\documentclass{amsart}
\usepackage[colorlinks=true, allcolors=blue]{hyperref}
\usepackage{amsmath}
\usepackage{paralist}
\usepackage{amssymb}
\usepackage{amsthm}
\usepackage{amscd}
\usepackage{graphicx,mathrsfs}
\usepackage{fontenc}
\usepackage{enumitem}

\usepackage[paperwidth=7in, paperheight=10in, margin=.75in]{geometry}
\numberwithin{equation}{section}

\newtheorem{Thm}{Theorem}[section]
\newtheorem{Lem}{Lemma}[section]
\newtheorem{Cor}{Corollary}[section]
\newtheorem{Prop}{Proposition}[section]

\newtheorem{Def}{Definition}[section]
\newtheorem{Rem}{Remark}[section]

\begin{document}
\sloppy
\allowdisplaybreaks[4]
\title[Linear parabolic equations on Carnot tori]{Regularity results for linear parabolic equations on Carnot tori via mollifier kernel construction}

\author{Yiming Jiang, Yawei Wei, Yiyun Yang}
\address{School of Mathematical Sciences and LPMC\\ Nankai University\\ Tianjin 300071 China}
\email{ymjiangnk@nankai.edu.cn}
\address{School of Mathematical Sciences and LPMC\\ Nankai University\\ Tianjin 300071 China}
\email{weiyawei@nankai.edu.cn}
\address{School of Mathematical Sciences\\ Nankai University\\ Tianjin 300071 China}
\email{1120210035@mail.nankai.edu.cn}
%\thanks{Acknowledgements: This work is supported by the NSFC under the grands 12271269 and supported by the Fundamental Research Funds for the Central Universities.}
\keywords{Schauder estimates; mollifiers; linear degenerate parabolic equation; Fokker-Planck-Kolmogorov equation; Carnot tori}
\subjclass[2020]{35R03, 35K65, 35Q84}

\begin{abstract}
This paper first proves the existence, uniqueness and regularity of the solution to a class of linear backward parabolic equations on Carnot tori, namely the periodic linear parabolic equation on Carnot groups. Such groups are non-commutative and typical examples of sub-Riemannian manifolds. Moreover, we apply the results for this equation to its dual equation (i.e., the Fokker-Planck-Kolmogorov equation in the general form), and derive the existence, uniqueness and regularity of its weak solution. To obtain the regularity results for solutions to the linear parabolic equation and its dual equation, firstly, we construct several families of mollifiers adapted respectively to the H\"{o}rmander vector fields generating Carnot groups, Carnot tori and dual spaces of non-isotropic H\"{o}lder spaces; secondly, we use the theory of singular integral operators to establish stronger a priori regularity for the solutions.
\end{abstract}

\maketitle
\section{Introduction}
%\noindent
In this paper, we first investigate the existence and uniqueness of the solution to the following linear degenerate backward parabolic equation
\begin{equation}\label{LHJB}
\begin{cases}
-\partial_t z-\Delta_{\mathcal{X}} z+b(t,x) \cdot D_{\mathcal{X}} z=f(t, x),& \text {in }[0, T) \times \mathbb{T}_{\mathbb{G}}, \\
z(T, x)=z_T(x),& \text {in }\mathbb{T}_{\mathbb{G}},
\end{cases}
\end{equation}
and establish Schauder estimates as well as H\"{o}lder continuity estimates for the solution. In addition, we also consider the dual form of equation \eqref{LHJB}, namely the general forward Fokker-Planck-Kolmogorov (FPK in short) equation:
\begin{equation}\label{general FPK}
\begin{cases}
\partial_t\rho-\Delta_\mathcal{X}\rho-\operatorname{div}_{\mathcal{X}}(\rho b)=\upsilon, & \text{ in } [0,T] \times \mathbb{T}_{\mathbb{G}}, \\
\rho(0)=\rho_{0}, & \text{ in } \mathbb{T}_{\mathbb{G}}.
\end{cases}
\end{equation}
We then prove the existence, uniqueness and regularity estimates for a class of weak solutions in the sense of distributions to the equation \eqref{general FPK}. Here, $\mathbb{T}_{\mathbb{G}}$ denotes the torus in the homogeneous Carnot group $\mathbb{G}=\left(\mathbb{R}^n,\circ\right)$, i.e. $\mathbb{T}_{\mathbb{G}}:=\mathbb{G}/\mathbb{Z}^{n}$. Let $\mathcal{X}=\{X_1,X_2,\ldots,X_{n_1}\},n_1< n$ denote the Jacobian generators of $\mathbb{G}$, which is a typical class of vector fields with an anisotropic structure satisfying the H\"{o}rmander condition, i.e.
\begin{equation*}
\operatorname{rank}(\operatorname{Lie}\{X_1,\ldots,X_{n_1}\}(x))=n,\,\text{ for any }x\in\mathbb{R}^n,
\end{equation*}
where $\operatorname{Lie}\{X_1,\ldots,X_{n_1}\}(x)$ denotes the Lie algebra induced by the given vector fields. See Subsection \ref{Subsec_2.1} for further details on H\"{o}rmander vector fields, Carnot group and Carnot torus. For any function $f:\mathbb{T}_{\mathbb{G}} \to \mathbb{R}$, the subgradient and the hypoelliptic operator associated with $\mathcal{X}$ are defined respectively as
$$
D_{\mathcal{X}}f:=(X_1f,\ldots,X_{n_1}f)^{\top},\quad\Delta_{\mathcal{X}}f:=\sum_{i=1}^{n_1}X_i^2f.
$$
While for any vector-valued function $g:\mathbb{T}_{\mathbb{G}} \to \mathbb{R}^{n_1}$, the corresponding divergence is defined as
$$
\operatorname{div}_{\mathcal{X}}g=\sum_{i=1}^{n_1}X_ig.
$$
Given any $T>0$, the coefficients $b(t,x)=(b_1(t,x),\ldots,b_{n_1}(t,x))^{\top}$ and $f(t,x)$ are continuous functions on $[0,T]\times\mathbb{T}_{\mathbb{G}}$, $\upsilon(t)$ and $\rho_{0}$ are generalized functions on $\mathbb{T}_{\mathbb{G}}$. Moreover, $b_i(t,\cdot)$, $f(t,\cdot)$ and $z_T$ belong to some non-isotropic H\"{o}lder spaces (see Subsection \ref{Subsec_2.1} for definitions).

H\"{o}rmander's groundbreaking work \cite{67Ho} laid the cornerstone for the study of subelliptic differential equations associated with H\"{o}rmander vector fields, a research area that has garnered escalating scholarly attention over the past few decades. Presently, numerous scholars have investigated the Schauder theory for second-order degenerate equations associated with H\"{o}rmander operators. Roughly speaking, Schauder estimates assert that if both the coefficients of the operator $H$ and the inhomogeneous term $f$ in the equation $Hu=f$ satisfy H\"{o}lder continuity, then all derivatives of $u$ possess H\"{o}lder continuity. In recent years, linear H\"{o}rmander operators with variable coefficients have also garnered attention, such as the operator:
\begin{equation}\label{Def_general Hormander elliptic operator}
\sum_{i,j=1}^{m}a_{i,j}(x)X_i X_j+\sum_{i=1}^{m}b_i(x)X_i+c(x)
\end{equation}
with $(X_1,\ldots,X_m),m<n$ being the family of H\"{o}rmander vector fields, or the operator with drift:
\begin{equation}\label{Def_general Hormander parabolic operator}
-\sum_{i,j=1}^{m}a_{i,j}(x)X_i X_j+a_0(x)X_0+\sum_{i=1}^{m}b_i(x)X_i+c(x)
\end{equation}
with $(X_0,X_1,\ldots,X_m),m+1<n$ being the family of H\"{o}rmander vector fields, where the coefficient matrix $(a_{i,j})_{i,j}$ is symmetric and uniformly positive definite. In 1992, Xu \cite{92Xu} established local a-priori Schauder estimates for operators of type \eqref{Def_general Hormander elliptic operator}, and the proof relied on an additional assumption regarding the structure of the Lie algebra generated by the family of H\"{o}rmander vector fields. Capogna and Han \cite{03CH} proved pointwise Schauder estimates for operators of type \eqref{Def_general Hormander elliptic operator} with $b_i,c\equiv0$ on Carnot groups. In \cite{07BB}, Bramanti and Brandolini demonstrated local a-priori Schauder estimates for the parabolic counterpart of the aforementioned operators, i.e., operators of type \eqref{Def_general Hormander parabolic operator} with $a_0X_0=\partial_t$. In this study, the analysis of the H\"{o}rmander vector fields did not presuppose any group structure. %For operators of type \eqref{Def_general Hormander parabolic operator} with more general drift $X_0$, Guti\'{e}rrez and Lanconelli \cite{09GL} derived the related local Schauder estimates under the framework of Carnot groups with $a_0\equiv1,b_i,c\equiv0$. Subsequently, Bramanti and Zhu \cite{11BZ} extended this result to the case of general H\"{o}rmander vector fields, where $a_0$ is a variable coefficient. 
Furthermore, the Schauder theory for H\"{o}rmander degenerate equations with less regular inhomogeneous terms and coefficients also constitutes an important research topic. In our previous work \cite{25JWY}, we investigated global a-priori Schauder estimates for solutions to the Cauchy problem of H\"{o}rmander operators of type \eqref{Def_general Hormander parabolic operator} with $a_0X_0=\partial_t$ and $(a_{i,j})=I$ on Carnot groups, focusing on cases where the inhomogeneous terms and coefficients are rough in time. This paper is a further development of \cite{25JWY}. We likewise consider the linear degenerate parabolic equations with time-rough coefficients in \eqref{LHJB}, and establish the well-posedness, Schauder estimates and H\"{o}lder continuity estimates of its solutions by using the regularity results proved in \cite{25JWY}.

FPK equations are a class of second-order PDEs that characterize the evolution of measures, and they have an inherent connection with Markov diffusion processes (cf. \cite{08St}). For example, for classical Markov diffusion models such as standard Brownian motion and $n$-dimensional Ornstein-Uhlenbeck process, their transition probabilities satisfy the corresponding parabolic FPK equations, while the invariant measures of the diffusion processes are exactly the solutions to the stationary FPK equations. The study of FPK equations can be reduced to the analysis of their dual PDEs in the sense of distributions, as described in \cite{10Ev,01GT,04RR}. The researches focus on key issues including the existence, uniqueness, regularity and density of solutions, which possess both probabilistic and analytical values. Another active research direction is investigating FPK equations on abstract spaces (e.g., Riemannian manifolds), where the geometric properties exert an influence on the analysis (cf. \cite{14BGL}). In this paper, we investigate the general form of the FPK equation on Carnot tori (see \eqref{general FPK}) by employing the idea of duality. In particular, when $\upsilon=0$ in \eqref{general FPK}, its solution can characterize the transition probability measure of the following diffusion process:
\begin{equation}\label{periodic Carnot diffusion}
\begin{cases}
d Z_{t}=-\sum_{k=1}^{n_1}b_{k}(t,Z_t)X_k(Z_t) dt+\sqrt{2}\sum_{k=1}^{n_1}X_k(Z_t)d B_{t}^{k}, \\
Z_{0}=x=(x^{1},\ldots,x^{n})^{\top}, \text{ in } \mathbb{T}_{\mathbb{G}},
\end{cases}
\end{equation}
where $X_k(Z_t)d B_{t}^{k}$ is the Stratonovich-type and $B_{t}=(B_t^{1},\ldots,B_t^{n_1})^{\top}$ is standard $n_1$-dimensional Brownian motion. This process is constrained to move periodically follow the horizontal curves with respect to the family of vector fields $\mathcal{X}$ generating the Carnot group $\mathbb{G}$. Notably, $\mathbb{G}$ is an example of sub-Riemannian manifolds (see \cite{22CCL}). By using the Schauder theory for the equation \eqref{LHJB}, this paper studies the well-posedness and regularity estimates of the weak solution in the framework of the dual spaces of non-isotropic H\"{o}lder spaces.

\noindent
\textbf{Notations.} Throughout this paper, we assume that $T > 0$ is a fixed finite time, and the positive constant, typically denoted by $C$, may change values from line to line. By default, we denote $c>0$ as a constant depending on $\mathbb{G}$. In addition, we shall use the notation $c_{f_1,\ldots,f_k}$ if $c$ also depends on $f_1,\ldots,f_k$.

For any $k\in\mathbb{N}$ and multi-index $I=\left(i_1,\ldots,i_k\right), i_j \in \{1,\ldots,n_1\},j\in\{1,\ldots,k\}$ with the length $|I|:=\sum_{j=1}^{n}\alpha_{i_j}$, we define
\begin{equation*}
X_I:=
\begin{cases}
X_{i_1} \cdots X_{i_k}, & \mbox{if } \dim(I)\geq1, \\
\mathrm{Id}, & \mbox{if } \dim(I)=0,
\end{cases}
\end{equation*}
where $\dim(I)$ is the dimension of vector $I$, i.e. $\dim(I)=k$. Here, $\alpha_{i_j}$ represents the homogeneous degree of $X_{i_j}$. It should be noted that $\alpha_{i_j}=1$ if $i_j\in \{1,\ldots,n_1\}$.

For any interval $D \subseteq\mathbb{R}$ and $n$-dimensional open domain $\Omega$, we define the space
\begin{equation*}
B\left(D;C_{\mathcal{X}}^{k+\alpha}\left(\Omega\right)\right):=\left\{\phi:D\to C_{\mathcal{X}}^{k+\alpha}\left(\Omega\right)~\bigg|~ \sup_{t\in D}\|\phi(t,\cdot)\|_{C_{\mathcal{X}}^{k+\alpha}\left(\Omega\right)}<+\infty\right\}
\end{equation*}  
for any $k\in\mathbb{N}$, $\alpha\in(0,1]$, where the H\"{o}lder space $C_{\mathcal{X}}^{k+\alpha}$ and the H\"{o}lder norm $\|\cdot\|_{C_{\mathcal{X}}^{k+\alpha}}$ are defined below in \eqref{non-isotropic Holder spaces_Def} and \eqref{Holder norm} respectively. We also define the function space
$$
C_{\mathcal{X}}^{1,2}(D\times\Omega):=\left\{u:D\times\Omega\to\mathbb{R}~|~\partial_{t}u, X_Iu\in C(D\times\Omega),\,\forall\,|I|\leq 2 \right\}.
$$

Let $d_{cc}$ denote the Carnot-Carath\'{e}odory distance induced by the vector fields $\mathcal{X}$ (see Definition \ref{Def_CC distance} below). For $x \in \Omega$, we introduce the $d_{cc}$-ball as
$$
B_\delta(x)=\{y \in \Omega:d_{cc}(x,y)<\delta\}.
$$

In the following, we present the main results of this paper. 

To prove the regularity results for the solutions to equations \eqref{LHJB} and \eqref{general FPK}, we need to construct several types of suitable mollifiers. Specifically, first we use the fundamental solution $\Gamma_0\left(t-s,y^{-1}\circ x\right)$ for the heat operator $\mathcal{H}=\partial_t-\Delta_{\mathcal{X}}$ (see \eqref{heat operator} below) to build a family of mollifiers adapted to the family of H\"{o}rmander vector fields $\mathcal{X}$.
\begin{Prop}[Mollifiers adapted to $\mathcal{X}$]\label{Prop_Mollifiers_Holder}
For each $\varepsilon>0$, set
\begin{equation}\label{mollifiers_phi_varepsilon}
\phi_{\varepsilon}(t,x):=\frac{1}{\varepsilon}\Gamma_0\left(\varepsilon,x\right)\varphi\left(\frac{t}{\varepsilon}\right),\,(t,x)\in\mathbb{R}\times\mathbb{R}^n,
\end{equation}
where $\varphi \in C_0^{\infty}(\mathbb{R})$ is a standard mollified function such that $\int_{\mathbb{R}}\varphi(t)dt=1$.
For any function $f:\mathbb{R}\times\mathbb{R}^{n}\to\mathbb{R}$ satisfying
\begin{equation}\label{f_growth cond.}
|f(t,x)| \leq M \exp\left(\mu \left\|x\right\|^2\right),\,(t,x)\in\mathbb{R}\times\mathbb{R}^{n}
\end{equation}
for some constants $M>0$ and $\mu \in [0,(2c)^{-1})$ with $c$ given in \eqref{Gamma_0 Lie derivatives GE}, define the function
\begin{equation*}
f_{\varepsilon}(t,x):=\int_{\mathbb{R}\times\mathbb{R}^{n}}\phi_{\varepsilon}(t-s,y^{-1}\circ x)f(s,y)d y ds,\,(t,x)\in\mathbb{R}\times\mathbb{R}^n,
\end{equation*}
where $\phi_{\varepsilon}$ is the function defined in \eqref{mollifiers_phi_varepsilon}. Then for any $\varepsilon\in(0,1]$, $f_{\varepsilon}(t,x) \in C^{\infty}\left(\mathbb{R}\times\mathbb{R}^{n}\right)$ and the following conclusions hold.
\begin{enumerate}[label=(\arabic*), leftmargin=*, itemindent=1.7em]
\item If $f(t,x) \in C\left(\mathbb{R}\times\mathbb{R}^{n}\right)$ and satisfies
\begin{equation}\label{f_ctn. cond.}
\left|f(t,x)-f(s,y)\right| \leq \omega_{f}(|t-s|)+c_f d_{cc}(x,y)^\alpha
\end{equation}
for some $\alpha\in(0,1]$, where the continuous modulus function $\omega_{f}(\cdot)$ and the constant $c_f>0$ depend only on $f$. Then
\begin{equation}\label{f_varepsilon->f}
\lim\limits_{\varepsilon\to0}\left\|f_{\varepsilon}-f\right\|_{L^{\infty}\left(\mathbb{R}\times\mathbb{R}^{n}\right)}=0.
\end{equation}
\label{mollifiers_f_varepsilon->f}
\item For any $\alpha \in (0,1)$ and $k \in \mathbb{N}$, if $f(t,x) \in C_{\mathcal{X}}^{\frac{\alpha}{2},k+\alpha}\left(\mathbb{R}\times\mathbb{R}^{n}\right)$, then
    \begin{equation*}
    \left\|f_{\varepsilon}\right\| _{C_{\mathcal{X}}^{\frac{\alpha}{2},k+\alpha}\left(\mathbb{R}\times\mathbb{R}^{n}\right)} \leq C \left\|f\right\| _{C_{\mathcal{X}}^{\frac{\alpha}{2},k+\alpha}\left(\mathbb{R}\times\mathbb{R}^{n}\right)},
    \end{equation*}
    where the constant $C>0$ depends on $\mathbb{G}$ and $k$ only. Here, for any $k\in\mathbb{N}$, the parabolic H\"{o}lder space $C_{\mathcal{X}}^{\frac{\alpha}{2},k+\alpha}$ is defined in \eqref{time-space-Holder space_Def} below.
    \label{mollifiers_C^alpha/2,alpha}
\item For any $\alpha \in (0,1)$ and $k \in \mathbb{N}$, if $f(t,x) \in B\left(\mathbb{R};C_{\mathcal{X}}^{k+\alpha}\left(\mathbb{R}^{n}\right)\right)$, then
    \begin{equation*}
    \sup_{t\in\mathbb{R}}\left\|f_{\varepsilon}(t,\cdot)\right\| _{C_{\mathcal{X}}^{k+\alpha}\left(\mathbb{R}^{n}\right)} \leq C \sup_{t\in\mathbb{R}}\left\|f(t,\cdot)\right\| _{C_{\mathcal{X}}^{k+\alpha}\left(\mathbb{R}^{n}\right)},
    \end{equation*}
    where the constant $C>0$ depends on $\mathbb{G}$ and $k$ only.
    \label{mollifiers_C^k+alpha}
\item If $f(t,\cdot)$ is $1_{\mathbb{G}}$-periodic on $\mathbb{R}^n$ for any $t\in\mathbb{R}$, i.e., $f(t,k \circ x)=f(t,x)$ for all $x \in \mathbb{R}^n$, then $f_{\varepsilon}(t,\cdot),\varepsilon\in(0,1]$ are also $1_{\mathbb{G}}$-periodic on $\mathbb{R}^n$ for any $t\in\mathbb{R}$.\label{mollifiers_periodicity}
\end{enumerate}
\end{Prop}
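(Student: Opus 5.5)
The plan rests on the standard properties of the heat kernel $\Gamma_0$ of $\mathcal{H}=\partial_t-\Delta_{\mathcal{X}}$ on $\mathbb{G}$, which I take as stated in Subsection \ref{Subsec_2.1}:
\begin{itemize}
\item $\Gamma_0(\varepsilon,\cdot)\in C^\infty$, $\Gamma_0\ge 0$ and $\int_{\mathbb{R}^n}\Gamma_0(\varepsilon,y)\,dy=1$;
\item the Gaussian bounds \eqref{Gamma_0 Lie derivatives GE}, namely $|X_I\Gamma_0(\varepsilon,x)|\le c\,\varepsilon^{-(Q+|I|)/2}\exp(-\|x\|^2/(c\varepsilon))$, together with analogous bounds for ordinary Euclidean derivatives;
\item the homogeneity $\Gamma_0(\varepsilon,x)=\varepsilon^{-Q/2}\Gamma_0(1,\delta_{1/\sqrt\varepsilon}x)$.
\end{itemize}
From these, $\int_{\mathbb{R}\times\mathbb{R}^n}\phi_\varepsilon=1$.

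\textbf{Smoothness.} I write $f_\varepsilon(t,x)=\int\phi_\varepsilon(s,y)\,f(t-s,x\circ y^{-1})\,dy\,ds$ in the first form, differentiate under the integral in the original form, and let all derivatives fall on $\phi_\varepsilon(t-s,y^{-1}\circ x)$. This is a smooth function of $(t,x)$ with polynomial group law. Differentiating under the integral is legitimate because the Gaussian bounds give $\exp(-\|y^{-1}\circ x\|^2/(c\varepsilon))$, the quasi-triangle inequality gives $\|y\|\lesssim \|x\|+\|y^{-1}\circ x\|$, and the growth \eqref{f_growth cond.} has $\mu<(2c)^{-1}$. So for $\varepsilon\le1$ the integrand is dominated locally uniformly in $x$. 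This is exactly where the restriction on $\mu$ is used.

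\textbf{Part (1).} Using $\int\phi_\varepsilon=1$ and the change of variables $y=x\circ w^{-1}$ (left-invariance of Lebesgue measure), I get
\[
|f_\varepsilon-f|(t,x)\le\int\phi_\varepsilon(s,w)\bigl(\omega_f(|s|)+c_f\|w\|^\alpha\bigr)\,dw\,ds .
\]
The time factor is supported in $|s|\le\varepsilon$, so its contribution is at most $\omega_f(\varepsilon)$. The space part is $\int\Gamma_0(\varepsilon,w)\|w\|^\alpha dw=\varepsilon^{\alpha/2}\int\Gamma_0(1,w)\|w\|^\alpha dw$ by homogeneity. Here I use $d_{cc}(x,x\circ w^{-1})=\|w\|$, with the homogeneous norm equivalent to $d_{cc}$. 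Both bounds are uniform in $(t,x)$, which gives \eqref{f_varepsilon->f}.

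\textbf{Parts (2) and (3).} The key structural fact is that the convolution commutes with the left-invariant fields $X_I$ and with time translations: writing $f_\varepsilon(t,x)=\int\phi_\varepsilon(s,w)f(t-s,x\circ w^{-1})\,dw\,ds$, I get $X_If_\varepsilon=(X_If)_\varepsilon$ for $|I|\le k$, since $X_I$ acting on $x\mapsto f(x\circ w^{-1})$ is $(X_If)(x\circ w^{-1})$ by left invariance. Then every seminorm entering the definitions \eqref{Holder norm} and \eqref{time-space-Holder space_Def} is an averaged version of the same seminorm of $f$. For example,
\[
|X_If_\varepsilon(t,x)-X_If_\varepsilon(t',x')|\le\int\phi_\varepsilon(s,w)\,\bigl|X_If(t-s,x\circ w^{-1})-X_If(t'-s,x'\circ w^{-1})\bigr| ,
\]
so I need $d_{cc}(x\circ w^{-1},x'\circ w^{-1})$ controlled by $d_{cc}(x,x')$.

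This is the main obstacle. The CC distance is left-invariant but not right-invariant, so right translation by $w^{-1}$ does not preserve it. To avoid right translations, I will instead write $f_\varepsilon(t,x)=\int\phi_\varepsilon(s,y^{-1}\circ x)\dots$, or equivalently use $\tilde f(y)=f(x\circ y)$ with the kernel reflected. Concretely, I will set $f_\varepsilon(t,x)=\int\phi_\varepsilon(s,w)f(t-s,x\circ w)\,dw\,ds$ after the substitution $y=x\circ w$, which needs $\Gamma_0(\varepsilon,w^{-1})=\Gamma_0(\varepsilon,w)$, the symmetry of the heat kernel. Then $d_{cc}(x\circ w,x'\circ w)$ is still a right translation. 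So the fallback I would try is the opposite: move the H\"older increment onto the kernel. That is, estimate the difference $f_\varepsilon(x)-f_\varepsilon(x')$ via $\int[\phi_\varepsilon(\cdot,y^{-1}\circ x)-\phi_\varepsilon(\cdot,y^{-1}\circ x')](f(y)-f(x))\,dy$ when $d_{cc}(x,x')<\sqrt\varepsilon$, using the mean value inequality on the stratified group with the Gaussian bound on $X_j\Gamma_0$. When $d_{cc}(x,x')\ge\sqrt\varepsilon$, I would use part-(1)-type bounds $|f_\varepsilon-f|\le C[f]_\alpha\varepsilon^{\alpha/2}$ at both points plus the H\"older bound for $f$. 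The same two-regime split handles the time increment, with threshold $|t-t'|$ versus $\varepsilon$, and the derivatives up to order $k$ via $X_If_\varepsilon=(X_If)_\varepsilon$. The sup-norm terms are trivial. In part (3) the time variable is only averaged, so only the spatial argument is needed, uniformly in $t$.

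\textbf{Part (4).} For $k\in\mathbb{Z}^n$, the change of variables $y=k\circ y'$ is measure preserving, $(k\circ y')^{-1}\circ k\circ x=y'^{-1}\circ x$, and $f(s,k\circ y')=f(s,y')$. Hence $f_\varepsilon(t,k\circ x)=f_\varepsilon(t,x)$.
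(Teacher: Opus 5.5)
Your smoothness argument, part (1) and part (4) are fine. Part (1) via $\int\Gamma_0(\varepsilon,w)\|w\|^\alpha\,dw=\varepsilon^{\alpha/2}\int\Gamma_0(1,w)\|w\|^\alpha\,dw$ is actually slicker than the paper's dyadic-annuli sum.

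\textbf{The gap is in parts (2) and (3) for $k\ge 1$.} Your ``key structural fact'' $X_If_\varepsilon=(X_If)_\varepsilon$ is false, and you rest both the sup-norm bounds and the H\"older bounds for derivatives on it. In $f_\varepsilon(t,x)=\int\phi_\varepsilon(s,w)f(t-s,x\circ w^{-1})\,dw\,ds$, the map $x\mapsto x\circ w^{-1}$ is a \emph{right} translation. Left-invariant fields commute only with left translations; this is the same non-commutativity you correctly flag for $d_{cc}$ a few lines later.

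For instance, take the Heisenberg group with law $x\circ y=(x_1+y_1,\,x_2+y_2,\,x_3+y_3+\tfrac12(x_1y_2-x_2y_1))$, $X_1=\partial_{x_1}-\frac{x_2}{2}\partial_{x_3}$ and $X_2=\partial_{x_2}+\frac{x_1}{2}\partial_{x_3}$. A direct computation gives
\[
X_1\bigl[f(\cdot\circ w^{-1})\bigr](x)=(X_1f)(x\circ w^{-1})-w_2\,\bigl([X_1,X_2]f\bigr)(x\circ w^{-1}),
\]
so a second-order derivative of $f$, weighted by a coordinate of $w$, appears.

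What is true is that derivatives fall on the kernel: $X_If_\varepsilon=\int X_I\phi_\varepsilon(t-s,y^{-1}\circ x)f(s,y)\,dy\,ds$. This is only $O(\varepsilon^{-|I|/2}\|f\|_{L^\infty})$. To get a bound uniform in $\varepsilon$ in terms of $X_Pf$ with $|P|=|I|$, you have to integrate by parts. By \eqref{convolution on homogeneous groups_property}, moving $X_j$ from $f$ onto the kernel produces the right-invariant field $X_j^R$, not $X_j$. These two differ by polynomial-coefficient combinations of commutators (cf.\ \eqref{X_i^R=sum X_J_j,l}).

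Carrying this through gives the representation \eqref{X_J int_0^t phi_varepsilon f}: $X_Jf_\varepsilon=\sum_P\int\phi_\varepsilon^P(t,x;s,y)X_Pf(s,y)\,dy\,ds$, with kernels $\sum_i\mathcal{R}_i(y^{-1}\circ x)X_{l_1}\cdots X_{l_i}\phi_\varepsilon$ built from $i$-homogeneous polynomials. This formula, which the paper imports from \cite{25JWY}, is the missing ingredient. The paper then checks growth, mean value and cancellation bounds for $\phi^P_\varepsilon$ uniformly in $\varepsilon$ and applies Lemma~\ref{Lem_HCSIO}.

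Your fallback two-regime argument is itself sound, and it would be a more elementary substitute for the singular-integral lemma. For $k=0$, the kernel increment $\phi_\varepsilon(\cdot,y^{-1}\circ x)-\phi_\varepsilon(\cdot,y^{-1}\circ x')$ is taken under the \emph{left} translation $z\mapsto y^{-1}\circ z$. So the stratified mean value inequality and the Gaussian bound on $D_{\mathcal{X}}\Gamma_0$ give $C[f]\,d\,\varepsilon^{(\alpha-1)/2}\le C[f]\,d^\alpha$ for $d<\sqrt\varepsilon$. Similarly, in time you get $C[f]\,|t-t'|\,\varepsilon^{\alpha/2-1}\le C[f]\,|t-t'|^{\alpha/2}$ for $|t-t'|<\varepsilon$. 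The part-(1) bound handles the other regime.

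Combined with the correct representation, the same split applies to each $\phi^P_\varepsilon$. These are parabolic rescalings $\varepsilon^{-(Q+2)/2}\phi^P_1\circ D^p_{1/\sqrt\varepsilon}$ of fixed Gaussian-decaying kernels, and their integrals $c_{J,P}$ do not depend on $(t,x)$. As written, however, your proposal establishes parts (2) and (3) only for $k=0$.
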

\begin{Rem}\label{Rem_Mollifiers_Holder}
\begin{enumerate}[label=(\arabic*), leftmargin=*, itemindent=1.7em]
\item Proposition \ref{Prop_Mollifiers_Holder} still applies to the function $f$ defined on $[0,T]\times\mathbb{R}^n$, as $f$ can be continuously extended to the entire interval in the following manner:
    \begin{equation*}
    f(t,x)=
    \begin{cases}
    f(t,x), & \mbox{if } t\in[0,T], \\
    f(0,x), & \mbox{if } t<0, \\
    f(T,x), & \mbox{if } t>T
    \end{cases}
    \end{equation*}
    for any $x\in\mathbb{R}^n$.
\item In fact, based on the properties of the Carnot torus (see Section \ref{Subsec_2.1} below for more details about Carnot tori), conclusion \ref{mollifiers_periodicity} in Proposition \ref{Prop_Mollifiers_Holder} implies that Proposition \ref{Prop_Mollifiers_Holder} also applies to the space $\mathbb{R}\times\mathbb{T}_{\mathbb{G}}$.
\end{enumerate}
\end{Rem}
To obtain the smooth approximations of Lipschitz functions on the Carnot torus $\mathbb{T}_{\mathbb{G}}$, analogous to the standard mollifiers with compact support on Euclidean space, we introduce the mollifiers on the Carnot group $\mathbb{G}$ as follows (see \cite[Proposition 3.6]{24MMM}):
\begin{equation*}
\psi(x):=
\begin{cases}
\exp\left(\frac{1}{\left\|x\right\|_{\mathbb{G}}^{2r!}-1}\right), & \mbox{if } \left\|x\right\|_{\mathbb{G}}\leq 1 \\
0, & \mbox{otherwise},
\end{cases}
\end{equation*}
and for each $\varepsilon>0$, set
\begin{equation}\label{mollifiers_psi_varepsilon}
\psi_{\varepsilon}(x):=\frac{C}{\varepsilon^Q}\psi\left(D_{\frac{1}{\varepsilon}}(x)\right),\,x\in\mathbb{R}^n,
\end{equation}
where the constant $C>0$ is independent of $\varepsilon$ such that $\int_{\mathbb{R}^n}\psi_{\varepsilon}(x)dx=1$. Here, $\left\|\cdot\right\|_{\mathbb{G}}$ is an homogeneous norm on $\mathbb{G}$ defined as
\begin{equation*}
\left\|x\right\|_{\mathbb{G}}=\left(\sum_{j=1}^{r}|x^{(j)}|^{\frac{2r!}{j}}\right)^{\frac{1}{2r!}}, \,x=\left(x^{(1)},\ldots,x^{(r)}\right)\in\mathbb{G},
\end{equation*}
where $\left|x^{(j)}\right|$ denotes the Euclidean norm on $\mathbb{R}^{n_j}$ and $r>1$ is the step of $\mathbb{G}$, $Q$ is the homogeneous dimension and $D_\lambda$ with $\lambda>0$ is the dilation of $\mathbb{G}$ (see Definition \ref{Def_Carnot group} below). The periodicization of $\psi_{\varepsilon}$ can yield the mollifiers on $\mathbb{T}_{\mathbb{G}}$.
\begin{Prop}[Mollifiers adapted to Carnot tori]\label{Prop_Mollifiers_Lip.}
For any $\varepsilon>0$ and integrable function $g:\mathbb{T}_{\mathbb{G}}\to\mathbb{R}$, define the function
\begin{equation*}
g_{\varepsilon}(x):=\int_{[0,1)^n}\sum_{k\in\mathbb{Z}^n}\psi_{\varepsilon}\left(k\circ x\circ y^{-1}\right)g(y)d y,\,x\in\mathbb{T}_{\mathbb{G}},
\end{equation*}
where $\psi_{\varepsilon}$ is the function defined in \eqref{mollifiers_psi_varepsilon}. Then $g_{\varepsilon}(x) \in C^{\infty}\left(\mathbb{T}_{\mathbb{G}}\right)$ and the following conclusions hold.
\begin{enumerate}[label=(\arabic*), leftmargin=*, itemindent=1.7em]
\item If $g(x) \in C\left(\mathbb{T}_{\mathbb{G}}\right)$, then
\begin{equation*}
\lim\limits_{\varepsilon\to0}\left\|g_{\varepsilon}-g\right\|_{L^{\infty}\left(\mathbb{T}_{\mathbb{G}}\right)}=0.
\end{equation*}\label{g_varepsilon->g}
\item If $g(x) \in C_{\mathcal{X}}^{0+1}\left(\mathbb{T}_{\mathbb{G}}\right)$, then
\begin{equation*}
\left[g_{\varepsilon}\right]_{C_{\mathcal{X}}^{0+1}\left(\mathbb{T}_{\mathbb{G}}\right)} \leq \left[g\right]_{C_{\mathcal{X}}^{0+1}\left(\mathbb{T}_{\mathbb{G}}\right)}.
\end{equation*}\label{mollifiers_C^0,1}
\end{enumerate}
\end{Prop}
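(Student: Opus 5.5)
Since the translated, dilated kernel $\psi_\varepsilon$ is smooth, compactly supported in the homogeneous ball $\{\|x\|_{\mathbb{G}}\le\varepsilon\}$ and has unit integral, I would reduce everything to a convolution on $\mathbb{G}$. Let $\tilde g$ be the $1_{\mathbb{G}}$-periodic lift of $g$ to $\mathbb{R}^n$, so $\tilde g(k\circ y)=\tilde g(y)$ for $k\in\mathbb{Z}^n$. Using that $[0,1)^n$ is a fundamental domain for the left action of $\mathbb{Z}^n$ and that Lebesgue measure is bi-invariant on $\mathbb{G}$, I would unfold the sum over $k$:
\begin{equation*}
g_\varepsilon(x)=\int_{\mathbb{R}^n}\psi_\varepsilon\left(x\circ y^{-1}\right)\tilde g(y)\,dy=\int_{\mathbb{R}^n}\psi_\varepsilon(z)\,\tilde g\left(z^{-1}\circ x\right)dz .
\end{equation*}
Here I substitute $y=k^{-1}\circ y'$, which turns $k\circ x\circ y^{-1}$ into $k\circ x\circ y'^{-1}\circ k$, and I will need to match conventions carefully (possibly $x\mapsto k\circ x$, since $x\in\mathbb{T}_{\mathbb{G}}$ is only defined modulo $\mathbb{Z}^n$). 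The right-hand side is then a right-convolution of the periodic function $\tilde g$, and periodicity of $g_\varepsilon$ follows because $x\mapsto z^{-1}\circ x$ commutes with left translation by $k$. Only finitely many $k$ contribute locally, since the support is compact. So smoothness follows by differentiating under the integral in the first representation, $\psi_\varepsilon\in C_0^\infty$.

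For \ref{g_varepsilon->g}, I would use the second representation together with $\int\psi_\varepsilon=1$:
\begin{equation*}
|g_\varepsilon(x)-g(x)|\le\int_{\|z\|_{\mathbb{G}}\le\varepsilon}\psi_\varepsilon(z)\,\bigl|\tilde g(z^{-1}\circ x)-\tilde g(x)\bigr|\,dz.
\end{equation*}
Since $\tilde g$ is continuous and periodic, it is uniformly continuous on $\mathbb{R}^n$: it is uniformly continuous on a compact fundamental region enlarged by a unit ball, and periodic. I also need that $z^{-1}\circ x-x\to0$ uniformly in $x$ as $z\to0$. This is not automatic, because the group law is polynomial in $x$. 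I would handle it through uniform continuity with respect to $d_{cc}$ instead. The map $x\mapsto z^{-1}\circ x$ is a left translation, and $d_{cc}(z^{-1}\circ x,x)$ is the right-invariant-type quantity $\|x^{-1}\circ z^{-1}\circ x\|$, which is also not uniform. The cleanest fix is to exploit periodicity: reduce $x$ to the compact set $[0,1]^n$ and pick $k$ with $k\circ x\in[0,1]^n$; the left translation by $z^{-1}$ should commute with this reduction, since $z^{-1}\circ k\circ x$ versus $k\circ z^{-1}\circ x$ differ. This means I must choose the convention so that the translation by $z$ acts on the side opposite to the lattice. With the lattice acting on the left, the mollifier should act on the right: $\tilde g(x\circ z^{-1})$. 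Then $\tilde g(k\circ x\circ z^{-1})=\tilde g(x\circ z^{-1})$, so I may take $x\in[0,1]^n$, where joint continuity on the compact set gives the uniformity. This convention-matching in the unfolding step is the main obstacle, and I would fix the ordering so that the kernel argument becomes $x\circ y^{-1}$ with the lattice on the left.

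For \ref{mollifiers_C^0,1}, in the same right-convolution form $g_\varepsilon(x)=\int\psi_\varepsilon(z)\tilde g(x\circ z^{-1})dz$ (or its left analogue), I would estimate
\begin{equation*}
|g_\varepsilon(x)-g_\varepsilon(x')|\le\int\psi_\varepsilon(z)\,\bigl|\tilde g(x\circ z^{-1})-\tilde g(x'\circ z^{-1})\bigr|\,dz,
\end{equation*}
and I need $d(x\circ z^{-1},x'\circ z^{-1})\le d(x,x')$ for the metric used in $C^{0+1}_{\mathcal{X}}(\mathbb{T}_{\mathbb{G}})$. Since $d_{cc}$ is left-invariant, the translation must act on the left. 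So I would write $g_\varepsilon(x)=\int\psi_\varepsilon(z)\tilde g(z\circ x)dz$ (after an inversion of $z$, legitimate because $dz$ is invariant under inversion), combined with the torus distance $\min_k d_{cc}(k\circ x,x')$. The tension with the periodicity in the previous paragraph must then be resolved by noting that $\mathbb{Z}^n$ is a lattice of a Carnot group whose torus quotient is taken appropriately; the paper's subsection on Carnot tori presumably guarantees the needed compatibility. Granting this, the integrand is at most $[g]\,d(x,x')$, and integrating against $\psi_\varepsilon$ with unit mass gives the claim with constant exactly $1$.
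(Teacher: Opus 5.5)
There is a genuine gap, and it is exactly the compatibility you decide to grant.

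Your unfolding $g_\varepsilon(x)=\int_{\mathbb{R}^n}\psi_\varepsilon(x\circ y^{-1})\tilde g(y)\,dy$ is false for the given kernel. Your own substitution produces $\psi_\varepsilon(k\circ x\circ y'^{-1}\circ k)$, and nothing removes the right-hand $k$. The periodicity argument also fails, because $z^{-1}\circ k\circ x\neq k\circ z^{-1}\circ x$.

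What the definition actually gives is $g_\varepsilon(x)=\sum_k h_\varepsilon(k\circ x)$ with $h_\varepsilon(x)=\int_{[0,1)^n}\psi_\varepsilon(x\circ y^{-1})g(y)\,dy$, that is,
\[
g_\varepsilon(x)=\int_{\mathbb{G}}\psi_\varepsilon(w)\,\tilde g\bigl(w^{-1}\circ k_w(x)\circ x\bigr)\,dw ,
\]
where $k_w(x)$ is the unique $k\in\mathbb{Z}^n$ with $w^{-1}\circ k\circ x\in[0,1)^n$. Existence and uniqueness follow from the layer-by-layer argument of the lemma identifying $\mathbb{T}_{\mathbb{G}}$ with $[0,1)^n$: the $j$-th layer of $w^{-1}\circ k\circ x$ is $k^{(j)}$ plus a function of $w$, $x$ and $k^{(1)},\dots,k^{(j-1)}$. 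The same count gives unit mass. Since $w^{-1}\circ k\circ x=k\circ(k^{-1}\circ w^{-1}\circ k)\circ x$, periodicity does not let you drop $k_w(x)$.

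This representation suffices for smoothness and for part (1). For $x\in[0,1)^n$ only finitely many $k_w(x)$ occur, so the conjugates $k^{-1}\circ w^{-1}\circ k$ are uniformly small, and uniform continuity of $\tilde g$ on a compact set concludes. For part (2), however, left-invariance of $d_{cc}$ only helps when $k_w(x_1)=k_w(x_2)$. Otherwise you are comparing $\tilde g$ at two points not related by a single left translation. What you need is an average of left translates of $\tilde g$ that is also $\mathbb{Z}^n$-periodic, and no such operator exists in a non-commutative group. So there is nothing to grant.

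The paper does not unfold. It substitutes $z=x\circ y^{-1}$, keeps the $x$-dependent domain $x\circ([0,1)^n)^{-1}$, and proves unit mass by tiling. For (2) it then places $g_\varepsilon(x_1)$ and $g_\varepsilon(x_2)$ over one domain ``$x\circ([0,1)^n)^{-1}$'' and invokes left-invariance. That tacitly assumes $z\mapsto\sum_k\psi_\varepsilon(k\circ z)\tilde g(z^{-1}\circ x)$ is periodic in $z$, which is the same hole.

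The hole is real. Take the Heisenberg group with $p\circ q=(p_1+q_1,p_2+q_2,p_3+q_3+p_1q_2)$, where $\mathbb{Z}^3$ is a subgroup and $X_1=\partial_{p_1}$. There $w^{-1}\circ k\circ p=k\circ(w^{-1}\circ p)\circ(0,0,k_1w_2-k_2w_1)$ with $(0,0,c)$ central. Hence for $p$ near $p_0=(0,\frac12,\frac12)$ and small $\varepsilon$,
\[
g_\varepsilon(p)=\int_{\{w_1\le p_1\}}\psi_\varepsilon(w)\,\tilde g(w^{-1}\circ p)\,dw+\int_{\{w_1>p_1\}}\psi_\varepsilon(w)\,\tilde g\bigl((w^{-1}\circ p)\circ(0,0,w_2)\bigr)\,dw .
\]
Both integrands are $1$-Lipschitz in $p_1$. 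Moving the interface, however, produces at $p_0$ the slice term $-\int_{\{w_1=0\}}\psi_\varepsilon(w)\bigl[\tilde g((w^{-1}\circ p_0)\circ(0,0,w_2))-\tilde g(w^{-1}\circ p_0)\bigr]\,dw_2\,dw_3$.

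Now take $g=d_{cc}^{\mathbb{T}_{\mathbb{G}}}(\cdot,p_0)$, which satisfies $[g]_{C^{0+1}_{\mathcal{X}}(\mathbb{T}_{\mathbb{G}})}\le 1$. The bracket is $\ge c\sqrt{\varepsilon}$ when $|w_2|\ge\varepsilon/4$ and $\ge -C\varepsilon$ otherwise, while the slice carries $\psi_\varepsilon$-mass $\asymp\varepsilon^{-1}$. Hence $[g_\varepsilon]_{C^{0+1}_{\mathcal{X}}(\mathbb{T}_{\mathbb{G}})}\ge|X_1g_\varepsilon(p_0)|\ge c\varepsilon^{-1/2}-C$.

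So part (2) cannot be proved for this $g_\varepsilon$, with constant $1$ or with any $\varepsilon$-independent constant. Only smoothness and part (1) survive.
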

Denote $\mathcal{D}'(\Omega)$ by the dual space of $C^{\infty}\left(\Omega\right)$ for any subset $\Omega\subset\mathbb{R}^n$. We are going to provide the mollifiers on the dual spaces $C_{\mathcal{X}}^{-(k+\alpha)}\left(\mathbb{T}_{\mathbb{G}}\right)$ and $L^1\left([0,T];C_{\mathcal{X}}^{-(k+\alpha)}\left(\mathbb{T}_{\mathbb{G}}\right)\right)$, where the definition of $C_{\mathcal{X}}^{-(k+\alpha)}$ is given in \eqref{dual Holder space_Def} below.
\begin{Prop}[Mollifiers adapted to dual spaces]\label{Prop_Mollifiers_dual space}
%Let the inverse of $x$ in $\mathbb{G}$ coincide with $-x$. 
The following conclusions hold.
\begin{enumerate}[label=(\arabic*), leftmargin=*, itemindent=1.7em]
\item For any $\varepsilon>0$ and $\mu\in\mathcal{D}'([0,1)^n)$, define the function
\begin{equation}\label{mu_varepsion}
\mu_{\varepsilon}(x):=\left\langle\mu,\sum_{k\in\mathbb{Z}^n}\psi_{\varepsilon}\left(k\circ x \circ (\cdot)^{-1}\right)\right\rangle,\,x\in\mathbb{T}_{\mathbb{G}},
\end{equation}
where $\psi_{\varepsilon}$ is the function defined in \eqref{mollifiers_psi_varepsilon}. Then $\mu_{\varepsilon}(x)\in C^{\infty}\left(\mathbb{T}_{\mathbb{G}}\right)$.

Moreover, for any $k \in \mathbb{N}$, if $\mu \in C_{\mathcal{X}}^{-k}\left([0,1)^n\right)$, then
    \begin{equation*}
    \left\|\mu_{\varepsilon}\right\| _{C_{\mathcal{X}}^{-k}\left(\mathbb{T}_{\mathbb{G}}\right)} \leq C \left\|\mu\right\| _{C_{\mathcal{X}}^{-k}\left([0,1)^n\right)},
    \end{equation*}
    where the constant $C>0$ depends on $\mathbb{G}$ and $k$ only; additionally, for any $\alpha\in(0,1]$, if $\mu \in C_{\mathcal{X}}^{-(k+\alpha)}\left(\mathbb{T}_{\mathbb{G}}\right)$, then
    $$
    \lim_{\varepsilon\to0}\left\|\mu_{\varepsilon}-\mu\right\| _{C_{\mathcal{X}}^{-(k+\alpha)}\left(\mathbb{T}_{\mathbb{G}}\right)}=0.
    $$\label{mollifiers_C^-k+alpha}

\item For any $\varepsilon>0$ and $\mu\in L_{\text{loc}}^1\left(\mathbb{R};\mathcal{D}'([0,1)^n)\right)$, define the function
$$
\mu_{\varepsilon}(t,x):=\int_{\mathbb{R}}\left\langle\mu(s),\sum_{k\in\mathbb{Z}^n}\psi_{\varepsilon}\left(k\circ x \circ (\cdot)^{-1}\right)\right\rangle\varphi_{\varepsilon}(t-s)ds,\,(t,x)\in\mathbb{R}\times\mathbb{T}_{\mathbb{G}},
$$
where $\varphi_{\varepsilon}$ is the one-dimensional standard mollifier and $\psi_{\varepsilon}$ is the function defined in \eqref{mollifiers_psi_varepsilon}. Then $\mu_{\varepsilon}(t,x)\in C^{\infty}\left(\mathbb{R}\times\mathbb{T}_{\mathbb{G}}\right)$.

Moreover, for any $k \in \mathbb{N}$ and $T>0$, if $\mu \in L^1\left([0,T];C_{\mathcal{X}}^{-k}\left([0,1)^n\right)\right)$, then
    \begin{equation*}
    \left\|\mu_{\varepsilon}\right\| _{L^1\left([0,T];C_{\mathcal{X}}^{-k}\left(\mathbb{T}_{\mathbb{G}}\right)\right)} \leq C \left\|\mu\right\| _{L^1\left([0,T];C_{\mathcal{X}}^{-k}\left([0,1)^n\right)\right)},
    \end{equation*}
    where the constant $C>0$ depends on $\mathbb{G}$ and $k$ only; additionally, for any $\alpha\in(0,1]$, if $\mu \in L^1\left([0,T];C_{\mathcal{X}}^{-(k+\alpha)}\left(\mathbb{T}_{\mathbb{G}}\right)\right)$, then 
    $$
    \lim_{\varepsilon\to0}\left\|\mu_{\varepsilon}-\mu\right\| _{L^1\left([0,T];C_{\mathcal{X}}^{-(k+\alpha)}\left(\mathbb{T}_{\mathbb{G}}\right)\right)}=0.
    $$\label{mollifiers_L^1(C^-k+alpha)}
\end{enumerate}
\end{Prop}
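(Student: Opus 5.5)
Throughout, write $\Psi_{\varepsilon,x}(y):=\sum_{k\in\mathbb{Z}^n}\psi_{\varepsilon}(k\circ x\circ y^{-1})$. This is the kernel paired against $\mu$, and the plan is to move every property of $\mu_\varepsilon$ onto it by duality.

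\textbf{Basic properties of $\Psi_{\varepsilon,x}$.} Since $\psi_\varepsilon$ has compact support in $B_\varepsilon(0)$ (with respect to $\|\cdot\|_{\mathbb{G}}$), for $x,y$ in the fundamental domain $[0,1)^n$ only finitely many $k$ contribute. Hence $\Psi_{\varepsilon,x}$ is a smooth, $1_{\mathbb{G}}$-periodic function of $(x,y)$. By the periodicity argument already used for Proposition \ref{Prop_Mollifiers_Lip.}, $\mu_\varepsilon$ is well defined on $\mathbb{T}_{\mathbb{G}}$. Smoothness of $\mu_\varepsilon$ follows from continuity of $\mu$ as a distribution. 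Difference quotients of $x\mapsto\Psi_{\varepsilon,x}$ converge in $C^\infty$ in the $y$ variable, so derivatives pass inside the pairing: $\partial^\beta_x\mu_\varepsilon(x)=\langle\mu,\partial_x^\beta\Psi_{\varepsilon,x}\rangle$. For part (2), I additionally differentiate under the $ds$ integral in $\varphi_\varepsilon(t-s)$, using $\mu\in L^1_{loc}$ in time.

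\textbf{Uniform bound in $C_{\mathcal{X}}^{-k}$.} I take the norm as the dual norm: $\|\mu\|_{C^{-k}_{\mathcal X}}=\sup\{\langle\mu,\phi\rangle:\|\phi\|_{C^k_{\mathcal X}}\le1\}$. For a test function $\phi$ I compute $\int\mu_\varepsilon\phi\,dx=\langle\mu,\tilde\phi_\varepsilon\rangle$, with $\tilde\phi_\varepsilon(y)=\int_{[0,1)^n}\Psi_{\varepsilon,x}(y)\phi(x)\,dx$. Exchanging $\int dx$ and the pairing is justified by approximating the integral with Riemann sums in $C^\infty$. Unfolding the sum over $k$ and using that Lebesgue measure is the Haar measure of $\mathbb{G}$, $\tilde\phi_\varepsilon$ becomes the group convolution $\tilde\phi_\varepsilon(y)=\int_{\mathbb{R}^n}\psi_\varepsilon(z)\phi(z^{-1}\circ y)\,dz$, after a change of variables $x=k^{-1}\circ z\circ y$.

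The key point is that the $X_i$ are left-invariant, so they commute with convolution on this side: $X_I\tilde\phi_\varepsilon=\int\psi_\varepsilon(z)(X_I\phi)(z^{-1}\circ y)\,dz$. I must check that the convolution side matches left translation. If the formula instead yields right translation, I will rewrite $k\circ x\circ y^{-1}$ via $\psi_\varepsilon(w)=\psi_\varepsilon(w^{-1})$, which holds because $\|w^{-1}\|_{\mathbb{G}}=\|w\|_{\mathbb{G}}$. Since $\psi_\varepsilon\ge0$ and $\int\psi_\varepsilon=1$, this gives $\|\tilde\phi_\varepsilon\|_{C^k_{\mathcal X}}\le\|\phi\|_{C^k_{\mathcal X}}$. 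Taking the supremum yields the bound, with a constant $C$ that absorbs the norm conventions.

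\textbf{Convergence in $C_{\mathcal{X}}^{-(k+\alpha)}$; main obstacle.} This is the step I expect to be hardest, since mollification does not converge in the norm of $C^{-(k+\alpha)}_{\mathcal X}$ itself. I would estimate
\[
\|\mu_\varepsilon-\mu\|_{C^{-(k+\alpha)}}=\sup_{\|\phi\|_{C^{k+\alpha}}\le1}\langle\mu,\tilde\phi_\varepsilon-\phi\rangle,
\]
which needs $\sup_{\|\phi\|_{C^{k+\alpha}}\le1}\|\tilde\phi_\varepsilon-\phi\|\to0$ in a norm controlled by $\mu$. If $\mu\in C^{-(k+\alpha)}$ merely means $\mu$ is bounded on $C^{k+\alpha}$, the convergence needs an extra step. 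I would use that $C^{k+\alpha}$ embeds compactly into $C^{k+\alpha'}$ for $\alpha'<\alpha$, and use uniform estimates of the type $\|\tilde\phi_\varepsilon-\phi\|_{C^{k}}\le C\varepsilon^{\alpha}\|\phi\|_{C^{k+\alpha}}$. The latter follow from $|X_I\phi(z^{-1}\circ y)-X_I\phi(y)|\le[\,X_I\phi\,]_\alpha\, d_{cc}(z^{-1}\circ y,y)^\alpha$ and $d_{cc}(z^{-1}\circ y,y)\le C\|z\|_{\mathbb{G}}\le C\varepsilon$ on the support of $\psi_\varepsilon$. If $C^{-(k+\alpha)}$ is defined as the dual of $C^{k+\alpha}$ but $\mu$ acts continuously on $C^k$, this closes directly. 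Otherwise I would follow the paper's precise definition \eqref{dual Holder space_Def}: I would first approximate $\mu$ by smooth functions, for which convergence holds by Proposition \ref{Prop_Mollifiers_Lip.}\ref{g_varepsilon->g}, and then apply a $3\epsilon$-argument using the uniform bound above.

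\textbf{Part (2).} Part (2) follows by applying the above for a.e.\ $s$ and integrating. The bound uses Minkowski's inequality together with $\|\varphi_\varepsilon\|_{L^1}=1$. The convergence uses continuity of translations in $L^1$ and dominated convergence in $s$. For the dominating function I take $C\|\mu(s)\|_{C^{-(k+\alpha)}}$, which comes from the uniform bound.
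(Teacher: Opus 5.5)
Your argument follows the paper's route. The paper also moves the mollifier onto the test function, $g_\varepsilon(y)=\int_{\mathbb{R}^n}\psi_\varepsilon(x\circ y^{-1})g(x)\,dx$, and uses left-invariance of the $X_I$ to get $\|g_\varepsilon\|_{C^k_{\mathcal X}}\le\|g\|_{C^k_{\mathcal X}}$. It obtains convergence from $\langle\mu,g_\varepsilon-g\rangle\le\|\mu\|_{C^{-k}_{\mathcal X}([0,1)^n)}\|g_\varepsilon-g\|_{C^k_{\mathcal X}}$ together with $L^1$-convergence of the time mollification (it proves (2) and reads off (1), the reverse of your order). Several of your intermediate claims are wrong, though, and need correcting.

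\textbf{The displacement estimate.} Your substitution $x=k^{-1}\circ z\circ y$ actually gives $\tilde\phi_\varepsilon(y)=\int\psi_\varepsilon(z)\phi(z\circ y)\,dz$, not $\phi(z^{-1}\circ y)$. This slip is harmless, because it is still a left translation. So you never need $\psi_\varepsilon(w)=\psi_\varepsilon(w^{-1})$, which is just as well: $\|w^{-1}\|_{\mathbb G}\neq\|w\|_{\mathbb G}$ for the paper's norm unless $w^{-1}=-w$.

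More seriously, $d_{cc}(z^{-1}\circ y,y)=\|y^{-1}\circ z^{-1}\circ y\|$ is the norm of a \emph{conjugate} and is not bounded by $C\|z\|_{\mathbb G}$. On the Heisenberg group with $(x\circ y)_3=x_3+y_3+x_1y_2$ one has $(y^{-1}\circ w\circ y)_3=w_3+w_1y_2-y_1w_2$. This can be of size $|y|\,\varepsilon$ when $\|w\|\le\varepsilon$, so the conjugate has norm of order $(|y|\varepsilon)^{1/2}$ rather than $\varepsilon$. Hence your estimate $\|\tilde\phi_\varepsilon-\phi\|_{C^k_{\mathcal X}}\le C\varepsilon^\alpha\|\phi\|_{C^{k+\alpha}_{\mathcal X}}$ is false on $\mathbb{R}^n$. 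What does hold is the same bound with rate $\varepsilon^{\alpha/r}$ for $y$ in the bounded set $[0,1)^n$; this is what the paper's factor $\sup_{\|z\|_{\mathbb G}\le 1}\|y^{-1}\circ D_\varepsilon(z)\circ y\|^\alpha$ encodes. That suffices, since $\mu$ only acts on functions on the cube.

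Relatedly, $\Psi_{\varepsilon,x}(y)$ is $1_{\mathbb G}$-periodic in $x$ but not in $y$: replacing $y$ by $k'\circ y$ conjugates $k'$ by $x\circ y^{-1}$. So $\tilde\phi_\varepsilon$ is not periodic. This is exactly why $\mu$ has to be a distribution on $[0,1)^n$ and not merely on $\mathbb{T}_{\mathbb G}$.

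\textbf{The convergence clause.} Discard your fallback and use the hypothesis $\mu\in C^{-k}_{\mathcal X}([0,1)^n)$ (resp.\ $L^1([0,T];C^{-k}_{\mathcal X}([0,1)^n))$) from the first half of the statement. This is what your ``closes directly'' branch does and what the paper does. Without this hypothesis the clause is false, so no density or compactness argument can succeed. For instance, take $k=0$, $\alpha<1$ and $x_n\to x_*$, and let $\Lambda(\phi)=\mathrm{LIM}_n\,(\phi(x_n)-\phi(x_*))/d_{cc}(x_n,x_*)^\alpha$ be a Banach limit. Then $\Lambda$ is a nonzero element of $C^{-\alpha}_{\mathcal X}$ (test it on $d_{cc}(\cdot,x_*)^\alpha$) that annihilates every smooth function. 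Hence $\Lambda_\varepsilon\equiv 0$, and smooth functions are not norm-dense in $C^{-\alpha}_{\mathcal X}$.

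Accordingly, the dominating function in (2) should be $C\|\mu(s)\|_{C^{-k}_{\mathcal X}([0,1)^n)}$, not $C\|\mu(s)\|_{C^{-(k+\alpha)}_{\mathcal X}}$. With the corrected uniform rate on the cube, you do not need dominated convergence for that term at all.
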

Denote $\mathcal{P}\left(\mathbb{T}_{\mathbb{G}}\right)$ by the set of Borel probability measures on $\mathbb{T}_{\mathbb{G}}$, endowed with the Kantorovich-Rubinstein distance
$$
d_1\left(m, m'\right):=\sup_{[\phi]_{C_{\mathcal{X}}^{0+1}\left(\mathbb{T}_{\mathbb{G}}\right)}\leq 1} \int_{\mathbb{T}_{\mathbb{G}}} \phi(y) d\left(m-m'\right)(y),
$$
where the supremum is taken over all $d_{cc}^{\mathbb{T}_{\mathbb{G}}}$-Lipschitz continuous maps $\phi: \mathbb{T}_{\mathbb{G}} \to \mathbb{R}$ with the Lipschitz constant bounded by $1$. From Section 5.1 of \cite{18CD}, we see that $d_1$ is well-defined and metricizes the weak convergence of measures. The following is a corollary of Proposition \ref{Prop_Mollifiers_dual space}, which provides the mollifiers adapted to $\mathcal{P}\left(\mathbb{T}_{\mathbb{G}}\right)$.
\begin{Cor}\label{Cor_Mollifiers_P(T_G)}
Let $\mu_{\varepsilon}$ be the smooth function defined in \eqref{mu_varepsion}, if $\mu \in \mathcal{P}(\mathbb{T}_{\mathbb{G}})$, then $\mu_{\varepsilon}\to\mu$ in $\mathcal{P}(\mathbb{T}_{\mathbb{G}})$ as $\varepsilon\to0$.
\end{Cor}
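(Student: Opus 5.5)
The idea is to view a probability measure $\mu\in\mathcal{P}(\mathbb{T}_{\mathbb{G}})$ as an element of the dual space $C_{\mathcal{X}}^{-(0+1)}(\mathbb{T}_{\mathbb{G}})$, and then read off the convergence from Proposition \ref{Prop_Mollifiers_dual space}\ref{mollifiers_C^-k+alpha} with $k=0$, $\alpha=1$. The link is that the dual norm controls $d_1$. Concretely, for any $\phi$ with $[\phi]_{C_{\mathcal{X}}^{0+1}(\mathbb{T}_{\mathbb{G}})}\le1$ I will subtract $\phi(x_0)$ for a fixed $x_0$. Write $\tilde\phi=\phi-\phi(x_0)$. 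Then $\|\tilde\phi\|_{L^\infty}\le \operatorname{diam}(\mathbb{T}_{\mathbb{G}})$, which is finite by compactness of the torus and continuity of $d_{cc}^{\mathbb{T}_{\mathbb{G}}}$. Hence $\|\tilde\phi\|_{C_{\mathcal{X}}^{0+1}}\le C$ with $C$ depending only on $\mathbb{G}$.

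Before this can work, I need $\mu_\varepsilon$ to have total mass one, so that subtracting the constant changes nothing. I will check that $\mu_\varepsilon$ is a probability density on $\mathbb{T}_{\mathbb{G}}$, i.e. $\mu_\varepsilon\ge0$ and $\int_{[0,1)^n}\mu_\varepsilon(x)\,dx=1$.
\begin{itemize}
\item Nonnegativity follows from $\psi\ge0$ and $\mu\ge0$.
\item For the mass, apply Fubini to the pairing:
\[
\int_{[0,1)^n}\mu_\varepsilon(x)\,dx=\Big\langle\mu,\int_{[0,1)^n}\sum_{k}\psi_\varepsilon(k\circ x\circ y^{-1})\,dx\Big\rangle .
\]
\item The inner integral unfolds to $\int_{\mathbb{R}^n}\psi_\varepsilon(z\circ y^{-1})\,dz=1$. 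This uses that $[0,1)^n$ is a fundamental domain for the left action of $\mathbb{Z}^n$ and that Lebesgue measure is bi-invariant on the nilpotent group $\mathbb{G}$.
\end{itemize}
Consequently $\int\phi\,d(\mu_\varepsilon-\mu)=\int\tilde\phi\,d(\mu_\varepsilon-\mu)$. Taking the supremum over such $\phi$ gives
\[
d_1(\mu_\varepsilon,\mu)\le C\|\mu_\varepsilon-\mu\|_{C_{\mathcal{X}}^{-(0+1)}(\mathbb{T}_{\mathbb{G}})}.
\]

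It remains to check that $\mu\in C_{\mathcal{X}}^{-(0+1)}(\mathbb{T}_{\mathbb{G}})$. This is immediate, since $|\langle\mu,\phi\rangle|\le\|\phi\|_{L^\infty}\le\|\phi\|_{C_{\mathcal{X}}^{0+1}}$. Proposition \ref{Prop_Mollifiers_dual space}\ref{mollifiers_C^-k+alpha} then gives $\|\mu_\varepsilon-\mu\|_{C_{\mathcal{X}}^{-(0+1)}}\to0$, and hence $d_1(\mu_\varepsilon,\mu)\to0$. Since $d_1$ metrizes weak convergence, this is exactly the claim $\mu_\varepsilon\to\mu$ in $\mathcal{P}(\mathbb{T}_{\mathbb{G}})$.

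The main obstacle is bookkeeping about the definition of the dual norm $C_{\mathcal{X}}^{-(k+\alpha)}$ in \eqref{dual Holder space_Def}. If it is the supremum over $\|\phi\|_{C_{\mathcal{X}}^{0+1}}\le1$, the argument above goes through. If it is defined differently (for example on $[0,1)^n$ with compactly supported test functions), I would instead repeat the convergence argument directly. In that case I would move the mollifier onto the test function via Fubini, $\langle\mu_\varepsilon,\phi\rangle=\langle\mu,\check\phi_\varepsilon\rangle$, where $\check\phi_\varepsilon$ is the periodized mollification with the reflected kernel. Then $|\check\phi_\varepsilon-\phi|\le\varepsilon\,[\phi]_{C_{\mathcal{X}}^{0+1}}$, because $\psi_\varepsilon$ is supported in the $\|\cdot\|_{\mathbb{G}}$-ball of radius $\varepsilon$, which is comparable to a $d_{cc}$-ball. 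This yields $d_1(\mu_\varepsilon,\mu)\le C\varepsilon$ uniformly.
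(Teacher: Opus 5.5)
Your main argument is correct, but it is organized differently from the paper's. The paper does not quote the conclusion of Proposition \ref{Prop_Mollifiers_dual space}. Instead it reruns the duality identity $\langle\mu_\varepsilon,g\rangle=\langle\mu,g_\varepsilon\rangle$ from that proof with two specific test functions. Taking $g=1_A$ ($A$ Borel) shows $\mu_\varepsilon\in\mathcal{P}(\mathbb{T}_{\mathbb{G}})$. Taking any $g$ with $[g]_{C_{\mathcal{X}}^{0+1}(\mathbb{T}_{\mathbb{G}})}\le 1$ gives $|\langle\mu_\varepsilon-\mu,g\rangle|=|\langle\mu,g_\varepsilon-g\rangle|\le C(\varepsilon)\to0$ uniformly in $g$, because the estimate of $g_\varepsilon-g$ involves only the Lipschitz seminorm.

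You instead take the norm convergence $\|\mu_\varepsilon-\mu\|_{C_{\mathcal{X}}^{-(0+1)}(\mathbb{T}_{\mathbb{G}})}\to0$ as given and pass to $d_1$ by subtracting $\phi(x_0)$. This forces two extra checks: that $\mu_\varepsilon$ has unit mass, and that $\mathbb{T}_{\mathbb{G}}$ has finite diameter. Both checks are fine, and your unfolding computation of the mass is the same tiling argument used in the proof of Proposition \ref{Prop_Mollifiers_Lip.}. Your route is more modular: it actually shows $d_1(m,m')\le(1+\operatorname{diam}\mathbb{T}_{\mathbb{G}})\|m-m'\|_{C_{\mathcal{X}}^{-(0+1)}(\mathbb{T}_{\mathbb{G}})}$ for all $m,m'\in\mathcal{P}(\mathbb{T}_{\mathbb{G}})$. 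The paper's route is shorter and needs no normalization in the $d_1$ step.

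One correction to your fallback, which is essentially the paper's argument: the bound $|\check\phi_\varepsilon-\phi|\le\varepsilon[\phi]_{C_{\mathcal{X}}^{0+1}}$ fails in a non-commutative group. Here $\check\phi_\varepsilon(y)=\int\psi_\varepsilon(z)\phi(z\circ y)\,dz$. Because $d_{cc}$ is left-invariant, $d_{cc}(z\circ y,y)=\|y^{-1}\circ z\circ y\|$, which is the norm of a conjugate of $z$, not $\|z\|$. This is why the paper's estimate carries the factor $\sup_{\|z\|_{\mathbb{G}}\le1}\|y^{-1}\circ D_\varepsilon(z)\circ y\|$. For $y$ in the bounded set $[0,1)^n$, which is all the pairing with $\mu$ sees, this factor tends to $0$, but only like $\varepsilon^{1/r}$ (already $\varepsilon^{1/2}$ in the Heisenberg group). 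So the fallback yields $d_1(\mu_\varepsilon,\mu)\le C\varepsilon^{1/r}$, not $C\varepsilon$, which still suffices for the corollary.
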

Now, we establish the well-posedness of the solution to the equation \eqref{LHJB}, and prove the Schauder estimates in the scale of non-isotropic H\"{o}lder spaces.
\begin{Thm}\label{Thm_LHJB wellposed regularity}
Let $k\in\mathbb{Z}_+$ and $\alpha\in(0,1)$. Assume $b(t,x)$ and $f(t,x)$ are continuous on $[0,T]\times\mathbb{T}_{\mathbb{G}}$, satisfying $b\in B\left([0,T];C_{\mathcal{X}}^{k-1+\alpha}\left(\mathbb{T}_{\mathbb{G}};\mathbb{R}^{n_1}\right)\right)$, $f\in B\left([0,T];C_{\mathcal{X}}^{k-1+\alpha}\left(\mathbb{T}_{\mathbb{G}}\right)\right)$ and $z_T(x)\in C_{\mathcal{X}}^{k+\alpha}\left(\mathbb{T}_{\mathbb{G}}\right)$. Then equation \eqref{LHJB} has a unique solution $z$ which belongs to $C_{\mathcal{X}}^{1,2} \left([0,T) \times \mathbb{T}_{\mathbb{G}} \right) \cap C([0, T] \times \mathbb{T}_{\mathbb{G}})$ and satisfies
\begin{equation}\label{LHJB_USE}
\sup_{t \in[0, T]}\|z(t, \cdot)\|_{C_{\mathcal{X}}^{k+\alpha}\left(\mathbb{T}_{\mathbb{G}}\right)} \leq C\left(\left\|z_T\right\|_{C_{\mathcal{X}}^{k+\alpha}\left(\mathbb{T}_{\mathbb{G}}\right)}+\sup_{t\in(0,T)}\|f(t, \cdot)\|_{C_{\mathcal{X}}^{k-1+\alpha}\left(\mathbb{T}_{\mathbb{G}}\right)}\right).
\end{equation}
In addition, for any constant $\epsilon \in (0,T)$, $z$ satisfies
\begin{equation}\label{LHJB_HSE 1}
\sup_{\substack{t \neq t'\\t,t' \in [0,T-\epsilon]}} \frac{\left\|z\left(t^{\prime}, \cdot\right)-z(t, \cdot)\right\|_{C_{\mathcal{X}}^{k+\alpha}\left(\mathbb{T}_{\mathbb{G}}\right)}}{\left|t^{\prime}-t\right|^{\frac{1}{2}}} \leq C\left(\epsilon^{-\frac{1}{2}}\left\|z_T\right\|_{C_{\mathcal{X}}^{k+\alpha}\left(\mathbb{T}_{\mathbb{G}}\right)}+\sup_{t \in(0, T)}\|f(t, \cdot)\|_{C_{\mathcal{X}}^{k-1+\alpha}\left(\mathbb{T}_{\mathbb{G}}\right)}\right).
\end{equation}
Moreover, if $z_T(x)\in C_{\mathcal{X}}^{k+1+\alpha}\left(\mathbb{T}_{\mathbb{G}}\right)$, then $z$ satisfies
\begin{equation}\label{LHJB_HSE 2}
\sup _{\substack{t \neq t'\\t,t' \in [0,T]}} \frac{\left\|z\left(t^{\prime}, \cdot\right)-z(t, \cdot)\right\|_{C_{\mathcal{X}}^{k+\alpha}\left(\mathbb{T}_{\mathbb{G}}\right)}}{\left|t^{\prime}-t\right|^{\frac{1}{2}}} \leq C\left(\left\|z_T\right\|_{C_{\mathcal{X}}^{k+1+\alpha}\left(\mathbb{T}_{\mathbb{G}}\right)}+\sup_{t \in(0, T)}\|f(t, \cdot)\|_{C_{\mathcal{X}}^{k-1+\alpha}\left(\mathbb{T}_{\mathbb{G}}\right)}\right).
\end{equation}
Here, the constants $C>0$ depend on $\mathbb{G}$, $\alpha$, $k$, $T$ and $\sup_{t\in(0,T)}\|b(t,\cdot)\|_{C_{\mathcal{X}}^{k-1+\alpha}\left(\mathbb{T}_{\mathbb{G}};\mathbb{R}^{n_1}\right)}$ only.
\end{Thm}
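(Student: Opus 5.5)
The plan is to deduce everything from the global a-priori estimates of \cite{25JWY} for the Cauchy problem on the whole Carnot group $\mathbb{G}$, using the periodic structure of $\mathbb{T}_{\mathbb{G}}$ and the mollifiers of Proposition \ref{Prop_Mollifiers_Holder}. First I reverse time, $\tilde z(t,x)=z(T-t,x)$, which turns \eqref{LHJB} into the forward problem $\partial_t\tilde z-\Delta_{\mathcal{X}}\tilde z+\tilde b\cdot D_{\mathcal{X}}\tilde z=\tilde f$ with $\tilde z(0)=z_T$. I then lift $b,f,z_T$ to $1_{\mathbb{G}}$-periodic functions on $\mathbb{R}^n$; they are bounded, so the growth condition \eqref{f_growth cond.} holds with $\mu=0$. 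On $\mathbb{G}$ I use the results of \cite{25JWY}, which are designed for coefficients that are merely bounded measurable in time and $C^{k-1+\alpha}_{\mathcal{X}}$ in space. They give existence of a solution in $C^{1,2}_{\mathcal{X}}$ together with the a-priori bound \eqref{LHJB_USE} and the time-H\"older bounds \eqref{LHJB_HSE 1}--\eqref{LHJB_HSE 2} on $\mathbb{R}^n$. If \cite{25JWY} only provides a-priori estimates for smooth data, I will first replace $b,f$ by the regularizations $b_\varepsilon,f_\varepsilon$ of Proposition \ref{Prop_Mollifiers_Holder}.

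For the regularized problem I proceed as follows. Parts \ref{mollifiers_C^k+alpha} and \ref{mollifiers_periodicity} show that $b_\varepsilon,f_\varepsilon$ remain periodic, smooth, and bounded in $B([0,T];C^{k-1+\alpha}_{\mathcal{X}})$ uniformly in $\varepsilon$. Classical theory, via the fundamental solution $\Gamma_0$ and a Duhamel/Picard iteration treating $b_\varepsilon\cdot D_{\mathcal{X}}$ as a perturbation, then gives a smooth bounded solution $z^\varepsilon$. Next I use uniqueness of bounded solutions on $\mathbb{G}$, which follows from a maximum principle for the Cauchy problem with bounded solutions. Since the data are invariant under left translations $x\mapsto k\circ x$, $k\in\mathbb{Z}^n$, and the $X_i$ are left invariant, the function $z^\varepsilon(t,k\circ x)$ solves the same problem. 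Uniqueness therefore forces $z^\varepsilon$ to be periodic, so it descends to $\mathbb{T}_{\mathbb{G}}$. The estimates of \cite{25JWY} hold uniformly in $\varepsilon$ with constants depending only on $\sup_t\|b\|_{C^{k-1+\alpha}_{\mathcal{X}}}$. By Arzel\`a--Ascoli on the compact torus, using the uniform bound on $\|z^\varepsilon(t)\|_{C^{k+\alpha}_{\mathcal{X}}}$ and the $t^{1/2}$-H\"older bound in time, I extract a subsequence converging in $C^{0,2}_{\mathcal{X}}$ locally in $[0,T)$. Part \ref{mollifiers_f_varepsilon->f} gives the uniform convergence $b_\varepsilon\to b$ and $f_\varepsilon\to f$, so I can pass to the limit in the integral form of the equation and identify $\partial_t z$. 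This yields a solution in $C^{1,2}_{\mathcal{X}}([0,T)\times\mathbb{T}_{\mathbb{G}})\cap C([0,T]\times\mathbb{T}_{\mathbb{G}})$, and all three estimates survive by lower semicontinuity of the norms.

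Uniqueness on the torus comes from the maximum principle on the compact manifold. The difference $w$ of two solutions solves the homogeneous equation with $w(T)=0$. Applying the comparison argument to $e^{\lambda t}w\pm\delta(T-t)$, at an interior maximum we have $D_{\mathcal{X}}w=0$ and $\Delta_{\mathcal{X}}w\le0$, which gives $w\equiv0$. Continuity up to $t=T$ is what makes this work.

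The main obstacle is the $\epsilon^{-1/2}$ weighting in \eqref{LHJB_HSE 1}. This requires showing that the semigroup part $\int\Gamma(T-t,\cdot)z_T$ satisfies $\|\partial_t z(t)\|_{C^{k-2+\alpha}}$-type smoothing, or more precisely $\|z(t')-z(t)\|_{C^{k+\alpha}}\le C|t-t'|^{1/2}(T-t)^{-1/2}\|z_T\|_{C^{k+\alpha}}$. I plan to obtain this from the Gaussian bounds \eqref{Gamma_0 Lie derivatives GE} on Lie derivatives of $\Gamma_0$: one extra horizontal derivative costs a factor $(T-t)^{-1/2}$. For the forcing and drift terms I plan to use the singular-integral estimates of \cite{25JWY}, with the drift term absorbed by a Gronwall argument on short time intervals.
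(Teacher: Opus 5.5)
There is a genuine gap, and it lies exactly where the paper does its main work. The estimates you import from \cite{25JWY} (Proposition \ref{Prop_LHJB in R^n USE&HSE}) are \emph{a priori}. They apply only to a solution already known to lie in $B([0,T];C^{k+\alpha}_{\mathcal{X}})$, uniformly up to the initial time of the forward problem. Your proposal never verifies this for $z^\varepsilon$.

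Mollifying $b,f$ does not address it. The limitation in \cite{25JWY} is not smoothness of the data, since rough-in-time coefficients are allowed there; it is the qualitative regularity of the solution. Nor is ``classical theory gives a smooth bounded solution'' justified:
\begin{itemize}
\item hypoellipticity gives smoothness only in the open set;
\item you do not mollify $z_T$, so $z^\varepsilon$ is not smooth up to the initial time, and for $k=1$ even the existence result of Corollary \ref{Cor_LHJB existence and uniqueness}, which needs $g\in C^{2+\alpha}_{\mathcal{X}}$, is unavailable;
\item global $C^{k+\alpha}_{\mathcal{X}}$ control up to $t=0$ for a degenerate operator is not classical.
\end{itemize}

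The paper fills this with Proposition \ref{Prop_LHJB in R^n SE}. It mollifies $b$, $f$ \emph{and} $z_T$, and uses that $\phi_\varepsilon$ also smooths in time, so the coefficients become H\"older in $t$. It then proves a Schauder estimate in the parabolic spaces $C^{\frac{\alpha}{2},k+\alpha}_{\mathcal{X}}([0,T]\times\mathbb{R}^n)$. This is done by applying Lemmas \ref{Lem_HCSIO} and \ref{Lem_HCFIO} on $([0,T]\times B_\delta(x_0),d_p)$ to the kernels produced by Lemma \ref{Lem_X_i X_J T f}. The base level $C^{\frac{\alpha}{2},1+\alpha}_{\mathcal{X}}$ is reached by moving a derivative onto $\Gamma_0$ through the right-invariant fields in \eqref{convolution on homogeneous groups_property}--\eqref{X_i^R=sum X_J_j,l}. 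The result is $w^\varepsilon\in C^{\frac{\alpha}{2},k+\alpha}_{\mathcal{X}}\subset B([0,T];C^{k+\alpha}_{\mathcal{X}})$ with $\varepsilon$-dependent constants. Only then do the $\varepsilon$-uniform bounds of \cite{25JWY} become available.

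A Duhamel/Picard iteration could in principle replace this step. It would have to be run in $B([0,T];C^{k+\alpha}_{\mathcal{X}})$ using the commutation formulas of Lemma \ref{Lem_X_i X_J T f}, and you would then have to show the mild solution is a $C^{1,2}_{\mathcal{X}}$ solution. The proposal does neither.

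Two further points. First, for $k=1$ your compactness step fails as well. Uniform $C^{1+\alpha}_{\mathcal{X}}$ bounds do not give convergence in $C^{0,2}_{\mathcal{X}}$, so nothing shows the limit lies in $C^{1,2}_{\mathcal{X}}$. The paper avoids this by obtaining the $C^{1,2}_{\mathcal{X}}$ solution first and independently, in Proposition \ref{Prop_LHJB well-posedness}. That result uses the Levi parametrix for $H$ on $\mathbb{G}$, the identity $\Gamma(t,k\circ x;s,y)=\Gamma(t,x;s,k^{-1}\circ y)$, and the maximum principle on $(0,T)\times\mathbb{T}_{\mathbb{G}}$. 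The approximants are then used only to transfer bounds, with their pointwise limit identified with that solution through the representation formula. Your periodicity-by-uniqueness argument on $\mathbb{G}$ and your comparison argument on the torus are reasonable substitutes for that part.

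Second, \eqref{LHJB_HSE 1} is not the main obstacle. Once the qualitative regularity is known, it is exactly Proposition \ref{Prop_LHJB in R^n USE&HSE}(2) on $[\epsilon,T]$ after the substitution $z(t)=w(T-t)$. The separate Gaussian/Gronwall derivation you plan is therefore unnecessary.
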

The next theorem states that when $z_T$ is only a $d_{cc}$-Lipschitz function in equation \eqref{LHJB}, the solution $z$ satisfies the H\"{o}lder continuity in $t$ and $d_{cc}$-Lipschitz continuity in $x$.
\begin{Thm}\label{Thm_LHJB Lipschitz}
Assume $b(t,x)$ and $f(t,x)$ are continuous on $[0,T]\times\mathbb{T}_{\mathbb{G}}$, satisfying $b\in B\left([0,T];C_{\mathcal{X}}^{\alpha}\left(\mathbb{T}_{\mathbb{G}};\mathbb{R}^{n_1}\right)\right)$, $f\in B\left([0,T];C_{\mathcal{X}}^{\alpha}\left(\mathbb{T}_{\mathbb{G}}\right)\right)$ and $z_T(x)\in C_{\mathcal{X}}^{0+1}\left(\mathbb{T}_{\mathbb{G}}\right)$. Then there exists a unique solution $z\in C_{\mathcal{X}}^{1,2} \left([0, T) \times \mathbb{T}_{\mathbb{G}} \right) \cap C([0, T] \times \mathbb{T}_{\mathbb{G}})$ to the equation \eqref{LHJB}, satisfying
\begin{align}\label{LHJB_Holder&Lipschitz}
& \sup_{\substack{t \neq t'\\t,t' \in [0,T]}} \frac{\left\|z\left(t^{\prime}, \cdot\right)-z(t, \cdot)\right\|_{L^{\infty}\left(\mathbb{T}_{\mathbb{G}}\right)}}{\left|t^{\prime}-t\right|^{\frac{1}{2}}} +\sup_{t \in [0,T]}\left[z(t,\cdot)\right]_{C_{\mathcal{X}}^{0+1}\left(\mathbb{T}_{\mathbb{G}}\right)}\notag \\
\leq & C\left(\left\|z_T\right\|_{C_{\mathcal{X}}^{0+1}\left(\mathbb{T}_{\mathbb{G}}\right)} +\|f\|_{L^{\infty}\left((0,T)\times\mathbb{T}_{\mathbb{G}}\right)}\right),
\end{align}
where the constant $C>0$ depends on $\mathbb{G}$, $T$ and $\|b\|_{L^{\infty}\left((0,T)\times\mathbb{T}_{\mathbb{G}}\right)}$ only.
\end{Thm}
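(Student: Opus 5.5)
We prove Theorem \ref{Thm_LHJB Lipschitz} by approximation: mollify the terminal datum, apply Theorem \ref{Thm_LHJB wellposed regularity} to the smooth problems, derive bounds that depend only on the Lipschitz norm of $z_T$, and pass to the limit. For $\varepsilon\in(0,1]$ let $z_T^\varepsilon:=(z_T)_\varepsilon$ be the mollification from Proposition \ref{Prop_Mollifiers_Lip.}. Then $z_T^\varepsilon\in C^\infty(\mathbb{T}_{\mathbb{G}})\subset C_{\mathcal{X}}^{2+\alpha}(\mathbb{T}_{\mathbb{G}})$, $z_T^\varepsilon\to z_T$ uniformly, and $[z_T^\varepsilon]_{C_{\mathcal{X}}^{0+1}}\le [z_T]_{C_{\mathcal{X}}^{0+1}}$. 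Since $\|z_T^\varepsilon\|_{L^\infty}\le\|z_T\|_{L^\infty}$ (the kernel is a probability density), we also have $\|z_T^\varepsilon\|_{C_{\mathcal{X}}^{0+1}}\le \|z_T\|_{C_{\mathcal{X}}^{0+1}}$.

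Theorem \ref{Thm_LHJB wellposed regularity} with $k=1$ (so the hypotheses $b,f\in B([0,T];C_{\mathcal{X}}^{\alpha})$ match) gives a unique solution $z^\varepsilon\in C_{\mathcal{X}}^{1,2}([0,T)\times\mathbb{T}_{\mathbb{G}})\cap C([0,T]\times\mathbb{T}_{\mathbb{G}})$. The estimates it provides involve $\|z_T^\varepsilon\|_{C^{1+\alpha}_{\mathcal{X}}}$, which blows up as $\varepsilon\to0$. We therefore derive uniform bounds directly, in three steps.

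\textbf{(a) Sup bound.} By the maximum principle applied to $e^{\lambda t}z^\varepsilon\pm(\cdot)$, we get $\|z^\varepsilon\|_{L^\infty}\le \|z_T\|_{L^\infty}+T\|f\|_{L^\infty}$. Alternatively, this follows from the stochastic representation $z^\varepsilon(t,x)=\mathbb{E}[z_T^\varepsilon(Z_T)+\int_t^T f(s,Z_s)ds]$ along \eqref{periodic Carnot diffusion}.

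\textbf{(b) Lipschitz bound in $x$.} Write the equation as a perturbation of the heat operator, using the fundamental solution $\Gamma_0$ periodized on $\mathbb{T}_{\mathbb{G}}$:
$$z^\varepsilon(t)=P_{T-t}z_T^\varepsilon+\int_t^T P_{s-t}\bigl(f(s)-b(s)\cdot D_{\mathcal{X}}z^\varepsilon(s)\bigr)ds,$$
where $P_\tau$ denotes convolution with $\Gamma_0(\tau,\cdot)$. Since $P_\tau$ commutes with left translations, $[P_\tau g]_{C^{0+1}_{\mathcal{X}}}\le [g]_{C^{0+1}_{\mathcal{X}}}$. Applying $D_{\mathcal{X}}$ and using the Gaussian bounds \eqref{Gamma_0 Lie derivatives GE}, namely $\|D_{\mathcal{X}}P_\tau g\|_\infty\le c\tau^{-1/2}\|g\|_\infty$, we obtain for $u(t):=\|D_{\mathcal{X}}z^\varepsilon(t)\|_{L^\infty}$
$$u(t)\le [z_T]_{C^{0+1}_{\mathcal{X}}}+c\int_t^T (s-t)^{-1/2}\bigl(\|f\|_\infty+\|b\|_\infty u(s)\bigr)ds.$$
A singular Gronwall lemma then gives $\sup_t u(t)\le C(\|z_T\|_{C^{0+1}_{\mathcal{X}}}+\|f\|_\infty)$, with $C$ depending only on $\mathbb{G}$, $T$ and $\|b\|_\infty$. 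One point needs care here: we must have $u(t)<\infty$ a priori, which holds because $z^\varepsilon$ is $C^{1+\alpha}_{\mathcal{X}}$ uniformly in $t$ by \eqref{LHJB_USE}. We also need the identity $[g]_{C^{0+1}_{\mathcal{X}}}=\|D_{\mathcal{X}}g\|_{L^\infty}$ for $C^1_{\mathcal{X}}$ functions, which holds since $d_{cc}$ is defined through horizontal curves.

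\textbf{(c) $\tfrac12$-Hölder bound in $t$.} For $t<t'$, apply the same Duhamel formula on $[t,t']$:
$$z^\varepsilon(t)-z^\varepsilon(t')=\bigl(P_{t'-t}z^\varepsilon(t')-z^\varepsilon(t')\bigr)+\int_t^{t'}P_{s-t}\bigl(f-b\cdot D_{\mathcal{X}}z^\varepsilon\bigr)ds.$$
The integral is bounded by $|t'-t|\,(\|f\|_\infty+\|b\|_\infty\sup u)$. For the first term we use $|P_\tau g-g|\le [g]_{C^{0+1}_{\mathcal{X}}}\int\Gamma_0(\tau,y)\,d_{cc}(0,y)\,dy\le c\tau^{1/2}[g]_{C^{0+1}_{\mathcal{X}}}$, which follows from the scaling of $\Gamma_0$ under the dilations $D_\lambda$. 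Combined with (b), and using $|t'-t|\le T^{1/2}|t'-t|^{1/2}$, this yields \eqref{LHJB_Holder&Lipschitz} for $z^\varepsilon$ with constants uniform in $\varepsilon$.

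\textbf{Passage to the limit.} By linearity and (a), $\|z^\varepsilon-z^{\varepsilon'}\|_\infty\le\|z_T^\varepsilon-z_T^{\varepsilon'}\|_\infty\to0$, so $z^\varepsilon\to z$ uniformly on $[0,T]\times\mathbb{T}_{\mathbb{G}}$. Hence $z$ is continuous, attains the terminal datum $z_T$, and inherits \eqref{LHJB_Holder&Lipschitz}. It remains to show $z\in C^{1,2}_{\mathcal{X}}$ on $[0,T)$ and that $z$ solves \eqref{LHJB}. On $[0,T-\epsilon]$ the difference $z^\varepsilon-z^{\varepsilon'}$ solves \eqref{LHJB} with $f=0$. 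Applying Theorem \ref{Thm_LHJB wellposed regularity} on $[0,T-\epsilon/2]$ with terminal datum $(z^\varepsilon-z^{\varepsilon'})(T-\epsilon/2)$ requires a $C^{1+\alpha}_{\mathcal{X}}$ bound on that datum in terms of its sup norm. This interior smoothing estimate, $\|w(T-\epsilon/2)\|_{C^{1+\alpha}_{\mathcal{X}}}\le C_\epsilon\|w_T\|_\infty$, is the main obstacle. I would derive it from the Duhamel formula via the kernel bounds $\|D_{\mathcal{X}}P_\tau\|_{L^\infty\to C^\alpha_{\mathcal{X}}}\le c\tau^{-(1+\alpha)/2}$ and a Gronwall-type bootstrap. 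Given it, $z^\varepsilon$ is Cauchy in $C^{1+\alpha}_{\mathcal{X}}$ uniformly on $[0,T-\epsilon]$, and the equation itself makes $\partial_t z^\varepsilon+\Delta_{\mathcal{X}}z^\varepsilon$ Cauchy. Passing to the limit in the Duhamel formula then shows that $z$ is the classical solution, and interior Schauder estimates give $X_IX_Jz$ and $\partial_t z$ continuous.

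Uniqueness follows from the maximum principle: the difference of two solutions in the stated class solves the homogeneous problem with zero terminal datum.
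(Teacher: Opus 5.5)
\textbf{Gap 1: existence of the classical limit.} You build $z$ as the uniform limit of $z^\varepsilon$ and then need it to lie in $C^{1,2}_{\mathcal{X}}([0,T)\times\mathbb{T}_{\mathbb{G}})$ and solve \eqref{LHJB}. The interior smoothing estimate $\|w(T-\epsilon/2)\|_{C^{1+\alpha}_{\mathcal{X}}}\le C_\epsilon\|w_T\|_{L^\infty}$, which you call the main obstacle, is only sketched. As written, therefore, existence is not proved.

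This step is unnecessary. Since $z_T$ is $d_{cc}$-Lipschitz, it is continuous. Proposition \ref{Prop_LHJB well-posedness}, applied after the change $t\mapsto T-t$, already gives a unique solution $z\in C^{1,2}_{\mathcal{X}}([0,T)\times\mathbb{T}_{\mathbb{G}})\cap C([0,T]\times\mathbb{T}_{\mathbb{G}})$ for continuous terminal data and $b,f\in B([0,T];C^{\alpha}_{\mathcal{X}})$. Uniqueness comes from the same proposition.

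Once $z$ exists, $z^\varepsilon-z$ is a classical solution of the homogeneous problem with terminal datum $z_T^\varepsilon-z_T$. The maximum principle then gives $\|z^\varepsilon-z\|_{L^\infty}\le\|z_T^\varepsilon-z_T\|_{L^\infty}\to0$. Your uniform bounds then pass to the limit pointwise. This is the paper's structure; it identifies the limit through the representation formula of Proposition \ref{Prop_LHJB well-posedness} and dominated convergence.

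\textbf{Gap 2: the Lipschitz bound in step (b).} You claim $[P_\tau g]_{C^{0+1}_{\mathcal{X}}}\le[g]_{C^{0+1}_{\mathcal{X}}}$ ``since $P_\tau$ commutes with left translations''. This does not follow. Writing $P_\tau g(x)=\int\Gamma_0(\tau,w)\,g(x\circ w^{-1})\,dw$ shows that $P_\tau$ averages \emph{right} translations, while $d_{cc}$ is only left-invariant. Correspondingly, $X_i$ does not commute with $P_\tau$ when $\mathbb{G}$ is non-abelian, since $X_iP_\tau g$ involves $X_i\Gamma_0$ whereas $P_\tau X_ig$ involves $X_i^R\Gamma_0$. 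By contrast, the mollifier of Proposition \ref{Prop_Mollifiers_Lip.} is built from $\psi_\varepsilon(x\circ y^{-1})$ precisely so that it averages left translations and keeps constant $1$.

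What is true is $\|D_{\mathcal{X}}P_\tau g\|_{L^\infty}\le C\|D_{\mathcal{X}}g\|_{L^\infty}$ with $C=C(\mathbb{G})$. This follows from the commutation formula of \cite[Theorem 1.2]{25JWY}, as used in part (3) of the proof of Proposition \ref{Prop_Mollifiers_Holder}. Since the final constant may depend on $\mathbb{G}$, your singular Gronwall argument survives this correction. Your bound $|P_\tau g-g|\le c\,\tau^{1/2}[g]_{C^{0+1}_{\mathcal{X}}}$ in step (c) is correct, because $d_{cc}(x\circ w^{-1},x)=\|w\|$.

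\textbf{Relation to the paper.} Your steps (a)--(c) re-derive Proposition \ref{Prop_LHJB in R^n UE&HE}. The paper only checks $\|D_{\mathcal{X}}z_T^\varepsilon\|_{L^\infty}\le C[z_T]_{C^{0+1}_{\mathcal{X}}}$. It then reads off the uniform Lipschitz and $\tfrac12$-H\"older bounds from \eqref{LHJB in R^n_prior C^1 estimate} and \eqref{LHJB in R^n_HSE_L^infty}.
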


Finally, we provide the results of existence, uniqueness and regularity of the weak solution to the degenerate FPK equation \eqref{general FPK}. Before this, we refer to the idea of duality in \cite{21Ri} to state a suitable definition of the distributional solution.
\begin{Def}\label{Def_general FPK weak sol.}
Let $k\in\mathbb{Z}_+$ and $\alpha\in(0,1)$. Assume $b \in C_{\mathcal{X}}^{\frac{\alpha}{2},k-1+\alpha}\left([0,T]\times\mathbb{T}_{\mathbb{G}};\mathbb{R}^{n_1}\right)$, $\upsilon \in L^1\left([0,T];C_{\mathcal{X}}^{-k}\left([0,1)^n\right)\cap C_{\mathcal{X}}^{-(k+\alpha)}(\mathbb{T}_{\mathbb{G}})\right)$ and $\rho_0 \in C_{\mathcal{X}}^{-k}\left([0,1)^n\right)\cap C_{\mathcal{X}}^{-(k+\alpha)}(\mathbb{T}_{\mathbb{G}})$. For a given function $\rho \in C\left([0,T];C_\mathcal{X}^{-(k+\alpha)}(\mathbb{T}_{\mathbb{G}})\right)$, if for all $f \in C\left([0,t]\times\mathbb{T}_{\mathbb{G}}\right)\cap B\left([0,t];C_\mathcal{X}^{k+\alpha}(\mathbb{T}_{\mathbb{G}})\right)$, $\xi \in C_\mathcal{X}^{k+\alpha}(\mathbb{T}_{\mathbb{G}})$ and the solution $z \in C_{\mathcal{X}}^{1,2}\left([0,t)\times\mathbb{T}_{\mathbb{G}}\right) \cap C\left([0,t]\times\mathbb{T}_{\mathbb{G}}\right)$ to the linear equation as follows
\begin{equation}\label{dual FPK}
\begin{cases}
-\partial_tz-\Delta_\mathcal{X}z+b\cdot D_{\mathcal{X}}z=f, & \text{ in } [0,t) \times \mathbb{T}_{\mathbb{G}}, \\
z(t)=\xi, & \text{ in } \mathbb{T}_{\mathbb{G}},
\end{cases}
\end{equation}
the following weak formulation holds true:
\begin{equation}\label{general FPK_weak formulation}
\left\langle\rho(t),\xi\right\rangle +\int_{0}^{t} \left\langle \rho(s),f(s,\cdot) \right\rangle ds =\left\langle\rho_0,z(0,\cdot)\right\rangle +\int_{0}^{t}\left\langle \upsilon(s),z(s,\cdot)\right\rangle ds,
\end{equation}
then we say that $\rho$ is a weak solution to equation \eqref{general FPK}. Here, $C_{\mathcal{X}}^{-k}\left([0,1)^n\right)$ and $C_{\mathcal{X}}^{-(k+\alpha)}(\mathbb{T}_{\mathbb{G}})$ are dual spaces of $C_{\mathcal{X}}^{k}\left([0,1)^n\right)$ and $C_{\mathcal{X}}^{k+\alpha}(\mathbb{T}_{\mathbb{G}})$ respectively, and $\left\langle \cdot,\cdot \right\rangle$ denotes the duality between $C_\mathcal{X}^{-(k+\alpha)}(\mathbb{T}_{\mathbb{G}})$ and $C_\mathcal{X}^{k+\alpha}(\mathbb{T}_{\mathbb{G}})$. See \eqref{dual Holder space_Def} below for the definition of dual spaces.
\end{Def}
\begin{Thm}\label{Thm_general FPK wellposed regularity}
Let $k\in\mathbb{Z}_+$ and $\alpha\in(0,1)$. Assume $b \in C_{\mathcal{X}}^{\frac{\alpha}{2},k-1+\alpha}\left([0,T]\times\mathbb{T}_{\mathbb{G}};\mathbb{R}^{n_1}\right)$, $\upsilon \in L^1\left([0,T];C_{\mathcal{X}}^{-k}\left([0,1)^n\right)\cap C_{\mathcal{X}}^{-(k+\alpha)}(\mathbb{T}_{\mathbb{G}})\right)$ and $\rho_0 \in C_{\mathcal{X}}^{-k}\left([0,1)^n\right)\cap C_{\mathcal{X}}^{-(k+\alpha)}(\mathbb{T}_{\mathbb{G}})$. Then there exists a unique weak solution $\rho$ in the sense of Definition \ref{Def_general FPK weak sol.} to the equation \eqref{general FPK}, satisfying
\begin{equation}\label{general FPK sol. regularity}
\sup_{t \in [0,T]}\left\|\rho(t)\right\|_{C_{\mathcal{X}}^{-(k+\alpha)}(\mathbb{T}_{\mathbb{G}})} \leq C\left(\left\|\rho_0\right\|_{C_{\mathcal{X}}^{-(k+\alpha)}(\mathbb{T}_{\mathbb{G}})} + \left\|\upsilon\right\|_{L^1\left([0,T];C_{\mathcal{X}}^{-(k+\alpha)}(\mathbb{T}_{\mathbb{G}})\right)}\right),
\end{equation}
where the constant $C>0$ depends on $\mathbb{G}$, $\alpha$, $k$, $T$ and $\sup_{t\in(0,T)}\|b(t,\cdot)\|_{C_{\mathcal{X}}^{k-1+\alpha}\left(\mathbb{T}_{\mathbb{G}};\mathbb{R}^{n_1}\right)}$ only.

Moreover, the solution is stable: if $b^{i} \to b$ in $C_{\mathcal{X}}^{\frac{\alpha}{2},k-1+\alpha}\left([0,T] \times \mathbb{T}_{\mathbb{G}};\mathbb{R}^{n_1}\right)$, $\upsilon^{i} \to \upsilon$ in $L^1\left([0,T];C_{\mathcal{X}}^{-(k+\alpha)}(\mathbb{T}_{\mathbb{G}})\right)$ and $\rho_{0}^{i} \to \rho_{0}$ in $C_{\mathcal{X}}^{-(k+\alpha)}(\mathbb{T}_{\mathbb{G}})$ as $i\to +\infty$, with $\upsilon^{i}\in L^1\left([0,T];C_{\mathcal{X}}^{-k}\left([0,1)^n\right)\right)$ and $\rho_{0}^{i}\in C_{\mathcal{X}}^{-k}\left([0,1)^n\right)$, then, calling $\rho^{i}$ and $\rho$ the solutions related to $\left(\rho_{0}^{i},b^{i},\upsilon^{i}\right)$ and $\left(\rho_{0},b,\upsilon\right)$ respectively, we have $\rho^{i} \to \rho$ in $C\left([0,T];C_{\mathcal{X}}^{-(k+\alpha)}(\mathbb{T}_{\mathbb{G}})\right)$ as $i\to +\infty$.
\end{Thm}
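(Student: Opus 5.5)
The plan is to argue by duality: uniqueness and the estimate come from the weak formulation \eqref{general FPK_weak formulation} combined with Theorem \ref{Thm_LHJB wellposed regularity}, and existence comes from approximation by smooth data built with the mollifiers of Proposition \ref{Prop_Mollifiers_Holder} and Proposition \ref{Prop_Mollifiers_dual space}.

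\textbf{A priori estimate and uniqueness.} Fix $t\in(0,T]$, take $f=0$ and an arbitrary $\xi\in C_{\mathcal{X}}^{k+\alpha}(\mathbb{T}_{\mathbb{G}})$, and let $z$ solve \eqref{dual FPK}. Since $b\in B([0,t];C_{\mathcal{X}}^{k-1+\alpha})$, Theorem \ref{Thm_LHJB wellposed regularity} (estimate \eqref{LHJB_USE}) gives $\sup_{s\le t}\|z(s,\cdot)\|_{C_{\mathcal{X}}^{k+\alpha}}\le C\|\xi\|_{C_{\mathcal{X}}^{k+\alpha}}$, with $C$ uniform in $t\le T$. The weak formulation then yields
$$|\langle\rho(t),\xi\rangle|\le C\|\xi\|_{C_{\mathcal{X}}^{k+\alpha}}\Big(\|\rho_0\|_{C_{\mathcal{X}}^{-(k+\alpha)}}+\|\upsilon\|_{L^1([0,T];C_{\mathcal{X}}^{-(k+\alpha)})}\Big).$$
Taking the supremum over $\xi$ gives \eqref{general FPK sol. regularity}. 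If $\rho$ and $\rho'$ are both weak solutions, their difference satisfies the formulation with $\rho_0=0$ and $\upsilon=0$, so it vanishes.

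\textbf{Existence.} Regularize the data: set $\rho_0^\varepsilon=(\rho_0)_\varepsilon$ and $\upsilon^\varepsilon=\upsilon_\varepsilon$ via Proposition \ref{Prop_Mollifiers_dual space}, and take $b^\varepsilon$ smooth via Proposition \ref{Prop_Mollifiers_Holder}, which keeps $\|b^\varepsilon\|$ uniformly bounded in $C^{\frac{\alpha}{2},k-1+\alpha}_{\mathcal{X}}$ and gives $b^\varepsilon\to b$ uniformly. For these smooth data the FPK equation has a classical solution $\rho^\varepsilon$, obtained for instance from the classical theory for hypoelliptic operators with smooth coefficients on the compact torus, or by a fixed-point argument on the fundamental solution. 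Integrating by parts against the solution $z$ of \eqref{dual FPK}, using $\operatorname{div}_{\mathcal{X}}$ as the formal adjoint of $-D_{\mathcal{X}}$ (valid since the $X_i$ are left-invariant and divergence-free with respect to Lebesgue measure), shows that $\rho^\varepsilon$ satisfies \eqref{general FPK_weak formulation}.

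\textbf{Cauchy property and stability.} This is the main obstacle. The norms $\|\rho_0^\varepsilon\|_{C^{-k}}$ are uniformly bounded (here the hypothesis $\rho_0\in C_{\mathcal{X}}^{-k}([0,1)^n)$ is used), and $\rho_0^\varepsilon\to\rho_0$, $\upsilon^\varepsilon\to\upsilon$ in the $-(k+\alpha)$ norms. For the difference $\rho^\varepsilon-\rho^{\varepsilon'}$, test with $z^\varepsilon$ solving the equation with drift $b^\varepsilon$. The extra term is
$$\int_0^t\big\langle\rho^{\varepsilon'}(s),(b^\varepsilon-b^{\varepsilon'})\cdot D_{\mathcal{X}}z^{\varepsilon'}\big\rangle\,ds.$$
Bounding it requires $\|(b^\varepsilon-b^{\varepsilon'})\cdot D_{\mathcal{X}}z\|_{C^{k+\alpha}}$, which loses a derivative in $b$, since $b$ is only in $C^{k-1+\alpha}$. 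To handle this, the plan is a compactness/interpolation step. On the function side, $D_{\mathcal{X}}z$ lies in $C^{k-1+\alpha}$, and the uniform $C^{-k}$ bound interpolated against $C^{-(k+\alpha)}$ convergence should give convergence of $\rho^\varepsilon$ in a weaker norm $C^{-(k+1+\alpha)}$ or $C^{-(k-1+\alpha)}$ by testing with smoother $\xi$. The uniform bound in $C^{-(k+\alpha)}$ together with the density of smooth $\xi$ then upgrades this to convergence in $C([0,T];C^{-(k+\alpha)})$. For this upgrade, the time-equicontinuity comes from \eqref{LHJB_HSE 2} applied to $\xi\in C^{k+1+\alpha}$, and the space-time H\"older regularity of $b$ is what allows $z$ to belong to $C^{1,2}$.

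Passing to the limit in \eqref{general FPK_weak formulation} uses the continuity of $z$ with respect to $b$: the difference of two solutions solves \eqref{LHJB} with source $(b-b^\varepsilon)\cdot D_{\mathcal{X}}z$, so Theorem \ref{Thm_LHJB wellposed regularity} bounds it in $C^{k-1+\alpha}$-based norms. The same argument applied to $(\rho_0^i,b^i,\upsilon^i)$ gives the stability statement.
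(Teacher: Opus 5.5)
Your duality argument for the a priori bound and for uniqueness is fine, and so is the regularization setup. You also correctly isolate the term that causes trouble,
\[
\int_0^t\int_{\mathbb{T}_{\mathbb{G}}}\rho^{\varepsilon'}\,(b^{\varepsilon}-b^{\varepsilon'})\cdot D_{\mathcal{X}}z^{\varepsilon}\,dx\,ds .
\]
The gap is in how you propose to control it. The interpolation/compactness step does not work, for three reasons.
\begin{enumerate}
\item You only have a uniform $C^{-k}_{\mathcal{X}}$ bound on $\rho_0^\varepsilon$, not on $\rho^\varepsilon(s)$. Even a pointwise-in-time $C^{-k}_{\mathcal{X}}$ bound would not help, because $(b^\varepsilon-b^{\varepsilon'})\cdot D_{\mathcal{X}}z$ is not small, or even uniformly bounded, in $C^{k}_{\mathcal{X}}$ when $b$ is only $C^{k-1+\alpha}_{\mathcal{X}}$.
\item Testing with smoother $\xi$ does not help either. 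The bottleneck is the regularity of $b^\varepsilon-b^{\varepsilon'}$, not of $z$.
\item Smooth functions are not norm-dense in $C^{k+\alpha}_{\mathcal{X}}$. A uniform $C^{-(k+\alpha)}_{\mathcal{X}}$ bound plus convergence in a weaker dual norm therefore gives only weak-$*$ convergence, not convergence in $C([0,T];C^{-(k+\alpha)}_{\mathcal{X}})$. Note also that $C^{-(k-1+\alpha)}_{\mathcal{X}}$ is a stronger norm than $C^{-(k+\alpha)}_{\mathcal{X}}$, not a weaker one.
\end{enumerate}
The same imprecision affects your limit passage. Theorem \ref{Thm_LHJB wellposed regularity} only bounds $z^{\varepsilon}-z$ in $C^{k+\alpha}_{\mathcal{X}}$; it does not make it small there, since $b^\varepsilon-b$ is small only in $C^{0,k-1}_{\mathcal{X}}$.

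The missing idea is a time-integrated bound on $\rho^{\varepsilon'}$ one derivative below $C^{-k}$, which comes from parabolic smoothing. Solve the backward problem \eqref{dual FPK} with drift $b^{\varepsilon'}$, zero terminal datum and source $f$. The estimate \eqref{LHJB in R^n_prior C^k estimate} gives $\sup_s\|z(s,\cdot)\|_{C^k_{\mathcal{X}}}\le C\sup_s\|f(s,\cdot)\|_{C^{k-1}_{\mathcal{X}}}$, so by duality
\[
\int_0^t\int_{\mathbb{T}_{\mathbb{G}}}\rho^{\varepsilon'}f\,dx\,ds\le C\sup_{s}\|f(s,\cdot)\|_{C^{k-1}_{\mathcal{X}}(\mathbb{T}_{\mathbb{G}})}\Big(\|\rho_0\|_{C^{-k}_{\mathcal{X}}([0,1)^n)}+\|\upsilon\|_{L^1([0,T];C^{-k}_{\mathcal{X}}([0,1)^n))}\Big),
\]
uniformly in $\varepsilon'$. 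This is where the $C^{-k}$ hypotheses on $\rho_0$ and $\upsilon$ genuinely enter.

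Now apply this bound with $f=(b^\varepsilon-b^{\varepsilon'})\cdot D_{\mathcal{X}}z^{\varepsilon}$ on $[0,t]$. It controls your problematic term by $C\|b^\varepsilon-b^{\varepsilon'}\|_{C^{0,k-1}_{\mathcal{X}}}\|\xi\|_{C^{k+\alpha}_{\mathcal{X}}}$. Moreover, $b^\varepsilon\to b$ in $C^{0,k-1}_{\mathcal{X}}$ along a subsequence, by Arzel\`a--Ascoli and the uniform $C^{\frac{\alpha}{2},k-1+\alpha}_{\mathcal{X}}$ bound. Together these give the Cauchy property in $C([0,T];C^{-(k+\alpha)}_{\mathcal{X}}(\mathbb{T}_{\mathbb{G}}))$ directly, with no interpolation.

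The limit in the weak formulation is handled the same way. By \eqref{LHJB in R^n_prior C^k estimate}, $z^{\varepsilon_j}-z\to 0$ in $C^k_{\mathcal{X}}$, and this is paired against the uniform $C^{-k}$ bounds on $\rho_0^{\varepsilon_j}$ and $\upsilon^{\varepsilon_j}$. Stability follows by comparing $\rho^i$ with $\rho^{\varepsilon_j}$ using the same estimate.
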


The main contributions of this paper are the following. First, Carnot groups are typical examples of sub-Riemannian manifolds, whose non-commutative nature brings an inherent geometric difference from classical Euclidean spaces. This paper investigates the periodic linear degenerate parabolic equation on Carnot groups, i.e., equation \eqref{LHJB}. Building upon the a priori regularity results in the previous work \cite{25JWY}, we develop the existence, uniqueness, and regularity of the solution to the equation \eqref{LHJB}. Moreover, we apply these results to the dual equation of \eqref{LHJB}, namely the general-form degenerate FPK equation in \eqref{general FPK}. This FPK equation corresponds to the periodic Carnot diffusion process in \eqref{periodic Carnot diffusion}, and its solution can characterize the transition probability measure of this diffusion process. Second, in terms of application, the results established in this paper concerning the well-posedness and regularity of solutions to equations \eqref{LHJB} and \eqref{general FPK} can be used to address the problem of well-posedness for a class of mean field game master equations on Carnot tori. For details of the master equations for mean field games, see \cite{19CDLL,22GMMZ}. Third, when proving the relevant regularity from the a priori Schauder regularity for equation \eqref{LHJB}, new difficulties must be overcome. On one hand, we require stronger a priori regularity, specifically by enhancing the a priori regularity of the solution up to time zero. We employ the theory of singular integral operators to overcome this difficulty. On the other hand, we also need the technique of smoothing. Due to the non-isotropy of the H\"{o}lder spaces and the periodicity of the associated functions, traditional mollifiers are no longer applicable. To this end, we construct several families of mollifiers which are respectively adapted to the family of H\"{o}rmander vector fields $\mathcal{X}$ and Carnot tori. Fourth, to prove the well-posedness and regularity of the weak solution to the FPK equation in the dual spaces of non-isotropic H\"{o}lder spaces, we also construct mollifiers on these dual spaces.

This paper is structured as follows. In Section \ref{Sec_2}, we first introduce Carnot groups, Carnot tori, and the definitions of non-isotropic H\"{o}lder spaces and their dual spaces. Next, we present key concepts of singular integral operator theory. Finally, we review a priori Schauder estimates for the Cauchy problem on Carnot groups with rough coefficients. In Section \ref{Sec_3}, we construct several mollifiers, which are respectively adapted to the family of H\"{o}rmander vector fields $\mathcal{X}$, Carnot tori and the dual spaces discussed in this paper. In Section \ref{Sec_4}, we study the linear degenerate parabolic equation \eqref{LHJB}, and then prove Theorem \ref{Thm_LHJB wellposed regularity} and Theorem \ref{Thm_LHJB Lipschitz}. Lastly, in Section \ref{Sec_5}, we obtain the existence, uniqueness and regularity of the weak solution to the degenerate FPK equation \eqref{general FPK}, i.e., Theorem \ref{Thm_general FPK wellposed regularity}.

\section{Preliminaries}\label{Sec_2}

In this section, we first introduce the concepts of the homogeneous Carnot group and Carnot torus, and provide the definitions of non-isotropic H\"{o}lder spaces as well as their dual spaces. Next, we present the relevant concepts of the theory of singular integral operators, which are crucial for the present work. Finally, we review some a priori Schauder estimates for the Cauchy problem on Carnot groups with rough coefficients.

\subsection{Brief overview of Carnot groups and Carnot tori}\label{Subsec_2.1}

We now introduce some notations and preliminaries about Carnot groups. For a comprehensive overview, we refer the reader to the monographs \cite{07BLU,22BB}. For additional related research on Carnot groups, see \cite{03BLU,13CCM,24MMM}.

\begin{Def}[Homogeneous group, see \cite{07BLU,22BB}]\label{Def_Homogeneous group}
Let $\mathbb{G}=\left(\mathbb{R}^n,\circ\right)$ be a Lie group on $\mathbb{R}^n$ with $\circ$ being a given Lie group law on $\mathbb{R}^n$, called ``translation''. We say that $\mathbb{G}$ is a homogeneous (Lie) group on $\mathbb{R}^n$ if there exists an $n$-tuple of real numbers $(\alpha_1,\ldots,\alpha_n)$, with $1=\alpha_1 \leq \alpha_2 \leq \ldots \leq \alpha_n$, such that the ``dilation'' $D_{\lambda}:\mathbb{R}^n \to \mathbb{R}^n$, defined as
$$
D_{\lambda}(x):=\left(\lambda^{\alpha_1} x_1,\ldots,\lambda^{\alpha_n} x_n\right)
$$
is an automorphism of the group $\mathbb{G}$ for every $\lambda>0$.

We denote by $\mathbb{G}=(\mathbb{R}^n,\circ,D_{\lambda})$ the datum of a homogeneous group on $\mathbb{R}^n$ with group law $\circ$ and dilation group $\{D_{\lambda}\}_{\lambda>0}$. Moreover, the number
$$
Q:=\sum_{i=1}^{n}\alpha_i
$$
is called the homogeneous dimension of the homogeneous group. 
\end{Def}
A differential operator $P$ on $\mathbb{G}$ is said to be \textit{left-invariant} if
$$
P^x \left(f(y \circ x)\right)=\left(P f\right)(y \circ x)
$$
for every test function $f$ and $x,y \in \mathbb{R}^n$. Similarly, a differential operator $P$ on $\mathbb{G}$ is said to be \textit{right-invariant} if
$$
P^y \left(f(y \circ x)\right)=\left(P f\right)(y \circ x)
$$
for every test function $f$ and $x,y \in \mathbb{R}^n$.

For $\delta\in\mathbb{R}$, $P$ is said to be \textit{$D_{\lambda}$-homogeneous of degree $\delta$} (or simply ``\textit{$\delta$-homogeneous}'') if
$$
P^x\left(f(D_{\lambda}(x))\right)=\lambda^{\delta} \left(P f\right) \left(D_{\lambda}(x)\right)
$$
for every test function $f$, $\lambda>0$ and $x \in \mathbb{R}^n\setminus\{0\}$. A real function $f$ defined on $\mathbb{R}^n$ is called \textit{$D_{\lambda}$-homogeneous of degree $\delta$} (or simply ``\textit{$\delta$-homogeneous}'') if $f\not\equiv 0$ and $f$ satisfies
$$
f(D_{\lambda}(x))=\lambda^{\delta}f(x)
$$
for any $\lambda>0$ and $x \in \mathbb{R}^n$.

It is a verifiable result that the Haar measure, denoted by $dx$, of the homogeneous group $\mathbb{G}=(\mathbb{R}^n,\circ, D_{\lambda})$ is left-right-invariant and coincides with the Lebesgue measure on $\mathbb{R}^n$.
%We provide some useful rules for changing variables inside an integral:
%\begin{gather*}
%x'=x \circ y \Rightarrow dx'=dx; \\
%x'=y \circ x \Rightarrow dx'=dx; \\
%x'=x^{-1} \Rightarrow dx'=dx; \\
%x'=D_{\lambda}(x) \Rightarrow dx'=\lambda^Q dx.
%\end{gather*}
%The \textit{convolution} in the group $\mathbb{G}$ is defined as
%\begin{equation}\label{convolution in group}
%(f \ast g)(x):=\int_{\mathbb{R}^n}f\left(x \circ y^{-1}\right)g(y)dy=\int_{\mathbb{R}^n}g\left(y^{-1} \circ x\right)f(y)dy,
%\end{equation}
%for any couple of functions such that the above integrals make sense.
\begin{Lem}[see {\cite[Theorem 3.29, Remark 3.30]{22BB}}]\label{Lem_LI&RI vector field}
Let $X_i$ be the left-invariant vector field which coincides with $\partial_{x_i}$ at the origin, i.e. $X_{i}(0)=\partial_{x_{i}}|_{0}$, for $i=1,\ldots,n$. Then $X_i$ is $\alpha_i$-homogeneous and has the following structure:
\begin{equation*}
X_i=\partial_{x_i}+\sum_{i<k}q_{ik}(x)\partial_{x_k},\quad i=1,\ldots,n
\end{equation*}
where $q_{ik}(\cdot)$ is a $(\alpha_k-\alpha_i)$-homogeneous polynomial. In particular, $q_{ik}$ can only depend on the variable $x_1,\ldots,x_{k-1}$. Moreover, the transpose of the vector field is just its opposite:
\begin{equation*}
X_i^*=-X_i.
\end{equation*}

Analogous properties hold for the right-invariant vector field $X_i^R$ which coincides with $\partial_{x_i}$ at the origin.
\end{Lem}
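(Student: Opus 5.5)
The plan is to read off all the properties from the structure of the group law $\circ$ under the dilations. The first and main step is the structure of $\circ$ itself: we use the standard fact on homogeneous groups (\cite{07BLU,22BB}) that, since $D_\lambda$ is an automorphism, the composition law is polynomial and of the form
\begin{equation*}
(x\circ y)_k = x_k + y_k + Q_k(x,y),\qquad k=1,\ldots,n,
\end{equation*}
where $Q_k$ is a polynomial with $Q_k(x,0)=Q_k(0,y)=0$ and
\begin{equation*}
Q_k(D_\lambda x, D_\lambda y)=\lambda^{\alpha_k} Q_k(x,y).
\end{equation*}
The homogeneity identity is immediate from $D_\lambda(x\circ y)=D_\lambda x\circ D_\lambda y$. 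Polynomiality is the real content, and I expect it to be the main obstacle. I would not re-derive it; I would quote it, as the cited \cite[Theorem 3.29]{22BB} does, since it rests on the identification of $\mathbb{G}$ with its nilpotent Lie algebra via the exponential map and the Baker--Campbell--Hausdorff formula.

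Next, left-invariance together with $X_i(0)=\partial_{x_i}|_0$ forces
\begin{equation*}
X_i f(x)=\frac{d}{dt}f(x\circ t e_i)\Big|_{t=0}.
\end{equation*}
Hence
\begin{equation*}
X_i=\sum_k \partial_{y_i}(x\circ y)_k\big|_{y=0}\,\partial_{x_k}=\partial_{x_i}+\sum_k q_{ik}(x)\partial_{x_k},\qquad q_{ik}(x):=\partial_{y_i}Q_k(x,0).
\end{equation*}
Differentiating the homogeneity identity in $y_i$ at $y=0$ shows $q_{ik}(D_\lambda x)=\lambda^{\alpha_k-\alpha_i}q_{ik}(x)$. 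So each $q_{ik}$ is a homogeneous polynomial of degree $\alpha_k-\alpha_i$ (or identically zero), and $q_{ik}(0)=0$ because $X_i(0)=\partial_{x_i}|_0$.

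We then split into cases according to the sign of $\alpha_k-\alpha_i$.
\begin{itemize}
\item If $\alpha_k-\alpha_i<0$, no nonzero polynomial has that homogeneity, so $q_{ik}\equiv0$.
\item If $\alpha_k-\alpha_i=0$, then $q_{ik}$ is constant, hence equal to $q_{ik}(0)=0$.
\end{itemize}
Thus only indices with $\alpha_k>\alpha_i$ survive, and these satisfy $k>i$ since the $\alpha_j$ are nondecreasing. Moreover, every monomial $x^\beta$ in $q_{ik}$ has $\sum_j\beta_j\alpha_j=\alpha_k-\alpha_i<\alpha_k$. Hence it only involves variables $x_j$ with $\alpha_j<\alpha_k$, i.e. $j<k$, so $q_{ik}$ depends only on $x_1,\ldots,x_{k-1}$. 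The $\alpha_i$-homogeneity of $X_i$ then follows by applying the chain rule to $f(D_\lambda x)$ term by term: $\partial_{x_k}$ contributes $\lambda^{\alpha_k}$ and $q_{ik}$ contributes $\lambda^{-(\alpha_k-\alpha_i)}$ after rescaling.

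For the transpose, we integrate by parts against test functions:
\begin{equation*}
X_i^*=-\partial_{x_i}-\sum_{k>i}\partial_{x_k}\bigl(q_{ik}\,\cdot\,\bigr)=-X_i-\sum_{k>i}(\partial_{x_k}q_{ik}).
\end{equation*}
Since $q_{ik}$ does not depend on $x_k$, the last sum vanishes, giving $X_i^*=-X_i$.

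The right-invariant case is identical, using
\begin{equation*}
X_i^R f(x)=\frac{d}{dt}f(te_i\circ x)\Big|_{t=0}
\end{equation*}
and differentiating $Q_k(y,x)$ in the first slot instead.
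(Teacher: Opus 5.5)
Your argument is correct, and since the paper gives no proof of its own but only cites \cite[Theorem 3.29, Remark 3.30]{22BB}, it is essentially the standard argument behind that citation. You read off the coefficients of $X_i$ (resp.\ $X_i^R$) from $\partial_{y_i}(x\circ y)|_{y=0}$ (resp.\ $\partial_{y_i}(y\circ x)|_{y=0}$), then use the $D_\lambda$-homogeneity of the group law to force $q_{ik}\equiv 0$ unless $\alpha_k>\alpha_i$ and to rule out dependence on $x_k$, which yields $X_i^*=-X_i$.

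One minor correction: the polynomiality you quote does not rest on exponential coordinates or Baker--Campbell--Hausdorff. Since $D_\lambda$ fixes the identity, the identity is $0$, and any smooth $F$ with $F(D_\lambda x,D_\lambda y)=\lambda^{\alpha_k}F(x,y)$ is already a homogeneous polynomial by a Taylor expansion at the origin, so this step is not really an obstacle.
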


Meanwhile, we proceed to introduce the H\"{o}rmander's condition.
\begin{Def}[H\"{o}rmander's condition, see \cite{67Ho}]
Let $\Omega$ be an open subset of $\mathbb{R}^n$, and let $Y_{1},Y_{2},\ldots, Y_{m}$ be real smooth vector fields defined on $\Omega$. We say $Y_{1},Y_{2},\ldots, Y_{m}$ satisfy the H\"{o}rmander's condition on $\Omega$ if there exists a smallest integer $r\geq 1$ such that $Y_{1},Y_{2},\ldots, Y_{m}$ together with their commutators of length at most $r$ span the tangent space $T_{x}(\Omega)$ at each point $x\in\Omega$. The integer $r$ is called the H\"{o}rmander's index of $\Omega$.
\end{Def}
Given a family of vector fields $\mathcal{Y}=(Y_1,Y_2,\ldots,Y_m)$ satisfying H\"{o}rmander's condition on a domain $\Omega\subseteq\mathbb{R}^n$, we define the corresponding Carnot-Carath\'{e}odory distance (also referred to as the subunit metric or control distance). For additional details concerning its definition, we direct the reader to \cite[Definition 1.29]{22BB}.
\begin{Def}[Carnot-Carath\'{e}odory distance]\label{Def_CC distance}
For any points $x,y\in \Omega$ and $\delta>0$, let $C_{x,y}(\delta)$ be the collection of absolutely continuous mapping $\varphi:[0,\delta]\to \Omega$, which satisfies $\varphi(0)=x,\varphi(\delta)=y$ and
\[ \varphi'(t)=\sum_{i=1}^{m}a_{i}(t)Y_{i}(\varphi(t)),~~\sum_{i=1}^{m}{a_{i}(t)}^2\leq 1,~~a.e.~~ t\in [0,\delta]. \]
The Carnot-Carath\'{e}odory distance $d_{cc}(x,y)$ is defined as
\begin{equation*}\label{2-14}
d_{cc}(x,y):=\inf\{\delta>0~|~ \exists~\varphi\in C_{x,y}(\delta)\}.
\end{equation*}
\end{Def}
The Chow-Rashevskii theorem, together with H\"{o}rmander's condition, guarantees the well-definedness of the Carnot-Carath\'{e}odory distance (see \cite[Theorem 57]{14Br}). It is also known that this distance is (locally) topologically equivalent to the Euclidean distance, that is, for any compact set $K \subset \Omega$, there exists a constant $C>0$ such that, for any $x,y \in K$, we have
\begin{equation}\label{d_cc VS Euclidean}
C^{-1} |x-y| \leq d_{cc}(x, y) \leq C |x-y|^{\frac{1}{r}},
\end{equation}
where $|\cdot|$ denotes the Euclidean norm on $\mathbb{R}^n$ and $r$ is the H\"{o}rmander's index of $\Omega$.

We are now in a position to present the definitions of the Carnot group and Carnot torus, which are stated below:
\begin{Def}[Homogeneous Carnot group, see \cite{07BLU,22BB}]\label{Def_Carnot group}
The Lie group $\mathbb{G}=(\mathbb{R}^{n},\circ)$ is called the (homogeneous) Carnot group (or a stratified Lie group) if the following two conditions are fulfilled:
\begin{enumerate}[label=(\arabic*), leftmargin=*, itemindent=1.7em]
\item the decomposition $\mathbb{R}^{n}=\mathbb{R}^{n_{1}}\times \mathbb{R}^{n_{2}}\times \cdots\times \mathbb{R}^{n_{r}}$ holds for some integers $n_{1},\ldots, n_{r}$ such that      $n_{1}+n_{2}+\cdots+n_{r}=n$, and for each $\lambda>0$ there exists a dilation
\begin{equation*}
D_{\lambda}(x)=D_{\lambda}\left(x^{(1)},x^{(2)},\ldots,x^{(r)}\right)=\left(\lambda x^{(1)},\lambda^{2}x^{(2)},\ldots,\lambda^{r}x^{(r)}\right),
\end{equation*}
which is an automorphism of the group $\mathbb{G}$. Here $x^{(i)}\in \mathbb{R}^{n_{i}}$ for $i=1,2,\ldots,r$;\label{Carnot group C1}
\item let $X_{1},\ldots,X_{n_{1}}$ be the left-invariant vector fields on $\mathbb{G}$ such that $X_{k}(0)=\partial_{x_{k}}|_{0}$ for $k=1,\ldots,n_{1}$. Then
\begin{equation*}
\operatorname{rank}(\operatorname {Lie}\{X_1,\ldots,X_{n_1}\}(x))=n
\end{equation*}
for every $x\in\mathbb{R}^n$, i.e., $X_1,\ldots,X_{n_1}$ satisfy the H\"{o}rmander's condition of step $r$ on $\mathbb{R}^{n}$.\label{Carnot group C2}
\end{enumerate}

If \ref{Carnot group C1} and \ref{Carnot group C2} are satisfied, we shall say that the triple $\mathbb{G}=(\mathbb{R}^{n},\circ, D_{\lambda})$ is a (homogeneous) Carnot group with homogeneous dimension 
\begin{equation*}
Q=\sum_{j=1}^{r}j n_{j}.
\end{equation*}
We also say that $\mathbb{G}$ has step $r$ and $n_1$ generators. The vector fields $X_1,\ldots,X_{n_{1}}$ are called the Jacobian generators of $\mathbb{G}$.  
\end{Def}
\begin{Def}[Homogeneous norm on $\mathbb{G}$]
A continuous function $\|\cdot\|:\mathbb{G}\to [0,+\infty)$ is said to be a homogeneous norm on  $\mathbb{G}$ if it satisfies the following conditions:
\begin{enumerate}[label=(\arabic*), leftmargin=*, itemindent=1.7em]
\item $\|D_{\lambda}(x)\|=\lambda\|x\|$ for all $x\in \mathbb{G}$ and $\lambda>0$;
\item $\|x\|=0$ if and only if $x=0$.
\end{enumerate}
Moreover,  $\|\cdot\|$ is symmetric if $\|x^{-1}\|=\|x\|$ for all $x\in \mathbb{G}$.
\end{Def}
It follows from \cite[Proposition 5.1.4]{07BLU} that all homogeneous norms on $\mathbb{G}$ are equivalent. Throughout this paper, we shall take the homogeneous norm (see \cite[Theorem 5.2.8]{07BLU})
\begin{equation*}\label{homogeneous norm:=d_cc}
\|x\|:=d_{cc}(x,0),\quad x\in\mathbb{G}.
\end{equation*}
And we note that for any $x,y\in\mathbb{G}$, $\|y^{-1}\circ x\|:=d_{cc}(x,y)$.
\begin{Def}[Carnot torus]
The torus in the Carnot group $\left(\mathbb{G},\circ\right)$, denoted by $\mathbb{T}_{\mathbb{G}}$, is defined as the quotient space $\mathbb{G}/\mathbb{Z}^{n}$, which is determined by the following equivalence relation:
$$
x \sim y \text{ if there exists } k \in \mathbb{Z}^{n} \text{ such that } k \circ x=y.
$$
\end{Def}
Similar to the Euclidean torus, functions on $\mathbb{T}_{\mathbb{G}}$ are functions $f$ on $\mathbb{G}$ that satisfy $f(k \circ x)=f(x)$ for all $x \in \mathbb{G}$ and $k \in \mathbb{Z}^{n}$. Such functions are called $1_{\mathbb{G}}$-periodic functions. Moreover, the following lemma implies that the torus $\mathbb{T}_{\mathbb{G}}$ can be considered as the cube $[0,1)^n$.
\begin{Lem}\label{Lem_T_G=[0,1)^n}
For every point $x\in\mathbb{G}$, there exists a unique point $x_0\in[0,1)^n$ and a finite number of group actions generated by elements of the form $(k,0)\in\mathbb{Z}^{n_1}\times\{0\}^{n-n_1}$ such that applying these actions to $x_0$ yields $x$.
\end{Lem}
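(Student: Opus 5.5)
The proof will be a layer-by-layer (triangular) induction on the stratification $\mathbb{R}^n=\mathbb{R}^{n_1}\times\cdots\times\mathbb{R}^{n_r}$, using the polynomial structure of the group law. By Lemma \ref{Lem_LI&RI vector field} and the homogeneity of $D_\lambda$, the law of a homogeneous Carnot group has the form
\begin{equation*}
(y\circ x)^{(j)}=y^{(j)}+x^{(j)}+Q^{(j)}\bigl(y^{(1)},\ldots,y^{(j-1)},x^{(1)},\ldots,x^{(j-1)}\bigr),\quad j=1,\ldots,r,
\end{equation*}
with $Q^{(1)}\equiv0$ and $Q^{(j)}$ a $j$-homogeneous polynomial. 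I will use this together with the standing convention behind the definition of $\mathbb{T}_{\mathbb{G}}$: $\mathbb{Z}^n$ is a subgroup of $\mathbb{G}$, i.e.\ each $Q^{(j)}$ takes integer values at integer arguments. This convention is what makes $\mathbb{G}/\mathbb{Z}^n$ meaningful.

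\emph{Existence.} Given $x\in\mathbb{G}$, I construct $k\in\mathbb{Z}^n$ with $k\circ x\in[0,1)^n$, one layer at a time.
\begin{itemize}
\item For the first layer, set $k^{(1)}:=-\lfloor x^{(1)}\rfloor$ componentwise, so that $(k\circ x)^{(1)}=x^{(1)}+k^{(1)}\in[0,1)^{n_1}$.
\item Assume $k^{(1)},\ldots,k^{(j-1)}$ are already fixed. The $j$-th layer of $k\circ x$ equals $k^{(j)}+a_j$, where $a_j:=x^{(j)}+Q^{(j)}(k^{(<j)},x^{(<j)})$ is a known real vector. Set $k^{(j)}:=-\lfloor a_j\rfloor$.
\end{itemize}
After $r$ steps, $x_0:=k\circ x\in[0,1)^n$, and $x=k^{-1}\circ x_0$ with $k^{-1}\in\mathbb{Z}^n$.

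\emph{Uniqueness.} Suppose $k\circ x_0=k'\circ x_0'$ with $x_0,x_0'\in[0,1)^n$. Then $x_0'=m\circ x_0$ with $m:=k'^{-1}\circ k\in\mathbb{Z}^n$.
\begin{itemize}
\item The first layer gives $x_0'^{(1)}-x_0^{(1)}=m^{(1)}\in\mathbb{Z}^{n_1}\cap(-1,1)^{n_1}$, so $m^{(1)}=0$ and $x_0'^{(1)}=x_0^{(1)}$.
\item Inductively, if $m^{(<j)}=0$, then $Q^{(j)}(0,x_0^{(<j)})=0$. The $j$-th layer then gives $m^{(j)}=x_0'^{(j)}-x_0^{(j)}\in(-1,1)^{n_j}\cap\mathbb{Z}^{n_j}$, hence $m^{(j)}=0$.
\end{itemize}
Thus $m=0$ and $x_0'=x_0$. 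The representative is unique, and so is the lattice element, since the action is free.

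\emph{Reduction to first-layer generators.} It remains to write the lattice element $k^{-1}$ as a finite word in elements $(\ell,0)$ with $\ell\in\mathbb{Z}^{n_1}$ and their inverses. I will argue by downward induction on the layer.
\begin{itemize}
\item Multiplying by $(-k^{(1)},0)$ kills the first layer. This leaves an integer point of the form $(0,\ast,\ldots,\ast)$.
\item Points supported in layers $\ge j$ should be generated by iterated group commutators of first-layer integer points modulo layers $>j$. I would check this through the Baker--Campbell--Hausdorff formula: commutators of $(e_i,0)$ and $(e_l,0)$ produce $[X_i,X_l]$-directions in the second layer, and so on.
\end{itemize}

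\emph{Main obstacle.} I expect the difficulty to be concentrated in the reduction step. It needs the integer lattice to be generated by the first layer, i.e.\ the commutators of $(e_i,0)$ must span the integer points of each higher layer over $\mathbb{Z}$ and not merely a finite-index sublattice. First, I would verify this in exponential coordinates adapted so that the structure constants are integral, as in the Heisenberg-type models implicit in the paper. If only a finite-index sublattice is obtained in general, I would fall back on the statement with arbitrary $k\in\mathbb{Z}^n$, which is all that the identification $\mathbb{T}_{\mathbb{G}}\cong[0,1)^n$ actually uses.
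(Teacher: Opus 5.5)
Your existence and uniqueness arguments for $x=k\circ x_0$ with $k\in\mathbb{Z}^n$ and $x_0\in[0,1)^n$ are correct. Uniqueness is the same triangular, layer-by-layer argument the paper uses. The paper runs it on $x_{0,2}\circ x_{0,1}^{-1}=k_2^{-1}\circ k_1$ and needs the formula for inverses. Your version with $m\circ x_0$ and $Q^{(j)}(0,\cdot)=0$ is slightly cleaner, and you correctly land in $\mathbb{Z}^{n_j}\cap(-1,1)^{n_j}$. Your floor-function construction is a self-contained existence proof over $\mathbb{Z}^n$; the paper does not prove existence but quotes \cite[Lemma 1]{07St}.

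The gap is the clause about generation by the elements $(k,0)$. Your Baker--Campbell--Hausdorff plan cannot close it from the paper's standing hypotheses, because under them the clause is false in general.

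\emph{Counterexample.} Take $\mathbb{H}^1$ with $(x,y,t)\circ(x',y',t')=(x+x',y+y',t+t'+2(x'y-xy'))$, so $X_1=\partial_x+2y\partial_t$ and $X_2=\partial_y-2x\partial_t$. This is a homogeneous Carnot group in the paper's sense, and $\mathbb{Z}^3$ is a subgroup.
\begin{enumerate}
\item We have $(p,q,s)\circ(k_1,k_2,0)=(p+k_1,q+k_2,s+2(k_1q-pk_2))$ and $(k_1,k_2,0)^{-1}=(-k_1,-k_2,0)$. Hence every $h$ in the subgroup generated by the $(k,0)$ has even last coordinate.
\item Suppose $(0,0,1)=h\circ x_0$ with $h=(p,q,s)$ and $x_0=(a,b,c)\in[0,1)^3$. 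The first two coordinates force $a=b=p=q=0$, so $s+c=1$. This is impossible for even $s$ and $c\in[0,1)$.
\end{enumerate}
Here the commutator of $(1,0,0)$ and $(0,1,0)$ is $(0,0,-4)$, which is exactly the finite-index sublattice you were worried about. For the isomorphic law $t+t'+xy'$ the commutator is $(0,0,1)$, and the clause holds. So horizontal generation depends on the chosen coordinates; it cannot be derived from the Carnot structure together with $\mathbb{Z}^n$ being a subgroup.

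What you can prove is your fallback version with arbitrary $k\in\mathbb{Z}^n$, and that is all the paper actually uses. Its uniqueness proof is phrased with arbitrary $k_1,k_2\in\mathbb{Z}^n$. Later, the lemma only serves to tile $\mathbb{G}$ by $\{k\circ[0,1)^n\}_{k\in\mathbb{Z}^n}$ and to identify $\mathbb{T}_{\mathbb{G}}=\mathbb{G}/\mathbb{Z}^n$ with $[0,1)^n$. The horizontal-generator form needs an explicit normalization of the group law, and in the paper it enters only through the citation.
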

\begin{proof}
For existence, we can directly refer to \cite[Lemma 1]{07St}. To prove uniqueness, if there exist $k_1,k_2\in\mathbb{Z}^n$ and $x_{0,1},x_{0,2}\in[0,1)^n$ such that $k_1\circ x_{0,1}=x=k_2\circ x_{0,2}$, then
$$
x_{0,2}\circ x_{0,1}^{-1}=k_2^{-1}\circ k_1\in\mathbb{Z}^n.
$$
Due to the fact that (see \cite[Corollary 1.3.16]{07BLU})
\begin{equation*}
(y^{-1})_{j}=-y_j+q_j(y),
\end{equation*}
where $q_j(y)$ is a polynomial function in $y$, $D_{\lambda}$-homogeneous of degree $\alpha_j$, only depending on the $y_k$'s with $\alpha_k<\alpha_j$, and (see \cite[Corollary 1.3.18]{07BLU})
\begin{equation*}
(y^{-1}\circ x)_{j}=x_j-y_j+\sum_{k:\alpha_k<\alpha_j}P_{j,k}(x,y)(x_k-y_k),\,x,y\in\mathbb{G},\,j\in\{1,\ldots,n\},
\end{equation*}
where $P_{j,k}(x,y)$ is a polynomial function in $x$ and $y$ only depending on the $x_k$'s and $y_k$'s with $\alpha_k<\alpha_j$. For any $x\in\mathbb{G}$, denote the decomposition $x=\left(x^{(1)},x^{(2)},\ldots,x^{(r)}\right)$, where $x^{(i)}\in\mathbb{R}^{n_i}$ and $n_{1}+n_{2}+\cdots+n_{r}=n$. Thus we have
\begin{equation*}
\left(x_{0,2}\circ x_{0,1}^{-1}\right)^{(1)}=\left(x_{0,2}^{-1}\right)^{(1)}-\left(x_{0,1}^{-1}\right)^{(1)} =-x_{0,2}^{(1)}+x_{0,1}^{(1)}\in\mathbb{Z}^{n_1}\cap[0,1)^{n_1},
\end{equation*}
namely $x_{0,1}^{(1)}-x_{0,2}^{(1)}=0$. Furthermore, since $\left(x_{0,2}^{-1}\right)^{(1)}=\left(x_{0,1}^{-1}\right)^{(1)}$, we have
\begin{equation*}
\left(x_{0,2}\circ x_{0,1}^{-1}\right)^{(2)}=\left(x_{0,2}^{-1}\right)^{(2)}-\left(x_{0,1}^{-1}\right)^{(2)} +0=-x_{0,2}^{(2)}+x_{0,1}^{(2)}\in\mathbb{Z}^{n_2}\cap[0,1)^{n_2}.
\end{equation*}
By analogy, we can conclude that $x_{0,1}^{(i)}=x_{0,2}^{(i)}$ for any $i\in\{1,\ldots,r\}$, which leads to the uniqueness.
\end{proof}
We refer to \cite{07St,21MMT} for more details about the periodicity on the Carnot group. However, it is worth to observe that the Carnot torus does not coincide with the Euclidean torus. For instance, $\mathbb{T}_{\mathbb{G}}$ is not obtained identifying the points of two opposite faces of $[0,1]^n$ with the same two coordinates. It is easy to find that $\mathbb{T}_{\mathbb{G}}$ is a bounded compact space, naturally endowed with the distance induced by any distance $d$ in $\mathbb{G}$ as
$$
d^{\mathbb{T}_{\mathbb{G}}}(x,y):=\inf_{\substack{x', y' \in \mathbb{G} \\ x' \sim x, y' \sim y}}d\left(x',y'\right),\,x,y \in \mathbb{T}_{\mathbb{G}}.
$$

\begin{Rem}\label{Rem_Holder norm R^n=T_mathbb G}
Since a $1_{\mathbb{G}}$-periodic function $f$ on $\mathbb{G}=\mathbb{R}^{n}$ can be regarded as a function on $\mathbb{T}_{\mathbb{G}}$ (still denoted as $f$), we note that the H\"{o}lder norm induced by $d$ on $\mathbb{G}$ is equal to the one induced by $d^{\mathbb{T}_{\mathbb{G}}}$ on $\mathbb{T}_{\mathbb{G}}$, namely $\left\|f\right\|_{C_{d}^{\alpha}\left(\mathbb{R}^{n}\right)} =\left\|f\right\|_{C_{d^{\mathbb{T}_{\mathbb{G}}}}^{\alpha}\left(\mathbb{T}_{\mathbb{G}}\right)},\alpha\in(0,1]$. This is because, on the one hand, for any $x,y \in \mathbb{G}$, $x \neq y$,
\begin{equation*}
\frac{|f(x)-f(y)|}{d(x,y)^{\alpha}} \leq\frac{|f(x)-f(y)|}{d^{\mathbb{T}_{\mathbb{G}}}(x,y)^{\alpha}}.
\end{equation*}
Hence $\left[f\right]_{C_{d}^{\alpha}\left(\mathbb{R}^{n}\right)} \leq\left[f\right]_{C_{d^{\mathbb{T}_{\mathbb{G}}}}^{\alpha}\left(\mathbb{T}_{\mathbb{G}}\right)}$. On the other hand, for any $x,y \in \mathbb{T}_{\mathbb{G}}$, $x \neq y$,
\begin{equation*}
\frac{|f(x)-f(y)|}{d^{\mathbb{T}_{\mathbb{G}}}(x,y)^{\alpha}} =\inf_{\substack{x', y' \in \mathbb{G} \\ x' \sim x, y' \sim y}}\frac{|f(x')-f(y')|}{d(x',y')^{\alpha}}.
\end{equation*}
Hence $\left[f\right]_{C_{d^{\mathbb{T}_{\mathbb{G}}}}^{\alpha}\left(\mathbb{T}_{\mathbb{G}}\right)} \leq\left[f\right]_{C_{d}^{\alpha}\left(\mathbb{R}^{n}\right)}$. Since it's easy to find that $\left\|f\right\|_{C\left(\mathbb{R}^{n}\right)} =\left\|f\right\|_{C\left(\mathbb{T}_{\mathbb{G}}\right)}$, finally we obtain $\left\|f\right\|_{C_{d}^{\alpha}\left(\mathbb{R}^{n}\right)} =\left\|f\right\|_{C_{d^{\mathbb{T}_{\mathbb{G}}}}^{\alpha}\left(\mathbb{T}_{\mathbb{G}}\right)}$. In this paper, we typically take $d=d_{cc}$.
\end{Rem}

Next we introduce an important class of non-isotropic H\"{o}lder spaces associated with the family of vector fields $\mathcal{X}=\{X_1,\ldots,X_{n_1}\}$ (see \cite{07BB,10BBLU}).

Let $\Omega\subseteq\mathbb{R}^n$ be any open subset and $X_I:=X_{i_1} \cdots X_{i_{|I|}}$, where $I$ is any multi-index $I=\left(i_1, \ldots, i_{|I|}\right)$ with $i_j \in \{1,\cdots,n_1\}, j=1,\cdots,|I|$. For any $k \in \mathbb{N}$, we define the non-isotropic space
\begin{equation*}
C_{\mathcal{X}}^{k}\left(\Omega\right):=\left\{\phi \in C\left(\Omega\right)~|~X_I \phi \in C\left(\Omega\right),\,\forall\,|I| \leq k\right\}.
\end{equation*}
For any $k \in \mathbb{N}$ and $\alpha \in(0,1]$ we define the non-isotropic H\"{o}lder spaces
\begin{equation*}
C_{\mathcal{X}}^{\alpha}\left(\Omega\right):=\left\{\phi \in L^{\infty}\left(\Omega\right)~\bigg|~ \sup _{\substack{x, y \in \Omega \\
x \neq y}} \frac{|\phi(x)-\phi(y)|}{d_{cc}(x, y)^\alpha}<+\infty\right\},
\end{equation*}
\begin{equation}\label{non-isotropic Holder spaces_Def}
C_{\mathcal{X}}^{k+\alpha}\left(\Omega\right):=\left\{\phi \in L^{\infty}\left(\Omega\right)~|~X_I \phi \in C_{\mathcal{X}}^{\alpha}\left(\Omega\right),\,\forall\,|I| \leq k\right\}.
\end{equation}
For any function $\phi \in C_{\mathcal{X}}^{ \alpha}\left(\Omega\right)$, the H\"{o}lder seminorm can be defined as
$$
[\phi]_{C_{\mathcal{X}}^{ \alpha}\left(\Omega\right)}:=\sup_{\substack{x, y \in \Omega \\ x \neq y}} \frac{|\phi(x)-\phi(y)|}{d_{cc}(x, y)^\alpha}.
$$
Furthermore, for any $\phi \in C_{\mathcal{X}}^{k+ \alpha}\left(\Omega\right)$, the H\"{o}lder norm is defined as
\begin{equation}\label{Holder norm}
\|\phi\|_{C_{\mathcal{X}}^{k+\alpha}\left(\Omega\right)}:=\|\phi\|_{C_{\mathcal{X}}^k\left(\Omega\right)}+\sum_{0 \leq|I| \leq k}\left[X_I \phi\right]_{C_{\mathcal{X}}^{ \alpha}\left(\Omega\right)},
\end{equation}
where $\|\phi\|_{C_{\mathcal{X}}^k\left(\Omega\right)}:=\sum\limits_{0 \leq |I|\leq k}\|X_I\phi\|_{L^{\infty}\left(\Omega\right)}$.

Endowed with the above norm, $C_{\mathcal{X}}^{k+ \alpha}\left(\Omega\right)$ is a Banach space and it follows from \eqref{d_cc VS Euclidean} that, for any compact set $K \subset \Omega$,
$$
C^{-1}\|\phi\|_{C^{ \frac{\alpha}{k}}\left(K\right)} \leq\|\phi\|_{C_{\mathcal{X}}^{\alpha}\left(K\right)} \leq C\|\phi\|_{C^{ \alpha}\left(K\right)},
$$
where $\|\phi\|_{C^{\alpha}\left(K\right)}$ is the standard H\"{o}lder norm, and $C>0$ is a constant depending only on the dimension $n$ and the family of vector fields $\mathcal{X}$.

%Moreover, we denote by $C_{\mathcal{X},c}^{k+\alpha}\left(\Omega\right)$ the space consisting of all $C_{\mathcal{X}}^{k+\alpha}\left(\Omega\right)$ functions that have compact support in $\Omega$.

%We can also define the parabolic Carnot-Carath\'{e}odory distance
%$$
%d_p\left((t,x),(s,y)\right):=\sqrt{d_{cc}(x,y)^2+|t-s|},
%$$
%which is a well-defined distance on $\mathbb{R} \times \Omega$. Replacing distance $d_{cc}$ with $d_p$,
For any $T>0$, $l,k \in \mathbb{N}$ and $\alpha,\beta \in(0,1]$, we define the parabolic non-isotropic H\"{o}lder spaces on $[0,T] \times \Omega$ as
\begin{align*}
& C_{\mathcal{X}}^{\beta,\alpha}\left([0,T]\times \Omega \right)\\
:=&\left\{\phi \in L^{\infty}\left([0,T] \times \Omega\right)~\bigg|~\sup _{\substack{(t,x), (s,y) \in [0,T]\times \Omega \\
(t,x) \neq (s,y)}} \frac{|\phi(t,x)-\phi(s,y)|}{|t-s|^\beta+d_{cc}(x,y)^\alpha}<+\infty\right\},
\end{align*}
\begin{align}\label{time-space-Holder space_Def}
& C_{\mathcal{X}}^{l+\beta,k+\alpha}\left([0,T] \times \Omega\right)\\
:=&\left\{\phi \in L^{\infty}\left([0,T]\times \Omega\right)~\big|~\partial_t^i X_I \phi \in C_{\mathcal{X}}^{\beta,\alpha}\left([0,T]\times \Omega \right),\,\forall\, i,|I|\in\mathbb{N},i\leq l,|I|\leq k\right\}.\notag
\end{align}
with the seminorm
$$
[\phi]_{C_{\mathcal{X}}^{\beta,\alpha}\left([0,T]\times \Omega\right)}:=\sup_{\substack{(t,x), (s,y) \in [0,T]\times \Omega \\
(t,x) \neq (s,y)}} \frac{|\phi(t,x)-\phi(s,y)|}{|t-s|^\beta+d_{cc}(x,y)^\alpha},
$$
and the norm
\begin{equation*}\label{para. Holder norm}
\|\phi\|_{C_{\mathcal{X}}^{l+\beta,k+\alpha}\left([0,T]\times \Omega\right)}:=\sum_{i\leq l,|I|\leq k}\left(\\|\partial_t^i X_I\phi\|_{L^{\infty}\left([0,T]\times \Omega\right)} +\left[\partial_t^i X_I \phi\right]_{C_{\mathcal{X}}^{\beta,\alpha}\left([0,T]\times \Omega\right)}\right).
\end{equation*}
%We also define the space
%\begin{equation*}
%B\left([0,T];C_{\mathcal{X}}^{k+\alpha}\left(\Omega\right)\right):=\left\{\phi:[0,T]\to C_{\mathcal{X}}^{k+\alpha}\left(\Omega\right)~\bigg|~ \sup_{t\in[0,T]}\|\phi(t,\cdot)\|_{C_{\mathcal{X}}^{k+\alpha}\left(\Omega\right)}<+\infty\right\}.
%\end{equation*}

%To simplify the notations, we can abbreviate the H\"{o}lder norms in \eqref{Holder norm} and \eqref{para. Holder norm} respectively as $\|\cdot\|_{k+\alpha}$ and $\|\cdot\|_{l+\beta,k+\alpha}$.

The dual space of $C_{\mathcal{X}}^{k+\alpha}\left(\Omega\right)$ is denoted by
\begin{equation}\label{dual Holder space_Def}
C_{\mathcal{X}}^{-(k+\alpha)}\left(\Omega\right):=\left\{\psi~\big|~ \left\langle\psi, \phi\right\rangle_{C_{\mathcal{X}}^{-(k+\alpha)}\left(\Omega\right), C_{\mathcal{X}}^{k+\alpha}\left(\Omega\right)}<+\infty,\,\forall\, \phi\in C_{\mathcal{X}}^{k+\alpha}\left(\Omega\right)\right\}
\end{equation}
with the norm
\begin{equation*}\label{dual Holder space norm_Def}
\|\psi\|_{C_{\mathcal{X}}^{-(k+\alpha)}\left(\Omega\right)}:=\sup_{\|\phi\|_{C_{\mathcal{X}}^{k+\alpha}\left(\Omega\right)} \leq 1}\left\langle\psi, \phi\right\rangle_{C_{\mathcal{X}}^{-(k+\alpha)}\left(\Omega\right), C_{\mathcal{X}}^{k+\alpha}\left(\Omega\right)}\text{ for any }\psi \in C_{\mathcal{X}}^{-(k+\alpha)}\left(\Omega\right).
\end{equation*}
In the following derivation, we denote $\left\langle\psi, \phi\right\rangle_{C_{\mathcal{X}}^{-(k+\alpha)}\left(\Omega\right), C_{\mathcal{X}}^{k+\alpha}\left(\Omega\right)}$ simply as $\left\langle\psi, \phi\right\rangle$. %Using the same manner, we can also define the dual space of $C_{\mathcal{X},c}^{k+\alpha}\left(\Omega\right)$, denoted as $C_{\mathcal{X},c}^{-(k+\alpha)}\left(\Omega\right)$.
\begin{Rem}
Following the same way as above, we can also define the non-isotropic H\"{o}lder spaces on $\mathbb{T}_{\mathbb{G}}$ and their dual spaces, where it suffices to replace the distance $d_{cc}$ with $d_{cc}^{\mathbb{T}_{\mathbb{G}}}$.
\end{Rem}

\subsection{Theory of singular integral operators and continuity on H\"{o}lder spaces}\label{Subsec_2.2}

To start with, we introduce the following definitions.
\begin{Def}[Quasidistance]
Let $\Omega \subseteq \mathbb{R}^{n}$ be a set. A function $q: \Omega \times \Omega \to \mathbb{R}$ is called a quasidistance on $\Omega$ if there exists a constant $c_q \geqslant 1$ such that for any $x, y, z \in \Omega$:
$$
\begin{gathered}
q(x, y) \geqslant 0 \text { and } q(x, y)=0 \text{ if and only if } x=y ; \\
q(x, y)=q(y, x) ; \\
q(x, y) \leq c_q(q(x, z)+q(z, y)) .
\end{gathered}
$$
\end{Def}
\begin{Def}[Spaces of homogeneous type]
Let $(\Omega, q)$ be a space endowed with a quasidistance $q$ such that the $q$-balls are open with respect to the topology induced by $q$, and let $\mu$ be a positive Borel measure on $\Omega$ satisfying the doubling condition: there exists a constant $c_\mu>0$ such that
$$
\mu\left(\mathbf{B}_{2r}(x)\right) \leq c_\mu \cdot \mu\left(\mathbf{B}_r(x)\right) \text { for any } x \in \Omega, r>0,
$$
where the ball
$$
\mathbf{B}_r(x):=\{y \in \Omega~|~ q(x, y)<r\}\text{ for any }x\in\Omega,\,r>0.
$$
Then $(\Omega,q,\mu)$ is called a space of homogeneous type.
\end{Def}
\begin{Def}[H\"{o}lder spaces]
For any $\alpha>0$ and function $u: \Omega \to \mathbb{R}$, define
$$
\begin{gathered}\relax
[u]_{\mathbf{C}^\alpha(\Omega)}:=\sup \left\{\frac{|u(x)-u(y)|}{q(x, y)^\alpha}~\bigg|~ x, y \in \Omega, x \neq y\right\}, \\
\|u\|_{\mathbf{C}^\alpha(\Omega)}:=[u]_{\mathbf{C}^\alpha(\Omega)}+\|u\|_{L^{\infty}(\Omega)}, \\
\mathbf{C}^\alpha(\Omega):=\left\{u: \Omega \to \mathbb{R}~|~\|u\|_{\mathbf{C}^\alpha(\Omega)}<+\infty\right\}.
\end{gathered}
$$
\end{Def}
\begin{Def}
Let $(\Omega,q,\mu)$ be a space of homogeneous type. We say that a measurable function $\mathcal{K}(x, y): \Omega \times \Omega \to \mathbb{R}$ is a standard kernel on $\Omega$ if $\mathcal{K}$ satisfies
\begin{enumerate}[label=(\arabic*), leftmargin=*, itemindent=1.7em]
\item (``growth condition") for any $x, y \in \Omega$, there exists a constant $c_1>0$ such that
$$
|\mathcal{K}(x, y)| \leq \frac{c_1}{|\mathbf{B}_{q(x, y)}(x)|};
$$
\item (``mean value inequality") for any $x_0, x, y \in \Omega$ with $q\left(x_0, y\right) \geqslant M q\left(x_0, x\right)$, $M>1$, there exist constants $c_2,\beta>0$ such that
\begin{equation}\label{MVI_S}
\left|\mathcal{K}(x, y)-\mathcal{K}\left(x_0, y\right)\right| \leq \frac{c_2}{\left|\mathbf{B}_{q(x_0, y)}(x_0)\right|}\left(\frac{q\left(x_0, x\right)}{q\left(x_0, y\right)}\right)^\beta.
\end{equation}
\end{enumerate}
\end{Def}
In the following, we present some of the main tools that would be used in this paper.
\begin{Lem}[$\mathbf{C}^{\alpha}$ continuity of singular integral operators, see \mbox{\cite[Theorem 2.7]{07BB}}]\label{Lem_HCSIO}
Let $(\Omega,q,\mu)$ be a bounded space of homogeneous type, and let $\mathcal{K}(x, y)$ be a standard kernel. Define
\begin{equation}\label{K_varepsilon}
K_{\varepsilon} f(x):=\int_{q^{\prime}(x, y)>\varepsilon} \mathcal{K}(x, y) f(y) d\mu(y)
\end{equation}
where $q'$ is any quasidistance on $\Omega$, equivalent to $q$. Assume that
\begin{enumerate}[label=(\arabic*), leftmargin=*, itemindent=1.7em]
\item for every $f \in \mathbf{C}^\alpha(\Omega)$ and $x \in \Omega$ the following limit exists:
$$
K f(x)=P V \int_\Omega \mathcal{K}(x, y) f(y) d\mu(y)=\lim _{\varepsilon \to 0} K_{\varepsilon} f(x);
$$
\item (``cancellation properties") there exist constants $c_3,c_4>0$ such that
\begin{equation}\label{IP_S}
\left|\int_{q^{\prime}(x, y)>r} \mathcal{K}(x, y) d\mu(y)\right| \leq c_3
\end{equation}
for any $r>0$, where $c_3$ is independent of $r$, and
\begin{equation}\label{CP_S}
\lim_{\varepsilon \to 0} \left|\int_{q^{\prime}(x, y)>\varepsilon} \mathcal{K}(x, y) d\mu(y)-\int_{q^{\prime}(x_0, y)>\varepsilon} \mathcal{K}\left(x_0, y\right) d\mu(y)\right|\leq c_4 q(x, x_0)^\gamma
\end{equation}
for some $\gamma \in(0,1]$, where $q^{\prime}$ is the same quasidistance appearing in \eqref{K_varepsilon}.
\end{enumerate}
Then the integral operator $K$ is continuous on $\mathbf{C}^\alpha(\Omega)$. More precisely,
$$
[K f]_{\mathbf{C}^\alpha(\Omega)} \leq c\sum_{i=1}^{4}c_i\|f\|_{\mathbf{C}^\alpha(\Omega)} \text { for every } \alpha \leq \gamma, \alpha<\beta,
$$
where $\gamma$ is the constant in \eqref{CP_S} and $\beta$ is the constant in \eqref{MVI_S}, and
\begin{equation*}\label{L^inf E}
\|K f\|_{L^\infty(\Omega)} \leq c_{R,\alpha}(c_1+c_3)\|f\|_{\mathbf{C}^\alpha(\Omega)},\text { where } R=\operatorname{diam} \Omega.
\end{equation*}
\end{Lem}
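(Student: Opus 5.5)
The statement is a citation of \cite[Theorem 2.7]{07BB}, so in the paper one simply invokes it. If I were to prove it, I would follow the classical Calder\'on--Zygmund argument for H\"older continuity on a space of homogeneous type. The tools are the doubling property and the four kernel constants $c_1,\dots,c_4$.

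\textbf{Step 1: the $L^\infty$ bound.} For $f\in\mathbf{C}^\alpha(\Omega)$ and $\varepsilon>0$ I would write
\[
K_\varepsilon f(x)=\int_{q'(x,y)>\varepsilon}\mathcal{K}(x,y)\bigl(f(y)-f(x)\bigr)\,d\mu(y)+f(x)\int_{q'(x,y)>\varepsilon}\mathcal{K}(x,y)\,d\mu(y).
\]
The second term is bounded by $c_3\|f\|_{L^\infty}$ by \eqref{IP_S}. For the first term, the growth condition gives an integrand bounded by $c_1[f]_{\mathbf{C}^\alpha}\,q(x,y)^\alpha/|\mathbf{B}_{q(x,y)}(x)|$. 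I would then decompose $\Omega$ into dyadic annuli $\{2^{-j-1}R\le q(x,y)<2^{-j}R\}$ and use doubling; this bounds the integral by $c\,R^\alpha\sum_j 2^{-j\alpha}$, uniformly in $\varepsilon$. The same estimate gives absolute convergence of the first term as $\varepsilon\to0$, which is consistent with hypothesis (1). Letting $\varepsilon\to0$ yields the $L^\infty$ estimate with constant $c_{R,\alpha}(c_1+c_3)$.

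\textbf{Step 2: the H\"older seminorm.} Fix $x,x_0$ and set $\delta=q(x,x_0)$. Let $M$ be large, depending on $c_q$ and on the equivalence constants between $q$ and $q'$. I would split $\Omega$ into a near region $N=\{q(x_0,y)<M\delta\}$ and a far region $F=\Omega\setminus N$, and decompose
\[
Kf(x)-Kf(x_0)=I_1+I_2+I_3+I_4+I_5.
\]
\begin{itemize}
\item $I_1=\int_N\mathcal{K}(x,y)(f(y)-f(x))\,d\mu$ and $I_2=-\int_N\mathcal{K}(x_0,y)(f(y)-f(x_0))\,d\mu$. Both are bounded by $c\,c_1[f]_\alpha\delta^\alpha$, by the dyadic argument of Step 1 restricted to balls of radius $\lesssim\delta$; for $I_1$ one notes that $N$ is contained in a $q$-ball about $x$ of radius comparable to $M\delta$.
\item $I_3=\int_F(\mathcal{K}(x,y)-\mathcal{K}(x_0,y))(f(y)-f(x_0))\,d\mu$. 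By the mean value inequality \eqref{MVI_S}, its integrand is at most $c_2[f]_\alpha\,\delta^\beta q(x_0,y)^{\alpha-\beta}/|\mathbf{B}_{q(x_0,y)}(x_0)|$. Summing over dyadic annuli $q(x_0,y)\sim 2^jM\delta$ gives a geometric series because $\alpha<\beta$, so $I_3\le c\,c_2[f]_\alpha\delta^\alpha$.
\item $I_4=(f(x_0)-f(x))\int_F\mathcal{K}(x,y)\,d\mu$. Here $|f(x_0)-f(x)|\le[f]_\alpha\delta^\alpha$, and the integral is bounded by $c_3$ plus a correction. The correction comes from replacing $F$ by a set of the form $\{q'(x,y)>r\}$. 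The symmetric difference lies in an annulus where $q(x,y)\sim M\delta$, and there the growth condition together with doubling gives a bound $c\,c_1$.
\item $I_5$ collects what remains after expressing the principal values through the truncations $\{q'>\varepsilon\}$. It is exactly $f(x)$ times the difference in \eqref{CP_S}, so $|I_5|\le c_4\|f\|_\infty\delta^\gamma\le c_4R^{\gamma-\alpha}\|f\|_\infty\delta^\alpha$, using $\alpha\le\gamma$ and $\delta\le R$.
\end{itemize}
Dividing by $\delta^\alpha$ gives the claimed seminorm bound with constant $c\sum_i c_i$.

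\textbf{Main obstacle.} I expect the difficulty to be the bookkeeping in Step 2 rather than any single estimate. The near/far splitting must be performed before passing to the limit $\varepsilon\to0$ in the principal values, and the regions are defined via $q$ while the truncations use $q'$. Each mismatch between regions must be shown to be an annulus on which $q(x,y)\simeq q(x_0,y)\simeq\delta$ or $\simeq M\delta$; only then can the growth condition and doubling be applied with uniform constants. I would handle this by first fixing $\varepsilon\ll\delta$, carrying out the decomposition for $K_\varepsilon$, absorbing every mismatch region into such annuli, and only then sending $\varepsilon\to0$ using hypothesis (1) and \eqref{CP_S}.
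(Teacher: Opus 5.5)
Your proposal is correct: the paper gives no proof of this lemma and only cites \cite[Theorem 2.7]{07BB}, and your sketch follows the standard Calder\'on--Zygmund argument for that result. You split off $f(x)\int_{q'(x,y)>\varepsilon}\mathcal{K}(x,y)\,d\mu(y)$, treat the remaining absolutely convergent part by a near/far decomposition using growth, the mean value inequality with $\alpha<\beta$, and doubling, and control the split-off part by \eqref{IP_S} and \eqref{CP_S}. One bookkeeping fix: with $I_1,\dots,I_4$ as you define them, the remainder is $f(x)A_\varepsilon(x)-f(x_0)A_\varepsilon(x_0)$, where $A_\varepsilon(z)=\int_{q'(z,y)>\varepsilon}\mathcal{K}(z,y)\,d\mu(y)$, so it is $f(x)$ times the difference in \eqref{CP_S} \emph{plus} $(f(x)-f(x_0))A_\varepsilon(x_0)$; this extra term is at most $c_3[f]_{\mathbf{C}^\alpha(\Omega)}\delta^\alpha$ by \eqref{IP_S}, so the final bound is unaffected.
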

\begin{Lem}[$\mathbf{C}^{\alpha}$ continuity of fractional integral operators, see \mbox{\cite[Theorem 2.11]{07BB}}]\label{Lem_HCFIO}
Let $(\Omega,q,\mu)$ be a bounded space of homogeneous type, and assume that $\Omega$ does not contain atoms (that is, points of positive measure). Let $\mathcal{K}_\delta(x, y)$ be a ``fractional integral kernel", that is,
\begin{enumerate}[label=(\arabic*), leftmargin=*, itemindent=1.7em]
\item (``growth condition")
\begin{equation}\label{GC_F}
|\mathcal{K}_\delta(x, y)| \leq \frac{c_1 q(x, y)^\delta}{|\mathbf{B}_{q(x, y)}(x)|}
\end{equation}
for any $x, y \in \Omega$ and some $c_1, \delta>0$;
\item (``mean value inequality")
\begin{equation}\label{MVI_F}
\left|\mathcal{K}_\delta(x, y)-\mathcal{K}_\delta\left(x_0, y\right)\right| \leq \frac{c_2 q\left(x_0, y\right)^\delta}{\left|\mathbf{B}_{q(x_0, y)}(x_0)\right|}\left(\frac{q\left(x_0, x\right)}{q\left(x_0, y\right)}\right)^\beta
\end{equation}
for any $x_0, x, y \in \Omega$ with $q\left(x_0, y\right) \geqslant M q\left(x_0, x\right)$, some $c_2, \beta>0$ and $M>1$.
\end{enumerate}
Then the integral operator
$$
I_\delta f(x)=\int_\Omega \mathcal{K}_\delta(x, y) f(y) d\mu(y)
$$
is continuous on $\mathbf{C}^\alpha(\Omega)$. More precisely,
$$
[I_\delta f]_{\mathbf{C}^\alpha(\Omega)} \leq c_{R,\delta}\sum_{i=1}^{2}c_i\|f\|_{\mathbf{C}^\alpha(\Omega)} \text { for every } \alpha<\min (\beta, \delta),
$$
and
\begin{equation*}
\|I_\delta f\|_{L^\infty(\Omega)} \leq c_{R,\delta}c_1\|f\|_{L^\infty(\Omega)},\text { where } R=\operatorname{diam} \Omega.
\end{equation*}
\end{Lem}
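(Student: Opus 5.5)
Although this lemma is quoted from \cite[Theorem 2.11]{07BB}, here is how I would prove it directly. The argument is a dyadic-annulus decomposition around the singularity combined with the doubling property of $\mu$. It uses only $\|f\|_{L^\infty(\Omega)}$, which is stronger than what is claimed.

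\emph{The $L^\infty$ bound.} Fix $x\in\Omega$ and let $R=\operatorname{diam}\Omega$. Split $\Omega\setminus\{x\}$ into the annuli
$$A_j=\{y:\ 2^{-j-1}R\le q(x,y)<2^{-j}R\},\qquad j\ge 0.$$
The point $y=x$ is negligible because $\Omega$ has no atoms. On $A_j$, the growth condition \eqref{GC_F} and doubling give
$$|\mathcal{K}_\delta(x,y)|\le c_1 (2^{-j}R)^\delta\,\mu\big(\mathbf B_{2^{-j-1}R}(x)\big)^{-1},\qquad \mu(A_j)\le \mu\big(\mathbf B_{2^{-j}R}(x)\big)\le c_\mu\,\mu\big(\mathbf B_{2^{-j-1}R}(x)\big).$$
Hence the contribution of $A_j$ is at most $c\,c_1 (2^{-j}R)^\delta\|f\|_\infty$. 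Summing the geometric series gives $\|I_\delta f\|_{L^\infty}\le c_{R,\delta}\,c_1\|f\|_{L^\infty}$. The same computation on a ball shows
$$\int_{q(x,y)<\rho}|\mathcal{K}_\delta(x,y)|\,d\mu(y)\le c\,c_1\rho^\delta,$$
and I will reuse this below.

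\emph{The H\"older seminorm.} Fix $x_0\neq x$ and set $r=q(x_0,x)$. If $r\ge R/M$, the $L^\infty$ bound already gives $|I_\delta f(x)-I_\delta f(x_0)|\le 2c_{R,\delta}c_1\|f\|_\infty\le c_{R,\delta,\alpha}\, r^\alpha\|f\|_\infty$. So assume $r<R/M$ and split $\Omega$ into a near region $N=\{y:\ q(x_0,y)<Mr\}$ and a far region $F=\Omega\setminus N$.

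On $N$, estimate $|\mathcal{K}_\delta(x,y)|+|\mathcal{K}_\delta(x_0,y)|$ separately. By the quasi-triangle inequality, $N\subset\{q(x,y)<c_q(M+1)r\}$. The ball estimate above, applied once centred at $x$ and once at $x_0$, bounds this piece by $c\,c_1 r^\delta\|f\|_\infty$.

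On $F$, apply the mean value inequality \eqref{MVI_F} and decompose $F$ into the annuli $2^{j}Mr\le q(x_0,y)<2^{j+1}Mr$, of which there are $O(\log(R/r))$. Using doubling as before, the integral is controlled by
$$c\,c_2\|f\|_\infty\, r^\beta\sum_j (2^jMr)^{\delta-\beta}.$$
There are three cases. If $\delta<\beta$, the sum is dominated by its first term and the bound is $\lesssim r^\delta$. If $\delta>\beta$, the sum is dominated by its last term and the bound is $\lesssim R^{\delta-\beta}r^\beta$. If $\delta=\beta$, the bound is $\lesssim r^\beta\log(R/r)$. In every case, since $r<R$ and $\alpha<\min(\beta,\delta)$, the bound is at most $c_{R,\delta,\alpha}(c_1+c_2)\,r^\alpha\|f\|_\infty$.

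Combining the near and far estimates yields $[I_\delta f]_{\mathbf C^\alpha}\le c_{R,\delta}(c_1+c_2)\|f\|_{L^\infty}\le c_{R,\delta}(c_1+c_2)\|f\|_{\mathbf C^\alpha}$.

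The main obstacle is the near-region estimate centred at $x$. The region $N$ is defined through balls centred at $x_0$, while the growth bound for $\mathcal{K}_\delta(x,\cdot)$ involves $\mu(\mathbf B_{q(x,y)}(x))$, that is, balls centred at $x$. Reconciling the two requires the enlargement constant $c_q(M+1)$ and repeated use of doubling to compare the measures of balls with different centres and comparable radii. The only other point needing care is the restriction $\alpha<\min(\beta,\delta)$: it is what absorbs the logarithm in the borderline case $\delta=\beta$ and turns $r^{\min(\beta,\delta)}$ into $r^\alpha$ using boundedness of $\Omega$.
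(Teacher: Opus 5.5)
Your proof is correct and follows the standard route. The paper does not prove this lemma itself but imports it from \cite[Theorem 2.11]{07BB}, whose argument is the same splitting at scale $M\,q(x_0,x)$: a near part controlled by the growth condition, where the absence of atoms and same-centre doubling on dyadic annuli enter, and a far part controlled by the mean value inequality, with boundedness of $\Omega$ absorbing the $R^{\delta-\beta}$ or $\log(R/r)$ factors.

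The only overstatement is in your last paragraph. Since $N\subset\{y:\ q(x,y)<c_q(M+1)r\}$ and the integrand is nonnegative, your ball estimate centred at $x$ applies directly. No comparison of measures of balls with different centres is needed, so the near-region term is not really an obstacle.
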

\begin{Rem}
In Lemma \ref{Lem_HCFIO}, we do not need to fulfill \eqref{CP_S} and \eqref{IP_S}, this is because properties \eqref{GC_F} and \eqref{MVI_F} can imply these cancellation properties.
\end{Rem}
Now, we consider the fundamental solution $\Gamma_0(t,x;s,y):\mathbb{R}\times\mathbb{G}\times\mathbb{R}\times\mathbb{G}\to\mathbb{R}$ for the heat operator
\begin{equation}\label{heat operator}
\mathcal{H}=\partial_t-\Delta_{\mathcal{X}}
\end{equation}
on $\mathbb{R}\times\mathbb{G}$ to be an integral kernel, which is nonnegative and vanishes when $t\leq s$, having the form $\Gamma_0(t,x;s,y)=\Gamma_0(t-s,y^{-1}\circ x)$. For more properties of $\Gamma_0$ such as existence as well as Gaussian estimates, one can refer to \cite{02BLU}. Here, we regard $\mathbb{R}\times\mathbb{G}$ as a homogeneous group with translation
\begin{equation*}
(t,x)*(s,y):=(t+s,x\circ y),
\end{equation*}
dilation
\begin{equation*}
D_{\lambda}^p(t,x):=\left(\lambda^2t,D_{\lambda}(x)\right)
\end{equation*}
and homogeneous dimension $Q'=Q+2$, where $Q$ is the homogeneous dimension of $\mathbb{G}$. We introduce homogeneous functions with respect to the dilation $D_{\lambda}^p$.
\begin{Def}[$D_{\lambda}^p$-homogeneous function]\label{Def_homogeneous function}
A real function $f$ defined on $\mathbb{R}^{n}$ is called the $D_{\lambda}^p$-homogeneous of degree $\delta\in \mathbb{R}$ (or simply ``$\delta$-homogeneous'') if $f\not\equiv 0$ and $f$ satisfies
$$
f(D_{\lambda}^p(x))=\lambda^{\delta}f(x)
$$
for any $\lambda>0$ and $x \in \mathbb{R}^{n}$.
\end{Def}
\begin{Rem}
If $f$ is a $\gamma$-homogeneous function, smooth outside the origin, and $P$ is a $\beta$-homogeneous differential operator, then $Pf$ is a $(\gamma-\beta)$-homogeneous function (see \cite[p. 107]{22BB}).
\end{Rem}
Denote $d_p$ by the parabolic Carnot-Carath\'{e}odory distance, defined as
\begin{equation*}
d_p((t,x),(s,y)):=\sqrt{|t-s|+d_{cc}(x,y)^2},\quad(t,x),(s,y)\in\mathbb{R}\times\mathbb{R}^{n}.
\end{equation*}
Additionally, for any $(t,x),(s,y)\in\mathbb{R}\times\mathbb{R}^{n}$, we denote the ball
$$
B^p((t,x);(s,y)):=B_{d_{p}((t,x),(s,y))}(t,x).
$$
Then the following lemma holds.
\begin{Lem}\label{Lem_mathcal K_I,q,l_singular integral theory}
For any multi-index $I=\left(i_1, i_2, \cdots, i_p\right),i_j \in\{1,\ldots,n_1\}$ with the length $|I|=p\in\mathbb{N}$ and $q,l\in\mathbb{N}$, we define a kernel as
$$
\mathcal{K}_{I,q,l}(t,x;s,y):=r_{l}(y^{-1}\circ x)X_I\partial_t^q\Gamma_0\left(t-s,y^{-1}\circ x\right),\quad(t,x),(s,y)\in\mathbb{R}\times\mathbb{R}^{n},
$$
where $r_{l}(\cdot)$ is a $l$-homogeneous polynomial. Let $\Omega$ be a bounded subset of $\mathbb{R}\times\mathbb{R}^{n}$. Then there exists a constant $c>0$, depending only on $|I|$, $q$, $\max_{\|x\|=1}\left|r_l(x)\right|$ and $\mathbb{G}$, such that the integral kernel $\mathcal{K}_{I,q,l}:\Omega\times\Omega\to\mathbb{R}$ satisfies the conditions in Lemma \ref{Lem_HCSIO} and Lemma \ref{Lem_HCFIO} as follows:
\begin{enumerate}[label=(\arabic*), leftmargin=*, itemindent=1.7em]
\item (growth condition)
\begin{equation*}
|\mathcal{K}_{I,q,l}(t,x;s,y)| \leq \frac{c}{d_p((t,x),(s,y))^{Q+|I|+2q-l}} \leq c \frac{d_p((t,x),(s,y))^{2-|I|-2q+l}}{|B^p((t,x);(s,y))|};
\end{equation*}\label{mathcal K_I,q,l_GC}
\item (mean value inequality)
\begin{align*}
|\mathcal{K}_{I,q,l}(t,x;s,y)-\mathcal{K}_{I,q,l}(t_1,x_1;s,y)| & \leq c \frac{d_p((t_1,x_1),(t,x))}{d_p((t_1,x_1),(s,y))^{Q+|I|+2q-l+1}}\\
& \leq c \frac{d_p((t_1,x_1),(t,x))^{2-|I|-2q+l}}{|B^p((t_1,x_1);(s,y))|} \cdot\left(\frac{d_p((t_1,x_1),(t,x))}{d_p((t_1,x_1),(s,y))}\right)
\end{align*}
when $d_p((t_1,x_1),(s,y)) > 4 d_p((t_1,x_1),(t,x))$.\label{mathcal K_I,q,l_MVI}
\item If $|I|+2q-l=2$, then $\mathcal{K}_{I,q,l}$ also satisfies (cancellation properties)
\begin{equation*}
\left|\int_{r<d_p^{\prime}((t,x),(s,y))<R} \mathcal{K}_{I,q,l}(t,x;s,y)dyds\right| \leq c
\end{equation*}
with $c$ independent of $r, R>0$, and
\begin{align*}
\lim_{\varepsilon \to 0} \bigg| & \int_{d_p^{\prime}((t,x),(s,y))>\varepsilon} \mathcal{K}_{I,q,l}(t,x;s,y)dyds-\int_{d_p^{\prime}((t_1,x_1),(s,y))>\varepsilon} \mathcal{K}_{I,q,l}(t_1,x_1;s,y)dyds\bigg| \\
\leq & c d_p((t_1,x_1),(t,x))^\gamma
\end{align*}
for any constant $\gamma$, where $d_p^{\prime}$ is any quasidistance on $\Omega$, equivalent to $d_p$.\label{mathcal K_I,q,l_CP}
\end{enumerate}
\end{Lem}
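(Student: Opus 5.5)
The proof rests on two properties of $\Gamma_0$: its joint homogeneity under the parabolic dilation $D_\lambda^p$, and its Gaussian-type bounds (cf. \eqref{Gamma_0 Lie derivatives GE}). From homogeneity,
$$
\Gamma_0(\lambda^2 t,D_\lambda(x))=\lambda^{-Q}\Gamma_0(t,x),
$$
so $X_I\partial_t^q\Gamma_0$ is $D_\lambda^p$-homogeneous of degree $-(Q+|I|+2q)$. Multiplying by the $l$-homogeneous polynomial $r_l(y^{-1}\circ x)$ shows that $\mathcal{K}_{I,q,l}$, as a function of $(t-s,y^{-1}\circ x)$, is homogeneous of degree $-(Q+|I|+2q-l)$. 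Away from the origin it is smooth, since $\Gamma_0$ is smooth off $\{t=s,\,x=y\}$ and vanishes for $t\le s$ together with all its derivatives. The Gaussian bounds show that it extends continuously by zero across $t=s$, $x\neq y$.

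\emph{Growth condition.} Let $\Phi(\tau,z):=r_l(z)X_I\partial_\tau^q\Gamma_0(\tau,z)$. Set $\rho=d_p((t,x),(s,y))$ and apply homogeneity with $\lambda=\rho$, which gives $|\Phi(\tau,z)|=\rho^{-(Q+|I|+2q-l)}|\Phi(\tau',z')|$ with $d_p((\tau',z'),0)=1$. The sup of $|\Phi|$ on the unit $d_p$-sphere is finite by continuity and compactness. Combined with $|B^p|\simeq \rho^{Q+2}$, this gives (1), with the stated dependence of $c$.

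\emph{Mean value inequality.} Write $\mathcal{K}_{I,q,l}(t,x;s,y)-\mathcal{K}_{I,q,l}(t_1,x_1;s,y)$ as a difference of $\Phi$ at the points $(t-s,y^{-1}\circ x)$ and $(t_1-s,y^{-1}\circ x_1)$. I would use the Lagrange-type mean value theorem on homogeneous groups (e.g.\ \cite{07BLU,22BB}): for a $D^p_\lambda$-homogeneous function $\Phi$ of degree $-m$, smooth off the origin, one has $|\Phi(\zeta\circ\eta)-\Phi(\zeta)|\le c\,\|\eta\|_p\,\|\zeta\|_p^{-m-1}$ whenever $\|\zeta\|_p\ge M\|\eta\|_p$. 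Here $\zeta=(t_1-s,y^{-1}\circ x_1)$ and $\eta=(t-t_1,x_1^{-1}\circ x)$, and the constant $c$ comes from the derivatives $X_j\Phi$, $\partial_\tau\Phi$ on the unit sphere, which are again homogeneous. This is where right translation matters. The vector fields are left invariant, so $y^{-1}\circ x=(y^{-1}\circ x_1)\circ(x_1^{-1}\circ x)$ and increments are taken via $X_j$ together with a time derivative. The quasi-triangle inequality together with $d_p((t_1,x_1),(s,y))>4d_p((t_1,x_1),(t,x))$ keeps the segment away from the singularity, which yields (2).

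\emph{Cancellation when $|I|+2q-l=2$.} In this case the kernel has degree $-(Q+2)$. For $l=0$ I would use $\int_{\mathbb{G}}X_I\partial_t^q\Gamma_0(\tau,z)\,dz=0$ whenever $|I|+q\ge1$, since $\int\Gamma_0(\tau,\cdot)=1$ and $X_i^*=-X_i$. Given this, integration over a homogeneous annulus $r<d_p'<R$ reduces, via the dilation change of variables $(\tau,z)\mapsto D^p_\lambda(\tau,z)$, to boundary-type terms bounded uniformly in $r,R$. For $l>0$, i.e.\ $\Phi=r_lX_I\partial^q\Gamma_0$ with lower total order $|I|+2q=2+l$, I would move derivatives onto $r_l$ by integrating in $z$. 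For $d_p'$ I would first take $d_p'=d_p$, or any homogeneous quasinorm, and then handle a general equivalent $d_p'$ by the growth bound on the symmetric-difference region, which has bounded logarithmic measure. The second cancellation estimate follows from translation invariance: the difference of the truncated integrals at $(t,x)$ and $(t_1,x_1)$ is zero in the limit for $d_p$-balls, and is controlled in general by the same comparison, giving any $\gamma$.

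I expect the main obstacle to be the cancellation part for general $l$ and general $d_p'$: one must verify carefully that the spatial integral of the homogeneous kernel over annuli stays bounded.
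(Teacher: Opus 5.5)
Your argument is correct in substance and proves the same three estimates, but by a more intrinsic homogeneous-group route than the paper's.

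\emph{Part (1).} The paper does not use homogeneity and compactness of the unit $d_p$-sphere. It inserts the Gaussian bound $|X_I\partial_t^q\Gamma_0(t,x)|\le c\,t^{-\frac{Q+|I|+2q}{2}}\exp(-\|x\|^2/(ct))$ and $|r_l(z)|\le \max_{\|w\|=1}|r_l(w)|\,\|z\|^l$, then uses $\sup_{u\ge 0}(1+u)^je^{-u}<\infty$.

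\emph{Part (2).} The paper does not use a mean value theorem for homogeneous functions under right translation. It sets $2R=d_p((t_1,x_1),(s,y))$ and multiplies the kernel by a cutoff $\varphi$ adapted to $B^p_R(t_1,x_1)$. It then applies the Lagrange-type inequality of \cite[Proposition 4.2(ii)]{07BB} to $h=\mathcal{K}_{I,q,l}\varphi$. Finally it bounds $X_ih$ and $R\,\partial_th$ term by term, including the terms with $X_ir_l$ and with derivatives of $\varphi$, via the same Gaussian bounds.

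\emph{Part (3).} The paper gets the vanishing of annulus integrals abstractly: $\mathcal{K}_{I,q,l}$ inherits $D^p_\lambda$-homogeneity of degree $-(Q+2)$ from $\Gamma_0$, and \cite[Proposition 1.8]{75Fo} on homogeneous distributions of that degree is quoted.

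\emph{What each buys.} The paper's route handles all $(I,q,l)$ at once and stays within the Gaussian estimates and the machinery of \cite{07BB}. Yours avoids cutoffs, makes the origin of the constants transparent, and proves the mean-zero property by hand from the slice identity $\int_{\mathbb{G}}\Phi(\tau,z)\,dz=0$ for $\tau>0$. To make your ``boundary terms'' step precise, use polar coordinates. A slab $\{a<\tau<b\}$ carries integral $\frac12\mu\log(b/a)$ and an annulus $\{r<d_p<R\}$ carries $\mu\log(R/r)$, where $\mu$ is the spherical mean of $\Phi$. The slice identity therefore forces $\mu=0$, so every annulus integral vanishes exactly.

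A few points need tightening.

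\emph{Time derivatives when $l>0$.} Integrating by parts in $z$ moves only the horizontal derivatives. It leaves $\partial_\tau^q\int_{\mathbb{G}}\tilde r(z)\Gamma_0(\tau,z)\,dz$, with $\tilde r$ homogeneous of degree $l-|I|=2q-2$. For $q=1$ your remark $\int\Gamma_0(\tau,\cdot)=1$ finishes the argument, but for $q\ge 2$ it does not. You then need one of two facts. Either use $\partial_\tau\Gamma_0=\Delta_{\mathcal{X}}\Gamma_0$ for $\tau>0$, so that $|I|+2q=l+2$ horizontal derivatives fall on $r_l$ and annihilate it. Or use the scaling identity $\int_{\mathbb{G}}\tilde r\,\Gamma_0(\tau,\cdot)\,dz=c\,\tau^{q-1}$, whose $q$-th derivative vanishes.

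\emph{General quasidistance $d_p'$.} For a general $d_p'$ equivalent to $d_p$, the symmetric-difference comparison gives only an $O(1)$ error. That is enough for the first cancellation bound, which the paper's change of variables glosses over. It is not enough for the second estimate with $c\,d_p((t_1,x_1),(t,x))^\gamma$. Your argument, like the paper's, proves that estimate only for left-invariant homogeneous $d_p'$, e.g.\ $d_p'=d_p$, where the left-hand side is $0$.

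\emph{Second inequality in (2).} Your bound $c\,d_p((t_1,x_1),(t,x))\,d_p((t_1,x_1),(s,y))^{-(Q+|I|+2q-l+1)}$ yields the second form in (2) with $d_p((t_1,x_1),(s,y))^{2-|I|-2q+l}$ in the numerator. That is what Lemma \ref{Lem_HCFIO} actually needs. With $d_p((t_1,x_1),(t,x))^{2-|I|-2q+l}$ in the numerator, as printed, that inequality fails when $|I|+2q-l<2$.
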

\begin{proof}
It can be known from the Gaussian estimates of $\Gamma_0$ (see \cite[Theorem 2.5]{02BLU}) that
\begin{equation*}
|X_I\partial_t^q\Gamma_0\left(t,x\right)| \leq c_{|I|,q}t^{-\frac{Q+|I|+2q}{2}}\exp\left(-\frac{\left\|x\right\|^2}{ct}\right),\,(t,x)\in\mathbb{R}_+\times\mathbb{R}^n,
\end{equation*}
where $c>1$ depends only on $\mathbb{G}$ and $c_{|I|,q}>0$ depends only on $|I|$, $q$ and $\mathbb{G}$. Hence, we have
\begin{align*}
|\mathcal{K}_{I,q,l}(t,x;s,y)| \leq & c_{|I|,q}\max_{\|x\|=1}\left|r_l(x)\right| \left\|y^{-1}\circ x\right\|^{l}(t-s)^{-\frac{Q+|I|+2q}{2}}\exp\left(-\frac{\left\|y^{-1}\circ x\right\|^2}{c(t-s)}\right) \\
\leq & c_{|I|,q,r_l}\frac{\left(\frac{\left\|y^{-1}\circ x\right\|^2+t-s}{t-s}\right)^{\frac{Q+|I|+2q}{2}}\exp\left(-\frac{\left\|y^{-1}\circ x\right\|^2}{c(t-s)}\right)}{\left(\left\|y^{-1}\circ x\right\|^2+t-s\right)^{\frac{Q+|I|+2q-l}{2}}} \\
\leq & \frac{c_{|I|,q,r_l,Q}}{d_p((t,x),(s,y))^{Q+|I|+2q-l}}\text{ for }t>s,
\end{align*}
by means of the fact that
\begin{equation*}
\sup_{x\in[0,+\infty)}(1+x)^{j}\exp(-x)=c_j<+\infty,\,j \geq 0.
\end{equation*}
Thus condition \ref{mathcal K_I,q,l_GC} is proved.

Next let us prove condition \ref{mathcal K_I,q,l_MVI}. Fix any $(t_1,x_1),(s,y)\in\Omega$. Let $d_{p}((t_1,x_1),(s,y))=2R$, then $d_{p}((t_1,x_1),(s,y)) > 4 d_{p}((t_1,x_1),(t,x))$ if and only if $(t,x) \in B_{\frac{R}{2}}^p(t_1,x_1)$. Let $\varphi$ be a smooth cutoff function such that $0 \leq \varphi \leq 1$, $\varphi \equiv 1$ on $B_{\frac{R}{2}}^p(t_1,x_1)$, $\operatorname{supp}\left(\varphi\right) \subset B_{R}^p(t_1,x_1)$, and for any multi-index $I=(i_1,i_2,\ldots,i_{k_1}), i_j\in\{1,\ldots,n_1\}$ with the length $|I|=k_1\in\mathbb{N}$ and $k_2\in\mathbb{N}$,
\begin{equation*}
\left|\partial_t^{k_2} X_I \varphi\right| \leq c_{k_1,k_2}R^{-(k_1+2k_2)}
\end{equation*}
(see \cite[Lemma 6.2]{07BB}). We define
\begin{equation*}
h(t,x):=\mathcal{K}_{I,q,l}(t-s,y^{-1}\circ x)\varphi(t,x).
\end{equation*}
Clearly, $h \in C_{\mathcal{X}}^{1,1}\left(B_{R}^p(t_1,x_1)\right)$. For any $(t,x) \in B_{\frac{R}{2}}^p(t_1,x_1)$, using \cite[Proposition 4.2(ii)]{07BB}, we have
\begin{equation}\label{h(t,x)-h(t_1,x_1)}
|h(t,x)-h(t_1,x_1)| \leq\sup_{(t,x) \in B_{R}^p(t_1,x_1)}\left(\left|D_{\mathcal{X}}h(t,x)\right|+R\left|\partial_t h(t,x)\right|\right)\cdot d_{p}((t,x),(t_1,x_1)).
\end{equation}
For any $i \in \{1,\ldots,n_1\}$,
\begin{align*}
X_i h(t,x)= & r_{l}(y^{-1}\circ x) X_iX_I\partial_t^q\Gamma_0(t-s,y^{-1}\circ x)\varphi(t,x)+\mathcal{K}_{I,q,l}(t-s,y^{-1}\circ x)X_i\varphi(t,x) \\
& + X_ir_{l}(y^{-1}\circ x) X_I\partial_t^q\Gamma_0(t-s,y^{-1}\circ x)\varphi(t,x) \\
=: & I+II+III.
\end{align*}
It is easy to find that for $d_{p}((t,x),(t_1,x_1))<R$,
\begin{align*}
|I| \leq & c_{|I|,q,r_l}\frac{\left(\frac{\left\|y^{-1}\circ x\right\|^2+t-s}{t-s}\right)^{\frac{Q+|I|+2q+1}{2}}\exp\left(-\frac{\left\|y^{-1}\circ x\right\|^2}{c(t-s)}\right)}{\left(\left\|y^{-1}\circ x\right\|^2+t-s\right)^{\frac{Q+|I|+2q+1-l}{2}}} \\
\leq & \frac{c_{|I|,q,r_l,Q}}{d_p((t,x),(s,y))^{Q+|I|+2q+1-l}} \\
\leq & \frac{c_{|I|,q,r_l,Q}}{d_p((t_1,x_1),(s,y))^{Q+|I|+2q+1-l}},
\end{align*}
and
\begin{align*}
|II| \leq & \frac{c}{R}\frac{c_{|I|,q,r_l,Q}}{d_p((t,x),(s,y))^{Q+|I|+2q-l}} \\
\leq & \frac{c_{|I|,q,r_l,Q}}{d_p((t_1,x_1),(s,y))^{Q+|I|+2q-l+1}},
\end{align*}
where we have used the fact that
$$
d_{p}((t,x),(s,y)) \geq d_{p}((t_1,x_1),(s,y))-d_{p}((t,x),(t_1,x_1)) \geq R
$$
and $d_{p}((t_1,x_1),(s,y))=2R$. Note that $X_ir_{l}(\cdot)$ is a $(l-1)$-homogeneous polynomial when $l\geq 1$, and $X_ir_{l}(\cdot)\equiv 0$ when $l=0$. Then, we likewise have
\begin{equation*}
|III| \leq \frac{c_{|I|,q,r_l,Q}}{d_p((t_1,x_1),(s,y))^{Q+|I|+2q-(l-1)}}.
\end{equation*}
Therefore, for $d_{p}((t,x),(t_1,x_1))<R$, we have
\begin{equation*}
\left|X_i h(t,x)\right| \leq \frac{c_{|I|,q,r_l,Q}}{d_p((t_1,x_1),(s,y))^{Q+|I|+2q-l+1}}.
\end{equation*}
Since
\begin{equation*}
\partial_t h(t,x)=r_{l}(y^{-1}\circ x) X_I\partial_t^{q+1}\Gamma_0(t-s,y^{-1}\circ x)\varphi(t,x)+\mathcal{K}_{I,q,l}(t-s,y^{-1}\circ x)\partial_t\varphi(t,x),
\end{equation*}
In the same way, we can obtain that for $d_{p}((t,x),(t_1,x_1))<R$,
\begin{equation*}
\left|\partial_t h(t,x)\right| \leq \frac{c_{|I|,q,r_l,Q}}{R d_p((t_1,x_1),(s,y))^{Q+|I|+2q-l+1}}
\end{equation*}
Combining the above estimates of $\left|X_i h(t,x)\right|$ and $\left|\partial_t h(t,x)\right|$ with \eqref{h(t,x)-h(t_1,x_1)}, we finally get that condition \ref{mathcal K_I,q,l_MVI} holds.

To prove condition \ref{mathcal K_I,q,l_CP} when $|I|+2q-l=2$, we note that $\Gamma_0(t,x)$ is $-Q$-homogeneous with respect to the dilations $D_{\lambda}^p$ (see \cite[Theorem 3.7]{02BLU}). Hence $\mathcal{K}_{I,q,l}(t,x)=\mathcal{K}_{I,q,l}(t,x;0,0)$ is a $-Q'$-homogeneous function, where $Q'=Q-l+|I|+2q=Q+2$. According to a known property of homogeneous distributions of degree $-Q'$ in homogeneous groups (see \cite[Proposition 1.8]{75Fo}), we have
\begin{equation*}
\int_{r<d_p^{\prime}((t,x),(s,y))<R} \mathcal{K}_{I,q,l}(t-s,y^{-1}\circ x)dyds=\int_{r'<d_p((t,x),(0,0))<R'} \mathcal{K}_{I,q,l}(t,x)dxdt=0
\end{equation*}
for any $R>r>0$ and quasidistance $d_p^{\prime}$ on $\Omega$, equivalent to $d_p$. Hence, the cancellation properties in condition \ref{mathcal K_I,q,l_CP} hold since $\Omega$ is bounded.
\end{proof}

\subsection{A priori Schauder estimates for the Cauchy problem on Carnot groups with
rough coefficients}

We come to present some a priori regularity estimates of the solution to the following linear degenerate Cauchy problem
\begin{equation}\label{LHJB in R^n}
\begin{cases}
\partial_t z(t,x)-\Delta_{\mathcal{X}} z(t,x)+b(t,x)\cdot D_{\mathcal{X}} z(t,x)+c(t,x)z(t,x)=f(t,x),& \text {in }(0, T] \times \mathbb{R}^{n}, \\
z(0,x)=g(x),& \text {in }\mathbb{R}^{n},
\end{cases}
\end{equation}
as detailed in the following propositions.
\begin{Prop}[see {\cite[Theorem 1.3]{25JWY}}]\label{Prop_LHJB in R^n UE&HE}
Let $\alpha\in(0,1)$. Assume $b(t,x)$, $c(t,x)$, $f(t,x)$ are bounded and continuous on $[0,T]\times\mathbb{R}^{n}$, $b\in B\left((0,T);C_{\mathcal{X},loc}^{\alpha}\left(\mathbb{R}^{n};\mathbb{R}^{n_1}\right)\right)$, $c$, $f\in B\left((0,T);C_{\mathcal{X},loc}^{\alpha}\left(\mathbb{R}^{n}\right)\right)$ and $g(x)\in C_{\mathcal{X}}^{0+1}\left(\mathbb{R}^{n}\right)$. Suppose $z(t,x)\in C_{{\mathcal{X}}}^{1,2} \left((0,T]\times\mathbb{R}^{n}\right)\cap C\left([0,T]\times\mathbb{R}^{n}\right)$ is a solution to the Cauchy problem \eqref{LHJB in R^n}, satisfying $z\in B\left((0,T);C_{\mathcal{X},loc}^{1+\alpha}\left(\mathbb{R}^{n}\right)\right)$ and $\sup_{t\in(0,T]}\|z(t,\cdot)\|_{C_{\mathcal{X}}^{1}\left(\mathbb{R}^{n}\right)}<+\infty$. The following conclusions hold:
\begin{enumerate}[label=(\arabic*), leftmargin=*, itemindent=1.7em]
\item $z$ satisfies
\begin{equation}\label{LHJB in R^n_prior C^1 estimate}
\sup_{t \in(0, T]}\|z(t, \cdot)\|_{C_{\mathcal{X}}^{1}\left(\mathbb{R}^{n}\right)} \leq C\left(\left\|g\right\|_{C_{\mathcal{X}}^{0+1}\left(\mathbb{R}^{n}\right)} +\|f\|_{L^{\infty}\left((0,T)\times\mathbb{R}^{n}\right)}\right).
\end{equation}
\item For any constant $\epsilon \in (0,T)$, $z$ satisfies
\begin{equation*}
\sup _{\substack{t \neq t'\\t,t' \in [\epsilon,T]}} \frac{\left\|z\left(t^{\prime}, \cdot\right)-z(t, \cdot)\right\|_{L^{\infty}\left(\mathbb{R}^{n}\right)}}{\left|t^{\prime}-t\right|} \leq C\left(\epsilon^{-1}\|g\|_{L^{\infty}\left(\mathbb{R}^{n}\right)} +\left\|g\right\|_{C_{\mathcal{X}}^{0+1}\left(\mathbb{R}^{n}\right)} +\|f\|_{L^{\infty}\left((0,T)\times\mathbb{R}^{n}\right)}\right).
\end{equation*}
Moreover, if $g(x)\in C_{\mathcal{X}}^{2+\alpha}\left(\mathbb{R}^{n}\right)$, then $z$ satisfies
\begin{equation}\label{LHJB in R^n_HSE_L^infty}
\sup _{\substack{t \neq t'\\t,t' \in [0,T]}} \frac{\left\|z\left(t^{\prime}, \cdot\right)-z(t, \cdot)\right\|_{L^{\infty}\left(\mathbb{R}^{n}\right)}}{\left|t^{\prime}-t\right|^{\frac{1}{2}}} \leq C\left(\left\|g\right\|_{C_{\mathcal{X}}^{1}\left(\mathbb{R}^{n}\right)} +\|f\|_{L^{\infty}\left((0,T)\times\mathbb{R}^{n}\right)}\right).
\end{equation}
\end{enumerate}
Here, the constants $C>0$ depend on $\mathbb{G}$,  $\|b\|_{L^{\infty}\left((0,T)\times\mathbb{R}^{n}\right)}$, $\|c\|_{L^{\infty}\left((0,T)\times\mathbb{R}^{n}\right)}$ and $T$ only.
\end{Prop}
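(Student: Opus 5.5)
The plan is to freeze the lower-order terms into the right-hand side and represent $z$ by Duhamel's formula with the heat kernel $\Gamma_0$ of $\mathcal{H}$. Set $F:=f-b\cdot D_{\mathcal{X}}z-cz$, which is bounded and continuous on $[0,T]\times\mathbb{R}^n$ by the hypotheses on $z$. Since $z$ is bounded with bounded horizontal gradient, uniqueness for the heat Cauchy problem in this class (via the Gaussian bounds of \cite{02BLU}) should give
\begin{equation*}
z(t,x)=\int_{\mathbb{R}^n}\Gamma_0(t,y^{-1}\circ x)g(y)\,dy+\int_0^t\!\!\int_{\mathbb{R}^n}\Gamma_0(t-s,y^{-1}\circ x)F(s,y)\,dy\,ds=:P_tg(x)+\int_0^tP_{t-s}F(s)(x)\,ds .
\end{equation*}
The tools are the Gaussian estimates quoted in the proof of Lemma \ref{Lem_mathcal K_I,q,l_singular integral theory}, namely $|X_I\partial_t^q\Gamma_0(t,x)|\le c\,t^{-(Q+|I|+2q)/2}e^{-\|x\|^2/(ct)}$, together with $\int\Gamma_0(t,\cdot)=1$ and $\int X_I\Gamma_0(t,\cdot)=0$ for $|I|\ge1$.

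\textbf{Step 1 (the $C^1_{\mathcal{X}}$ bound (\ref{LHJB in R^n_prior C^1 estimate})).} By the maximum principle, or directly from the formula together with Gronwall, $\|z(t)\|_\infty\le\|g\|_\infty+\int_0^t(\|f\|_\infty+\|b\|_\infty\|D_{\mathcal{X}}z(s)\|_\infty+\|c\|_\infty\|z(s)\|_\infty)\,ds$. For the gradient I will use the cancellation $\int X_i\Gamma_0=0$ to write
\begin{equation*}
X_iP_tg(x)=\int X_i\Gamma_0(t,y^{-1}\circ x)\bigl(g(y)-g(x)\bigr)dy,
\end{equation*}
which is bounded by $[g]_{C^{0+1}_{\mathcal{X}}}\int|X_i\Gamma_0(t,w)|\,\|w\|\,dw\le C[g]_{C^{0+1}_{\mathcal{X}}}$. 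The Duhamel term gives at most $\int_0^tC(t-s)^{-1/2}\|F(s)\|_\infty\,ds$. Putting these together yields
\begin{equation*}
\phi(t):=\|z(t)\|_{C^1_{\mathcal{X}}}\le C\bigl(\|g\|_{C^{0+1}_{\mathcal{X}}}+\|f\|_\infty\bigr)+C\int_0^t(t-s)^{-1/2}\phi(s)\,ds.
\end{equation*}
A singular (Henry-type) Gronwall lemma, or iterating the inequality once to obtain a bounded kernel, then gives (\ref{LHJB in R^n_prior C^1 estimate}). The finiteness of $\sup_t\phi(t)$, which is assumed, is what justifies this argument.

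\textbf{Step 2 (the $1/2$-Hölder estimate in time).} Let $h=t'-t>0$. By the semigroup property, $P_{t'}g-P_tg=P_t(P_hg-g)$. Since $\|P_hg-g\|_\infty\le[g]_{C^{0+1}_{\mathcal{X}}}\int\Gamma_0(h,w)\|w\|\,dw\le Ch^{1/2}[g]_{C^{0+1}_{\mathcal{X}}}$, this part is fine. For the Duhamel part I split it as follows.
\begin{equation*}
\int_t^{t'}P_{t'-s}F\,ds+\int_0^t\bigl(P_h-\mathrm{Id}\bigr)P_{t-s}F\,ds .
\end{equation*}
The first integral is $O(h\|F\|_\infty)$. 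In the second, the bound $\|(P_h-\mathrm{Id})u\|_\infty\le Ch^{1/2}\|D_{\mathcal{X}}u\|_\infty$ combined with $\|D_{\mathcal{X}}P_{t-s}F\|_\infty\le C(t-s)^{-1/2}\|F\|_\infty$ gives $Ch^{1/2}T^{1/2}\|F\|_\infty$. Step 1 controls $\|F\|_\infty$, and this proves (\ref{LHJB in R^n_HSE_L^infty}); in fact it only needs $g$ Lipschitz.

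\textbf{Step 3 (the Lipschitz-in-time bound on $[\epsilon,T]$).} Here I expect the main obstacle. We need $\|\partial_tz\|_\infty=\|\Delta_{\mathcal{X}}z-b\cdot D_{\mathcal{X}}z-cz+f\|_\infty$ on $[\epsilon,T]$, which requires second horizontal derivatives. For the initial-data part, $|X_iX_jP_tg|\le Ct^{-1}\|g\|_\infty$, which yields the $\epsilon^{-1}\|g\|_\infty$ term. The Duhamel part is genuinely singular: the kernel $X_iX_j\Gamma_0$ has $|I|=2$, so it is a standard kernel with the cancellation of Lemma \ref{Lem_mathcal K_I,q,l_singular integral theory}\ref{mathcal K_I,q,l_CP}. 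I would handle it by subtracting $F(s,x)$ and using the spatial Hölder continuity of $F(s,\cdot)$, which holds because $z\in B(C^{1+\alpha}_{\mathcal{X},loc})$ and $b,c,f\in B(C^\alpha_{\mathcal{X},loc})$. This gives a bound $\int_0^t(t-s)^{-1+\alpha/2}[F(s)]_\alpha\,ds$.

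The quantitative Hölder seminorm of $D_{\mathcal{X}}z(s)$ is then itself estimated by a second bootstrap. Writing $D_{\mathcal{X}}z$ as $X_i\Gamma_0$ applied to $F$, this is a fractional kernel with $\delta=1$, and Lemma \ref{Lem_HCFIO} applies to it. The data contribution is $[D_{\mathcal{X}}P_sg]_\alpha\le Cs^{-\alpha/2}[g]_{C^{0+1}_{\mathcal{X}}}$, so the resulting estimate is integrable. Combining this with Step 1 gives the stated bound with constants depending only on $\mathbb{G}$, $T$, $\|b\|_\infty$ and $\|c\|_\infty$. The delicate point is making the local Hölder assumptions uniform; I would do this by translation invariance, working on unit $d_{cc}$-balls around arbitrary centres.
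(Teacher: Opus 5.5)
Steps 1 and 2 are correct. The paper itself gives no proof of this statement; it imports it from \cite[Theorem 1.3]{25JWY}.

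\textbf{Steps 1 and 2.} The Duhamel representation is legitimate: $F\in B((0,T);C^\alpha_{\mathcal{X},loc})$ makes the Duhamel integral a classical solution, and bounded solutions are unique. Step 1 then uses the cancellation $\int X_i\Gamma_0(t,\cdot)=0$, the bound $\|D_{\mathcal{X}}P_{t-s}F(s)\|_\infty\le C(t-s)^{-1/2}\|F(s)\|_\infty$, and the once-iterated singular Gronwall inequality. This gives part (1) with $C$ depending only on $\mathbb{G}$, $T$, $\|b\|_\infty$ and $\|c\|_\infty$. Your splitting in Step 2 gives the $\frac12$-H\"{o}lder estimate already for Lipschitz $g$. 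That implies the stated version, since $[g]_{C^{0+1}_{\mathcal{X}}}\le\sum_i\|X_ig\|_\infty$ by integrating along subunit curves.

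\textbf{The gap is Step 3.} After subtracting $F(s,x)$ you are left with $\int_0^t(t-s)^{-1+\alpha/2}[F(s)]_{\alpha}\,ds$, and
\[
[F(s)]_\alpha\le[f(s)]_\alpha+[b(s)]_\alpha\|D_{\mathcal{X}}z(s)\|_\infty+\|b\|_\infty[D_{\mathcal{X}}z(s)]_\alpha+[c(s)]_\alpha\|z(s)\|_\infty+\|c\|_\infty[z(s)]_\alpha .
\]
Your second bootstrap handles $[D_{\mathcal{X}}z(s)]_\alpha$. However, $[f(s)]_\alpha$, $[b(s)]_\alpha$ and $[c(s)]_\alpha$ survive. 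They are neither on the right-hand side nor among the quantities $C$ may depend on. Moreover, the hypotheses control them only on compact sets, with nothing uniform in the centre. Translation invariance of $\mathcal{H}$ does not make the \emph{data}'s local seminorms uniform. So ``combining this with Step 1 gives the stated bound'' does not follow.

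\textbf{This cannot be repaired.} Take $b=c=0$ and $g=0$. The first estimate in (2) then asserts $\|\partial_t z\|_{L^\infty([\epsilon,T]\times\mathbb{R}^n)}\le C\|f\|_{L^\infty}$, i.e.\ $L^\infty$ maximal regularity, and that fails:
\begin{itemize}
\item By homogeneity, $\int_{\mathbb{R}^n}|\partial_t\Gamma_0(\tau,w)|\,dw=c'/\tau$ with $c'>0$.
\item Choose smooth $f$ with $|f|\le 1$, supported in $\{s<T-\delta\}$, approximating $\operatorname{sign}\partial_t\Gamma_0(T-s,y^{-1})$ in $L^1(|\partial_t\Gamma_0(T-s,y^{-1})|\,dy\,ds)$.
\item Then $\partial_t z(T,0)=\int_0^{T-\delta}\!\int_{\mathbb{R}^n}\partial_t\Gamma_0(T-s,y^{-1})f(s,y)\,dy\,ds\approx c'\log(T/\delta)\to\infty$, while every qualitative hypothesis holds.
\end{itemize}

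\textbf{What Step 3 can deliver.} It gives the Lipschitz-in-time bound with $\sup_t\|f(t)\|_{C^\alpha_{\mathcal{X}}(\mathbb{R}^n)}$ on the right. The constant then also depends on $\sup_t\|b(t)\|_{C^\alpha_{\mathcal{X}}(\mathbb{R}^n)}$ and $\sup_t\|c(t)\|_{C^\alpha_{\mathcal{X}}(\mathbb{R}^n)}$, and these must be global, not local, seminorms. The first estimate of part (2), as literally stated, should not be claimed. Within the paper, only part (1) and the $\frac12$-H\"{o}lder estimate are ever used, and your Steps 1--2 establish both.
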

\begin{Cor}[Existence and uniqueness of the $C_{\mathcal{X}}^{1}$-bounded solution]\label{Cor_LHJB existence and uniqueness}
Let $\alpha\in(0,1)$. Assume $b(t,x)$, $c(t,x)$ and $f(t,x)$ are bounded and continuous on $[0,T]\times\mathbb{R}^{n}$, $b\in B\left([0,T];C_{\mathcal{X}}^{\alpha}\left(\mathbb{R}^{n};\mathbb{R}^{n_1}\right)\right)$, $c\in B\left([0,T];C_{\mathcal{X}}^{\alpha}\left(\mathbb{R}^{n}\right)\right)$, $f\in B\left((0,T);C_{\mathcal{X}}^{\alpha}\left(\mathbb{R}^{n}\right)\right)$ and $g(x)\in C_{\mathcal{X}}^{2+\alpha}\left(\mathbb{R}^{n}\right)$. Then there exists a unique solution $z(t,x) \in C_{{\mathcal{X}}}^{1,2} \left([0,T]\times\mathbb{R}^{n}\right)$ to the Cauchy problem \eqref{LHJB in R^n}, satisfying $\sup_{t\in(0,T]}\|z(t,\cdot)\|_{C_{\mathcal{X}}^{1}\left(\mathbb{R}^{n}\right)}<+\infty$.
\end{Cor}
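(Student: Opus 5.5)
Uniqueness comes from the a priori estimate, and existence from approximation plus compactness.

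\emph{Uniqueness.} Let $z_1,z_2$ be two solutions with $\sup_{t}\|z_i(t,\cdot)\|_{C_{\mathcal{X}}^{1}}<+\infty$. The difference $w=z_1-z_2$ solves \eqref{LHJB in R^n} with $f\equiv 0$ and $g\equiv 0$. Since $w\in C_{\mathcal{X}}^{1,2}([0,T]\times\mathbb{R}^n)$, it is locally $C_{\mathcal{X}}^{1+\alpha}$ in space uniformly in $t$, possibly after a local regularity step based on the interior Schauder theory of \cite{07BB}. Thus the hypotheses of Proposition \ref{Prop_LHJB in R^n UE&HE} hold, and \eqref{LHJB in R^n_prior C^1 estimate} gives $\sup_t\|w(t,\cdot)\|_{C^1_{\mathcal{X}}}\le C(0+0)=0$. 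Hence $w\equiv0$. Alternatively, the Gaussian-growth maximum principle for $\mathcal{H}$ applied to the bounded function $w$ gives the same conclusion.

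\emph{Existence.} I will use a fixed-point argument built on the heat kernel $\Gamma_0$. Set $u_0(t,x)=\int\Gamma_0(t,y^{-1}\circ x)g(y)\,dy$. This satisfies $\mathcal{H}u_0=0$ and $u_0(0)=g$, and since $X_I$ is left invariant it commutes with the convolution, so $\|u_0(t)\|_{C^{2+\alpha}_{\mathcal{X}}}\le\|g\|_{C^{2+\alpha}_{\mathcal{X}}}$. For the inhomogeneous part, write the solution as $z=u_0+v$, where
\[
v=\mathcal{G}\bigl(f-b\cdot D_{\mathcal{X}}z-cz\bigr),\qquad \mathcal{G}F(t,x)=\int_0^t\!\!\int\Gamma_0(t-s,y^{-1}\circ x)F(s,y)\,dy\,ds .
\]
By the Gaussian bounds in Lemma \ref{Lem_mathcal K_I,q,l_singular integral theory}, we have $\|X_i\mathcal{G}F(t)\|_\infty\le C\sqrt t\,\|F\|_\infty$. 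Therefore the map $z\mapsto u_0+\mathcal{G}(f-b\cdot D_{\mathcal{X}}z-cz)$ is a contraction in the space $\{z:\sup_t\|z(t)\|_{C^1_{\mathcal{X}}}<\infty\}$ on a short interval $[0,T_0]$. Since $T_0$ depends only on $\|b\|_\infty$ and $\|c\|_\infty$, iterating covers $[0,T]$, and we obtain a bounded mild solution with bounded $D_{\mathcal{X}}z$.

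\emph{Regularity of the mild solution.} It remains to show that this mild solution is classical, i.e.\ $z\in C^{1,2}_{\mathcal{X}}$. The source $F=f-b\cdot D_{\mathcal{X}}z-cz$ is bounded and $C^\alpha_{\mathcal{X}}$ in $x$, uniformly in $t$. For this I need $D_{\mathcal{X}}z(t,\cdot)\in C^{\alpha}_{\mathcal{X}}$. This follows from the fractional kernel $\mathcal{K}_{(i),0,0}$ together with Lemma \ref{Lem_HCFIO}, applied on bounded sets and then made uniform by translation invariance. Then $X_iX_j\mathcal{G}F$ is given by a principal value with the kernel $\mathcal{K}_{(i,j),0,0}$ plus a term $-c_{ij}F$. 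By Lemma \ref{Lem_mathcal K_I,q,l_singular integral theory}\ref{mathcal K_I,q,l_CP}, this principal value is well defined and continuous.

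The main obstacle is that $F$ is H\"older only in $x$, not in $t$. I will handle this by freezing the time variable, using the standard splitting
\[
F(s,y)=\bigl(F(s,y)-F(s,x)\bigr)+F(s,x).
\]
The first term is absolutely integrable against $X_iX_j\Gamma_0$, because of the spatial H\"older gain $\|y^{-1}\circ x\|^\alpha$. The second term is handled by $\int X_iX_j\Gamma_0(t-s,y^{-1}\circ x)\,dy=0$ for $t>s$. This yields continuous $X_IX_J z$ for $|I|+|J|\le2$, and then $\partial_tz=\Delta_{\mathcal{X}}z-b\cdot D_{\mathcal{X}}z-cz+f$ is continuous. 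Continuity up to $t=0$ uses $g\in C^{2+\alpha}_{\mathcal{X}}$, since then $X_IX_Ju_0(t)\to X_IX_Jg$ uniformly.
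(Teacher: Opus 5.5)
\textbf{Verdict: the approach is viable and differs from the paper's, but one step fails as written.} Your uniqueness argument is the paper's. For the difference $w$, continuity of $X_iX_jw$ on $[0,T]\times\mathbb{R}^n$ already gives $w\in B((0,T);C^{1+\alpha}_{\mathcal{X},loc})$, so no extra Schauder step is needed, and \eqref{LHJB in R^n_prior C^1 estimate} with $f=g=0$ forces $w\equiv0$. For existence the paper simply cites \cite[Corollary 1.1]{25JWY}, so your Duhamel-plus-contraction construction is a different, self-contained route. It has a real gap at its first step.

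\textbf{The gap.} You claim that $X_I$ "commutes with the convolution", giving $\|u_0(t)\|_{C^{2+\alpha}_{\mathcal{X}}}\le\|g\|_{C^{2+\alpha}_{\mathcal{X}}}$. This is false on a non-abelian $\mathbb{G}$.
\begin{itemize}
\item Writing $u_0(t,x)=\int\Gamma_0(t,w)\,g(x\circ w^{-1})\,dw$ shows $u_0(t)$ is an average of \emph{right} translates of $g$, while left-invariant fields commute only with \emph{left} translations.
\item Hence $X_Iu_0(t,x)=\int (X_I\Gamma_0)(t,y^{-1}\circ x)\,g(y)\,dy$. The derivatives land on the kernel, and the naive bound is $Ct^{-|I|/2}\|g\|_{L^\infty}$.
\item Moving them onto $g$ via \eqref{convolution on homogeneous groups_property} produces right-invariant fields $X_i^R\neq X_i$. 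For example, in the first Heisenberg group $X^R-X$ is a nonzero multiple of $y\,\partial_z$, and $\Gamma_0$ depends on $z$.
\item By \eqref{X_i^R=sum X_J_j,l}, these $X_i^R$ are polynomial-coefficient combinations of higher-order horizontal derivatives. This is why Lemma \ref{Lem_X_i X_J T f} and \cite[Theorem 1.2]{25JWY} need the weighted kernels $\mathcal{K}^P$, and why one gets a bound only with some constant $C=C(\mathbb{G})$.
\end{itemize}

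Three later steps rest on this claim:
\begin{itemize}
\item that $u_0$ lies in your contraction space, i.e.\ has bounded $D_{\mathcal{X}}u_0$ up to $t=0$;
\item the $t$-uniform $C^{\alpha}_{\mathcal{X}}$ bound on $D_{\mathcal{X}}u_0(t,\cdot)$, which you need for $F(t,\cdot)$ to be H\"older;
\item the limit $X_IX_Ju_0(t)\to X_IX_Jg$ as $t\to0$.
\end{itemize}

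\textbf{How to repair it.} The simplest fix avoids $u_0$ altogether. Put $z=g+w$ and run your fixed point for $w=\mathcal{G}\tilde F$ with zero initial datum, where $\tilde F=f+\Delta_{\mathcal{X}}g-b\cdot D_{\mathcal{X}}(g+w)-c(g+w)$. Then every derivative falls on $\Gamma_0$ and your freezing argument applies directly. The bound $|X_iX_j\mathcal{G}\tilde F(t,\cdot)|\le Ct^{\alpha/2}$ gives continuity up to $t=0$. This is the paper's own device $v_\delta=(z-g)\varphi_\delta$ in Proposition \ref{Prop_LHJB in R^n SE}. Alternatively, invoke \cite[Theorem 1.2(1)]{25JWY} to get $\sup_t\|u_0(t)\|_{C^{2+\alpha}_{\mathcal{X}}}\le C\|g\|_{C^{2+\alpha}_{\mathcal{X}}}$.

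\textbf{Smaller points.}
\begin{itemize}
\item Lemma \ref{Lem_HCFIO} and Lemma \ref{Lem_HCSIO} (with Lemma \ref{Lem_mathcal K_I,q,l_singular integral theory}\ref{mathcal K_I,q,l_CP}) require inputs H\"older in $(t,x)$ with respect to $d_p$. Your $F$ is not, and for $D_{\mathcal{X}}z$ the appeal would be circular.
\item What you actually need is the elementary fact that $D_{\mathcal{X}}\mathcal{G}F(t,\cdot)$ is log-Lipschitz for merely bounded $F$. Split $t-s\lessgtr d_{cc}(x,x')^2$ and use \eqref{Gamma_0 Lie derivatives GE}.
\item For $X_iX_j\mathcal{G}F$, drop the principal-value and "$-c_{ij}F$" formulation. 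Use only the splitting $F(s,y)=(F(s,y)-F(s,x))+F(s,x)$, which freezes the \emph{space} variable, not time. Truncate in time with $\int_0^{t-\eta}$ to justify differentiating under the integral; with this slice-wise truncation, $\int X_iX_j\Gamma_0(\tau,\cdot)=0$ and no extra term appears.
\item The existence of $\partial_tz$, not just its formula, must be proved by the same truncation or taken from \cite[Proposition 4.2(1)]{25JWY} (cf.\ Lemma \ref{Lem_HE existence and uniqueness}).
\end{itemize}
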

\begin{proof}
See \cite[Corollary 1.1]{25JWY} for a detailed proof, where the uniqueness is obtained directly from \eqref{LHJB in R^n_prior C^1 estimate}.
\end{proof}
\begin{Prop}[see {\cite[Theorem 1.4]{25JWY}}]\label{Prop_LHJB in R^n USE&HSE}
Let $k\in\mathbb{Z}_+$, $\alpha\in(0,1)$, and $b(t,x)$, $c(t,x)$, $f(t,x)$ be bounded and continuous on $[0,T]\times\mathbb{R}^{n}$, $b\in B\left((0,T);C_{\mathcal{X}}^{k-1+\alpha}\left(\mathbb{R}^{n};\mathbb{R}^{n_1}\right)\right)$, $c$, $f\in B\left((0,T);C_{\mathcal{X}}^{k-1+\alpha}\left(\mathbb{R}^{n}\right)\right)$ and $g(x)\in C_{\mathcal{X}}^{k+\alpha}\left(\mathbb{R}^{n}\right)$. Suppose $z(t,x)\in C_{{\mathcal{X}}}^{1,2} \left((0,T]\times\mathbb{R}^{n}\right)\cap C\left([0,T]\times\mathbb{R}^{n}\right)$ is a solution to the Cauchy problem \eqref{LHJB in R^n}, satisfying $z\in
B\left([0,T];C_{\mathcal{X}}^{k+\alpha}\left(\mathbb{R}^{n}\right)\right)$. Then the following conclusions hold:
\begin{enumerate}[label=(\arabic*), leftmargin=*, itemindent=1.7em]
\item $z$ satisfies
\begin{equation}\label{LHJB in R^n_prior USE}
\sup _{t \in[0, T]}\|z(t, \cdot)\|_{C_{\mathcal{X}}^{k+\alpha}\left(\mathbb{R}^{n}\right)} \leq C\left(\left\|g\right\|_{C_{\mathcal{X}}^{k+\alpha}\left(\mathbb{R}^{n}\right)}+\sup_{t\in(0,T)}\|f(t, \cdot)\|_{C_{\mathcal{X}}^{k-1+\alpha}\left(\mathbb{R}^{n}\right)}\right)
\end{equation}
\item For any constant $\epsilon \in (0,T)$, $z$ satisfies
\begin{equation*}\label{LHJB in R^n_prior HSE}
\sup _{\substack{t \neq t'\\t,t' \in [\epsilon,T]}} \frac{\left\|z\left(t^{\prime}, \cdot\right)-z(t, \cdot)\right\|_{C_{\mathcal{X}}^{k+\alpha}\left(\mathbb{R}^{n}\right)}}{\left|t^{\prime}-t\right|^{\frac{1}{2}}} \leq C\left(\epsilon^{-\frac{1}{2}}\left\|g\right\|_{C_{\mathcal{X}}^{k+\alpha}\left(\mathbb{R}^{n}\right)}+\sup_{t \in(0, T)}\|f(t, \cdot)\|_{C_{\mathcal{X}}^{k-1+\alpha}\left(\mathbb{R}^{n}\right)}\right).
\end{equation*}
Moreover, if $g(x)\in C_{\mathcal{X}}^{k+1+\alpha}\left(\mathbb{R}^{n}\right)$, then $z$ satisfies
\begin{equation*}
\sup _{\substack{t \neq t'\\t,t' \in [0,T]}} \frac{\left\|z\left(t^{\prime}, \cdot\right)-z(t, \cdot)\right\|_{C_{\mathcal{X}}^{k+\alpha}\left(\mathbb{R}^{n}\right)}}{\left|t^{\prime}-t\right|^{\frac{1}{2}}} \leq C\left(\left\|g\right\|_{C_{\mathcal{X}}^{k+1+\alpha}\left(\mathbb{R}^{n}\right)}+\sup_{t \in(0, T)}\|f(t, \cdot)\|_{C_{\mathcal{X}}^{k-1+\alpha}\left(\mathbb{R}^{n}\right)}\right).
\end{equation*}
\end{enumerate}
Here, the constants $C>0$ depend on $\mathbb{G}$,  $\alpha$, $k$, $T$, $\sup_{t\in(0,T)}\|b(t,\cdot)\|_{C_{\mathcal{X}}^{k-1+\alpha}\left(\mathbb{R}^{n};\mathbb{R}^{n_1}\right)}$ and $\sup_{t\in(0,T)}\|c(t, \cdot)\|_{C_{\mathcal{X}}^{k-1+\alpha}\left(\mathbb{R}^{n}\right)}$ only.
\end{Prop}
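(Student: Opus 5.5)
We would prove the proposition by treating the lower-order terms as a perturbation of the heat operator $\mathcal{H}=\partial_t-\Delta_{\mathcal{X}}$ and using the Duhamel representation with the fundamental solution $\Gamma_0$. Since $z\in C_{\mathcal{X}}^{1,2}$ is bounded together with its $C_{\mathcal{X}}^{k+\alpha}$ norm, a uniqueness argument for bounded solutions of the heat equation on $\mathbb{G}$ (via the Gaussian bounds of $\Gamma_0$) gives
\begin{equation*}
z(t,x)=\int_{\mathbb{R}^n}\Gamma_0(t,y^{-1}\circ x)g(y)dy+\int_0^t\int_{\mathbb{R}^n}\Gamma_0(t-s,y^{-1}\circ x)F(s,y)\,dy\,ds,
\end{equation*}
where $F:=f-b\cdot D_{\mathcal{X}}z-cz$. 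By the hypotheses and the algebra property of $C_{\mathcal{X}}^{k-1+\alpha}$,
\begin{equation*}
\sup_s\|F(s,\cdot)\|_{C_{\mathcal{X}}^{k-1+\alpha}}\le \sup_s\|f(s,\cdot)\|_{C_{\mathcal{X}}^{k-1+\alpha}}+C_{b,c}\sup_s\|z(s,\cdot)\|_{C_{\mathcal{X}}^{k+\alpha}} .
\end{equation*}
So it is enough to prove two linear estimates: the homogeneous term is controlled in $C_{\mathcal{X}}^{k+\alpha}$ by $\|g\|_{C_{\mathcal{X}}^{k+\alpha}}$, and the Duhamel term gains one derivative with a small constant on short time intervals.

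\textbf{Homogeneous term.} Write $\int\Gamma_0(t,y^{-1}\circ x)g(y)dy=\int\Gamma_0(t,w)g(x\circ w^{-1})dw$. Because the $X_i$ are left-invariant, derivatives in $x$ do not pass directly onto $g(x\circ w^{-1})$. Following the device encoded in the kernels $\mathcal{K}_{I,q,l}$ of Lemma \ref{Lem_mathcal K_I,q,l_singular integral theory}, we express $X_i$ acting on right translates through right-invariant fields, with coefficients that are homogeneous polynomials $r_l(w)$. Integrating by parts moves $X_I$, $|I|\le k$, onto $g$, at the cost of kernels $r_l(w)X_{I'}\Gamma_0(t,w)$ with $|I'|-l=0$. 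These are integrable in $w$ uniformly in $t$ by the Gaussian estimate. Hölder continuity in $x$ then follows from $[X_Ig]_{C^\alpha_{\mathcal{X}}}$, since the kernels have bounded $L^1$ norm.

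\textbf{Duhamel term.} Apply the same transfer of $k-1$ derivatives onto $F$ (for $k=1$ nothing is transferred). What remains is kernels of type $\mathcal{K}_{I,0,l}$ with $|I|-l=1$ or $2$, acting on $G=X_JF\in C_{\mathcal{X}}^{\alpha}$ in space and only bounded and measurable in time.
\begin{itemize}
\item For the $C_{\mathcal{X}}^{k}$ part (one more derivative), subtract $G(s,x)$. The kernel bound then gives $\int_0^t(t-s)^{-(1-\alpha)/2}ds\lesssim t^{(1+\alpha)/2}$.
\item For the $C_{\mathcal{X}}^{\alpha}$ seminorm of the top derivatives, at each fixed $s$ use the growth bound, the mean value inequality and the cancellation $\int X^2\Gamma_0(t-s,\cdot)=0$ from Lemma \ref{Lem_mathcal K_I,q,l_singular integral theory}. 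Split the $y$-integral into $d_{cc}(x,y)$ small and large relative to $d_{cc}(x,x_1)$, in the spirit of Lemma \ref{Lem_HCSIO}.
\item Integrate in $s$ only after the spatial estimate. This avoids any need for time regularity of $F$, which is the point of the rough-coefficient setting.
\end{itemize}
Doing this carefully with the $\alpha$-gap should give a factor $C\tau^{\theta}$ for some $\theta>0$ on an interval of length $\tau$. Choosing $\tau$ small lets us absorb $C_{b,c}\tau^{\theta}\sup\|z\|_{C_{\mathcal{X}}^{k+\alpha}}$ into the left-hand side. Iterating over finitely many intervals of length $\tau$ covering $[0,T]$, restarting each time from $z(t_j,\cdot)$ as initial datum, gives \eqref{LHJB in R^n_prior USE}.

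\textbf{Time Hölder estimates.} Write
\begin{equation*}
z(t')-z(t)=\bigl[\Gamma_0(t'-t)*z(t)-z(t)\bigr]+\int_t^{t'}\Gamma_0(t'-s)*F(s)\,ds .
\end{equation*}
\begin{itemize}
\item The integral term is bounded in $C_{\mathcal{X}}^{k+\alpha}$ by $C|t'-t|^{1/2}\sup\|F\|_{C_{\mathcal{X}}^{k-1+\alpha}}$, by the same kernel bounds.
\item When $g\in C_{\mathcal{X}}^{k+1+\alpha}$, we first apply part (1) with $k+1$ to get $z(t)\in C_{\mathcal{X}}^{k+1+\alpha}$. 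The bracket is then controlled by $|t'-t|^{1/2}\|z(t)\|_{C_{\mathcal{X}}^{k+1+\alpha}}$, using $\int\Gamma_0(h,w)\|w\|\,dw\lesssim h^{1/2}$ and the stratified mean value inequality.
\item When only $g\in C_{\mathcal{X}}^{k+\alpha}$ and $t,t'\ge\epsilon$, we instead put one derivative on $\Gamma_0(t,\cdot)$ in the homogeneous part. This yields the $\epsilon^{-1/2}$ factor.
\end{itemize}

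The main obstacle is the derivative transfer across the group convolution. Left-invariant derivatives of $\int\Gamma_0(t,y^{-1}\circ x)g(y)dy$ do not commute with translation in $y$, so one needs an exact identity rewriting $X_I$ in $x$ as a sum of derivatives on $g$ times homogeneous polynomial weights on $\Gamma_0$. One must also check that every resulting kernel still has the homogeneity and cancellation required by Lemma \ref{Lem_mathcal K_I,q,l_singular integral theory}. We would first try the Taylor expansion of right-invariant in terms of left-invariant fields, $X_i^R=\sum_j p_{ij}(x)X_j$ with homogeneous polynomial $p_{ij}$, and induct on $|I|$.
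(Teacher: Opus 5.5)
\textbf{Gap in part (2).} The time-H\"older estimates do not go through as written. You split at time $t$, writing $z(t')-z(t)=[\Gamma_0(t'-t)*z(t)-z(t)]+\int_t^{t'}\Gamma_0(t'-s)*F(s)\,ds$. The bracket is $O(|t'-t|^{1/2})$ in $C_{\mathcal{X}}^{k+\alpha}$ only if $z(t)\in C_{\mathcal{X}}^{k+1+\alpha}$.

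You obtain this from ``part (1) with $k+1$'', but that step is not available. Part (1) at level $k+1$ requires $b,c,f\in B((0,T);C_{\mathcal{X}}^{k+\alpha})$ and the a priori hypothesis $z\in B([0,T];C_{\mathcal{X}}^{k+1+\alpha})$. In the last statement only $g$ is upgraded; $b,c,f$ stay in $C_{\mathcal{X}}^{k-1+\alpha}$.

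Your own machinery cannot produce it either. The Duhamel part of $z(t)$ would have to gain two derivatives from $F\in L^\infty_t C_{\mathcal{X}}^{k-1+\alpha}$. But the per-time $C_{\mathcal{X}}^{\alpha}$ operator norm of the second-derivative kernels is of order $(t-s)^{-1}$, which is not integrable. That is a maximal-regularity statement for time-rough data, which you have not proved and which the proposition does not claim.

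The $\epsilon$-estimate has the same defect. Putting one derivative on $\Gamma_0(t,\cdot)$ handles $\Gamma_0(t)*g$, but the Duhamel component of $z(t)$ inside the bracket is still only in $C_{\mathcal{X}}^{k+\alpha}$.

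\textbf{Repair.} Split from time $0$ and estimate the Duhamel increment directly, as $\int_t^{t'}\Gamma_0(t'-s)*F(s)\,ds+\int_0^t[\Gamma_0(t'-s)-\Gamma_0(t-s)]*F(s)\,ds$. For the second term, transfer $k-1$ derivatives (Lemma~\ref{Lem_X_i X_J T f}) and apply the last one. Set $\eta=t'-t$. The operator $G\mapsto[X\mathcal{K}^P(h+\eta,\cdot)-X\mathcal{K}^P(h,\cdot)]*G$ then has $C_{\mathcal{X}}^{\alpha}$-norm at most $C\min\{h^{-1/2},\eta h^{-3/2}\}$, by the Gaussian bounds for $\partial_h$ of the kernels together with cancellation. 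This integrates over $h\in(0,t)$ to $C\eta^{1/2}$.

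The free part is handled separately:
\begin{itemize}
\item If $g\in C_{\mathcal{X}}^{k+1+\alpha}$, write it as $\Gamma_0(t)*(\Gamma_0(\eta)*g-g)$, which is bounded by $C\eta^{1/2}\|g\|_{C_{\mathcal{X}}^{k+1+\alpha}}$.
\item If only $g\in C_{\mathcal{X}}^{k+\alpha}$ and $t\ge\epsilon$, write it as $\int_t^{t'}\partial_s\Gamma_0(s)*g\,ds$. After transferring $k$ derivatives the operator norm is $Cs^{-1}$, giving a bound of order $\min\{1,\eta/\epsilon\}\le(\eta/\epsilon)^{1/2}$.
\end{itemize}
Equivalently, as in the proof of Proposition~\ref{Prop_LHJB in R^n SE}, you can work with $v_\delta=(z-g)\varphi_\delta$. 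It has zero initial datum and a source containing $\Delta_{\mathcal{X}}g\,\varphi_\delta$. That term lies in $C_{\mathcal{X}}^{k-1+\alpha}$ exactly when $g\in C_{\mathcal{X}}^{k+1+\alpha}$, which is where that hypothesis enters.

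\textbf{Part (1).} Your global Duhamel representation with short-time absorption and restarting is a legitimate alternative to the cited argument of \cite{25JWY}. As the proof of Proposition~\ref{Prop_LHJB in R^n SE} indicates, that argument instead localizes with $\varphi_\delta$, represents the solution via Lemma~\ref{Lem_HE existence and uniqueness}, and applies Lemmas~\ref{Lem_HCSIO}--\ref{Lem_HCFIO} on $(B_\delta(x_0),d_{cc},dx)$.

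Two points on your version:
\begin{itemize}
\item It needs uniqueness among merely bounded solutions, whereas Lemma~\ref{Lem_HE existence and uniqueness} only covers solutions vanishing at infinity. The localization sidesteps this.
\item The derivative transfer you flag as the main obstacle is already available: Lemma~\ref{Lem_X_i X_J T f} for the Duhamel term, and \cite[Theorem 1.2(1)]{25JWY} for the free term.
\end{itemize}

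One correction is needed. H\"older continuity of the free term does not follow from $L^1$ bounds on the kernels. In $\int\mathcal{K}^P(t,w)X_Pg(x\circ w^{-1})\,dw$, the right translation $x\mapsto x\circ w^{-1}$ is not $d_{cc}$-Lipschitz: conjugation by $w$ distorts the left-invariant distance and costs H\"older exponent. You need the mean value inequality and cancellation of the kernel in the variable $y^{-1}\circ x$, exactly as you already plan for the Duhamel term.
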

It is worth mentioning that several tricks are needed in the process of proving Proposition \ref{Prop_LHJB in R^n USE&HSE}, which are also useful for this paper.
\begin{Lem}\label{Lem_HE existence and uniqueness}
Let $\alpha\in(0,1)$. Assume $f:[0,T]\times\mathbb{R}^{n}\to\mathbb{R}$ and $g:\mathbb{R}^{n}\to\mathbb{R}$ are bounded and vanish as $|x|\to+\infty$, and $f\in C\left([0,T]\times\mathbb{R}^{n}\right)\cap B\left((0,T);C_{\mathcal{X},loc}^{\alpha}\left(\mathbb{R}^{n}\right)\right)$, $g\in C\left(\mathbb{R}^{n}\right)$. Then the function
$$
z(t,x)=\int_{\mathbb{R}^{n}}\Gamma_0(t,y^{-1}\circ x)g(y)d y + \int_{0}^{t}\int_{\mathbb{R}^{n}}\Gamma_0(t-s,y^{-1}\circ x)f(s,y)d y d s
$$
belongs to the class
$$
C_{{\mathcal{X}}}^{1,2} \left((0, T] \times \mathbb{R}^{n} \right) \cap C\left([0, T] \times \mathbb{R}^{n}\right)
$$
and is the unique solution to the heat equation
\begin{equation}\label{heat equ.}
\begin{cases}
\partial_tz(t,x)-\Delta_{\mathcal{X}}z(t,x)=f(t,x),& \text {in }(0, T] \times \mathbb{R}^{n}, \\
z(0,x)=g(x),& \text {in }\mathbb{R}^{n}.
\end{cases}
\end{equation}
Here, $\Gamma_0$ is the fundamental solution for the heat operator $\mathcal{H}=\partial_t-\Delta_{\mathcal{X}}$.
\end{Lem}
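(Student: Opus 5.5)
The plan is to prove existence by verifying the Duhamel formula directly, and uniqueness by the maximum principle. Set
\[
z(t,x)=\int_{\mathbb{R}^n}\Gamma_0(t,y^{-1}\circ x)g(y)\,dy+\int_0^t\!\!\int_{\mathbb{R}^n}\Gamma_0(t-s,y^{-1}\circ x)f(s,y)\,dy\,ds=:z_1+z_2 .
\]

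\textbf{The free term $z_1$.} By the Gaussian bounds on $X_I\partial_t^q\Gamma_0$ quoted in the proof of Lemma \ref{Lem_mathcal K_I,q,l_singular integral theory}, we may differentiate under the integral for $t>0$. This gives $z_1\in C^\infty((0,T]\times\mathbb{R}^n)$ with $\mathcal{H}z_1=0$, since $\mathcal{H}\Gamma_0(\cdot,y^{-1}\circ\cdot)=0$ for $t>0$.

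For continuity up to $t=0$, I would use $\int\Gamma_0(t,y)\,dy=1$, which follows from the $D^p_\lambda$-homogeneity and the fundamental solution property. Changing variables $y=x\circ D_{\sqrt t}(w)^{-1}$ gives
\[
z_1(t,x)-g(x)=\int\Gamma_0(1,w)\left[g\big(x\circ D_{\sqrt t}(w)^{-1}\big)-g(x)\right]dw .
\]
The right side tends to $0$ locally uniformly by the uniform continuity of $g$ (bounded, continuous, vanishing at infinity) and dominated convergence with the Gaussian tail.

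\textbf{The Duhamel term $z_2$.} Continuity and $z_2(t,\cdot)\to0$ as $t\to0$ follow from $|z_2|\le t\|f\|_\infty$. The real work is the $C^{1,2}_{\mathcal X}$ regularity. First-order derivatives $X_iz_2$ are fine, because $|X_i\Gamma_0|\lesssim (t-s)^{-1/2}\times$(Gaussian) is integrable in $s$.

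For $X_iX_jz_2$ and $\partial_tz_2$ the kernel has a non-integrable singularity of order $(t-s)^{-1}$, and this is the main obstacle. I would handle it with the standard Hölder trick, using $f(s,\cdot)\in C^\alpha_{\mathcal X,loc}$ uniformly in $s$. For $s<t-\delta$, write
\[
X_iX_j\int\Gamma_0(t-s,y^{-1}\circ x)f(s,y)\,dy=\int X_iX_j\Gamma_0(t-s,y^{-1}\circ x)\big(f(s,y)-f(s,x)\big)\,dy .
\]
This uses $\int X_iX_j\Gamma_0(\tau,y^{-1}\circ x)\,dy=0$, which holds because $X_I$ applied to the constant $1$ vanishes. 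The integrand is then bounded by
\[
(t-s)^{-1}\,\big\|y^{-1}\circ x\big\|^\alpha\times\text{Gaussian}\lesssim (t-s)^{-1+\alpha/2},
\]
which is integrable. So the truncated integrals $z_2^\delta=\int_0^{t-\delta}$ converge together with $X_iX_jz_2^\delta$, locally uniformly as $\delta\to0$, and hence $X_iX_jz_2$ exists and is continuous.

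For $\partial_t z_2$, compute $\partial_t z_2^\delta$. It consists of the boundary term $\int\Gamma_0(\delta,y^{-1}\circ x)f(t-\delta,y)\,dy$, which tends to $f(t,x)$ by the approximate-identity argument above together with the continuity of $f$, plus $\int_0^{t-\delta}\!\int\Delta_{\mathcal X}\Gamma_0\,f$, which converges by the same cancellation. This yields $\partial_tz_2=\Delta_{\mathcal X}z_2+f$, with both sides continuous on $(0,T]\times\mathbb{R}^n$.

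\textbf{Uniqueness.} The difference $w$ of two solutions solves the homogeneous problem with zero initial data. The solution class is implicitly the bounded one vanishing at infinity; in general I would restrict uniqueness to bounded solutions. I would apply the weak maximum principle for $\mathcal{H}$ on $[0,T]\times\mathbb{R}^n$ to the functions
\[
w_{\pm}=\pm w-\eta\,(\|x\|^{2r!}_{\mathbb{G}}+Ct),
\]
where the polynomial barrier $h(x)=\|x\|^{2r!}_{\mathbb{G}}$ is smooth and satisfies $\Delta_{\mathcal X}h\le C(1+h)$. A Gronwall-adjusted version $e^{Ct}(1+h)$ keeps the barrier a supersolution. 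Letting $\eta\to0$ gives $w\equiv0$.
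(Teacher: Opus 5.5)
Your proposal is correct, but it takes a different route from the paper's proof.

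\textbf{Existence.} The paper does not re-derive existence. It quotes \cite[Proposition 4.2(1)]{25JWY} for the fact that the Duhamel formula gives a solution in $C^{1,2}_{\mathcal{X}}((0,T]\times\mathbb{R}^n)\cap C([0,T]\times\mathbb{R}^n)$. You reprove this by the classical potential-theoretic argument: an approximate identity after the substitution $y=x\circ D_{\sqrt t}(w)^{-1}$, and the cancellation $\int X_iX_j\Gamma_0(\tau,y^{-1}\circ x)\,dy=0$ combined with the spatial H\"older continuity of $f$ to absorb the $(t-s)^{-1}$ singularity. This is sound once you split the $y$-integral into two regions. On $d_{cc}(x,y)<1$ you use the \emph{local} H\"older bound. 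On $d_{cc}(x,y)\ge 1$, boundedness of $f$ and the Gaussian tail suffice.

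\textbf{Uniqueness.} The two routes use different uniqueness classes. The paper uses the assumption that $f,g$ vanish at infinity to show, by dominated convergence, that $z(t,x)\to0$ as $|x|\to\infty$. It then applies the infinite-strip weak maximum principle \cite[Corollary 13.2]{10BBLU} among solutions vanishing at infinity. You instead prove uniqueness among bounded solutions, using the supersolution $e^{Ct}(1+\|x\|_{\mathbb{G}}^{2r!})$ and the elementary maximum principle on bounded cylinders. Your $z$ lies in that class trivially, because $|z|\le\|g\|_\infty+T\|f\|_\infty$.

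\textbf{Comparison.} Your route is self-contained and gives uniqueness among all bounded solutions. It also shows that the decay hypothesis on $f,g$ is not actually needed. The paper's route is much shorter because both halves are outsourced. For the paper's use of the lemma (the bounded, compactly supported $v_\delta$ in the proof of Proposition \ref{Prop_LHJB in R^n SE}), either uniqueness class suffices.

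\textbf{One correction.} Keep only the exponential barrier. The function $\eta(\|x\|_{\mathbb{G}}^{2r!}+Ct)$ is not a supersolution, because $\Delta_{\mathcal{X}}\|x\|_{\mathbb{G}}^{2r!}$ is a $D_\lambda$-homogeneous polynomial of degree $2r!-2\ge 2$ that is unbounded above. For example, along $x=(x^{(1)},0,\ldots,0)$ it is at least $2r!\,(2r!+n_1-2)\,|x^{(1)}|^{2r!-2}$.
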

\begin{proof}
It is directly obtained from \cite[Proposition 4.2(1)]{25JWY} that
$$
z(t,x)\in C_{{\mathcal{X}}}^{1,2} \left((0, T] \times \mathbb{R}^{n} \right) \cap C\left([0, T] \times \mathbb{R}^{n}\right)
$$
is a solution to the equation \eqref{heat equ.}. To prove the uniqueness, we shall verify that for any $t\in[0,T]$,
\begin{equation}\label{z vanishes at infty}
z(t,x)\to 0 \text{ as }|x|\to+\infty.
\end{equation}
Indeed, by the dominated convergence theorem and the fact $\|x\circ y^{-1}\|\geq\|x\|-\|y\|$, we have
\begin{equation*}
\int_{0}^{t}\int_{\mathbb{R}^{n}}\Gamma_0\left(t-s,y^{-1}\circ x\right)f(s,y)dyds=\int_{0}^{t}\int_{\mathbb{R}^{n}}\Gamma_0\left(t-s,y\right)f(s,x\circ y^{-1})dyds\to 0
\end{equation*}
as $|x|\to+\infty$. In the same way, we also have
\begin{equation*}
\int_{\mathbb{R}^{n}}\Gamma_0\left(t,y^{-1}\circ x\right)g(y)dyds=\int_{\mathbb{R}^{n}}\Gamma_0\left(t,y\right)g(x\circ y^{-1})dyds\to 0
\end{equation*}
as $|x|\to+\infty$. Hence \eqref{z vanishes at infty} holds.

Finally, we apply the weak maximum principle in infinite strip (see \cite[Corollary 13.2]{10BBLU}) to obtain that the solution to the equation \eqref{heat equ.}, which vanishes as $|x|\to+\infty$, is unique.
\end{proof}
\begin{Lem}\label{Lem_X_i X_J T f}
For any multi-index $I=(i_1,i_2,\ldots,i_p), i_j\in\{1,\ldots,n_1\}$ with the length $|I|=p\in\mathbb{N}$ and $q\in\mathbb{N}$, let $\mathcal{K}_{I,q}$ be the kernel defined as
\begin{equation*}
\mathcal{K}_{I,q}(t,x;s,y):=X_I \partial_t^q \Gamma_0(t-s,y^{-1}\circ x), \quad(t,x),(s,y)\in\mathbb{R}\times\mathbb{R}^{n},
\end{equation*}
where $\Gamma_0$ is the fundamental solution for the operator $\mathcal{H}=\partial_t-\Delta_{\mathcal{X}}$. Then for any $k\in\mathbb{N}$ and multi-index $J=(j_1,j_2,\ldots,j_k),j_l\in\{1,\ldots,n_1\}$, there exist kernels $\mathcal{K}_{I,q}^P(t,x;s,y)$, $P=(p_1,p_2,\ldots,p_k),p_i\in\{1,\ldots,n_1\}$ having the form
\begin{equation}\label{mathcal K_I,q^P}
\mathcal{K}_{I,q}^{P}(t,x;s,y)=
\begin{cases}
\sum_{i=1}^{M}\mathcal{R}_{i}^{J,P}(y^{-1}\circ x)X_{l_1^{J,P}}\cdots X_{l_i^{J,P}} \mathcal{K}_{I,q}(t-s,y^{-1}\circ x), & \mbox{if } \dim(P)\geq1, \\
\mathcal{K}_{I,q}(t-s,y^{-1}\circ x), & \mbox{if } \dim(P)=0
\end{cases}
\end{equation}
for a certain family of finite number of $i$-homogeneous polynomials $\mathcal{R}_i^{J,P},i\in\{1,\ldots,M\}$ and left-invariant vector fields $\{X_{l_i^{J,P}}\}_{i=1}^{M},l_i^{J,P}\in\{1,\ldots,n_1\}$ with $M=M(J,P)\in\mathbb{Z}_+$, such that the following holds:
\begin{equation}\label{int |mathcal K_I,q^P|}
\left|\mathcal{K}_{I,q}^{P}(t,x;s,y)\right| \leq c_{|I|,q,J,P}(t-s)^{-\frac{Q+|I|+2q}{2}}\exp\bigg(-\frac{\left\|y^{-1}\circ x\right\|^2}{c(t-s)}\bigg),\,t>s,\,x,y\in\mathbb{R}^n;
\end{equation}
if $|I|\in\{0,1\}$ and $q=0$, then for any $f \in B\left((0,t);C_{\mathcal{X}}^{k+\alpha}\left(\mathbb{R}^{n}\right)\right)$,
\begin{equation}\label{X_J int_0^t T f}
X_J \int_{0}^{t}\int_{\mathbb{R}^{n}} \mathcal{K}_{I,0}(t,x;s,y)f(s,y)dyds =\sum_{p_1,\ldots,p_k=1}^{n_1}\int_{0}^{t}\int_{\mathbb{R}^{n}} \mathcal{K}_{I,0}^P(t,x;s,y)X_Pf(s,y)dyds
\end{equation}
for any $t>0$ and $x\in\mathbb{R}^{n}$. Moreover,
\begin{equation}\label{X_i X_J int_0^t T f}
X_i X_J \int_{0}^{t}\int_{\mathbb{R}^{n}} \mathcal{K}_{I,0}(t,x;s,y)f(s,y)dyds =\sum_{p_1,\ldots,p_k=1}^{n_1}\int_{0}^{t}\int_{\mathbb{R}^{n}} X_i \mathcal{K}_{I,0}^P(t,\cdot;s,y)(x) X_Pf(s,y)dyds
\end{equation}
for any $i\in\{1,\ldots,n_1\}$, $t>0$ and $x\in\mathbb{R}^{n}$.
\end{Lem}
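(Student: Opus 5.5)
The plan is to derive all three parts from how left-invariant vector fields interact with the group convolution $y^{-1}\circ x$. I will prove \eqref{int |mathcal K_I,q^P|} first, then the commutation identity \eqref{X_J int_0^t T f} by induction on $k=\dim(J)$, and finally \eqref{X_i X_J int_0^t T f}.

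\textbf{Step 1: moving a derivative across the convolution.} Change variables $y=x\circ w^{-1}$, so that
\[
\int_{\mathbb{R}^n}\mathcal{K}_{I,0}(t-s,y^{-1}\circ x)f(s,y)\,dy=\int_{\mathbb{R}^n}\mathcal{K}_{I,0}(t-s,w)f(s,x\circ w^{-1})\,dw .
\]
A left-invariant $X_j$ in $x$ acts on $f(s,x\circ w^{-1})$. It does not land directly on $f$, because $x\mapsto x\circ w^{-1}$ is a right translation and left-invariant fields are not right-invariant. I will instead use the standard relation between left- and right-invariant fields on a homogeneous group:
\[
X_j=\sum_{h}c_{jh}(x)\,X_h^R,
\]
with $c_{jh}$ homogeneous polynomials of degree $\alpha_h-1$ (this follows from Lemma \ref{Lem_LI&RI vector field}). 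Equivalently, I differentiate under the integral in the original variables and use the identity
\[
X_j^x\big[F(y^{-1}\circ x)\big]=-(X_j^R)^y\big[F(y^{-1}\circ x)\big].
\]
Integrating by parts in $y$ with $(X^R)^*=-X^R$ transfers the derivative onto $f$ as $X^R_h f$. Rewriting $X^R_h$ in terms of left-invariant fields at the point $y$ produces polynomial coefficients; shifting these to be functions of $y^{-1}\circ x$ (using $y=x\circ(y^{-1}\circ x)^{-1}$) yields the homogeneous polynomials $\mathcal{R}_i^{J,P}(y^{-1}\circ x)$. The leftover higher-layer fields $X_h$, $h>n_1$, are commutators of $X_1,\ldots,X_{n_1}$. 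Their degree excess is balanced by extra horizontal derivatives falling on the kernel, namely the factor $X_{l_1}\cdots X_{l_i}$ paired with an $i$-homogeneous $\mathcal{R}_i$. The result is the form \eqref{mathcal K_I,q^P} with one horizontal derivative $X_{p}$ on $f$. Iterating in $k$ gives $X_P f$ with $\dim P=k$, and the kernels remain of the stated structure, since applying $X_j$ to $\mathcal{R}_i(y^{-1}\circ x)$ lowers the homogeneity by one, consistently with the bookkeeping.

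\textbf{Step 2: the Gaussian bound \eqref{int |mathcal K_I,q^P|}.} Each term $\mathcal{R}_i(y^{-1}\circ x)\,X_{l_1}\cdots X_{l_i}X_I\partial_t^q\Gamma_0$ is bounded using the Gaussian estimates of \cite{02BLU}, as in the proof of Lemma \ref{Lem_mathcal K_I,q,l_singular integral theory}:
\[
\|y^{-1}\circ x\|^{i}\,(t-s)^{-\frac{Q+|I|+i+2q}{2}}\exp\Big(-\frac{\|y^{-1}\circ x\|^2}{c(t-s)}\Big).
\]
The factor $\|y^{-1}\circ x\|^i (t-s)^{-i/2}$ is absorbed into the exponential after enlarging $c$, which gives exactly the claimed bound.

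\textbf{Step 3: justification and the second-order identity \eqref{X_i X_J int_0^t T f}.} For $|I|\le1$ the kernel bound gives integrability $(t-s)^{-|I|/2}$ in time. Moreover $f\in B(C^{k+\alpha}_{\mathcal{X}})$ makes $X_Pf$ bounded, so differentiation under the integral and the integrations by parts are legitimate by dominated convergence after truncating near $s=t$. Boundary terms vanish thanks to the Gaussian decay.

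Applying one further $X_i$ is where I expect the main obstacle. The kernel $X_i\mathcal{K}^P_{I,0}$ has order up to $|I|+1\le 2$, which is non-integrable at $s=t$ when $|I|=1$. I will handle this in the standard way. First, split off $s\in(t-\delta,t)$. Second, use the H\"older continuity of $X_Pf$ in $y$, subtracting $X_Pf(s,x)$. Third, use the cancellation $\int X_i\mathcal{K}^P(t,x;s,y)\,dy=0$: it holds for the pure-derivative terms, and for terms with polynomial factors it follows from integrating by parts or from homogeneity, in the spirit of Lemma \ref{Lem_mathcal K_I,q,l_singular integral theory}\ref{mathcal K_I,q,l_CP}. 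Together these show that the differentiated truncated integrals converge uniformly as $\delta\to0$, so $X_i$ may be passed inside and \eqref{X_i X_J int_0^t T f} holds with an absolutely convergent right-hand side.
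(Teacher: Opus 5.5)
Your Steps 2 and 3 are fine; the paper itself only cites \cite[Theorem 1.2]{25JWY} for the two identities, and your truncation, cancellation and H\"older-subtraction argument for the second derivative is the standard one. The gap is in Step 1, which carries the real content of the lemma.

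The identity $X_j^x[F(y^{-1}\circ x)]=-(X_j^R)^y[F(y^{-1}\circ x)]$ is false on a non-commutative group. Take the Heisenberg group with $x\circ y=(x_1+y_1,x_2+y_2,x_3+y_3+\frac12(x_1y_2-x_2y_1))$ and $F(u)=u_3$: the left side is $\frac12(y_2-x_2)$, while the right side is $\frac12(y_2+x_2)$. The correct relations are $X_j^x[F(y^{-1}\circ x)]=(X_jF)(y^{-1}\circ x)$ and $X_j^y[F(y^{-1}\circ x)]=-(X_j^RF)(y^{-1}\circ x)$. In words, a left-invariant derivative that ends up on $f$ corresponds to a right-invariant derivative of the kernel in its own variable; this is \eqref{convolution on homogeneous groups_property}.

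You instead put $X_h^R$ on $f$ and re-expand it at the point $y$, so your coefficients are polynomials in $y$. Substituting $y=x\circ(y^{-1}\circ x)^{-1}$ then makes them polynomials in $x$ and $y^{-1}\circ x$ jointly. This has three consequences:
\begin{itemize}
\item you do not get the convolution form \eqref{mathcal K_I,q^P};
\item the constant in the Gaussian bound would not be uniform in $x$;
\item the cancellation $\int_{\mathbb{R}^n}X_i\mathcal{K}^P_{I,0}(t,\cdot;s,y)(x)\,dy=0$ used in Step 3 is lost. It holds precisely because the kernel is a function $G(y^{-1}\circ x)$, so that $X_i^x$ produces $(X_iG)(y^{-1}\circ x)$ and $\int X_iG=0$.
\end{itemize}
Your sentence that the degree excess of higher-layer fields is balanced by extra horizontal derivatives on the kernel is asserted, not derived.

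The repair is to do all the bookkeeping in the kernel variable $u=y^{-1}\circ x$.
\begin{enumerate}
\item For fixed $s<t$, $X_j\int K(y^{-1}\circ x)f(s,y)\,dy=\int (X_jK)(y^{-1}\circ x)f(s,y)\,dy$.
\item Write $X_j=\sum_h c_{jh}(u)X_h^R$, where $c_{jh}=\delta_{jh}$ for $h\le n_1$ and $c_{jh}$ is homogeneous of degree $\alpha_h-1$. Expand each $X_h^R$ with $h>n_1$ into monomials of length $\alpha_h$ in $X_1^R,\dots,X_{n_1}^R$.
\item Repeatedly use $c\,X_p^RG=X_p^R(cG)-(X_p^Rc)G$. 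The number of derivatives minus the polynomial degree stays equal to one, so this terminates with $X_jK=\sum_{p\le n_1}X_p^RK^p$. Each $K^p$ is a sum of $m$-homogeneous polynomials in $u$ times $m$ derivatives of $K$.
\item Use \eqref{convolution on homogeneous groups_property} to move $X_p^R$ off the kernel, where it becomes $X_p$ acting on $f$.
\item Rewrite the right-invariant derivatives inside $K^p$ via \eqref{X_i^R=sum X_J_j,l} and the Leibniz rule in $u$. This gives kernels of the form \eqref{mathcal K_I,q^P}, with the Gaussian bound uniform in $x$.
\item Iterate in $k$, and then integrate in $s$ using your Step 3 domination.
\end{enumerate}
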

\begin{proof}
Clearly, \eqref{int |mathcal K_I,q^P|} holds true due to the Gaussian estimates of $\Gamma_0$. It is straightforward from \cite[Theorem 1.2(2)]{25JWY} that \eqref{X_J int_0^t T f} holds. In order to prove \eqref{X_i X_J int_0^t T f}, we simply refer to that for (3.28) in the proof of \cite[Theorem 1.2]{25JWY}.
\end{proof}
Specifically, following the method of proof for \eqref{LHJB in R^n_prior USE} with $g=0$, $\alpha=0$ and using \eqref{int |mathcal K_I,q^P|}, we can generalize \eqref{LHJB in R^n_prior C^1 estimate} to obtain the higher order regularity estimates as follows.
\begin{Lem}
Let $k\in\mathbb{Z}_+$. Assume $b(t,x)$, $c(t,x)$, $f(t,x)$ are bounded and continuous on $[0,T]\times\mathbb{R}^{n}$, $b\in B\left((0,T);C_{\mathcal{X},loc}^{\alpha}\left(\mathbb{R}^{n};\mathbb{R}^{n_1}\right)\right)\cap B\left((0,T);C_{\mathcal{X}}^{k-1}\left(\mathbb{R}^{n};\mathbb{R}^{n_1}\right)\right)$, and $c$, $f\in B\left((0,T);C_{\mathcal{X},loc}^{\alpha}\left(\mathbb{R}^{n}\right)\right)\cap B\left((0,T);C_{\mathcal{X}}^{k-1}\left(\mathbb{R}^{n}\right)\right)$. Suppose $z(t,x)\in C_{{\mathcal{X}}}^{1,2} \left((0,T]\times\mathbb{R}^{n}\right)\cap C\left([0,T]\times\mathbb{R}^{n}\right)$ is a solution to the Cauchy problem \eqref{LHJB in R^n}, satisfying $z\in B\left((0,T);C_{\mathcal{X},loc}^{1+\alpha}\left(\mathbb{R}^{n}\right)\right)\cap B\left([0,T];C_{\mathcal{X}}^{k}\left(\mathbb{R}^{n}\right)\right)$. Then $z$ satisfies
\begin{equation}\label{LHJB in R^n_prior C^k estimate}
\sup_{t \in[0, T]}\|z(t, \cdot)\|_{C_{\mathcal{X}}^{k}\left(\mathbb{R}^{n}\right)} \leq C\sup_{t \in(0, T)}\|f(t, \cdot)\|_{C_{\mathcal{X}}^{k-1}\left(\mathbb{R}^{n}\right)},
\end{equation}
where the constant $C>0$ depends on $\mathbb{G}$, $k$, $T$, $\sup_{t\in(0,T)}\|b(t,\cdot)\|_{C_{\mathcal{X}}^{k-1}\left(\mathbb{R}^{n};\mathbb{R}^{n_1}\right)}$ and $\sup_{t\in(0,T)}\|c(t, \cdot)\|_{C_{\mathcal{X}}^{k-1}\left(\mathbb{R}^{n}\right)}$ only.
\end{Lem}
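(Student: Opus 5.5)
We argue by induction on $k$. The base case $k=1$ is \eqref{LHJB in R^n_prior C^1 estimate} of Proposition \ref{Prop_LHJB in R^n UE&HE} with $g=0$. For the inductive step, fix $k\ge 2$ and assume \eqref{LHJB in R^n_prior C^k estimate} holds with $k-1$ in place of $k$. We then rewrite \eqref{LHJB in R^n} (with $g=0$) as a heat equation with forcing
\[
F:=f-b\cdot D_{\mathcal{X}}z-cz .
\]
By the uniqueness part of Lemma \ref{Lem_HE existence and uniqueness}, applied after a standard cutoff/localization (or directly via the $C_{\mathcal{X}}^1$-bounded uniqueness in Corollary \ref{Cor_LHJB existence and uniqueness}), we get the representation
\[
z(t,x)=\int_0^t\int_{\mathbb{R}^n}\Gamma_0(t-s,y^{-1}\circ x)F(s,y)\,dy\,ds .
\]

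Next we estimate $X_Jz$ for $|J|=k$. Write $X_J=X_iX_{J'}$ with $|J'|=k-1$. Formula \eqref{X_i X_J int_0^t T f} with $|I|=0$ and $q=0$ gives
\[
X_J z=\sum_{P}\int_0^t\int_{\mathbb{R}^n} X_i\mathcal{K}_{0,0}^P(t,\cdot;s,y)(x)\,X_PF(s,y)\,dy\,ds,\qquad |P|=k-1.
\]
By the Gaussian bound \eqref{int |mathcal K_I,q^P|} with $|I|=1$ (since $X_i\mathcal{K}_{0,0}^P$ has the form $\mathcal{K}_{(i),0}^P$ up to lower order homogeneous pieces), the kernel is bounded by $C(t-s)^{-\frac{Q+1}{2}}e^{-\|y^{-1}\circ x\|^2/(c(t-s))}$. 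Its $y$-integral is $C(t-s)^{-1/2}$, which is integrable on $(0,t)$. Hence
\[
\|X_Jz(t)\|_\infty\le C\sqrt{T}\,\sup_s\|X_PF(s)\|_\infty .
\]
Because no Hölder seminorm of $F$ is needed here, this is the place where taking ``$\alpha=0$'' in the method of \eqref{LHJB in R^n_prior USE} becomes legitimate.

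To control $X_PF$ for $|P|=k-1$, we expand by the Leibniz rule:
\[
\|X_PF\|_\infty\le \|f\|_{C^{k-1}_{\mathcal{X}}}+C\big(\|b\|_{C^{k-1}_{\mathcal{X}}}+\|c\|_{C^{k-1}_{\mathcal{X}}}\big)\|z(s)\|_{C^{k}_{\mathcal{X}}}.
\]
The top-order term $\|z(s)\|_{C^k_{\mathcal X}}$ appears on the right-hand side, so the estimate does not close directly. We close it with a Gronwall argument. We keep the time dependence by using $\int_0^t (t-s)^{-1/2}\,\phi(s)\,ds$ in place of the crude $\sqrt T$ bound, where $\phi(s)=\|z(s)\|_{C^k_{\mathcal X}}$. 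The lower-order part of $\phi$ is handled by the induction hypothesis, which bounds $\|z(s)\|_{C^{k-1}_{\mathcal X}}$ by $C\sup\|f\|_{C^{k-2}_{\mathcal X}}$. This yields
\[
\phi(t)\le C\sup_s\|f(s)\|_{C^{k-1}_{\mathcal X}}+C\int_0^t(t-s)^{-1/2}\phi(s)\,ds .
\]
The quantity $\phi$ is finite by the assumption $z\in B([0,T];C^k_{\mathcal{X}})$. A singular (Henry-type) Gronwall inequality, or iterating the inequality once to obtain a bounded kernel, then gives \eqref{LHJB in R^n_prior C^k estimate}.

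The main obstacle is justifying the differentiation formula \eqref{X_i X_J int_0^t T f} when $F$ is only in $B(C^{k-1}_{\mathcal X})\cap C^\alpha_{loc}$ rather than $C^{k-1+\alpha}$ globally. The formula requires $X_PF$ to be locally Hölder, which holds for $P$ of length up to $k-2$. At the top order we therefore apply \eqref{X_J int_0^t T f} with $k-1$ derivatives moved onto $F$ and take only the last $X_i$ on the kernel. That last derivative costs $(t-s)^{-1/2}$ and needs no cancellation, only Hölder continuity of $X_PF$ to justify differentiating under the integral. Where $X_PF$ lacks even local Hölder continuity, we would approximate $F$ by the mollifiers of Proposition \ref{Prop_Mollifiers_Holder}. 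Items (3) and (1) of that proposition give uniform bounds and locally uniform convergence, so the estimate passes to the limit.
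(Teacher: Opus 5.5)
Your proof is correct and follows essentially the route the paper indicates, namely the uniform Schauder argument of \cite{25JWY} run with $\alpha=0$: a Duhamel representation with forcing $F=f-b\cdot D_{\mathcal{X}}z-cz$, moving $k-1$ derivatives onto $F$ via Lemma \ref{Lem_X_i X_J T f}, bounding the remaining order-one kernel by its Gaussian estimate to get the factor $(t-s)^{-1/2}$, and closing with a singular Gronwall inequality, which is legitimate because $z\in B([0,T];C_{\mathcal{X}}^{k})$ makes the quantity being estimated finite. Your last paragraph is unnecessary rather than wrong: differentiating an order-two heat potential once needs only boundedness of $X_PF$ (truncate the time integral to $s<t-\eta$, differentiate, and let $\eta\to0$), so neither H\"{o}lder continuity nor mollification is needed, which is just as well because items (1) and (3) of Proposition \ref{Prop_Mollifiers_Holder} are stated under H\"{o}lder-type hypotheses that $F$ need not satisfy at top order.
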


\section{Construction of several types of Mollifiers}\label{Sec_3}

In this section, we study the several types of mollifiers constructed in Propositions \ref{Prop_Mollifiers_Holder}-\ref{Prop_Mollifiers_dual space}, and prove the validity of these propositions. Furthermore, we derive the corollary of Proposition \ref{Prop_Mollifiers_dual space}, namely Corollary \ref{Cor_Mollifiers_P(T_G)}.

Recalling \cite{02BLU}, the fundamental solution $\Gamma_0\left(t-s,y^{-1}\circ x\right)$ for the heat operator $\mathcal{H}=\partial_t-\Delta_{\mathcal{X}}$ is smooth on $\left\{(t,x;s,y)\in (\mathbb{R}\times \mathbb{R}^n)\times (\mathbb{R}\times \mathbb{R}^n)| t>s\right\}$, $\int_{\mathbb{R}^n}\Gamma_0\left(t,x\right)dx=1$ for any $t>0$, and for any multi-index $I=\left(i_1, i_2, \cdots, i_{|I|}\right),i_j \in\{1,\ldots,n_1\}$ with the length $|I|\in\mathbb{N}$, the Lie derivative $X_I\Gamma_0$ satisfies the following Gaussian estimate:
\begin{equation}\label{Gamma_0 Lie derivatives GE}
|X_I\Gamma_0\left(t,x\right)| \leq c_{|I|}t^{-\frac{Q+|I|}{2}}\exp\left(-\frac{\left\|x\right\|^2}{ct}\right),\,(t,x)\in\mathbb{R}_+\times\mathbb{R}^n,
\end{equation}
where $c>1$ depends only on $\mathbb{G}$ and $c_{|I|}>0$ depends only on $|I|$ and $\mathbb{G}$. We now prove the rationality of the mollifiers adapted to the family of H\"{o}rmander vector fields $\mathcal{X}$.
\begin{proof}[\bf{Proof of Proposition \ref{Prop_Mollifiers_Holder}}]
It is easy to find that $\phi_{\varepsilon}(t,x) \in C^{\infty}\left(\mathbb{R}\times\mathbb{R}^{n}\right)$, and using \eqref{Gamma_0 Lie derivatives GE} yields that for any multi-index $I=\left(i_1, i_2, \cdots, i_p\right),i_j \in\{1,\ldots,n_1\}$ with the length $|I|=p\in\mathbb{N}$ and $q\in\mathbb{N}$,
\begin{equation}\label{phi_varepsilon derivatives estimates}
|X_I\partial_t^q\phi_{\varepsilon}\left(t,x\right)| \leq C \varepsilon^{-\frac{Q+|I|+2q+2}{2}}\mathbf{1}_{[-\varepsilon,\varepsilon]}(t) \exp\left(-\frac{\left\|x\right\|^2}{c\varepsilon}\right),
\end{equation}
where the constant $C>0$ depends on $|I|$, $q$ and $\mathbb{G}$ only.

By referring to \cite[P. 4]{22BB}, it is known that for any $X_i\in\mathcal{X}$, the Lie derivative of any function $g:\mathbb{R}^{n}\to\mathbb{R}$ can be expressed as
\begin{equation*}
X_{i}g(x)=\frac{d}{d\tau}\bigg|_{\tau=0}g(\exp(\tau X_{i})(x))=\lim_{\tau\to 0}\frac{g(\exp(\tau X_{i})(x))-g(x)}{\tau}.
\end{equation*}
Here, $\exp(\tau X_{i})(x)$ denotes the exponential map of the vector field $X_{i}$, which is the solution to the ordinary differential equation $\gamma'(\tau)=X_{i}(\gamma(\tau))$ with initial condition $\gamma(0)=x$.

Applying the mean value theorem and noting that $X_ig(y^{-1}\circ \cdot)(x)=X_ig(y^{-1}\circ x)$, we obtain that for any $\tau\in \mathbb{R}$ and $(t,x)\in\mathbb{R}\times\mathbb{R}^{n}$,
\begin{align}\label{f_varepsilon differential}
& \frac{1}{\tau}\left(f_{\varepsilon}(t,\exp(\tau X_{i})(x))-f_{\varepsilon}(t,x)\right)\notag \\
= & \frac{1}{\tau}\int_{\mathbb{R}\times\mathbb{R}^{n}}\left(\phi_{\varepsilon}(t-s,y^{-1}\circ \exp(\tau X_{i})(x))-\phi_{\varepsilon}(t-s,y^{-1}\circ x)\right)f(s,y)d y ds\notag \\
= & \int_{\mathbb{R}\times\mathbb{R}^{n}} X_i\phi_{\varepsilon}(t-s,y^{-1}\circ \exp(\theta_{\tau} X_{i})(x))f(s,y)d y ds,
\end{align}
where $\theta_{\tau}$ is a value between $0$ and $\tau$. To consider the case when $\tau\to 0$, we simply take $|\tau|\leq\delta$ and choose $\delta>0$ sufficiently small. From the continuity of the mapping $\tau\mapsto \|\exp(\tau X_i)(x)\|$, there exists a constant $L=L(x,X_i,\delta)>0$ such that for any $|\tau|\leq \delta$, we have $\|\exp(\tau X_i)(x)\|\leq L$. Therefore, by the triangle inequality,
\begin{equation*}
\|y^{-1} \circ \exp(\theta_\tau X_i)(x)\|^2\geq\frac{1}{2}\|y^{-1}\|^2-\|\exp(\theta_\tau X_i)(x)\|^2\geq \frac{1}{2}\|y\|^2-L^2.
\end{equation*}
Then, it follows from \eqref{phi_varepsilon derivatives estimates} and \eqref{f_growth cond.} that for any $t,s\in\mathbb{R}$ and $x,y\in\mathbb{R}^n$,
\begin{align*}
& \left|X_i\phi_{\varepsilon}(t-s,y^{-1}\circ \exp(\theta_{\tau} X_{i})(x))f(s,y)\right| \\
\leq & C \varepsilon^{-\frac{Q+3}{2}}\mathbf{1}_{[-\varepsilon,\varepsilon]}(t-s)\exp\left(-\frac{\left\|y^{-1}\circ \exp(\theta_{\tau} X_{i})(x)\right\|^2}{c\varepsilon}\right)\exp\left(\mu \left\|y\right\|^2\right) \\
\leq & C \varepsilon^{-\frac{Q+3}{2}}\mathbf{1}_{[-\varepsilon,\varepsilon]}(t-s)\exp\left(\frac{L^2}{c\varepsilon}\right)\exp\left(-\left(\frac{1}{2c\varepsilon}-\mu\right) \left\|y\right\|^2\right),
\end{align*}
where the constant $C>0$ depends on $M$ and $\mathbb{G}$ only. Since
\begin{align*}
& \int_{\mathbb{R}\times\mathbb{R}^{n}}\varepsilon^{-\frac{Q+3}{2}}\mathbf{1}_{[-\varepsilon,\varepsilon]}(t-s)\exp\left(\frac{L^2}{c\varepsilon}\right)\exp\left(-\left(\frac{1}{2c\varepsilon}-\mu\right) \left\|y\right\|^2\right)dy ds \\
\leq & 2\varepsilon^{-\frac{1}{2}}\left(\frac{1}{2c}-\mu\varepsilon\right)^{-\frac{Q}{2}} \exp\left(\frac{L^2}{c\varepsilon}\right)<+\infty,
\end{align*}
letting $\tau\to 0$ in \eqref{f_varepsilon differential} and using the dominated convergence theorem, we get that $X_if_{\varepsilon}(t,x)$ exists, and
\begin{equation*}
X_if_{\varepsilon}(t,x)=\int_{\mathbb{R}\times\mathbb{R}^{n}} X_i\phi_{\varepsilon}(t-s,y^{-1}\circ x)f(s,y)d y ds,\,(t,x)\in\mathbb{R}\times\mathbb{R}^n.
\end{equation*}
Based on \eqref{phi_varepsilon derivatives estimates}, repeating a similar way can yield that $X_I\partial_t^q f_{\varepsilon}(t,x)$ exists, and
\begin{equation*}
X_I\partial_t^q f_{\varepsilon}(t,x)=\int_{\mathbb{R}\times\mathbb{R}^{n}} X_I\partial_t^q \phi_{\varepsilon}(t-s,y^{-1}\circ x)f(s,y)d y ds,\,(t,x)\in\mathbb{R}\times\mathbb{R}^n
\end{equation*}
for each multi-index $I=\left(i_1, i_2, \cdots, i_p\right),i_j \in\{1,\ldots,n_1\}$ with the length $|I|=p\in\mathbb{N}$ and $q\in\mathbb{N}$.

According to the Sobolev's embedding theorem (see \cite[Proposition 2.7]{22BB}), we have
\begin{equation*}
\bigcap_{k=1}^{\infty}W_{\mathcal{X}}^{k,p}(\mathbb{G})\subset C^{\infty}(\mathbb{G}),\,1\leq p\leq\infty,
\end{equation*}
where $W_{\mathcal{X}}^{k,p}$ denotes the Sobolev space defined by the family of H\"{o}rmander's vector fields $\mathcal{X}$. Therefore, we have proven that $f_{\varepsilon}(t,x) \in C^{\infty}\left(\mathbb{R}\times\mathbb{R}^{n}\right)$.

To prove conclusion \ref{mollifiers_f_varepsilon->f}, we notice that
$$
f_{\varepsilon}(t,x)=\int_{|h| \leq 1}\int_{\mathbb{R}^{n}}\Gamma_0\left(\varepsilon,y^{-1}\circ x\right)\varphi(h)f(t-\varepsilon h,y)dy dh
$$
and
$$
\int_{|h| \leq 1}\varphi(h)\int_{\mathbb{R}^{n}}\Gamma_0\left(\varepsilon,y^{-1}\circ x\right)dydh=\int_{|h| \leq 1}\varphi(h)\int_{\mathbb{R}^{n}}\Gamma_0\left(\varepsilon,z\right)dzdh=1.
$$
Then we have that for any $(t,x) \in \mathbb{R}\times\mathbb{R}^{n}$,
$$
\begin{aligned}
\left|f_{\varepsilon}(t,x)-f(t,x)\right|= & \left|\int_{-1}^{1}\int_{\mathbb{R}^{n}}\Gamma_0\left(\varepsilon,y^{-1}\circ x\right)\varphi(h) \left(f(t-\varepsilon h,y)-f(t,x)\right)dy dh\right| \\
\leq & \int_{-1}^{1}\int_{\mathbb{R}^{n}}\Gamma_0\left(\varepsilon,y^{-1}\circ x\right)\varphi(h) \left|f(t-\varepsilon h,y)-f(t,y)\right|dy dh \\
& + \int_{-1}^{1}\int_{\mathbb{R}^{n}}\Gamma_0\left(\varepsilon,y^{-1}\circ x\right)\varphi(h) \left|f(t,y)-f(t,x)\right|dy dh \\
=: & I_1+I_2.
\end{aligned}
$$
According to \eqref{f_ctn. cond.}, we have $I_1 \leq \omega_f(\varepsilon)$, where $\omega_f$ is the modulus of continuity of $f(\cdot,x)$ for any $x\in\mathbb{R}^n$.

As for $I_2$, we refer to the method in \cite[Theorem 11.2]{07BB}. Specifically, using \eqref{Gamma_0 Lie derivatives GE} and \eqref{f_ctn. cond.}, since the Lebesgue measure of the $d_{cc}$-ball satisfies (see \cite[(5.41)]{07BLU})
\begin{equation*}
\left|B_{r}\left(x\right)\right|=r^{Q}\left|B_{1}\left(x\right)\right|,\,r>0,
\end{equation*}
where $Q$ is the homogeneous dimension of $\mathbb{G}$ (see Definition \ref{Def_Carnot group}), then we have
\begin{align*}
I_2 \leq & \frac{C}{\left|B_{\sqrt{\varepsilon}}\left(x\right)\right|} \int_{\mathbb{R}^{n}}\exp\left(-\frac{d_{cc}(x,y)^2}{c\varepsilon}\right) d_{cc}(x,y)^{\alpha}dy \int_{-1}^{1}\varphi(h)dh \\
= & \frac{C}{\left|B_{\sqrt{\varepsilon}}\left(x\right)\right|} \int_{B_{\sqrt{\varepsilon}}\left(x\right)}\exp\left(-\frac{d_{cc}(x,y)^2}{c\varepsilon}\right) d_{cc}(x,y)^{\alpha}dy \\
& + \sum_{k=0}^{+\infty}\frac{C}{\left|B_{\sqrt{\varepsilon}}\left(x\right)\right|} \int_{B_{2^{k+1}\sqrt{\varepsilon}}\left(x\right)\backslash
B_{2^{k}\sqrt{\varepsilon}}\left(x\right)} \exp\left(-\frac{d_{cc}(x,y)^2}{c\varepsilon}\right) d_{cc}(x,y)^{\alpha}dy \\
\leq & C\left(\varepsilon^{\frac{\alpha}{2}} +\sum_{k=0}^{+\infty}\frac{\left|B_{2^{k+1}\sqrt{\varepsilon}}\left(x\right)\right|}{\left|B_{\sqrt{\varepsilon}}\left(x\right)\right|} \exp\left(-\frac{4^k}{c}\right)\left(2^{k+1}\sqrt{\varepsilon}\right)^{\alpha}\right) \\
= & C\left(\varepsilon^{\frac{\alpha}{2}} +\sum_{k=0}^{+\infty}2^{Q(k+1)} \exp\left(-\frac{4^k}{c}\right)\left(2^{k+1}\sqrt{\varepsilon}\right)^{\alpha}\right) \\
\leq & C \varepsilon^{\frac{\alpha}{2}},
\end{align*}
where the constant $C>0$ depends on $\mathbb{G}$ and $\alpha$ only.

Combining the above estimates of $I_1$ and $I_2$, we finally get $\lim\limits_{\varepsilon\to0}\left\|f_{\varepsilon}-f\right\|_{L^{\infty}\left(\mathbb{R}\times\mathbb{R}^{n}\right)}=0$.

To prove conclusion \ref{mollifiers_C^alpha/2,alpha}, we refer to the proof of \cite[Theorem 1.2(2)]{25JWY} to get that for any $k\in\mathbb{N}$ and multi-index $J=(j_1,j_2,\ldots,j_k),j_l\in\{1,\ldots,n_1\}$, there exist kernels $\phi_{\varepsilon}^P(t,x;s,y)$, $P=(p_1,p_2,\ldots,p_k),p_i\in\{1,\ldots,n_1\}$ having the form
\begin{equation*}
\phi_{\varepsilon}^{P}(t,x;s,y)=
\begin{cases}
\sum_{i=1}^{M}\mathcal{R}_{i}^{J,P}(y^{-1}\circ x)X_{l_1^{J,P}}\cdots X_{l_i^{J,P}} \phi_{\varepsilon}(t-s,y^{-1}\circ x), & \mbox{if } \dim(P)\geq1, \\
\phi_{\varepsilon}(t-s,y^{-1}\circ x), & \mbox{if } \dim(P)=0
\end{cases}
\end{equation*}
for a certain family of finite number of $i$-homogeneous polynomials $\mathcal{R}_i^{J,P},i\in\{1,\ldots,M\}$ and left-invariant vector fields $\{X_{l_i^{J,P}}\}_{i=1}^{M},l_i^{J,P}\in\{1,\ldots,n_1\}$ with $M=M(J,P)\in\mathbb{Z}_+$, such that
\begin{equation}\label{X_J int_0^t phi_varepsilon f}
X_J \int_{\mathbb{R}\times\mathbb{R}^{n}} \phi_{\varepsilon}(t-s,y^{-1}\circ x)f(s,y)dyds =\sum_{p_1,\ldots,p_k=1}^{n_1}\int_{\mathbb{R}\times\mathbb{R}^{n}} \phi_{\varepsilon}^{P}(t,x;s,y)X_Pf(s,y)dyds.
\end{equation}
It is easy to find that \eqref{phi_varepsilon derivatives estimates} still holds for $\phi_{\varepsilon}^{P}$, and can be further written as
\begin{align*}
|X_I\partial_t^q\phi_{\varepsilon}^{P}(t,\cdot;s,y)(x)| \leq & C\mathbf{1}_{[-\varepsilon,\varepsilon]}(t-s) \frac{\left(\frac{\left\|y^{-1}\circ x\right\|^2+\varepsilon}{\varepsilon}\right)^{\frac{Q+|I|+2q+2}{2}} \exp\left(-\frac{\left\|y^{-1}\circ x\right\|^2}{c\varepsilon}\right)}{\left(\left\|y^{-1}\circ x\right\|^2+t-s\right)^{\frac{Q+|I|+2q+2}{2}}} \\
\leq & \frac{C}{\left(\left\|y^{-1}\circ x\right\|^2+t-s\right)^{\frac{Q+|I|+2q+2}{2}}},
\end{align*}
where the constant $C>0$ depends on $|I|$, $q$, $J$, $P$ and $\mathbb{G}$ only. Then, using the same method as in Lemma \ref{Lem_mathcal K_I,q,l_singular integral theory}, we can verify that $\phi_{\varepsilon}^{P}(t,x;s,y)$ satisfies the growth condition:
\begin{equation*}
|\phi_{\varepsilon}^{P}(t,x;s,y)| \leq \frac{c}{d_p((t,x),(s,y))^{Q+2}} \leq \frac{c}{|B^p((t,x);(s,y))|};
\end{equation*}
the mean value inequality:
\begin{align*}
|\phi_{\varepsilon}^{P}(t,x;s,y)-\phi_{\varepsilon}^{P}(t_1,x_1;s,y)| & \leq c \frac{d_p((t_1,x_1),(t,x))}{d_p((t_1,x_1),(s,y))^{Q+3}}\\
& \leq \frac{c}{|B^p((t_1,x_1);(s,y))|} \cdot\left(\frac{d_p((t_1,x_1),(t,x))}{d_p((t_1,x_1),(s,y))}\right)
\end{align*}
when $d_p((t_1,x_1),(s,y)) > 4 d_p((t_1,x_1),(t,x))$, for any $(t,x),(s,y)\in\mathbb{R}\times\mathbb{R}^n$ and some constant $c>0$ depending only on $\mathbb{G}$. In addition, applying integration by parts, we have
\begin{equation*}
\int_{\mathbb{R}\times\mathbb{R}^{n}}\phi_{\varepsilon}^{P}(t,x;s,y)d y ds=c_{J,P}\int_{\mathbb{R}\times\mathbb{R}^{n}}\phi_{\varepsilon}(t-s,u)d u ds=c_{J,P}
\end{equation*}
for some real constant $c_{J,P}$ depending only on $J$ and $P$. Thus $\phi_{\varepsilon}^P$ satisfies the cancellation properties in Lemma \ref{Lem_mathcal K_I,q,l_singular integral theory}.

Therefore, by \eqref{X_J int_0^t phi_varepsilon f}, we can obtain
\begin{align*}
\left\|f_\varepsilon\right\|_{C_{\mathcal{X}}^{0,k}\left(\mathbb{R}\times\mathbb{R}^{n}\right)} \leq & C \left\|f\right\|_{C_{\mathcal{X}}^{0,k}\left(\mathbb{R}\times\mathbb{R}^{n}\right)}\int_{\mathbb{R}\times\mathbb{R}^{n}}\varepsilon^{-\frac{Q+2}{2}}\mathbf{1}_{[-\varepsilon,\varepsilon]}(t-s) \exp\left(-\frac{\left\|y^{-1}\circ x\right\|^2}{c\varepsilon}\right)dyds \\
= & C \left\|f\right\|_{C_{\mathcal{X}}^{0,k}\left(\mathbb{R}\times\mathbb{R}^{n}\right)},
\end{align*}
and it follows from Lemma \ref{Lem_HCSIO} that
\begin{equation*}
\left[f_\varepsilon\right]_{C_{\mathcal{X}}^{\frac{\alpha}{2},k+\alpha}\left(\mathbb{R}\times\mathbb{R}^{n}\right)} \leq C \left\|f\right\|_{C_{\mathcal{X}}^{\frac{\alpha}{2},k+\alpha}\left(\mathbb{R}\times\mathbb{R}^{n}\right)},
\end{equation*}
where the constants $C>0$ depend on $\mathbb{G}$ and $k$ only. Thus, conclusion \ref{mollifiers_C^alpha/2,alpha} holds true.

Let us now prove conclusion \ref{mollifiers_C^k+alpha}. Set the function
\begin{equation*}
F(t,y):=\int_{\mathbb{R}}\frac{1}{\varepsilon}\varphi\left(\frac{t-s}{\varepsilon}\right)f(s,y)ds, \,t\in\mathbb{R},\,y\in\mathbb{R}^n.
\end{equation*}
As before, by using the mean value theorem and the dominated convergence theorem, we simply obtain that for any multi-index $J=\left(j_1, j_2, \cdots, j_k\right),j_i \in\{1,\ldots,n_1\}$ with the length $|J|=k\in\mathbb{N}$, the Lie derivative $X_J F(t,y)$ exists, and
\begin{equation*}
X_J F(t,y)=\int_{\mathbb{R}}\frac{1}{\varepsilon}\varphi\left(\frac{t-s}{\varepsilon}\right)X_J f(s,y)ds,\,t\in\mathbb{R},\,y\in\mathbb{R}^n.
\end{equation*}
Furthermore, we apply \cite[Theorem 1.2(1)]{25JWY} to obtain
\begin{equation*}
X_J f_{\varepsilon}(t,x) =\sum_{p_1,\ldots,p_k=1}^{n_1}\int_{\mathbb{R}^n}\mathcal{K}^P\left(\varepsilon,y^{-1}\circ x\right)X_P F(t,y)dy,\,(t,x)\in\mathbb{R}\times\mathbb{R}^n,
\end{equation*}
where $\mathcal{K}^P(\varepsilon,y^{-1}\circ x),P=(p_1,p_2,\ldots,p_k),p_i\in\{1,\ldots,n_1\}$ are kernels in the form of \eqref{mathcal K_I,q^P} with $|I|=q=0$. And $\mathcal{K}^P$ satisfies the conditions in Lemma \ref{Lem_HCSIO} with respect to the distance $d_{cc}$ with the constants $c$ depending only on $\mathbb{G}$, $k$, $J$ and $P$.

In fact, by referring to the proof of \cite[Theorem 1.2(1)]{25JWY}, we can see that
\begin{equation*}
\left|\mathcal{K}^{P}\left(\varepsilon,y^{-1}\circ x\right)\right| \leq c_{J,P}\varepsilon^{-\frac{Q}{2}}\exp\left(-\frac{\left\|y^{-1}\circ x\right\|^2}{c\varepsilon}\right),\,x,y\in\mathbb{R}^n.
\end{equation*}
Hence,
\begin{equation*}
\left\|X_J f_{\varepsilon}\right\|_{L^{\infty}\left(\mathbb{R}\times\mathbb{R}^{n}\right)} \leq \sum_{p_1,\ldots,p_k=1}^{n_1}c_{J,P} \left\|X_P F\right\|_{L^{\infty}\left(\mathbb{R}\times\mathbb{R}^{n}\right)} \leq C \sup_{t\in\mathbb{R}}\left\|f(t,\cdot)\right\|_{C_{\mathcal{X}}^{k}\left(\mathbb{R}^{n}\right)}.
\end{equation*}
where the constant $C>0$ depends on $\mathbb{G}$ and $k$ only. On the other hand, by Lemma \ref{Lem_HCSIO}, we obtain
\begin{equation*}
\sup_{t\in\mathbb{R}}\left[f_{\varepsilon}(t,\cdot)\right]_{C_{\mathcal{X}}^{k+\alpha}\left(\mathbb{R}^{n}\right)} \leq C\sup_{t\in\mathbb{R}}\left\|F(t,\cdot)\right\|_{C_{\mathcal{X}}^{k+\alpha}\left(\mathbb{R}^{n}\right)} \leq C\sup_{t\in\mathbb{R}}\left\|f(t,\cdot)\right\|_{C_{\mathcal{X}}^{k+\alpha}\left(\mathbb{R}^{n}\right)},
\end{equation*}
where the constant $C>0$ depends on $\mathbb{G}$ and $k$ only. Hence, conclusion \ref{mollifiers_C^k+alpha} can be derived.

Finally, conclusion \ref{mollifiers_periodicity} follows directly from the fact that for any $k\in\mathbb{Z}^n$,
\begin{align*}
f_{\varepsilon}(t,k\circ x)= & \int_{\mathbb{R}\times\mathbb{R}^{n}}\phi_{\varepsilon}(t-s,y^{-1}\circ k\circ x)f(s,y)d y ds \\
= & \int_{\mathbb{R}\times\mathbb{R}^{n}}\phi_{\varepsilon}(t-s,z^{-1}\circ x)f(s,k\circ z)d z ds \\
= & f_{\varepsilon}(t,x),\,(t,x)\in\mathbb{R}\times\mathbb{R}^n.
\end{align*}

Here completes the proof.
\end{proof}
The following proof is for the mollifiers adapted to Carnot tori.
\begin{proof}[\bf{Proof of Proposition \ref{Prop_Mollifiers_Lip.}}]
Since $\left\|\cdot\right\|_{\mathbb{G}}$ is smooth out of the origin and all homogeneous norms on $\mathbb{G}$ are equivalent, we have that $\psi_{\varepsilon}$ is a smooth function with support in $B_{\varepsilon}(0)$. Then, it can be found that
\begin{align*}
\int_{[0,1)^n}\sum_{k\in\mathbb{Z}^n}\psi_{\varepsilon}\left(k\circ x\circ y^{-1}\right)g(y)d y= &\int_{[0,1)^n}\sum_{k\in\mathbb{Z}^n:\left\|k\circ x\circ y^{-1}\right\|_{\mathbb{G}}\leq \varepsilon}\psi_{\varepsilon}\left(k\circ x\circ y^{-1}\right)g(y)d y \\
= &\int_{x\circ\left([0,1)^n\right)^{-1}}\sum_{k\in\mathbb{Z}^n:\left\|k\circ z\right\|_{\mathbb{G}}\leq \varepsilon}\psi_{\varepsilon}(k\circ z)g(z^{-1}\circ x)d z,
\end{align*}
and there exist a finite number of $k$'s in $\mathbb{Z}^n$, depending on $\varepsilon$, such that $\left\|k\circ z\right\|_{\mathbb{G}}\leq \varepsilon$ for any $z\in\mathbb{R}^n$ with $\|z\|_{\mathbb{G}} \leq \|x\|_{\mathbb{G}}+r^{\frac{1}{2r!}}\sqrt{n}<+\infty$. Because $\psi_{\varepsilon}\left(k\circ x\circ y^{-1}\right)$ is smooth in $x$, by using standard calculation, it is easy to show that $g_{\varepsilon}(x) \in C^{\infty}\left(\mathbb{T}_{\mathbb{G}}\right)$.

To prove conclusion \ref{g_varepsilon->g}, from Lemma \ref{Lem_T_G=[0,1)^n}, since there exist finite sets in the family $\left\{k\circ [0,1)^n\right\}_{k\in\mathbb{Z}^n}$ that cover $x\circ\left([0,1)^n\right)^{-1}$ and $\left|x\circ\left([0,1)^n\right)^{-1}\right|=\left|[0,1)^n\right|$, then we have
$$
\int_{x\circ\left([0,1)^n\right)^{-1}}\sum_{k\in\mathbb{Z}^n}\psi_{\varepsilon}(k\circ z)dz=\int_{[0,1)^n}\sum_{k\in\mathbb{Z}^n}\psi_{\varepsilon}(k\circ k'\circ z')dz'=\int_{\mathbb{R}^n}\psi_{\varepsilon}(x)dx=1.
$$
Hence, for any $x\in\mathbb{T}_{\mathbb{G}}$,
\begin{align*}
\left|g_{\varepsilon}(x)-g(x)\right|= & \left|\int_{x\circ\left([0,1)^n\right)^{-1}}\sum_{k\in\mathbb{Z}^n}\psi_{\varepsilon}(k\circ z)\left(g(z^{-1}\circ x)-g(x)\right)dz\right| \\
\leq & C\int_{D_{\frac{1}{\varepsilon}}\left(x\right)\circ \left(D_{\frac{1}{\varepsilon}}\left([0,1)^n\right)\right)^{-1}}\sum_{k\in\mathbb{Z}^n}\psi(D_{\frac{1}{\varepsilon}}(k)\circ z)\left|g(\left(D_{\varepsilon}(z)\right)^{-1} \circ x)-g(x)\right|d z \to 0
\end{align*}
as $\varepsilon\to0$, where we notice that $\left\{D_{\frac{1}{\varepsilon}}(k)\circ D_{\frac{1}{\varepsilon}}\left([0,1)^n\right)\right\}$ is still a tiling of $\mathbb{G}$. This leads to conclusion \ref{g_varepsilon->g}.

Next we prove conclusion \ref{mollifiers_C^0,1}. For any $x_1,x_2 \in \mathbb{T}_{\mathbb{G}}$,
\begin{align*}
& \left|g_{\varepsilon}(x_1)-g_{\varepsilon}(x_2)\right| \\
= & \left|\int_{x\circ\left([0,1)^n\right)^{-1}}\sum_{k\in\mathbb{Z}^n}\psi_{\varepsilon}(k\circ z)\left(g(z^{-1}\circ x_1)-g(z^{-1}\circ x_2)\right)dz\right| \\
\leq & [g]_{C_{\mathcal{X}}^{0+1}\left(\mathbb{T}_{\mathbb{G}}\right)} d_{cc}^{\mathbb{T}_{\mathbb{G}}}(x_1, x_2).
\end{align*}
Thus we obtain conclusion \ref{mollifiers_C^0,1}.
\end{proof}
Lastly, we present the proof for the mollifiers adapted to the dual spaces of non-isotropic H\"{o}lder spaces.
\begin{proof}[\bf{Proof of Proposition \ref{Prop_Mollifiers_dual space}}]
We only need to prove conclusion \ref{mollifiers_L^1(C^-k+alpha)}, since it implies conclusion \ref{mollifiers_C^-k+alpha}. Note that for any fixed $x$, there exists a finite number of $k$'s in $\mathbb{Z}^n$, depending on $\varepsilon$, such that $\left\|k\circ x \circ y^{-1}\right\|_{\mathbb{G}}\leq \varepsilon$ for any $y\in[0,1)^n$, then $\sum_{k\in\mathbb{Z}^n}\psi_{\varepsilon}\left(k\circ x \circ (\cdot)^{-1}\right) \in C^\infty\left([0,1)^n\right)$, hence $\left\langle\mu(s),\sum_{k\in\mathbb{Z}^n}\psi_{\varepsilon}\left(k\circ x \circ (\cdot)^{-1}\right)\right\rangle$ makes sense.

It can be proved that $\mu_{\varepsilon}(t,x) \in C^\infty\left(\mathbb{R}\times\mathbb{T}_{\mathbb{G}}\right)$. Indeed, firstly, since $k\circ x \circ y^{-1}$ is smooth in $(x,y)$ for any $k\in\mathbb{Z}^n$, then
\begin{equation*}
\left\|\sum_{k\in\mathbb{Z}^n}\psi_{\varepsilon}\left(k\circ x \circ (\cdot)^{-1}\right) -\sum_{k\in\mathbb{Z}^n}\psi_{\varepsilon}\left(k\circ x_0 \circ (\cdot)^{-1}\right)\right\|_{C^\infty\left([0,1)^n\right)}\to0
\end{equation*}
as $x \to x_0\in \mathbb{T}_{\mathbb{G}}$. This leads to the continuity of $\left\langle\mu(s),\sum_{k\in\mathbb{Z}^n}\psi_{\varepsilon}\left(k\circ x \circ (\cdot)^{-1}\right)\right\rangle$ on $\mathbb{T}_{\mathbb{G}}$ for any $s\in\mathbb{R}$. Hence, it is easy to obtain that
$$
\mu_{\varepsilon}(t,x)=\int_{\mathbb{R}}\left\langle\mu(s),\sum_{k\in\mathbb{Z}^n}\psi_{\varepsilon}\left(k\circ x \circ (\cdot)^{-1}\right)\right\rangle\varphi_{\varepsilon}(t-s)ds\in C(\mathbb{R}\times\mathbb{T}_{\mathbb{G}}).
$$
Nextly, we denote $e_l$ as the unit vector in the $x_l$'s direction and construct
\begin{align*}
& \frac{\partial_t^q\mu_{\varepsilon}(t,x+he_l)-\partial_t^q\mu_{\varepsilon}(t,x)}{h} \\
= & \int_{\mathbb{R}}\frac{\left\langle\mu(s),\sum\limits_{k\in\mathbb{Z}^n}\psi_{\varepsilon}\left(k\circ \left(x+he_l\right)\circ (\cdot)^{-1}\right)\right\rangle-\left\langle\mu(s),\sum\limits_{k\in\mathbb{Z}^n}\psi_{\varepsilon}\left(k\circ x \circ (\cdot)^{-1}\right)\right\rangle}{h} \partial_t^q\varphi_{\varepsilon}(t-s)ds \\
= & \int_{\mathbb{R}}\left\langle\mu(s),\frac{\sum\limits_{k\in\mathbb{Z}^n}\psi_{\varepsilon}\left(k\circ \left(x+he_l\right)\circ (\cdot)^{-1}\right)-\sum\limits_{k\in\mathbb{Z}^n}\psi_{\varepsilon}\left(k\circ x \circ (\cdot)^{-1}\right)}{h}\right\rangle \partial_t^q\varphi_{\varepsilon}(t-s)ds
\end{align*}
for any $(t,x)\in\mathbb{R}\times\mathbb{T}_{\mathbb{G}}$ and $q\in\mathbb{N}$.
Letting $h \to 0$, we can get that the partial derivative of $\partial_t^q\mu_{\varepsilon}$ exists and
$$
\partial_{x_l}\partial_t^q\mu_{\varepsilon}(t,x)=\int_{\mathbb{R}}\left\langle\mu(s),\sum_{k\in\mathbb{Z}^n}\partial_{x_l}\psi_{\varepsilon}\left(k\circ x \circ (\cdot)^{-1}\right)\right\rangle \partial_t^q\varphi_{\varepsilon}(t-s)ds \in C\left(\mathbb{R}\times\mathbb{T}_{\mathbb{G}}\right).
$$
Repeating the above steps, we finally prove that $\mu_{\varepsilon}(t,x) \in C^\infty\left(\mathbb{R}\times\mathbb{T}_{\mathbb{G}}\right)$.

Moreover, choose any $g \in C_{\mathcal{X}}^{k}\left(\mathbb{T}_{\mathbb{G}}\right)$ and set
\begin{align*}
g_{\varepsilon}(y):= & \int_{\mathbb{T}_{\mathbb{G}}}\sum_{k\in\mathbb{Z}^n}\psi_{\varepsilon}\left(k\circ x \circ y^{-1}\right)g(x)dx \\
= & \int_{\mathbb{T}_{\mathbb{G}}}\sum_{k\in\mathbb{Z}^n}\psi_{\varepsilon}\left(k\circ x \circ y^{-1}\right)g(k\circ x)dx \\
= & \int_{\mathbb{R}^n}\psi_{\varepsilon}\left(x \circ y^{-1}\right)g(x)dx,\,y\in\mathbb{R}^n.
\end{align*}
Since $\mu_{\varepsilon}(t,x) \in C^\infty\left(\mathbb{R}\times \mathbb{T}_{\mathbb{G}}\right)$, we can find that
\begin{align*}
\left\langle\mu_{\varepsilon}(t),g(\cdot)\right\rangle= &
\int_{\mathbb{T}_{\mathbb{G}}}\left(\int_{\mathbb{R}}\left\langle\mu(s),\sum_{k\in\mathbb{Z}^n}\psi_{\varepsilon}\left(k\circ x \circ (\cdot)^{-1}\right)\right\rangle \varphi_{\varepsilon}(t-s)ds\right) g(x)dx \\
= & \int_{\mathbb{R}}\left\langle\mu(s),g_{\varepsilon}(\cdot)\right\rangle \varphi_{\varepsilon}(t-s)ds,\,t\in[0,T],
\end{align*}
where the last equality can be obtained by the standard method, which relies on the linearity of $\mu(s)$ for any $s\in\mathbb{R}$.

We can prove $\|g_{\varepsilon}\|_{C_{\mathcal{X}}^{k}\left(\mathbb{R}^n\right)} \leq \|g\|_{C_{\mathcal{X}}^{k}\left(\mathbb{T}_{\mathbb{G}}\right)}$. Indeed, for any multi-index $I=(i_1,i_2,\ldots,i_k)$, $i_j\in\{1,\ldots,n_1\}$ with the length $|I|=k\in\mathbb{N}$ and $y\in\mathbb{R}^n$, we have
\begin{align*}
\left|X_I g_{\varepsilon}(y)\right| = & \left|\int_{\mathbb{R}^n} \psi_{\varepsilon}(z) X_I g\left(z \circ y\right)dz\right| \\
= & \left|\int_{\{\|z\|_{\mathbb{G}}\leq 1\}} \psi(z) X_I g\left(D_{\varepsilon}(z) \circ y\right)dz\right| \leq \sup_{x\in\mathbb{R}^n}\left|X_I g\left(x\right)\right|.
\end{align*}
Therefore, for any $g \in C_{\mathcal{X}}^{k}\left(\mathbb{T}_{\mathbb{G}}\right)$, we obtain that
\begin{align}\label{<mu_varepsilon,g>}
\int_{0}^{T}\left\langle\mu_{\varepsilon}(t),g(\cdot)\right\rangle dt= & \int_{0}^{T}\int_{\mathbb{R}}\left\langle\mu(s),g_{\varepsilon}(\cdot)\right\rangle \varphi_{\varepsilon}(t-s)ds dt \\
\leq & C\left\|\mu\right\|_{L^1\left([0,T];C_{\mathcal{X}}^{-k}\left([0,1)^n\right)\right)} \left\|g_{\varepsilon}\right\|_{C_{\mathcal{X}}^{k}\left(\mathbb{R}^n\right)} \notag\\
\leq & C \left\|\mu\right\|_{L^1\left([0,T];C_{\mathcal{X}}^{-k}\left([0,1)^n\right)\right)} \left\|g\right\|_{C_{\mathcal{X}}^{k}\left(\mathbb{T}_{\mathbb{G}}\right)},\notag
\end{align}
where the constant $C>0$ is independent of $\varepsilon$ and $g$. Thus we have
\begin{equation*}
\left\|\mu_{\varepsilon}\right\|_{L^1\left([0,T];C_{\mathcal{X}}^{-k}\left(\mathbb{T}_{\mathbb{G}}\right)\right)} \leq C \left\|\mu\right\|_{L^1\left([0,T];C_{\mathcal{X}}^{-k}\left([0,1)^n\right)\right)}.
\end{equation*}

Furthermore, we have
\begin{align*}
\left|X_I(g_{\varepsilon}-g)(y)\right| = & \left|\int_{\{\|z\|_{\mathbb{G}}\leq 1\}} \psi(z)\left(X_I g\left(D_{\varepsilon}(z) \circ y\right)-X_Ig(y)\right)dz\right|\notag \\
\leq &\sup_{\{\|z\|_{\mathbb{G}}\leq 1\}}\left|X_I g\left(D_{\varepsilon}(z) \circ y\right)-X_Ig(y)\right|.
\end{align*}
Hence, if $g \in C_{\mathcal{X}}^{k+\alpha}\left(\mathbb{T}_{\mathbb{G}}\right)$, there is
\begin{equation*}
\left\|g_{\varepsilon}-g\right\|_{C_{\mathcal{X}}^{k}\left(\mathbb{R}^n\right)} \leq C\left[g\right]_{C_{\mathcal{X}}^{k+\alpha}\left(\mathbb{T}_{\mathbb{G}}\right)} \sup_{\{\|z\|_{\mathbb{G}}\leq 1\}}\left\|y^{-1}\circ D_{\varepsilon}(z)\circ y\right\|^{\alpha},
\end{equation*}
where the constant $C>0$ depends on $k$ and $\mathbb{G}$ only. Then, we obtain that
\begin{align}\label{<mu_varepsilon-mu,g>}
& \int_{0}^{T}\left\langle\left(\mu_{\varepsilon}-\mu\right)(t),g\right\rangle dt \\
= & \int_{0}^{T}\int_{\mathbb{R}}\left\langle\mu(s),\left(g_{\varepsilon}-g\right)(\cdot)\right\rangle \varphi_{\varepsilon}(t-s)ds dt +\int_{0}^{T}\left\langle\int_{\mathbb{R}}\mu(s)\varphi_{\varepsilon}(t-s)ds-\mu(t),g(\cdot)\right\rangle dt \notag\\
\leq & \left\|g_{\varepsilon}-g\right\|_{C_{\mathcal{X}}^{k}\left(\mathbb{R}^n\right)} \left\|\mu\right\|_{L^1\left([0,T];C_{\mathcal{X}}^{-k}\left([0,1)^n\right)\right)} +\left\|g\right\|_{C_{\mathcal{X}}^{k}\left(\mathbb{R}^n\right)} \left\|\tilde{\mu}_{\varepsilon}-\mu\right\|_{L^1\left([0,T];C_{\mathcal{X}}^{-k}\left([0,1)^n\right)\right)} \notag\\
\leq & C(\varepsilon)\left(\left[g\right]_{C_{\mathcal{X}}^{k+\alpha}\left(\mathbb{T}_{\mathbb{G}}\right)} \left\|\mu\right\|_{L^1\left([0,T];C_{\mathcal{X}}^{-k}\left([0,1)^n\right)\right)}+\left\|g\right\|_{C_{\mathcal{X}}^{k}\left(\mathbb{T}_{\mathbb{G}}\right)}\right),\notag
\end{align}
where $\tilde{\mu}_{\varepsilon}(t):=\int_{\mathbb{R}}\mu(s)\varphi_{\varepsilon}(t-s)ds$, and $C(\varepsilon)>0$ is independent of $g$, satisfying $C(\varepsilon) \to 0$ as $\varepsilon\to0$. This yields that $\mu_{\varepsilon}\to\mu$ in $L^1\left([0,T];C_{\mathcal{X}}^{-(k+\alpha)}\left(\mathbb{T}_{\mathbb{G}}\right)\right)$ as $\varepsilon\to0$.

Thus we have completed the proof.
\end{proof}
The same method as above can be applied to prove Corollary \ref{Cor_Mollifiers_P(T_G)}.
\begin{proof}[\bf{Proof of Corollary \ref{Cor_Mollifiers_P(T_G)}}]
It suffices to repeat the proof steps for Proposition \ref{Prop_Mollifiers_dual space}, where taking $g=1_{A}$ for any Borel set $A \subset \mathbb{T}_{\mathbb{G}}$ in \eqref{<mu_varepsilon,g>} can yield $\mu_{\varepsilon}\in\mathcal{P}(\mathbb{T}_{\mathbb{G}})$; taking any $g\in C_\mathcal{X}^{0+1}(\mathbb{T}_{\mathbb{G}})$ with $\left[g\right]_{C_{\mathcal{X}}^{0+1}\left(\mathbb{T}_{\mathbb{G}}\right)}\leq1$ in \eqref{<mu_varepsilon-mu,g>} can yield $d_1\left(\mu_{\varepsilon},\mu\right)\to0$ as $\varepsilon\to0$.
\end{proof}

\section{Results for the linear degenerate parabolic equation}\label{Sec_4}

In this section, we focus on proving the well-posedness and the Schauder estimates of the solution to the linear degenerate parabolic equation \eqref{LHJB}, i.e., Theorem \ref{Thm_LHJB wellposed regularity}. Additionally, we shall demonstrate the H\"{o}lder continuity estimates of the solution, i.e., Theorem \ref{Thm_LHJB Lipschitz}.

First, we would like to provide the following a priori Schauder estimate of the solution to the Cauchy problem \eqref{LHJB in R^n}, which is useful for the proof of Theorem \ref{Thm_LHJB wellposed regularity}.
\begin{Prop}\label{Prop_LHJB in R^n SE}
Let $k\in\{2,3,\ldots\}$ and $\alpha\in(0,1)$. Assume $b(t,x)\in C^{\frac{\alpha}{2},1+\alpha}_{\mathcal{X}}\left([0,T]\times\mathbb{R}^{n};\mathbb{R}^{n_1}\right)\cap C^{\frac{\alpha}{2},k-2+\alpha}_{\mathcal{X}}\left([0,T]\times\mathbb{R}^{n};\mathbb{R}^{n_1}\right)$, $c(t,x)$, $f(t,x)\in C^{\frac{\alpha}{2},k-2+\alpha}_{\mathcal{X}}\left([0,T]\times\mathbb{R}^{n}\right)$, and $g(x) \in C^{k+\alpha}_{\mathcal{X}}\left(\mathbb{R}^{n}\right)$. Suppose $z(t,x)\in C_{{\mathcal{X}}}^{1,2} \left((0,T]\times\mathbb{R}^{n}\right)\cap C\left([0,T]\times\mathbb{R}^{n}\right)$ is a solution to the Cauchy problem \eqref{LHJB in R^n}, satisfying $z\in B\left([0,T];C_{\mathcal{X}}^{1+\alpha}\left(\mathbb{R}^{n}\right)\right)$.

Then $z$ satisfies
\begin{equation}\label{LHJB in R^n SE}
\left\|z\right\|_{C^{\frac{\alpha}{2},k+\alpha}_{\mathcal{X}}\left([0,T]\times\mathbb{R}^{n}\right)} +\left\|z\right\|_{C^{1+\frac{\alpha}{2},\alpha}_{\mathcal{X}}\left([0,T]\times\mathbb{R}^{n}\right)} \leq C\left(\left\|g\right\|_{C_{\mathcal{X}}^{k+\alpha}\left(\mathbb{R}^{n}\right)} +\left\|f\right\|_{C_{\mathcal{X}}^{\frac{\alpha}{2},k-2+\alpha}\left([0,T]\times \mathbb{R}^{n}\right)}\right),
\end{equation}
where $C>0$ depends on $\mathbb{G}$, $\left\|b\right\|_{C_{\mathcal{X}}^{\frac{\alpha}{2},k-2+\alpha}\left([0,T]\times \mathbb{R}^{n};\mathbb{R}^{n_1}\right)}$, $\left\|c\right\|_{C_{\mathcal{X}}^{\frac{\alpha}{2},k-2+\alpha}\left([0,T]\times \mathbb{R}^{n}\right)}$, $T$, $k$, $\alpha$ and $\left\|b\right\|_ {C_{\mathcal{X}}^{\frac{\alpha}{2},1+\alpha}\left([0,T]\times \mathbb{R}^{n};\mathbb{R}^{n_1}\right)}$ only. 
\end{Prop}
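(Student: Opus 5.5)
Starting from the a priori spatial estimate of Proposition \ref{Prop_LHJB in R^n USE&HSE}, I would upgrade to full parabolic H\"{o}lder control by writing the solution through the heat kernel and applying the singular/fractional integral machinery of Lemmas \ref{Lem_HCSIO}--\ref{Lem_mathcal K_I,q,l_singular integral theory}. Set $F:=f-b\cdot D_{\mathcal{X}}z-cz$. Then $z$ solves the heat equation \eqref{heat equ.} with source $F$ and datum $g$, so
\begin{equation*}
z(t,x)=\int_{\mathbb{R}^n}\Gamma_0(t,y^{-1}\circ x)g(y)dy+\int_0^t\int_{\mathbb{R}^n}\Gamma_0(t-s,y^{-1}\circ x)F(s,y)dyds=:z_1+z_2.
\end{equation*}
To justify this representation I would localize or use Lemma \ref{Lem_HE existence and uniqueness} after subtracting and cutting off, using boundedness of $z$ in $C^{1+\alpha}_{\mathcal{X}}$.

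\textbf{Step 1 (spatial bounds).} Since $b,c,f\in C^{\frac{\alpha}{2},k-2+\alpha}_{\mathcal{X}}\subset B([0,T];C^{k-2+\alpha}_{\mathcal{X}})$, Proposition \ref{Prop_LHJB in R^n USE&HSE} applied with index $k-1$ gives $\sup_t\|z(t)\|_{C^{k-1+\alpha}_{\mathcal{X}}}$ bounded by the right-hand side of \eqref{LHJB in R^n SE}. For $k\ge 2$ the assumption $b\in C^{\frac{\alpha}{2},1+\alpha}_{\mathcal{X}}$ is what is needed at the lowest level. Consequently $F\in B([0,T];C^{k-2+\alpha}_{\mathcal{X}})$.

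\textbf{Step 2 (time regularity of $F$).} I would show $F\in C^{\frac{\alpha}{2},k-2+\alpha}_{\mathcal{X}}$. This requires $\frac{\alpha}{2}$-H\"{o}lder continuity in $t$ of $X_I D_{\mathcal{X}}z$ for $|I|\le k-2$. The second part of Proposition \ref{Prop_LHJB in R^n USE&HSE} gives $\frac12$-H\"{o}lder continuity in $t$ of $z$ in $C^{k-1+\alpha}_{\mathcal{X}}$, with $g\in C^{k+\alpha}_{\mathcal{X}}$ providing the index-$(k-1)+1$ datum. Products with $b,c$ keep the regularity, so $\|F\|_{C^{\frac{\alpha}{2},k-2+\alpha}}$ is controlled.

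\textbf{Step 3 (kernel estimates).} For $|J|=k-2$, I would use Lemma \ref{Lem_X_i X_J T f} to commute $X_J$ onto $F$. This writes $X_iX_lX_Jz_2$ as sums of integrals with kernels $X_iX_l\mathcal{K}^P$ of the form $r_l X_I\Gamma_0$ with $|I|-l=2$, applied to $X_PF$. These kernels satisfy the standard kernel and cancellation conditions of Lemma \ref{Lem_mathcal K_I,q,l_singular integral theory}, while lower-order terms are fractional kernels. Lemmas \ref{Lem_HCSIO} and \ref{Lem_HCFIO} on bounded cylinders in $(\mathbb{R}\times\mathbb{R}^n,d_p)$ then yield $[X_IX_Jz_2]_{C^{\frac\alpha2,\alpha}_{\mathcal{X}}}\le C\|F\|_{C^{\frac\alpha2,k-2+\alpha}}$ for $|I|\le2$. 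Since the constants do not depend on the cylinder, a covering argument globalizes the bound. The time derivative is handled by $\partial_t z=\Delta_{\mathcal{X}}z-b\cdot D_{\mathcal{X}}z-cz+f$, which gives the $C^{1+\frac\alpha2,\alpha}$ norm once the $k\ge2$ spatial part is known. For $z_1$ I would move derivatives onto $g$ via the $|I|=q=0$ version of the same lemma, so that $X_Jz_1=\sum_P\int \mathcal{K}^P(t,\cdot)X_Pg$ with $|P|=k$. $C^\alpha$ bounds in $x$ and $\frac\alpha2$-H\"{o}lder bounds in $t$ then follow from the Gaussian estimate \eqref{Gamma_0 Lie derivatives GE}, using that $X_Pg\in C^\alpha_{\mathcal{X}}$.

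\textbf{Main obstacle.} I expect the hardest point to be the H\"{o}lder estimates uniformly up to $t=0$. The singular integral must be taken over the truncated region $s\in(0,t)$, so the cancellation property \eqref{CP_S} has to be verified for kernels restricted to $s>0$, which is not simply the symmetric integral. My first attempt would be to extend $F$ to $s<0$ as in Remark \ref{Rem_Mollifiers_Holder}, for instance by $F(0,\cdot)$. The integral over $s<0$ is then handled as a heat semigroup applied to $F(0,\cdot)\in C^{k-2+\alpha}_{\mathcal{X}}$, and its singularity at $t=0$ should cancel against the corresponding piece of $z_1$, using $F(0)=\Delta_{\mathcal{X}}g-\ldots$ (compatibility).
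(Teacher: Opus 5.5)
Your argument is sound in outline, but it is organized differently from the paper's.

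\textbf{How the paper proceeds.} The paper never writes a global Duhamel formula for $z$. It localizes to $v_\delta=(z-g)\varphi_\delta$, which has zero initial datum, and bounds $v_\delta$ by $\|Q_\delta\|_{C^{\frac{\alpha}{2},k-2+\alpha}_{\mathcal{X}}}$. This reduces everything, tacitly by induction on $k$, to $\|z\|_{C^{\frac{\alpha}{2},1+\alpha}_{\mathcal{X}}}$. That quantity is estimated separately, by moving the derivative off $b\cdot D_{\mathcal{X}}z$ onto the kernel through the right-invariant fields $X_i^R$; this is where $b\in C^{\frac{\alpha}{2},1+\alpha}_{\mathcal{X}}$ is used. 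Finally $\|z\|_{C^{\frac{\alpha}{2},\alpha}_{\mathcal{X}}}$ is controlled by Proposition~\ref{Prop_LHJB in R^n UE&HE}.

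\textbf{How your route differs.} You separate off $z_1$ and push all $k$ derivatives onto $g$. You take the time regularity of $D_{\mathcal{X}}z$ directly from the ``moreover'' part of Proposition~\ref{Prop_LHJB in R^n USE&HSE} at index $k-1$, which holds up to $t=0$ because $g\in C^{k+\alpha}_{\mathcal{X}}$. This is more economical: the $X_i^R$ step becomes unnecessary. For $k=2$ your route does not use $b\in C^{\frac{\alpha}{2},1+\alpha}_{\mathcal{X}}$ at all, so your remark that this hypothesis is ``needed at the lowest level'' is not borne out by your own argument. The price is that you must justify the global representation (uniqueness of bounded solutions, or a cutoff as in the paper). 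You must also globalize the singular-integral bounds for a source $F$ that is not compactly supported. Both are routine.

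\textbf{First correction: induction on $k$.} Proposition~\ref{Prop_LHJB in R^n USE&HSE} at index $k-1$ presupposes $z\in B([0,T];C^{k-1+\alpha}_{\mathcal{X}})$. This is not a hypothesis when $k\ge 3$. You must argue by induction on $k$, as the paper tacitly does. The level-$(k-1)$ estimate then bounds $\|D_{\mathcal{X}}z\|_{C^{\frac{\alpha}{2},k-2+\alpha}_{\mathcal{X}}}$ directly.

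\textbf{Second correction: your fix for the ``main obstacle'' fails as stated.}
\begin{enumerate}
\item Extending $F$ by $F(0,\cdot)$ on all of $s<0$ makes $\int_{-\infty}^{0}\int_{\mathbb{R}^n}\Gamma_0(t-s,y^{-1}\circ x)F(0,y)\,dy\,ds$ diverge; take $F(0,\cdot)\equiv 1$.
\item There is no compatibility condition for the Cauchy problem on the whole group. Note $F(0,\cdot)=f(0,\cdot)-b(0,\cdot)\cdot D_{\mathcal{X}}g-c(0,\cdot)g$, not $\Delta_{\mathcal{X}}g-\dots$.
\item Your $z_1$ is already in $C^{\frac{\alpha}{2},k+\alpha}_{\mathcal{X}}$, so there is no singularity at $t=0$ to cancel.
\end{enumerate}

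\textbf{Why the obstacle is nonetheless benign.} For every fixed $\tau>0$ one has $\int_{\mathbb{R}^n}X_{j_1}\mathcal{K}^P(\tau,u)\,du=0$. The reason is that the outermost factor is a left-invariant field with $X_{j_1}^*=-X_{j_1}$, applied to a Gaussian-decaying function. So the cancellation holds on each time slice, and
\[
X_Jz_2(t,x)=\sum_P\int_0^t\int_{\mathbb{R}^n}X_{j_1}\mathcal{K}^P(t-s,y^{-1}\circ x)\bigl(X_PF(s,y)-X_PF(s,x)\bigr)\,dy\,ds
\]
is absolutely convergent, with spatial integral of size $(t-s)^{-1+\frac{\alpha}{2}}[X_PF(s,\cdot)]_{C^{\alpha}_{\mathcal{X}}}$. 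Truncating at $s=0$ therefore costs nothing, and the H\"{o}lder estimates extend up to $t=0$ by the usual near/far splitting. Alternatively, split $F=(F-F(0,\cdot))+F(0,\cdot)$, extend the first part by zero for $s<0$, and treat $\int_0^t\int_{\mathbb{R}^n}\Gamma_0(t-s,y^{-1}\circ x)F(0,y)\,dy\,ds$ by hand. The paper passes over the same point when it applies Lemma~\ref{Lem_HCSIO} on $[0,T]\times B_\delta(x_0)$ with a source $Q_\delta$ that does not vanish at $s=0$.
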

\begin{proof}
The idea to prove the Schauder estimate \eqref{LHJB in R^n SE} is similar to the one in Proposition \ref{Prop_LHJB in R^n USE&HSE}. While the difference is that we apply the abstract theory of singular integrals and fractional integrals (see Subsection \ref{Subsec_2.2}) to the space $\left([0,T]\times B_\delta\left(x_0\right), d_p, dt d x\right)$ instead of $\left(B_\delta\left(x_0\right),d_{cc},dx\right)$, where $d_p$ is the parabolic Carnot-Carath\'{e}odory distance, i.e.
\begin{equation*}
d_p((t,x),(s,y))=\sqrt{|t-s|+d_{cc}(x,y)^2},\quad(t,x),(s,y)\in\mathbb{R}\times\mathbb{G}.
\end{equation*}
It can be known from \cite[Lemma 3.3]{07BB} that, for any $\delta>0$, $x_0\in\mathbb{R}^{n}$, $([0,T]\times B_{\delta}(x_0),d_{p},dt dx)$ is a space of homogeneous type.

Fix any point $x_0 \in \mathbb{R}^{n}$. For any $\delta > 0$, set
$$
v_\delta(t,x):=(z(t,x)-g(x))\varphi_{\delta}(x),
$$
where $\varphi_{\delta}\in C_0^{\infty}\left(\mathbb{R}^{n}\right)$ is the cutoff function satisfying $0 \leq \varphi_{\delta} \leq 1$, $\varphi_{\delta} \equiv 1$ on $B_{\frac{\delta}{2}}(x)$, $\operatorname{supp}\left(\varphi_{\delta}\right) \subset B_{\delta}(x)$, and for any multi-index $I=(i_1,i_2,\ldots,i_k), i_j\in\{1,\ldots,n_1\}$ with the length $|I|=k\in\mathbb{N}$, $\alpha\in(0,1]$,
\begin{equation}\label{cut-off properties}
\left|X_I \varphi_{\delta}\right| \leq c_{k}\delta^{-k},\quad
\left[X_I \varphi_{\delta}\right]_{C_{\mathcal{X}}^{\alpha}(\mathbb{R}^{n})} \leq c_{k,\alpha}\delta^{-(k+\alpha)}
\end{equation}
(see \cite[Lemma 2.4]{25JWY}). We observe that $v_\delta$ is a solution to the following Cauchy problem:
\begin{equation*}
\begin{cases}
\mathcal{H} v_{\delta}=Q_{\delta}, & \text {in }(0, T] \times \mathbb{R}^{n}, \\
v_{\delta}(0,x)=0, & \text {in } \mathbb{R}^{n},
\end{cases}
\end{equation*}
where $\mathcal{H}=\partial_t-\Delta_{\mathcal{X}}$, which is a $2$-homogeneous left-invariant H\"{o}rmander's operator on $\mathbb{R}\times\mathbb{G}$, and
\begin{align*}
Q_{\delta}(t,x):= &-b(t, x) \cdot D_{\mathcal{X}} z(t,x)\varphi_{\delta}(x)-c(t,x)z(t,x)\varphi_{\delta}(x)+f(t,x)\varphi_{\delta}(x) \\
& +\Delta_{\mathcal{X}}g(x)\varphi_{\delta}(x)-2D_{\mathcal{X}}z(t,x)\cdot D_{\mathcal{X}}\varphi_{\delta}(x)+2D_{\mathcal{X}}g(x)\cdot D_{\mathcal{X}}\varphi_{\delta}(x) \\
& -(z(t, x)-g(x)) \Delta_{\mathcal{X}} \varphi_{\delta}(x)
\end{align*}
for any $(t, x) \in (0, T] \times \mathbb{R}^{n}$. Using Lemma \ref{Lem_HE existence and uniqueness}, we obtain that for any $(t,x)\in[0,T]\times\mathbb{R}^{n}$,
\begin{align}\label{v_delta 2}
v_{\delta}(t,x) & =\int_{0}^{t}\int_{\mathbb{R}^{n}}\Gamma_0\left(t-s,y^{-1}\circ x\right)Q_{\delta}(s,y)dyds\notag \\
& =\int_{0}^{T}\int_{\mathbb{R}^{n}}\Gamma_0\left(t-s,y^{-1}\circ x\right)Q_{\delta}(s,y)dyds,
\end{align}
where $\Gamma_0$ is the fundamental solution for the operator $\mathcal{H}$ and $\Gamma_0(t,x)=0$ for $(t,x)\in(-\infty,0]\times\mathbb{R}^{n}$.

Obviously $v_{\delta}\in C_{{\mathcal{X}}}^{1,2} \left((0,T]\times\mathbb{R}^{n}\right)$ and has derivatives of the following form (see \cite[Lemma 3.3]{03BLU}):
\begin{align*}
X_j X_i v_{\delta}(t, x) = & P V \int_{[0,T]\times\mathbb{R}^{n}}X_j X_i\Gamma_0\left(t-s,y^{-1}\circ x\right)Q_{\delta}(s,y)dsdy \\
= & \int_{0}^{T}\int_{\mathbb{R}^{n}}X_j X_i\Gamma_0\left(t-s,y^{-1}\circ x\right)Q_{\delta}(s,y)dyds
\end{align*}
for any $i,j\in\{1,\ldots,n_1\}$, and
\begin{align*}
\partial_t v_{\delta}(t, x) = & P V \int_{[0,T]\times\mathbb{R}^{n}}\partial_t\Gamma_0\left(t-s,y^{-1}\circ x\right)Q_{\delta}(s,y)dsdy \\
= & \int_{0}^{T}\int_{\mathbb{R}^{n}}\partial_t\Gamma_0\left(t-s,y^{-1}\circ x\right)Q_{\delta}(s,y)dyds.
\end{align*}

Further, we apply Lemma \ref{Lem_X_i X_J T f} to obtain that for any integer $k\geq2$, multi-index $J=(j_1,j_2,\ldots,j_{k}),j_l\in\{1,\ldots,n_1\}$ and $(t,x)\in(0,T]\times\mathbb{R}^{n}$,
\begin{align*}
X_J v_{\delta}(t, x) = \sum_{p_2,\ldots,p_{k-1}=1}^{n_1} \int_{0}^{T}\int_{\mathbb{R}^{n}}\mathcal{K}^{P_{-1,-k}}\left(t-s,y^{-1}\circ x\right)X_{P_{-1,-k}} Q_{\delta}(s,y)dyds,
\end{align*}
where $P_{-1,-k}=(p_2,\ldots,p_{k-1})$, i.e. $P=(p_1,\ldots,p_k)$ with its $1$-th and $k$-th components removed, and $\mathcal{K}^{P_{-1,-k}}$ are $-(Q+2)$-homogeneous kernels having the form
\begin{align*}
& \mathcal{K}^{P_{-1,-k}}\left(t-s,y^{-1}\circ x\right) \\
= & \begin{cases}
X_{j_1}\sum_{i=1}^{M}\mathcal{R}_{i}^{J_{-1,-k},P_{-1,-k}}(y^{-1}\circ x) \\
\qquad\quad\times X_{l_1^{J_{-1,-k},P_{-1,-k}}}\cdots X_{l_i^{J_{-1,-k},P_{-1,-k}}} X_{j_k}\Gamma_0\left(t-s,y^{-1}\circ x\right), & \mbox{if } \dim(P_{-1,-k})\geq1, \\
X_{j_1}X_{j_2}\Gamma_0\left(t-s,y^{-1}\circ x\right), & \mbox{if } \dim(P_{-1,-k})=0
\end{cases}
\end{align*}
for a certain family of finite number of $i$-homogeneous polynomials $\mathcal{R}_i^{J_{-1,-k},P_{-1,-k}},i\in\{1,\ldots,M\}$ and left-invariant vector fields $\{X_{l_i^{J_{-1,-k},P_{-1,-k}}}\}_{i=1}^{M},l_i^{J_{-1,-k},P_{-1,-k}}\in\{1,\ldots,n_1\}$ with $M=M(J_{-1,-k},P_{-1,-k})\in\mathbb{Z}_+$. Similarly, $J_{-1,-k}=(j_2,\ldots,j_{k-1})$ denotes $J$ with its $1$-th and $k$-th components removed. Moreover, since
\begin{align*}
& X_{j_1}\sum_{i=1}^{M}\mathcal{R}_{i}^{J_{-1,-k},P_{-1,-k}}(y^{-1}\circ x) X_{l_1^{J_{-1,-k},P_{-1,-k}}}\cdots X_{l_i^{J_{-1,-k},P_{-1,-k}}} X_{j_k}\Gamma_0\left(t-s,y^{-1}\circ x\right) \\
= & \sum_{i=1}^{M}X_{j_1}\mathcal{R}_{i}^{J_{-1,-k},P_{-1,-k}}(y^{-1}\circ x) X_{l_1^{J_{-1,-k},P_{-1,-k}}}\cdots X_{l_i^{J_{-1,-k},P_{-1,-k}}} X_{j_k}\Gamma_0\left(t-s,y^{-1}\circ x\right) \\
& +\sum_{i=1}^{M}\mathcal{R}_{i}^{J_{-1,-k},P_{-1,-k}}(y^{-1}\circ x) X_{j_1}X_{l_1^{J_{-1,-k},P_{-1,-k}}}\cdots X_{l_i^{J_{-1,-k},P_{-1,-k}}} X_{j_k}\Gamma_0\left(t-s,y^{-1}\circ x\right),
\end{align*}
then from Lemma \ref{Lem_mathcal K_I,q,l_singular integral theory} with $|I|+2q-l=2$, we know that $\mathcal{K}^{P_{-1,-k}}$ satisfy the conditions in Lemma \ref{Lem_HCSIO} and Lemma \ref{Lem_HCFIO}.

For any couple of functions $\Phi,\Psi\in C_{\mathcal{X}}^{\frac{\alpha}{2},\alpha}\left([0,T]\times B_\delta\left(x_0\right)\right)$, $k\in\mathbb{N}$, $\alpha\in(0,1]$, one has
\begin{equation}\label{Phi Psi C_mathcal X^alpha/2,k+alpha}
\|\Phi\Psi\|_{C_{\mathcal{X}}^{\frac{\alpha}{2},k+\alpha}\left([0,T]\times B_\delta\left(x_0\right)\right)} \leq c_{k}\|\Phi\|_{C_{\mathcal{X}}^{\frac{\alpha}{2},k+\alpha}\left([0,T]\times B_\delta\left(x_0\right)\right)} \|\Psi\|_{C_{\mathcal{X}}^{\frac{\alpha}{2},k+\alpha}\left([0,T]\times B_\delta\left(x_0\right)\right)}
\end{equation}
for some constant $c_{k}>0$ depending only on $k$ (see \cite[Proposition 4.2(i)]{07BB}).

Fix any $\delta>0$. Applying Lemma \ref{Lem_HCSIO} and Lemma \ref{Lem_HCFIO} and then using \eqref{Phi Psi C_mathcal X^alpha/2,k+alpha} and \eqref{cut-off properties}, we obtain that for any $k\geq 2$,
\begin{align}\label{v_delta_SE}
& \left\|v_\delta\right\|_{C_{\mathcal{X}}^{\frac{\alpha}{2},k+\alpha}\left([0,T]\times B_\delta\left(x_0\right)\right)} +\left\|v_\delta\right\|_{C_{\mathcal{X}}^{1+\frac{\alpha}{2},\alpha}\left([0,T]\times B_\delta\left(x_0\right)\right)} \\
\leq & C\left\|Q_\delta\right\|_{C_{\mathcal{X}}^{\frac{\alpha}{2},k-2+\alpha}\left([0,T]\times B_\delta\left(x_0\right)\right)}\notag \\
\leq & C\bigg(\left(\delta^{-(k-2+\alpha)}\|b\|_{C_{\mathcal{X}}^{\frac{\alpha}{2},k-2+\alpha}\left([0,T]\times B_\delta\left(x_0\right);\mathbb{R}^{n_1}\right)}+2\delta^{-(k-1+\alpha)}\right)\|D_{\mathcal{X}}z\|_ {C_{\mathcal{X}}^{\frac{\alpha}{2},k-2+\alpha}\left([0,T]\times B_\delta\left(x_0\right)\right)}\notag \\
& +\left(\delta^{-(k-2+\alpha)}\|c\|_{C_{\mathcal{X}}^{\frac{\alpha}{2},k-2+\alpha}\left([0,T]\times B_\delta\left(x_0\right)\right)}+\delta^{-(k+\alpha)}\right)\|z\|_{C_{\mathcal{X}}^{\frac{\alpha}{2},k-2+\alpha}\left([0,T]\times B_\delta\left(x_0\right)\right)}\notag \\
& +\delta^{-(k-2+\alpha)}\|f\|_{C_{\mathcal{X}}^{\frac{\alpha}{2},k-2+\alpha}\left([0,T]\times B_\delta\left(x_0\right)\right)}+\delta^{-(k+\alpha)}\|g\|_{C_{\mathcal{X}}^{k-2+\alpha}\left(B_\delta\left(x_0\right)\right)}\notag \\
& +2\delta^{-(k-1+\alpha)}\|g\|_{C_{\mathcal{X}}^{k-1+\alpha}\left(B_\delta\left(x_0\right)\right)} +\delta^{-(k-2+\alpha)}\|g\|_{C_{\mathcal{X}}^{k+\alpha}\left(B_\delta\left(x_0\right)\right)}\bigg),\notag
\end{align}
where $C>0$ depends on $\mathbb{G}$, $T$, $\alpha$ and $k$ only.

It can be found that for any function $\phi\in C_{\mathcal{X}}^{\frac{\alpha}{2},k+\alpha}\left([0,T]\times\mathbb{R}^{n}\right)$, $k\in\mathbb{N}$ and any $\delta'>0$, 
\begin{align}\label{phi C^k+alpha [0,T]timesR^3}
\|\phi\|_{C_{\mathcal{X}}^{\frac{\alpha}{2},k+\alpha}\left([0,T]\times\mathbb{R}^{n}\right)}\leq & \sup_{x_0\in\mathbb{R}^{n}} \sum_{|I|\leq k} [X_I\phi]_{C_{\mathcal{X}}^{\frac{\alpha}{2},\alpha}\left([0,T]\times B_{\delta'}(x_0)\right)} \\
& +\left(1+2{\delta'}^{-\alpha}\right)\sup_{x_0\in\mathbb{R}^{n}} \|\phi\|_{C_{\mathcal{X}}^{0,k}\left([0,T]\times B_{\delta'}(x_0)\right)},\notag
\end{align}
by means of the fact that for any multi-index $I=(i_1,i_2,\ldots,i_k), i_j\in\{1,\ldots,n_1\}$ with the length $|I|=k$,
\begin{align*}
[X_I\phi]_{C_{\mathcal{X}}^{\frac{\alpha}{2},\alpha}\left([0,T]\times\mathbb{R}^{n}\right)} \leq & \sup_{\substack{(t,x) \neq (s,y) \\ d_{cc}(x,y)<\delta'}}\frac{|X_I\phi(t,x)-X_I\phi(s,y)|}{\left(|t-s|+d_{cc}(x,y)^2\right)^{\frac{\alpha}{2}}} +\sup_{\substack{(t,x) \neq (s,y) \\ d_{cc}(x,y)\geq\delta'}}\frac{|X_I\phi(t,x)-X_I\phi(s,y)|}{\left(|t-s|+d_{cc}(x,y)^2\right)^{\frac{\alpha}{2}}} \\
\leq & \sup_{x_0\in\mathbb{R}^{n}} [X_I\phi]_{C_{\mathcal{X}}^{\frac{\alpha}{2},\alpha}\left([0,T]\times B_{\delta'}(x_0)\right)} +2{\delta'}^{-\alpha}\|X_I\phi\|_{L^\infty\left([0,T]\times\mathbb{R}^{n}\right)},
\end{align*}
and
\begin{equation*}
\|\phi\|_{C_{\mathcal{X}}^{0,k}\left([0,T]\times\mathbb{R}^{n}\right)}=\sup_{x_0\in\mathbb{R}^{n}} \|\phi\|_{C_{\mathcal{X}}^{0,k}\left([0,T]\times B_{\delta'}(x_0)\right)}.
\end{equation*}
Recalling that $v_\delta=z-g$ in $[0,T]\times B_{\frac{\delta}{2}}(x_0)$, from \eqref{phi C^k+alpha [0,T]timesR^3}, we finally obtain that for any $k\geq 2$,
\begin{align}\label{z SE}
& \left\|z\right\|_{C_{\mathcal{X}}^{\frac{\alpha}{2},k+\alpha}\left([0,T]\times\mathbb{R}^{n}\right)} +\left\|z\right\|_{C_{\mathcal{X}}^{1+\frac{\alpha}{2},\alpha}\left([0,T]\times\mathbb{R}^{n}\right)} \\
\leq & \left\|g\right\|_{C_{\mathcal{X}}^{k+\alpha}\left(\mathbb{R}^{n}\right)}+\left\|z-g\right\|_ {C_{\mathcal{X}}^{\frac{\alpha}{2},k+\alpha}\left([0,T]\times \mathbb{R}^{n}\right)} +\left\|z-g\right\|_ {C_{\mathcal{X}}^{1+\frac{\alpha}{2},\alpha}\left([0,T]\times \mathbb{R}^{n}\right)} \notag\\
\leq & \left\|g\right\|_{C_{\mathcal{X}}^{k+\alpha}\left(\mathbb{R}^{n}\right)} +\sup_{x_0\in\mathbb{R}^{n}} \left(\sum_{|I|\leq k} [X_Iv_\delta]_{C_{\mathcal{X}}^{\frac{\alpha}{2},\alpha}\left([0,T]\times B_{\frac{\delta}{2}}(x_0)\right)}+[v_\delta]_{C_{\mathcal{X}}^{1+\frac{\alpha}{2},\alpha}\left([0,T]\times B_{\frac{\delta}{2}}(x_0)\right)}\right. \notag \\
& \left.+\left(1+2^{1+\alpha}\delta^{-\alpha}\right)\bigg(\|v_\delta\|_{C_{\mathcal{X}}^{0,k}\left([0,T]\times B_{\frac{\delta}{2}}(x_0)\right)}+\|v_\delta\|_{C_{\mathcal{X}}^{1,0}\left([0,T]\times B_{\frac{\delta}{2}}(x_0)\right)}\bigg)\right)\notag \\
\leq & C\left(\left\|g\right\|_{C_{\mathcal{X}}^{k+\alpha}\left(\mathbb{R}^{n}\right)} +\left\|f\right\|_{C_{\mathcal{X}}^{\frac{\alpha}{2},k-2+\alpha}\left([0,T]\times \mathbb{R}^{n}\right)} +\left\|z\right\|_{C_{\mathcal{X}}^{\frac{\alpha}{2},k-1+\alpha}\left([0,T]\times \mathbb{R}^{n}\right)}\right)\notag \\
\leq & C\left(\left\|g\right\|_{C_{\mathcal{X}}^{k+\alpha}\left(\mathbb{R}^{n}\right)} +\left\|f\right\|_{C_{\mathcal{X}}^{\frac{\alpha}{2},k-2+\alpha}\left([0,T]\times \mathbb{R}^{n}\right)} +\left\|z\right\|_{C_{\mathcal{X}}^{\frac{\alpha}{2},1+\alpha}\left([0,T]\times \mathbb{R}^{n}\right)}\right),\notag
\end{align}
where $C>0$ depends on $\mathbb{G}$, $\left\|b\right\|_{C_{\mathcal{X}}^{\frac{\alpha}{2},k-2+\alpha}\left([0,T]\times \mathbb{R}^{n};\mathbb{R}^{n_1}\right)}$, $\left\|c\right\|_{C_{\mathcal{X}}^{\frac{\alpha}{2},k-2+\alpha}\left([0,T]\times \mathbb{R}^{n}\right)}$, $T$, $k$ and $\alpha$ only. 

To prove \eqref{LHJB in R^n SE}, we need the estimate of $\left\|z\right\|_{C^{\frac{\alpha}{2},1+\alpha}_{\mathcal{X}}\left([0,T]\times\mathbb{R}^{n}\right)}$. Note that for any $i\in\{1,\ldots,n\}$, $X_i$ is the $\alpha_i$-homogeneous and left-invariant vector field on $\mathbb{G}$ which coincides with $\partial_{x_i}$ at the origin, and the convolution on $\mathbb{G}$ satisfies the following property (see \cite[Proposition 3.47]{22BB}):
\begin{equation}\label{convolution on homogeneous groups_property}
\int_{\mathbb{R}^{n}}X_if(y)g(y^{-1}\circ x)dy=\int_{\mathbb{R}^{n}}f(y)X_i^R g(\cdot)(y^{-1}\circ x)dy
\end{equation}
for any couple of functions $f,g:\mathbb{R}^{n}\to\mathbb{R}$ for which the integral makes sense. Here, $X_i^R$ is the $\alpha_i$-homogeneous and right-invariant vector field on $\mathbb{G}$ satisfying $X_{i}^R(0)=\partial_{x_{i}}|_{0}=X_{i}(0)$. From \cite[Remark 3.32]{22BB}, since each element $X_{{n_1}+1},\ldots,X_{n}$ is a linear combination (with constant coefficients) of commutators of $X_1,\ldots,X_{n_1}$ with finite steps, it is known that
\begin{equation}\label{X_i^R=sum X_J_j,l}
X_i^R=\sum_{j=i}^{n}\tilde{r}_{ij}(x)X_j=\sum_{j=i}^{n}\sum_{J_{j,l}\in\{1,\ldots,n_1\}^{\alpha_j}}c_{j,l}\tilde{r}_{ij}(x)X_{J_{j,l}},
\end{equation}
with $\tilde{r}_{ij}(x)$ being $(\alpha_j-\alpha_i)$-homogeneous polynomials.

According to \eqref{v_delta 2} and using \eqref{convolution on homogeneous groups_property}, we have that for any $j\in\{0,1\}$, multi-index $I=(i_1,\ldots,i_{j}),i_l\in\{1,\ldots,n_1\}$ and $(t,x)\in[0,T]\times\mathbb{R}^{n}$,
\begin{align*}
& X_I v_{\delta}(t,x) \\
= &\int_{0}^{T}\int_{\mathbb{R}^{n}}X_I\Gamma_0\left(t-s,y^{-1}\circ x\right)R_{\delta}(s,y)dyds \\
& -\int_{0}^{T}\int_{\mathbb{R}^{n}}X_I\Gamma_0\left(t-s,y^{-1}\circ x\right)\bigg(b\cdot D_{\mathcal{X}}z\varphi_{\delta}+2D_{\mathcal{X}}z\cdot D_{\mathcal{X}}\varphi_{\delta}-\Delta_{\mathcal{X}}g\varphi_{\delta}\bigg)(s,y)dyds \\
= &\int_{0}^{T}\int_{\mathbb{R}^{n}}X_I\Gamma_0\left(t-s,y^{-1}\circ x\right)R_{\delta}(s,y)dyds \\
& +\int_{0}^{T}\int_{\mathbb{R}^{n}}X_I\Gamma_0\left(t-s,y^{-1}\circ x\right)\bigg(\operatorname{div}_{\mathcal{X}}(b\varphi_{\delta})z+2\Delta_{\mathcal{X}}\varphi_{\delta}z -D_{\mathcal{X}}g\cdot D_{\mathcal{X}}\varphi_{\delta}\bigg)(s,y)dyds \\
& -\int_{0}^{T}\int_{\mathbb{R}^{n}}\sum_{i=1}^{n_1}X_i^R X_I\Gamma_0\left(t-s,\cdot)(y^{-1}\circ x\right)\left(b_i\varphi_{\delta}z+2X_i\varphi_{\delta}z-X_ig\varphi_{\delta}\right)(s,y)dyds,
\end{align*}
where
\begin{align*}
R_{\delta}(t,x):= &-c(t,x)z(t,x)\varphi_{\delta}(x)+f(t,x)\varphi_{\delta}(x)+2D_{\mathcal{X}}g(x)\cdot D_{\mathcal{X}}\varphi_{\delta}(x) \\
& -(z(t, x)-g(x)) \Delta_{\mathcal{X}} \varphi_{\delta}(x),\,(t,x)\in(0, T]\times\mathbb{R}^{n}.
\end{align*}
It can be known from \eqref{X_i^R=sum X_J_j,l} and Lemma \ref{Lem_mathcal K_I,q,l_singular integral theory} with $|I|+2q-l=2$ that $X_i^R X_I\Gamma_0$ satisfy the conditions in Lemma \ref{Lem_HCSIO} and Lemma \ref{Lem_HCFIO}. Then, similar to the estimate in \eqref{v_delta_SE} and \eqref{z SE}, we can obtain
\begin{equation}\label{z_C_mathcal X^alpha/2,1+alpha}
\left\|z\right\|_{C_{\mathcal{X}}^{\frac{\alpha}{2},1+\alpha}\left([0,T]\times\mathbb{R}^{n}\right)} \leq C\left(\left\|g\right\|_{C_{\mathcal{X}}^{1+\alpha}\left(\mathbb{R}^{n}\right)} +\left\|f\right\|_{C_{\mathcal{X}}^{\frac{\alpha}{2},\alpha}\left([0,T]\times \mathbb{R}^{n}\right)} +\left\|z\right\|_{C_{\mathcal{X}}^{\frac{\alpha}{2},\alpha}\left([0,T]\times \mathbb{R}^{n}\right)}\right),
\end{equation}
where the constant $C>0$ depends on $\mathbb{G}$, $\left\|b\right\|_ {C_{\mathcal{X}}^{\frac{\alpha}{2},1+\alpha}\left([0,T]\times \mathbb{R}^{n};\mathbb{R}^{n_1}\right)}$, $\left\|c\right\|_{C_{\mathcal{X}}^{\frac{\alpha}{2},\alpha}\left([0,T]\times \mathbb{R}^{n}\right)}$, $T$ and $\alpha$ only. 

Due to \eqref{LHJB in R^n_HSE_L^infty} and \eqref{LHJB in R^n_prior C^1 estimate}, we have that for any $(t,x),(s,y)\in[0,T]\times\mathbb{R}^n$,
\begin{align*}
& |z(t,x)-z(s,y)| \\
\leq & |z(t,x)-z(s,x)|+|z(s,x)-z(s,y)| \\
\leq & C\left(\left\|g\right\|_{C_{\mathcal{X}}^{1}\left(\mathbb{R}^{n}\right)} +\|f\|_{L^{\infty}\left((0,T)\times\mathbb{R}^{n}\right)}\right)|t-s|^{\frac{1}{2}}+\sup_{t\in[0, T]}[z(t,\cdot)]_{C_{\mathcal{X}}^{\alpha}\left(\mathbb{R}^{n}\right)}d_{cc}(x,y)^{\alpha} \\
\leq & C\left(\left\|g\right\|_{C_{\mathcal{X}}^{1}\left(\mathbb{R}^{n}\right)} +\|f\|_{L^{\infty}\left((0,T)\times\mathbb{R}^{n}\right)}\right) \left(|t-s|^{\frac{1}{2}}+d_{cc}(x,y)^{\alpha}\right),
\end{align*}
where we note that for any $t\in[0,T]$ and $\delta'>0$,
\begin{align*}
[z(t,\cdot)]_{C_{\mathcal{X}}^{\alpha}\left(\mathbb{R}^{n}\right)} \leq & \sup_{x_0\in\mathbb{R}^{n}}[z(t,\cdot)]_{C_{\mathcal{X}}^{\alpha}\left(B_{\delta'}(x_0)\right)} +2{\delta'}^{-\alpha}\|z(t,\cdot)\|_{L^\infty\left(\mathbb{R}^{n}\right)} \\
\leq & {\delta'}^{1-\alpha}\sup_{x_0\in\mathbb{R}^{n}}\|D_{\mathcal{X}}z(t,\cdot)\|_{L^{\infty}\left(B_{5\delta'}(x_0)\right)} +2{\delta'}^{-\alpha}\|z(t,\cdot)\|_{L^\infty\left(\mathbb{R}^{n}\right)}
\end{align*}
by \cite[Proposition 4.2(ii)]{07BB}. Hence,
\begin{equation*}
\left\|z\right\|_{C_{\mathcal{X}}^{\frac{\alpha}{2},\alpha}\left([0,T]\times \mathbb{R}^{n}\right)} \leq C\left(\left\|g\right\|_{C_{\mathcal{X}}^{1}\left(\mathbb{R}^{n}\right)} +\|f\|_{L^{\infty}\left((0,T)\times\mathbb{R}^{n}\right)}\right).
\end{equation*}
Combining the above inequality with \eqref{z_C_mathcal X^alpha/2,1+alpha} yields that
\begin{equation*}
\left\|z\right\|_{C_{\mathcal{X}}^{\frac{\alpha}{2},1+\alpha}\left([0,T]\times\mathbb{R}^{n}\right)} \leq C\left(\left\|g\right\|_{C_{\mathcal{X}}^{1+\alpha}\left(\mathbb{R}^{n}\right)} +\left\|f\right\|_{C_{\mathcal{X}}^{\frac{\alpha}{2},\alpha}\left([0,T]\times \mathbb{R}^{n}\right)}\right),
\end{equation*}
where the constant $C>0$ depends on $\mathbb{G}$, $\left\|b\right\|_ {C_{\mathcal{X}}^{\frac{\alpha}{2},1+\alpha}\left([0,T]\times \mathbb{R}^{n};\mathbb{R}^{n_1}\right)}$, $\left\|c\right\|_{C_{\mathcal{X}}^{\frac{\alpha}{2},\alpha}\left([0,T]\times \mathbb{R}^{n}\right)}$, $T$ and $\alpha$ only. Putting the above inequality into \eqref{z SE}, we finally get that \eqref{LHJB in R^n SE} holds.
\end{proof}
From now on, we shall restrict the subsequent analysis to $\mathbb{T}_{\mathbb{G}}$.
The following proposition gives the existence and uniqueness of the solution to the linear degenerate Cauchy problem on $[0,T]\times\mathbb{T}_{\mathbb{G}}$.
\begin{Prop}\label{Prop_LHJB well-posedness}
Let $\alpha\in(0,1)$. For any $i,j\in\{1,2,\ldots,n_1\}$, assume $a_{i,j}(t,x)$, $b_i(t,x)$, $c(t,x)$, $f(t,x)$ are continuous on $[0,T]\times\mathbb{T}_{\mathbb{G}}$, satisfying $a_{i,j}\in C_{{\mathcal{X}}}^{\frac{\alpha}{2},\alpha} \left([0,T]\times\mathbb{T}_{\mathbb{G}}\right)$, $b_i$, $c$, $f\in B\left([0,T];C_{\mathcal{X}}^{\alpha}\left(\mathbb{T}_{\mathbb{G}}\right)\right)$, and $g(x)\in C\left(\mathbb{T}_{\mathbb{G}}\right)$. Then there exists a unique solution $z(t,x)\in C_{{\mathcal{X}}}^{1,2} \left((0,T]\times\mathbb{T}_{\mathbb{G}}\right)\cap C\left([0,T]\times\mathbb{T}_{\mathbb{G}}\right)$ to the equation
\begin{equation}\label{LHJB 2}
\begin{cases}
H z(t,x)=f(t,x),& \text {in }(0, T] \times \mathbb{T}_{\mathbb{G}}, \\
z(0,x)=g(x),& \text {in }\mathbb{T}_{\mathbb{G}},
\end{cases}
\end{equation}
where the operator
\begin{equation*}
H:=\partial_t-\sum_{i,j=1}^{n_1}a_{i,j}(t,x)X_i X_j-\sum_{i=1}^{n_1}b_i(t,x)X_i-c(t,x).
\end{equation*}
Moreover, $z$ has the form
\begin{equation*}
z(t,x)=\int_{\mathbb{T}_{\mathbb{G}}}\sum_{k\in\mathbb{Z}^{n}}\Gamma(t,k\circ x;0,y)g(y)d y + \int_{0}^{t}\int_{\mathbb{T}_{\mathbb{G}}}\sum_{k\in\mathbb{Z}^{n}}\Gamma(t,k\circ x;s,y)f(s,y)d y d s
\end{equation*}
for any $(t,x)\in[0,T]\times\mathbb{T}_{\mathbb{G}}$. Here, $\Gamma:(\mathbb{R}\times\mathbb{G})\times(\mathbb{R}\times\mathbb{G})\to\mathbb{R}$ is the fundamental solution for $H$ on $\mathbb{R}\times\mathbb{G}$ and $\Gamma\geq 0$.
\end{Prop}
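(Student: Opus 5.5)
The plan is to lift the problem to the whole group, use the fundamental solution $\Gamma$ of $H$ on $\mathbb{R}\times\mathbb{G}$, and then periodize. Extend $a_{i,j},b_i,c,f,g$ to $1_{\mathbb{G}}$-periodic functions on $[0,T]\times\mathbb{R}^n$ (still denoted the same). By Lemma \ref{Lem_T_G=[0,1)^n} and Remark \ref{Rem_Holder norm R^n=T_mathbb G}, the extensions are bounded and continuous. Moreover $a_{i,j}\in C^{\frac{\alpha}{2},\alpha}_{\mathcal{X}}([0,T]\times\mathbb{R}^n)$ and $b_i,c,f\in B([0,T];C^\alpha_{\mathcal{X}}(\mathbb{R}^n))$ with the same norms. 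Since $(a_{i,j})$ is uniformly positive definite (implicit in $H$ being of type \eqref{Def_general Hormander parabolic operator}), the known theory for H\"ormander operators with H\"older coefficients on Carnot groups gives a fundamental solution $\Gamma(t,x;s,y)\ge0$ of $H$. This theory is the Levi parametrix method built from frozen-coefficient kernels of the type $\Gamma_0$, as in \cite{02BLU,10BBLU}. It also gives Gaussian bounds
\[
|\Gamma(t,x;s,y)|+\sqrt{t-s}\,|X_i\Gamma(t,x;s,y)|\le C (t-s)^{-Q/2}\exp\Big(-\frac{\|y^{-1}\circ x\|^2}{C(t-s)}\Big),
\]
and the usual representation: for bounded data, $w(t,x)=\int_{\mathbb{R}^n}\Gamma(t,x;0,y)g(y)dy+\int_0^t\int_{\mathbb{R}^n}\Gamma(t,x;s,y)f(s,y)dyds$ lies in $C^{1,2}_{\mathcal{X}}((0,T]\times\mathbb{R}^n)\cap C([0,T]\times\mathbb{R}^n)$ and solves $Hw=f$, $w(0)=g$. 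For continuity up to $t=0$ with merely continuous $g$, I would use $\int\Gamma(t,x;0,y)dy\to1$ together with Gaussian concentration.

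Next, existence. I take $z:=w$ built from the periodic extensions. Decomposing $\mathbb{R}^n=\bigcup_{k\in\mathbb{Z}^n}k^{-1}\circ[0,1)^n$ (Lemma \ref{Lem_T_G=[0,1)^n}) and using periodicity of $f,g$, I get $\int_{\mathbb{R}^n}\Gamma(t,x;s,y)f(s,y)dy=\int_{[0,1)^n}\sum_k\Gamma(t,x;s,k\circ y)f(s,y)dy$. The Gaussian bound makes the series converge absolutely and locally uniformly. To match the stated form $\sum_k\Gamma(t,k\circ x;s,y)$, I use the invariance $\Gamma(t,k\circ x;s,k\circ y)=\Gamma(t,x;s,y)$. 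This holds because the coefficients are invariant under left translation by $k$ and the $X_i$ are left-invariant, so $(t,x)\mapsto\Gamma(t,k\circ x;s,k\circ y)$ is again a fundamental solution with pole $(s,y)$, and uniqueness of $\Gamma$ applies. The same invariance shows $z(t,k\circ x)=z(t,x)$, so $z$ descends to $[0,T]\times\mathbb{T}_{\mathbb{G}}$ and has the required regularity.

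Then uniqueness. A difference $u$ of two solutions is periodic, continuous and bounded on $[0,T]\times\mathbb{R}^n$ (the torus is compact), with $Hu=0$ and $u(0)=0$. I set $\tilde u=e^{-\lambda t}u$ with $\lambda>\|c\|_\infty$. At an interior positive maximum on the compact set $[0,T]\times\mathbb{T}_{\mathbb{G}}$, we have $D_{\mathcal{X}}\tilde u=0$, $\partial_t\tilde u\ge0$, and $\sum a_{i,j}X_iX_j\tilde u\le0$. The last inequality holds because the horizontal Hessian is symmetric up to commutator terms that pair with symmetric $a_{i,j}$. This contradicts $H$ applied to $\tilde u$ being zero, so $u\le0$, and symmetrically $u\ge0$. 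Alternatively, the weak maximum principle \cite[Corollary 13.2]{10BBLU} can be invoked directly, since periodic functions are bounded.

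The main obstacle is the first step. It requires the existence of $\Gamma$ with Gaussian bounds under coefficients that are only bounded measurable (H\"older in $x$) in time for $b,c$, plus the invariance argument identifying $\Gamma(t,k\circ x;s,k\circ y)=\Gamma(t,x;s,y)$. If the parametrix construction is not available in that generality, I would first mollify the coefficients in $t$ using Proposition \ref{Prop_Mollifiers_Holder}, which preserves periodicity by part \ref{mollifiers_periodicity}. I would then pass to the limit using the uniform Gaussian bounds and the a priori estimates of Proposition \ref{Prop_LHJB in R^n UE&HE}.
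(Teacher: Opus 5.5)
Your proposal is correct and follows essentially the paper's route. You extend the data periodically, use the parametrix fundamental solution $\Gamma$ of $H$ on $\mathbb{R}\times\mathbb{G}$, derive the translation identity $\Gamma(t,k\circ x;s,k\circ y)=\Gamma(t,x;s,y)$ to get periodicity of the whole-space solution and the periodized representation, and close with a maximum principle for uniqueness. The only differences are in how two steps are justified, and both of your variants are sound:
\begin{itemize}
\item \emph{Translation identity.} You get it from uniqueness of the fundamental solution: the translated kernel is again a fundamental solution with the same Gaussian bounds, because the coefficients are $1_{\mathbb{G}}$-periodic and the $X_i$ are left-invariant. The paper instead checks it term by term in the parametrix series: $\Gamma_{(t_0,x_0)}=\Gamma_{(t_0,k\circ x_0)}$, then $Z_1$, then $Z_j$ by induction, then $\Phi$.
\item \emph{Uniqueness.} You give a direct maximum-principle argument on the compact torus, whereas the paper quotes the weak maximum principle from the literature.
\end{itemize}
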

\begin{proof}
Since the functions on $\mathbb{T}_{\mathbb{G}}$ are $1_{\mathbb{G}}$-periodic on $\mathbb{R}^n$, and by Remark \ref{Rem_Holder norm R^n=T_mathbb G}, we may assume without loss of generality that the bounded functions $a_{i,j}(t,x)$, $b_i(t,x)$, $c(t,x)$, $f(t,x)$ and $g(x)$ are continuous on $[0,T]\times\mathbb{R}^{n}$ and $1_{\mathbb{G}}$-periodic on $\mathbb{R}^{n}$ for all $t\in[0,T]$, satisfying $a_{i,j}\in C_{{\mathcal{X}}}^{\frac{\alpha}{2},\alpha} \left([0,T]\times\mathbb{R}^{n}\right)$, $b_i$, $c$, $f\in B\left([0,T];C_{\mathcal{X}}^{\alpha}\left(\mathbb{R}^{n}\right)\right)$ and $g(x)\in C\left(\mathbb{R}^{n}\right)$.

For any fixed point $(t_0,x_0) \in \mathbb{R}\times\mathbb{R}^n$, denote the parabolic operators
\begin{equation*}\label{mathcal H_(t_0,x_0)}
\mathcal{H}_{(t_0,x_0)}:=\partial_t-\sum_{i,j=1}^{n_1}a_{i,j}(t_0,x_0)X_i X_j.
\end{equation*}
Let $\Gamma_{(t_0,x_0)}(t,x;s,y)=\Gamma_{(t_0,x_0)}(t-s,y^{-1}\circ x)$ be the fundamental solution for $\mathcal{H}_{(t_0,x_0)}$. We can see \cite{02BLU} for more details about $\Gamma_{(t_0,x_0)}$.
It follows from \cite[P. 74]{10BBLU} that the fundamental solution $\Gamma$ for $H$ can be written as
\begin{equation*}
\Gamma(t,x;s,y)=\Gamma_{(s,y)}(t-s,y^{-1}\circ x)+\int_{s}^{t}\int_{\mathbb{R}^{n}}\Gamma_{(\tau,z)}(t-\tau,z^{-1}\circ x)\Phi(\tau,z;s,y)d z d \tau
\end{equation*}
for any $(t,x),(s,y)\in \mathbb{R}\times\mathbb{R}^n$, $t>s$. Here, $\Phi$ is a certain determined kernel function, having the form
\begin{equation*}
\Phi(t,x;s,y)=\sum_{j=1}^{\infty}Z_j(t,x;s,y),\,(t,x),(s,y)\in \mathbb{R}\times\mathbb{R}^n,\,t>s,
\end{equation*}
where
\begin{equation*}\label{Z_1 Def}
Z_1:=-H\Gamma_{(s,y)}(t-s,y^{-1}\circ x),\, (t,x) \neq (s,y) \in \mathbb{R}\times\mathbb{R}^n,
\end{equation*}
and for every $j \in \mathbb{Z}_+$,
\begin{equation*}\label{Z_j+1}
Z_{j+1}(t,x;s,y):=\int_{s}^{t}\int_{\mathbb{R}^n}Z_1(t,x;\tau,z)Z_{j}(\tau,z;s,y)dz d\tau,\, (t,x),(s,y)\in \mathbb{R}\times\mathbb{R}^n,\,t>s.
\end{equation*}

Now let us claim that for any $k\in\mathbb{Z}^{n}$, $(t,x),(s,y)\in \mathbb{R}\times\mathbb{R}^n$ and $t>s$,
\begin{equation}\label{Gamma_periodic property}
\Gamma\left(t,k\circ x;s,y\right)=\Gamma\left(t,x;s,k^{-1}\circ y\right).
\end{equation}
In fact, for any $k\in\mathbb{Z}^{n}$ and $(t_0,x_0)\in\mathbb{R}\times\mathbb{R}^n$,
\begin{equation*}
\mathcal{H}_{(t_0,x_0)}=\partial_t-\sum_{i,j=1}^{n_1}a_{i,j}(t_0,x_0)X_i X_j=\partial_t-\sum_{i,j=1}^{n_1}a_{i,j}(t_0,k\circ x_0)X_i X_j=\mathcal{H}_{(t_0,k\circ x_0)},
\end{equation*}
then we get $\Gamma_{(t_0,x_0)}=\Gamma_{(t_0,k\circ x_0)}$. Since $H=H-\mathcal{H}_{(s,y)}+\mathcal{H}_{(s,y)}$, it can be found that
\begin{align*}
& Z_1(t,k\circ x;s,y) \\
= & \left(\mathcal{H}_{(s,y)}-H\right)\Gamma_{(s,y)}(t-s,y^{-1}\circ (k\circ x)) \\
= & \sum_{i,j=1}^{n_1}\left(a_{i,j}(t,k\circ x)-a_{i,j}(s,y)\right)X_i X_j\Gamma_{(s,y)}(t-s,y^{-1}\circ (k\circ x)) \\
& +\sum_{i=1}^{n_1}b_i(t,k\circ x)X_i\Gamma_{(s,y)}(t-s,y^{-1}\circ (k\circ x))+c(t,k\circ x)\Gamma_{(s,y)}(t-s,y^{-1}\circ (k\circ x)) \\
= & \sum_{i,j=1}^{n_1}\left(a_{i,j}(t,x)-a_{i,j}(s,k^{-1}\circ y)\right)X_i X_j\Gamma_{(s,k^{-1}\circ y)}(t-s,\left(k^{-1}\circ y\right)^{-1}\circ x) \\
& +\sum_{i=1}^{n_1}b_i(t,x)X_i\Gamma_{(s,k^{-1}\circ y)}(t-s,\left(k^{-1}\circ y\right)^{-1}\circ x)+c(t,x)\Gamma_{(s,k^{-1}\circ y)}(t-s,\left(k^{-1}\circ y\right)^{-1}\circ x) \\
= & Z_1(t,x;s,k^{-1}\circ y)
\end{align*}
for any $(t,x) \neq (s,y) \in \mathbb{R}\times\mathbb{R}^n$, where we note that $k^{-1}\in\mathbb{Z}^{n}$ and $k=\left(k^{-1}\right)^{-1}$. Next we assume that $Z_{j}(t,k\circ x;s,y)=Z_{j}(t,x;s,k^{-1}\circ y)$ for $j \in \mathbb{Z}_+$, then
\begin{align*}
Z_{j+1}(t,k\circ x;s,y)= & \int_{s}^{t}\int_{\mathbb{R}^n}Z_1(t,k\circ x;\tau,z)Z_{j}(\tau,z;s,y)dzd\tau \\
= & \int_{s}^{t}\int_{\mathbb{R}^n}Z_1(t,x;\tau,k^{-1}\circ z)Z_{j}(\tau,z;s,y)dzd\tau \\
= & \int_{s}^{t}\int_{\mathbb{R}^n}Z_1(t,x;\tau,\xi)Z_{j}(\tau,k\circ \xi;s,y)d\xi d\tau \\
= & \int_{s}^{t}\int_{\mathbb{R}^n}Z_1(t,x;\tau,\xi)Z_{j}(\tau,\xi;s,k^{-1}\circ y)d\xi d\tau \\
= & Z_{j+1}(t,x;s,k^{-1}\circ y).
\end{align*}
Hence, by induction, for every $j \in \mathbb{Z}_+$, we have $Z_{j}(t,k\circ x;s,y)=Z_{j}(t,x;s,k^{-1}\circ y)$. Therefore, for any $k\in\mathbb{Z}^{n}$, $(t,x),(s,y)\in \mathbb{R}\times\mathbb{R}^n$ and $t>s$,
\begin{equation*}
\Phi(t,k\circ x;s,y) =\sum_{j=1}^{\infty}Z_j(t,k\circ x;s,y) =\sum_{j=1}^{\infty}Z_j(t,x;s,k^{-1}\circ y) =\Phi(t,x;s,k^{-1}\circ y).
\end{equation*}
In a similar way, it can be obtained that
\begin{align*}
& \Gamma(t,k\circ x;s,y) \\
= & \Gamma_{(s,y)}(t-s,y^{-1}\circ (k\circ x))+\int_{s}^{t}\int_{\mathbb{R}^{n}}\Gamma_{(\tau,z)}(t-\tau,z^{-1}\circ (k\circ x))\Phi(\tau,z;s,y)dz d\tau \\
= & \Gamma_{(s,k^{-1}\circ y)}(t-s,\left(k^{-1}\circ y\right)^{-1}\circ x)+\int_{s}^{t}\int_{\mathbb{R}^{n}}\Gamma_{(\tau,\xi)}(t-\tau,\xi^{-1}\circ x)\Phi(\tau,\xi;s,k^{-1}\circ y)d\xi d\tau,
\end{align*}
which leads to \eqref{Gamma_periodic property}.

According to \cite[Proposition 4.2(1)]{25JWY}, the function 
\begin{equation*}
z(t,x)=\int_{\mathbb{R}^{n}}\Gamma(t,x;0,y)g(y)d y + \int_{0}^{t}\int_{\mathbb{R}^{n}}\Gamma(t,x;s,y)f(s,y)d y d s,
\end{equation*}
which belongs to $C_{{\mathcal{X}}}^{1,2} \left((0,T]\times\mathbb{R}^{n}\right)\cap C\left([0,T]\times\mathbb{R}^{n}\right)$, is a solution to the Cauchy problem
\begin{equation*}
\begin{cases}
Hz(t,x)=f(t,x),& \text {in }(0, T] \times \mathbb{R}^{n}, \\
z(0,x)=g(x),& \text {in }\mathbb{R}^{n}.
\end{cases}
\end{equation*}
From \eqref{Gamma_periodic property} and the $1_{\mathbb{G}}$-periodicity of $f$ and $g$, we find that $z(t,\cdot)$ is $1_{\mathbb{G}}$-periodic on $\mathbb{R}^{n}$ for all $t\in[0,T]$. Thus $z(t,x)\in C_{{\mathcal{X}}}^{1,2} \left((0,T]\times\mathbb{T}_{\mathbb{G}}\right)\cap C\left([0,T]\times\mathbb{T}_{\mathbb{G}}\right)$ is a solution to the Cauchy problem \eqref{LHJB 2} and
\begin{align*}
& z(t,x) \\
= & \int_{\mathbb{T}_{\mathbb{G}}}\sum_{k\in\mathbb{Z}^{n}}\Gamma(t,x;0,k^{-1}\circ y)g(k^{-1}\circ y)d y + \int_{0}^{t}\int_{\mathbb{T}_{\mathbb{G}}}\sum_{k\in\mathbb{Z}^{n}}\Gamma(t,x;s,k^{-1}\circ y)f(s,k^{-1}\circ y)d y d s \\
= & \int_{\mathbb{T}_{\mathbb{G}}}\sum_{k\in\mathbb{Z}^{n}}\Gamma(t,k\circ x;0,y)g(y)d y + \int_{0}^{t}\int_{\mathbb{T}_{\mathbb{G}}}\sum_{k\in\mathbb{Z}^{n}}\Gamma(t,k\circ x;s,y)f(s,y)d y d s.
\end{align*}

The uniqueness of the solution follows directly from the weak maximum principle in \cite[Theorem 13.1]{10BBLU}, which also applies to the bounded domain $U=(0,T)\times\mathbb{T}_{\mathbb{G}}$. Therefore, there exists a unique solution $z(t,x)\in C_{{\mathcal{X}}}^{1,2} \left((0,T]\times\mathbb{T}_{\mathbb{G}}\right)\cap C\left([0,T]\times\mathbb{T}_{\mathbb{G}}\right)$ to the Cauchy problem \eqref{LHJB 2}.
\end{proof}
We are now in a position to prove Theorem \ref{Thm_LHJB wellposed regularity}.
\begin{proof}[\bf{Proof of Theorem \ref{Thm_LHJB wellposed regularity}}]
It follows from Proposition \ref{Prop_LHJB well-posedness} that there exists a unique solution $w(t,x)\in C_{{\mathcal{X}}}^{1,2} \left((0,T]\times\mathbb{T}_{\mathbb{G}}\right)\cap C\left([0,T]\times\mathbb{T}_{\mathbb{G}}\right)$ to the following Cauchy problem
\begin{equation}\label{LHJB forward}
\begin{cases}
\partial_t w-\Delta_{\mathcal{X}} w+b(t,x) \cdot D_{\mathcal{X}} w=f(t,x),& \text {in }(0, T] \times \mathbb{T}_{\mathbb{G}}, \\
w(0,x)=z_T(x),& \text {in }\mathbb{T}_{\mathbb{G}}.
\end{cases}
\end{equation}
Set $z(t,x)=w(T-t,x)$, then $z(t,x)\in C_{{\mathcal{X}}}^{1,2} \left([0,T)\times\mathbb{T}_{\mathbb{G}}\right)\cap C\left([0,T]\times\mathbb{T}_{\mathbb{G}}\right)$ satisfies the equation \eqref{LHJB}. Suppose $z'(t,x)\in C_{{\mathcal{X}}}^{1,2} \left([0,T)\times\mathbb{T}_{\mathbb{G}}\right)\cap C\left([0,T]\times\mathbb{T}_{\mathbb{G}}\right)$ is another solution to the equation \eqref{LHJB}. Then $z'(T-t,x)=w(t,x)$ by the uniqueness of the solution to the equation \eqref{LHJB forward}. Hence we have $z(t,x)=z'(t,x)$. Consequently, we obtain the existence and uniqueness of the solution to the equation \eqref{LHJB}.

It remains for us to prove the estimates \eqref{LHJB_USE}, \eqref{LHJB_HSE 1} and \eqref{LHJB_HSE 2}. The desired regularity estimates can be obtained by Proposition \ref{Prop_LHJB in R^n USE&HSE} and the fact that $z(t,x)=w(T-t,x)$ for any $(t,x)\in[0,T]\times\mathbb{T}_{\mathbb{G}}$. Hence it is sufficient to prove that $w \in B\left([0,T];C_{\mathcal{X}}^{k+\alpha}\left(\mathbb{T}_{\mathbb{G}}\right)\right)$.

In fact, by Proposition \ref{Prop_Mollifiers_Holder} and Remark \ref{Rem_Mollifiers_Holder}, there exist the mollified versions $b^{\varepsilon}, f^{\varepsilon}$ and $z_T^{\varepsilon}$ of $b, f$ and $z_T$ respectively, satisfying
\begin{gather*}
\sup_{t\in[0,T]}\left\|b^{\varepsilon}(t,\cdot)\right\| _{C_{\mathcal{X}}^{k-1+\alpha}\left(\mathbb{T}_{\mathbb{G}};\mathbb{R}^{n_1}\right)} \leq C\sup_{t\in[0,T]}\left\|b(t,\cdot)\right\| _{C_{\mathcal{X}}^{k-1+\alpha}\left(\mathbb{T}_{\mathbb{G}};\mathbb{R}^{n_1}\right)}, \\
\sup_{t\in[0,T]}\left\|f^{\varepsilon}(t,\cdot)\right\| _{C_{\mathcal{X}}^{k-1+\alpha}\left(\mathbb{T}_{\mathbb{G}}\right)} \leq C\sup_{t\in[0,T]}\left\|f(t,\cdot)\right\| _{C_{\mathcal{X}}^{k-1+\alpha}\left(\mathbb{T}_{\mathbb{G}}\right)}, \\
\left\|z_T^{\varepsilon}\right\|_{C_{\mathcal{X}}^{k+\alpha}\left(\mathbb{T}_{\mathbb{G}}\right)} \leq C\left\|z_T\right\|_{C_{\mathcal{X}}^{k+\alpha}\left(\mathbb{T}_{\mathbb{G}}\right)},\,\varepsilon\in(0,1],
\end{gather*}
where the constants $C>0$ are independent of $\varepsilon$. In addition, when $\varepsilon \to 0$, we have $\left\|b^{\varepsilon}-b\right\|_{L^{\infty}\left([0,T]\times\mathbb{T}_{\mathbb{G}}\right)}\to 0$, $\left\|f^{\varepsilon}-f\right\|_{L^{\infty}\left([0,T]\times\mathbb{T}_{\mathbb{G}}\right)}\to 0$ and $\left\|z_T^{\varepsilon}-z_T\right\|_{L^{\infty}\left(\mathbb{T}_{\mathbb{G}}\right)}\to 0$.

For every $\varepsilon\in(0,1]$, define the operator
$$
H_{\varepsilon}:=\partial_t-\Delta_{\mathcal{X}}+b^{\varepsilon}\cdot D_{\mathcal{X}}.
$$
From Proposition \ref{Prop_LHJB well-posedness} and Corollary \ref{Cor_LHJB existence and uniqueness}, we obtain that there exists a unique solution $w^{\varepsilon}\in C_{{\mathcal{X}}}^{1,2} \left([0,T]\times\mathbb{T}_{\mathbb{G}}\right)$ to the equation
\begin{equation*}
\begin{cases}
H_{\varepsilon} w^{\varepsilon}(t,x)=f^{\varepsilon}(t,x),& \text {in }(0, T] \times \mathbb{T}_{\mathbb{G}}, \\
w^{\varepsilon}(0,x)=z_T^{\varepsilon}(x),& \text {in }\mathbb{T}_{\mathbb{G}}.
\end{cases}
\end{equation*}
It is easy to find that $w^{\varepsilon}\in B\left([0,T];C_{\mathcal{X}}^{1+\alpha}\left(\mathbb{T}_{\mathbb{G}}\right)\right)$. By using \eqref{LHJB in R^n SE}, we know that $w^{\varepsilon}\in C_{\mathcal{X}}^{\frac{\alpha}{2},k+\alpha}\left([0,T]\times\mathbb{T}_{\mathbb{G}}\right)$, thus $w^{\varepsilon}\in B\left([0,T];C_{\mathcal{X}}^{k+\alpha}\left(\mathbb{T}_{\mathbb{G}}\right)\right)$. According to Proposition \ref{Prop_LHJB in R^n USE&HSE}, we have $w^{\varepsilon}$ satisfies for any $k\in\mathbb{Z}_+$,
\begin{equation}\label{w_varepsilon USE}
\begin{aligned}
\sup_{t \in [0,T]} \left\|w^{\varepsilon}(t, \cdot)\right\|_{C_{\mathcal{X}}^{k+\alpha}\left(\mathbb{T}_{\mathbb{G}}\right)} \leq & C\left(\sup _{t \in (0,T)} \left\|f^{\varepsilon}(t,\cdot)\right\|_{C_{\mathcal{X}}^{k-1+\alpha}\left(\mathbb{T}_{\mathbb{G}}\right)} +\left\|z_T^{\varepsilon}\right\|_{C_{\mathcal{X}}^{k+\alpha}\left(\mathbb{T}_{\mathbb{G}}\right)}\right) \\
\leq & C\left(\sup _{t \in [0,T]} \left\|f(t,\cdot)\right\|_{C_{\mathcal{X}}^{k-1+\alpha}\left(\mathbb{T}_{\mathbb{G}}\right)} +\left\|z_T\right\|_{C_{\mathcal{X}}^{k+\alpha}\left(\mathbb{T}_{\mathbb{G}}\right)}\right),
\end{aligned}
\end{equation}
where $C>0$ is dependent of $\mathbb{G}$, $\alpha$, $k$, $T$ and $\sup_{t\in[0,T]}\|b(t,\cdot)\|_{C_{\mathcal{X}}^{k-1+\alpha}\left(\mathbb{R}^{n};\mathbb{R}^{n_1}\right)}$ only. Hence, %thanks to \eqref{d_cc VS Euclidean}, 
by using the Arzel\`{a}-Ascoli theorem, we can find a sequence $\left\{\varepsilon_j\right\}_{j=1}^{+\infty}$ with $\varepsilon_j\to0$ as $j\to+\infty$, and a function $v(t,\cdot) \in C_{\mathcal{X}}^{k}\left(\mathbb{T}_{\mathbb{G}}\right)$ such that $X_I w^{\varepsilon_j}(t,x) \to X_I v(t,x)$ uniformly with respect to $x\in\mathbb{T}_{\mathbb{G}}$ for any $t\in[0,T]$ and multi-index $I$ with the length $|I|\leq k$. Moreover, following \eqref{w_varepsilon USE} and the definition of H\"{o}lder norms, one can find that $v \in B\left([0,T];C_{\mathcal{X}}^{k+\alpha}\left(\mathbb{T}_{\mathbb{G}}\right)\right)$.

Due to Proposition \ref{Prop_LHJB well-posedness}, we have that for any $(t,x)\in[0,T]\times\mathbb{T}_{\mathbb{G}}$,
\begin{equation*}
w^{\varepsilon_j}(t,x)= \int_{\mathbb{T}_{\mathbb{G}}}\sum_{k\in\mathbb{Z}^{n}}\Gamma(t,k\circ x;0,y)z_T^{\varepsilon_j}(y)d y +\int_{0}^{t}\int_{\mathbb{T}_{\mathbb{G}}}\sum_{k\in\mathbb{Z}^{n}}\Gamma(t,k\circ x;s,y)f^{\varepsilon_j}(s,y)d y d s.
\end{equation*}
Letting $j\to +\infty$ and using the dominated convergence theorem, we obtain that for any $(t,x)\in[0,T]\times\mathbb{T}_{\mathbb{G}}$,
\begin{align}\label{lim w_varepsilon=w}
v(t,x)= & \int_{\mathbb{T}_{\mathbb{G}}}\sum_{k\in\mathbb{Z}^{n}}\Gamma(t,k\circ x;0,y)z_T(y)d y +\int_{0}^{t}\int_{\mathbb{T}_{\mathbb{G}}}\sum_{k\in\mathbb{Z}^{n}}\Gamma(t,k\circ x;s,y)f(s,y)d y d s \\
= & w(t,x)\notag.
\end{align}
Therefore, we obtain that for any $k\in\mathbb{Z}_+$, $w=v \in B\left([0,T];C_{\mathcal{X}}^{k+\alpha}\left(\mathbb{T}_{\mathbb{G}}\right)\right)$ when $b\in B\left([0,T];C_{\mathcal{X}}^{k-1+\alpha}\left(\mathbb{T}_{\mathbb{G}};\mathbb{R}^{n_1}\right)\right)$, $f\in B\left([0,T];C_{\mathcal{X}}^{k-1+\alpha}\left(\mathbb{T}_{\mathbb{G}}\right)\right)$ and $z_T(x)\in C_{\mathcal{X}}^{k+\alpha}\left(\mathbb{T}_{\mathbb{G}}\right)$.

So far we have completed the proof.
\end{proof}
In the end we come to the proof of Theorem \ref{Thm_LHJB Lipschitz}.
\begin{proof}[\bf{Proof of Theorem \ref{Thm_LHJB Lipschitz}}]
From \eqref{d_cc VS Euclidean} we have
$$
d_{cc}(x,y) \leq C |x-y|^{\frac{1}{r}},\,x,y\in K,
$$
for any compact set $K \subset \mathbb{R}^n$, which implies that $z_T$ is continuous on $\mathbb{R}^n$. From Proposition \ref{Prop_LHJB well-posedness}, we employ a time transformation to obtain that there exists a unique solution $z \in C_{\mathcal{X}}^{1,2} \left([0, T) \times \mathbb{T}_{\mathbb{G}} \right) \cap C\left([0, T] \times \mathbb{T}_{\mathbb{G}}\right)$ to the equation \eqref{LHJB}.

In order to prove \eqref{LHJB_Holder&Lipschitz}, by Proposition \ref{Prop_Mollifiers_Lip.}, there exist the mollified versions $z_T^{\varepsilon}$ of $z_T$, satisfying
\begin{equation*}
\left[z_T^{\varepsilon}\right] _{C_{\mathcal{X}}^{0+1}\left(\mathbb{T}_{\mathbb{G}}\right)} \leq C\left[z_T\right] _{C_{\mathcal{X}}^{0+1}\left(\mathbb{T}_{\mathbb{G}}\right)},\,\varepsilon>0
\end{equation*}
and $\left\|z_T^{\varepsilon}-z_T\right\|_{L^{\infty}\left(\mathbb{T}_{\mathbb{G}}\right)}\to 0$ as $\varepsilon \to 0$. According to Proposition \ref{Prop_LHJB well-posedness} and Corollary \ref{Cor_LHJB existence and uniqueness}, we obtain that for every $\varepsilon>0$, there exists a unique solution $z^{\varepsilon} \in C_{{\mathcal{X}}}^{1,2} \left([0,T]\times\mathbb{T}_{\mathbb{G}}\right)$ to the equation \eqref{LHJB} with the terminal condition $z(T, x)=z_T^{\varepsilon}(x)$.

The Lie derivative of the function $z_T^{\varepsilon}(x)$ can be expressed as (see \cite[P. 4]{22BB})
\begin{equation*}
X_{i}z_T^{\varepsilon}(x)=\frac{d}{d\tau}\bigg|_{\tau=0}z_T^{\varepsilon}(\exp(\tau X_{i})(x))=\lim_{\tau\to 0}\frac{z_T^{\varepsilon}(\exp(\tau X_{i})(x))-z_T^{\varepsilon}(x)}{\tau},\,i\in\{1,\ldots,n_1\}.
\end{equation*}
Here, $\exp(\tau X_{i})(x)$ denotes the exponential map of the vector field $X_{i}$, which is the solution to the ordinary differential equation $\gamma'(\tau)=X_{i}(\gamma(\tau))$ with initial condition $\gamma(0)=x\in\mathbb{R}^n$. Since $\gamma$ is $\mathcal{X}$-subunit (see Definition \ref{Def_CC distance}), we have
$$
\left|z_T^{\varepsilon}(\exp(\tau X_{i})(x))-z_T^{\varepsilon}(x)\right| \leq \left[z_T^{\varepsilon}\right]_{C_{\mathcal{X}}^{0+1}\left(\mathbb{T}_{\mathbb{G}}\right)} d_{cc}(\gamma(\tau), \gamma(0)) \leq \left[z_T^{\varepsilon}\right]_{C_{\mathcal{X}}^{0+1}\left(\mathbb{T}_{\mathbb{G}}\right)}|\tau|.
$$
Thus
\begin{equation}\label{D_X z_T^varepsilon leq Lip. z_T}
\left\|D_{\mathcal{X}} z_T^{\varepsilon}\right\|_{L^{\infty}\left(\mathbb{T}_{\mathbb{G}}\right)} \leq C\left[z_T^{\varepsilon}\right]_{C_{\mathcal{X}}^{0+1}\left(\mathbb{T}_{\mathbb{G}}\right)} \leq C\left[z_T\right]_{C_{\mathcal{X}}^{0+1}\left(\mathbb{T}_{\mathbb{G}}\right)},
\end{equation}
where the constant $C>0$ is independent of $\varepsilon$.

Since $z^{\varepsilon} \in B\left([0,T];C_{\mathcal{X}}^{1+\alpha}\left(\mathbb{T}_{\mathbb{G}}\right)\right)$, according to Proposition \ref{Prop_LHJB in R^n UE&HE} and \eqref{D_X z_T^varepsilon leq Lip. z_T}, we can obtain
\begin{align}\label{z^n Holder&Lipschitz}
& \sup _{\substack{t \neq t'\\t,t' \in [0,T]}} \frac{\left\|z^{\varepsilon}\left(t^{\prime}, \cdot\right)-z^{\varepsilon}(t, \cdot)\right\|_{L^{\infty}\left(\mathbb{T}_{\mathbb{G}}\right)}}{\left|t^{\prime}-t\right|^{\frac{1}{2}}} +\sup_{t \in[0,T]}\|z^{\varepsilon}(t, \cdot)\|_{C_{\mathcal{X}}^{1}\left(\mathbb{T}_{\mathbb{G}}\right)} \\
\leq & C\left(\left\|z_T^{\varepsilon}\right\|_{C_{\mathcal{X}}^{1}\left(\mathbb{T}_{\mathbb{G}}\right)} +\left\|z_T^{\varepsilon}\right\|_{C_{\mathcal{X}}^{0+1}\left(\mathbb{T}_{\mathbb{G}}\right)} +\|f\|_{L^{\infty}\left((0,T)\times\mathbb{T}_{\mathbb{G}}\right)}\right)\notag \\
\leq & C\left(\left\|z_T\right\|_{C_{\mathcal{X}}^{0+1}\left(\mathbb{T}_{\mathbb{G}}\right)} +\|f\|_{L^{\infty}\left((0,T)\times\mathbb{T}_{\mathbb{G}}\right)}\right),\notag
\end{align}
where the constant $C>0$ depends on $\mathbb{G}$, $T$ and $\|b\|_{L^{\infty}\left((0,T)\times\mathbb{T}_{\mathbb{G}}\right)}$ only.

Using the Arzel\`{a}-Ascoli theorem, we can find a sequence $\left\{\varepsilon_j\right\}_{j=1}^{+\infty}$ with $\varepsilon_j\to0$ as $j\to+\infty$, and a function $v\in C\left([0,T]\times\mathbb{T}_{\mathbb{G}}\right)$ such that $z^{\varepsilon_j}(t,x) \to  v(t,x)$ uniformly on $[0,T]\times\mathbb{T}_{\mathbb{G}}$. Following the same method as in \eqref{lim w_varepsilon=w}, we can obtain $v(t,x)=z(t,x)$ on $[0,T] \times \mathbb{T}_{\mathbb{G}}$.

Moreover, we obtain from \cite[Proposition 4.2(ii)]{07BB} and \eqref{z^n Holder&Lipschitz} that, for any $t,t' \in [0,T]$ and $x,y \in \mathbb{R}^n$,
\begin{align*}
\left|z^{\varepsilon_j}(t,x)-z^{\varepsilon_j}(t,y)\right| \leq & \left\|D_{\mathcal{X}}z^{\varepsilon_j}\right\|_{L^{\infty}\left(\mathbb{T}_{\mathbb{G}}\right)} d_{cc}(x,y) \\
\leq & C\left(\left\|z_T\right\|_{C_{\mathcal{X}}^{0+1}\left(\mathbb{T}_{\mathbb{G}}\right)} +\|f\|_{L^{\infty}\left((0,T)\times\mathbb{T}_{\mathbb{G}}\right)}\right) d_{cc}(x,y),
\end{align*}
and
\begin{align*}
\left|z^{\varepsilon_j}(t,x)-z^{\varepsilon_j}(t',x)\right| \leq C\left(\left\|z_T\right\|_{C_{\mathcal{X}}^{0+1}\left(\mathbb{T}_{\mathbb{G}}\right)} +\|f\|_{L^{\infty}\left((0,T)\times\mathbb{T}_{\mathbb{G}}\right)}\right)|t-t'|^{\frac{1}{2}},
\end{align*}
where the constant $C>0$ depends on $\mathbb{G}$, $T$ and $\|b\|_{L^{\infty}\left((0,T)\times\mathbb{T}_{\mathbb{G}}\right)}$ only. Letting $j \to +\infty$ in the above inequalities, we finally get that \eqref{LHJB_Holder&Lipschitz} holds.
\end{proof}

\section{Results for the degenerate FPK equation}\label{Sec_5}

In this section, we investigate the degenerate FPK equation \eqref{general FPK} and prove Theorem \ref{Thm_general FPK wellposed regularity}.

We start by recalling the definition of the weak solution to the FPK equation \eqref{general FPK} in Definition \ref{Def_general FPK weak sol.}. Noting that the definition is well-posed. In fact, by Theorem \ref{Thm_LHJB wellposed regularity}, equation \eqref{dual FPK} has a unique solution $z \in C_{\mathcal{X}}^{1,2}\left([0,t)\times\mathbb{T}_{\mathbb{G}}\right)\cap C\left([0,t]\times\mathbb{T}_{\mathbb{G}}\right)$, satisfying $z\in B\left([0,t];C_{\mathcal{X}}^{k+\alpha}(\mathbb{T}_{\mathbb{G}})\right)$, thus $\left\langle\rho_0,z(0,\cdot)\right\rangle$ and $\left\langle \upsilon(s),z(s,\cdot)\right\rangle$ are well defined.

We now prove the existence, uniqueness and regularity of the weak solution to the FPK equation \eqref{general FPK}.
\begin{proof}[\bf{Proof of Theorem \ref{Thm_general FPK wellposed regularity}}]
\emph{Step 1: Existence.} We begin by assuming that
\begin{gather*}
b \in B\left([0,T];C_{\mathcal{X}}^{1+\alpha}\left(\mathbb{T}_{\mathbb{G}}\right)\right),\\
\upsilon \in C\left([0,T]\times\mathbb{T}_{\mathbb{G}}\right)\cap B\left([0,T];C_{\mathcal{X}}^{\alpha}\left(\mathbb{T}_{\mathbb{G}}\right)\right),\, \rho_0 \in C(\mathbb{T}_{\mathbb{G}}),
\end{gather*}
and proving \eqref{general FPK sol. regularity}.

In this case, splitting the divergence terms in \eqref{general FPK} and using Proposition \ref{Prop_LHJB well-posedness}, we obtain that there exists a unique solution $\rho\in C_{{\mathcal{X}}}^{1,2} \left([0,T)\times\mathbb{T}_{\mathbb{G}}\right)\cap C\left([0,T]\times\mathbb{T}_{\mathbb{G}}\right)$ to equation \eqref{general FPK}. Let $z$ be the unique solution to equation \eqref{dual FPK} with $f=0$ and $\xi \in C_{\mathcal{X}}^{k+\alpha}(\mathbb{T}_{\mathbb{G}})$. By multiplying the equation of $\rho$ for $z$ and integrating by parts in $[0,t] \times \mathbb{T}_{\mathbb{G}}$, we get that for any $t \in [0,T]$,
\begin{equation}\label{weak formula}
\left\langle \rho(t),\xi \right\rangle = \left\langle \rho_0,z(0,\cdot) \right\rangle + \int_{0}^{t}\left\langle \upsilon(s),z(s,\cdot) \right\rangle ds.
\end{equation}
It follows from \eqref{LHJB_USE} that
$$
\sup_{s \in [0,T]} \left\|z(s,\cdot)\right\|_{C_{\mathcal{X}}^{k+\alpha}(\mathbb{T}_{\mathbb{G}})} \leq C\left\|\xi\right\|_{C_{\mathcal{X}}^{k+\alpha}(\mathbb{T}_{\mathbb{G}})},
$$
where $C>0$ depends on $\mathbb{G}$, $\alpha$, $k$, $T$ and $\sup_{t\in(0,T)}\|b(t,\cdot)\|_{C_{\mathcal{X}}^{k-1+\alpha}\left(\mathbb{T}_{\mathbb{G}};\mathbb{R}^{n_1}\right)}$ only. Hence, the right hand side of \eqref{weak formula} satisfies
\begin{align*}
& \left\langle \rho_0,z(0,\cdot) \right\rangle + \int_{0}^{t}\left\langle \upsilon(s),z(s,\cdot) \right\rangle ds \\
\leq & C\left\|\xi\right\|_{C_{\mathcal{X}}^{k+\alpha}(\mathbb{T}_{\mathbb{G}})} \left(\left\|\rho_0\right\|_{C_{\mathcal{X}}^{-(k+\alpha)}\left(\mathbb{T}_{\mathbb{G}}\right)} +\int_{0}^{t}\left\|\upsilon(s)\right\|_{C_{\mathcal{X}}^{-(k+\alpha)}\left(\mathbb{T}_{\mathbb{G}}\right)}ds\right).
\end{align*}
Taking the supremum for $\xi \in C_{\mathcal{X}}^{k+\alpha}(\mathbb{T}_{\mathbb{G}})$ with $\left\|\xi\right\|_{C_{\mathcal{X}}^{k+\alpha}(\mathbb{T}_{\mathbb{G}})} \leq 1$ for \eqref{weak formula}, we obtain
\begin{equation}\label{general FPK sol. regularity_regular case}
\sup_{t \in [0,T]}\left\|\rho(t)\right\|_{C_{\mathcal{X}}^{-(k+\alpha)}(\mathbb{T}_{\mathbb{G}})} \leq C\left(\left\|\rho_0\right\|_{C_{\mathcal{X}}^{-(k+\alpha)}(\mathbb{T}_{\mathbb{G}})} + \left\|\upsilon\right\|_{L^1([0,T];C_{\mathcal{X}}^{-(k+\alpha)}(\mathbb{T}_{\mathbb{G}}))}\right),
\end{equation}
where the constant $C>0$ depends on $\mathbb{G}$, $\alpha$, $k$, $T$ and $\sup_{t\in(0,T)}\|b(t,\cdot)\|_{C_{\mathcal{X}}^{k-1+\alpha}\left(\mathbb{T}_{\mathbb{G}};\mathbb{R}^{n_1}\right)}$ only.

In the general case, by Proposition \ref{Prop_Mollifiers_Holder}, Remark \ref{Rem_Mollifiers_Holder} and Proposition \ref{Prop_Mollifiers_dual space}, we can consider the mollified versions $b^{\varepsilon}$, $\rho_0^\varepsilon$, $\upsilon^{\varepsilon}$ converging to $b$, $\rho_0$, $\upsilon$ respectively in $C \left([0,T] \times \mathbb{T}_{\mathbb{G}}\right)$, $C_{\mathcal{X}}^{-(k+\alpha)}(\mathbb{T}_{\mathbb{G}})$ and $L^1([0,T];C_{\mathcal{X}}^{-(k+\alpha)}(\mathbb{T}_{\mathbb{G}}))$, satisfying
\begin{gather*}
\|b^{\varepsilon}\|_{C_{\mathcal{X}}^{\frac{\alpha}{2},k-1+\alpha}\left([0,T]\times\mathbb{T}_{\mathbb{G}}\right)} \leq C\|b\|_{C_{\mathcal{X}}^{\frac{\alpha}{2},k-1+\alpha}\left([0,T]\times\mathbb{T}_{\mathbb{G}}\right)}, \\
\left\|\rho_0^\varepsilon\right\|_{C_{\mathcal{X}}^{-k}(\mathbb{T}_{\mathbb{G}})} \leq C\left\|\rho_0\right\|_{C_{\mathcal{X}}^{-k}\left([0,1)^n\right)}, \\ \left\|\upsilon^{\varepsilon}\right\|_{L^1\left([0,T];C_{\mathcal{X}}^{-k}(\mathbb{T}_{\mathbb{G}})\right)} \leq C\left\|\upsilon\right\|_{L^1\left([0,T];C_{\mathcal{X}}^{-k}\left([0,1)^n\right)\right)},\,\varepsilon\in(0,1],
\end{gather*}
where the constant $C>0$ is independent of $\varepsilon$. Moreover, we can use the Arzel\`{a}-Ascoli theorem to find that $b^{\varepsilon} \to b$ up to a subsequence uniformly in $C_{\mathcal{X}}^{0,k-1}\left([0,T]\times\mathbb{T}_{\mathbb{G}}\right)$.

We denote $\rho^{\varepsilon}\in C_{{\mathcal{X}}}^{1,2} \left([0,T]\times\mathbb{T}_{\mathbb{G}}\right)$ as the corresponding unique solution to equation \eqref{general FPK}. Set $\rho^{\varepsilon_1,\varepsilon_2}:=\rho^{\varepsilon_1}-\rho^{\varepsilon_2}$ for any $\varepsilon_1,\varepsilon_2\in(0,1]$. Then $\rho^{\varepsilon_1,\varepsilon_2}$ satisfies equation \eqref{general FPK} with $b=b^{\varepsilon_1}$, $\upsilon=\upsilon^{\varepsilon_1}-\upsilon^{\varepsilon_2}+\operatorname{div}_{\mathcal{X}}\left(\rho^{\varepsilon_2}(b^{\varepsilon_1}-b^{\varepsilon_2})\right)$, $\rho_0=\rho_0^{\varepsilon_1}-\rho_0^{\varepsilon_2}$. From \eqref{general FPK sol. regularity_regular case}, we have
\begin{align}\label{Cauchy seq. regularity}
& \sup_{t \in [0,T]}\left\|\rho^{\varepsilon_1,\varepsilon_2}(t)\right\|_{C_{\mathcal{X}}^{-(k+\alpha)}(\mathbb{T}_{\mathbb{G}})} \notag\\
\leq & C\left(\left\|\rho_0^{\varepsilon_1}-\rho_0^{\varepsilon_2}\right\|_{C_{\mathcal{X}}^{-(k+\alpha)}(\mathbb{T}_{\mathbb{G}})} + \left\|\upsilon^{\varepsilon_1}-\upsilon^{\varepsilon_2}\right\|_{L^1\left([0,T];C_{\mathcal{X}}^{-(k+\alpha)}(\mathbb{T}_{\mathbb{G}})\right)}\right.\\
& \left. + \left\|\operatorname{div}_{\mathcal{X}}(\rho^{\varepsilon_2}(b^{\varepsilon_1}-b^{\varepsilon_2}))\right\|_{L^1\left([0,T];C_{\mathcal{X}}^{-(k+\alpha)}(\mathbb{T}_{\mathbb{G}})\right)}\right). \notag
\end{align}

Below we claim the following fact:
\begin{equation*}
\int_{0}^{T}\int_{\mathbb{T}_{\mathbb{G}}} \rho^{\varepsilon_2}f dxds \leq  C\sup_{t \in(0, T)}\|f(t, \cdot)\|_{C_{\mathcal{X}}^{k-1}\left(\mathbb{T}_{\mathbb{G}}\right)}\left(\left\|\rho_0\right\|_{C_{\mathcal{X}}^{-k}\left([0,1)^n\right)} + \left\|\upsilon\right\|_{L^1\left([0,T];C_{\mathcal{X}}^{-k}\left([0,1)^n\right)\right)}\right)
\end{equation*}
for any $f\in B\left([0,T];C_{\mathcal{X}}^{k-1+\alpha}\left(\mathbb{T}_{\mathbb{G}}\right)\right)$, where the constant $C$ is independent of $\varepsilon_2$.

In fact, consider the unique solution $z^{\varepsilon_2}$ to equation \eqref{dual FPK} with $t=T$, $b=b^{\varepsilon_2}$, $\xi=0$ and any $f\in B\left([0,T];C_{\mathcal{X}}^{k-1+\alpha}\left(\mathbb{T}_{\mathbb{G}}\right)\right)$, by Theorem \ref{Thm_LHJB wellposed regularity} and \eqref{LHJB in R^n_prior C^k estimate} we obtain
\begin{equation}\label{phi_1}
\sup_{t\in[0,T]}\left\|z^{\varepsilon_2}(t,\cdot)\right\|_{C_{\mathcal{X}}^{k}\left(\mathbb{T}_{\mathbb{G}}\right)} \leq C\sup_{t \in(0, T)}\|f(t, \cdot)\|_{C_{\mathcal{X}}^{k-1}\left(\mathbb{T}_{\mathbb{G}}\right)},
\end{equation}
where $C>0$ depends on $\mathbb{G}$, $k$, $T$ and $\sup_{t\in(0,T)}\|b^{\varepsilon_2}(t,\cdot)\|_{C_{\mathcal{X}}^{k-1}\left(\mathbb{R}^{n};\mathbb{R}^{n_1}\right)}$ only. Multiplying the equation of $\rho^{\varepsilon_2}$ for $z^{\varepsilon_2}$ and integrating by parts in $[0,t] \times \mathbb{T}_{\mathbb{G}}$, one has
$$
\int_{0}^{T} \int_{\mathbb{T}_{\mathbb{G}}} \rho^{\varepsilon_2}f dxds=\left\langle\rho_{0}^{\varepsilon_2},z^{\varepsilon_2}(0, \cdot)\right\rangle+\int_{0}^{T}\left\langle \upsilon^{\varepsilon_2}(s),z^{\varepsilon_2}(s,\cdot)\right\rangle ds.
$$
Combining the above with \eqref{phi_1}, we obtain
\begin{equation*}
\begin{aligned}
\int_{0}^{T}\int_{\mathbb{T}_{\mathbb{G}}} \rho^{\varepsilon_2}f dxds \leq & C\sup_{t\in[0,T]}\left\|z^{\varepsilon_2}(t,\cdot)\right\|_{C_{\mathcal{X}}^{k}(\mathbb{T}_{\mathbb{G}})} \left(\left\|\rho_0^{\varepsilon_2}\right\|_{C_{\mathcal{X}}^{-k}(\mathbb{T}_{\mathbb{G}})} + \left\|\upsilon^{\varepsilon_2}\right\|_{L^1\left([0,T];C_{\mathcal{X}}^{-k}(\mathbb{T}_{\mathbb{G}})\right)}\right) \\
\leq & C\sup_{t \in(0, T)}\|f(t, \cdot)\|_{C_{\mathcal{X}}^{k-1}\left(\mathbb{T}_{\mathbb{G}}\right)}\left(\left\|\rho_0\right\|_{C_{\mathcal{X}}^{-k}\left([0,1)^n\right)} + \left\|\upsilon\right\|_{L^1\left([0,T];C_{\mathcal{X}}^{-k}\left([0,1)^n\right)\right)}\right),
\end{aligned}
\end{equation*}
where the constant $C>0$ is independent of $\varepsilon_2$.

Hence, by integration by parts and Fatou's lemma, we have
\begin{align}\label{div_mathcal X(rho(b1-b2))}
& \left\|\operatorname{div}_{\mathcal{X}}(\rho^{\varepsilon_2}(b^{\varepsilon_1}-b^{\varepsilon_2}))\right\|_{L^1\left([0,T];C_{\mathcal{X}}^{-(k+\alpha)}(\mathbb{T}_{\mathbb{G}})\right)} \\
= & \int_{0}^{T}\sup_{\left\|w\right\|_{C_{\mathcal{X}}^{k+\alpha}(\mathbb{T}_{\mathbb{G}})\leq 1}} \left(-\int_{\mathbb{T}_{\mathbb{G}}}\rho^{\varepsilon_2}\left(b^{\varepsilon_1}-b^{\varepsilon_2}\right)\cdot D_{\mathcal{X}}w dx\right)dt \notag\\
\leq & \sup_{\left\|w\right\|_{C_{\mathcal{X}}^{k+\alpha}(\mathbb{T}_{\mathbb{G}})\leq 1}} \left|\int_{0}^{T}\int_{\mathbb{T}_{\mathbb{G}}}\rho^{\varepsilon_2}\left(b^{\varepsilon_2}-b^{\varepsilon_1}\right)\cdot D_{\mathcal{X}}w dx dt\right| \notag\\
\leq & C\left\|b^{\varepsilon_1}-b^{\varepsilon_2}\right\|_{C_{\mathcal{X}}^{0,k-1}\left([0,T]\times\mathbb{T}_{\mathbb{G}}\right)}, \notag
\end{align}
where the constant $C>0$ is independent of $\varepsilon_1$ and $\varepsilon_2$.

Therefore, the right hand side of \eqref{Cauchy seq. regularity} tends to $0$ when $\varepsilon_1,\varepsilon_2 \to 0$, that is, for any $\left\{\varepsilon_j\right\}_{j=1}^{+\infty}$ with $\varepsilon_j\to0$ as $j\to+\infty$, $\left\{\rho^{\varepsilon_j}\right\}_{j=1}^{+\infty}$ is a Cauchy sequence. According to the completeness and Cauchy criterion for uniform convergence, there exists a $\rho \in C\left([0,T];C_\mathcal{X}^{-(k+\alpha)}(\mathbb{T}_{\mathbb{G}})\right)$ such that $\rho^{\varepsilon_j} \to \rho$ in $C\left([0,T];C_\mathcal{X}^{-(k+\alpha)}(\mathbb{T}_{\mathbb{G}})\right)$. Moreover, since it follows from \eqref{general FPK sol. regularity_regular case} that
\begin{equation*}%\label{approx. sol. regularity}
\sup_{t \in [0,T]}\left\|\rho^{\varepsilon_j}(t)\right\|_{C_{\mathcal{X}}^{-(k+\alpha)}(\mathbb{T}_{\mathbb{G}})}\\
\leq C\left(\left\|\rho_0^{\varepsilon_j}\right\|_{C_{\mathcal{X}}^{-(k+\alpha)}(\mathbb{T}_{\mathbb{G}})}+ \left\|\upsilon^{\varepsilon_j}\right\|_{L^1\left([0,T];C_{\mathcal{X}}^{-(k+\alpha)}(\mathbb{T}_{\mathbb{G}})\right)}\right),
\end{equation*}
where $C>0$ is independent of $\varepsilon_j$ due to the fact that $\sup_{t\in[0,T]}\|b^{\varepsilon_j}(t,\cdot)\|_{C_{\mathcal{X}}^{k-1+\alpha}\left(\mathbb{T}_{\mathbb{G}};\mathbb{R}^{n_1}\right)}$ is uniformly bounded. Letting $j\to+\infty$ in the above inequality, we obtain that $\rho$ satisfies \eqref{general FPK sol. regularity}.

We next prove that $\rho$ is a solution to equation \eqref{general FPK} in the sense of Definition \ref{Def_general FPK weak sol.}. Let $z$ and $z^{\varepsilon_j}$ be the solutions to equation \eqref{dual FPK} associated with $b$ and $b^{\varepsilon_j}$ respectively. The weak formulation of $\rho^{\varepsilon_j}$ is as follows:
\begin{equation}\label{rho_varepsilon_j WF}
\left\langle\rho^{\varepsilon_j}(t),\xi\right\rangle+\int_{0}^{t} \left\langle \rho^{\varepsilon_j}(s),f(s,\cdot)\right\rangle ds=\left\langle\rho_0^{\varepsilon_j},z^{\varepsilon_j}(0,\cdot)\right\rangle+\int_{0}^{t}\left\langle \upsilon^{\varepsilon_j}(s),z^{\varepsilon_j}(s,\cdot)\right\rangle ds.
\end{equation}
%We only need to show that $z^{\varepsilon_j}$ converges to $z$, since the desired conclusion can be obtained by directly taking the limit for both sides of the equality as above. Actually, 
For any $j \in \mathbb{Z}_+$, the function $\bar{z}_j:=z^{\varepsilon_j}-z$ satisfies
\begin{equation*}
\begin{cases}
-\partial_t\bar{z}_j-\Delta_\mathcal{X}\bar{z}_j+b^{\varepsilon_j}\cdot D_{\mathcal{X}}\bar{z}_j=-(b^{\varepsilon_j}-b)\cdot D_{\mathcal{X}}z, & \text{ in } [0,t) \times \mathbb{T}_{\mathbb{G}},\\
\bar{z}_j(t)=0, & \text{ in } \mathbb{T}_{\mathbb{G}}.
\end{cases}
\end{equation*}
From Theorem \ref{Thm_LHJB wellposed regularity} and \eqref{LHJB in R^n_prior C^k estimate}, we have
\begin{align*}
\sup_{t\in[0,T]}\left\|\bar{z}_j(t,\cdot)\right\|_{C_{\mathcal{X}}^{k}\left(\mathbb{T}_{\mathbb{G}}\right)} \leq & C\sup_{t \in(0, T)}\|D_{\mathcal{X}}z(t, \cdot)\|_{C_{\mathcal{X}}^{k-1}\left(\mathbb{T}_{\mathbb{G}}\right)}\left\|b^{\varepsilon_j}-b\right\|_{C_{\mathcal{X}}^{0,k-1}\left([0,T]\times\mathbb{T}_{\mathbb{G}}\right)} \to 0
\end{align*}
as $j\to+\infty$. Then we can obtain
\begin{align*}
& \left|\left\langle\rho_0^{\varepsilon_j},z^{\varepsilon_j}(0,\cdot)\right\rangle -\left\langle\rho_0,z(0,\cdot)\right\rangle\right| \\
\leq & \left|\left\langle\rho_0^{\varepsilon_j},\bar{z}_j(0,\cdot)\right\rangle\right| +\left|\left\langle\rho_0^{\varepsilon_j}-\rho_0,z(0,\cdot)\right\rangle\right| \\
\leq & \left\|\rho_0\right\|_{C_{\mathcal{X}}^{-k}\left([0,1)^n\right)} \left\|\bar{z}_j(0,\cdot)\right\|_{C_{\mathcal{X}}^{k}\left(\mathbb{T}_{\mathbb{G}}\right)} +\left\|\rho_0^{\varepsilon_j}-\rho_0\right\|_{C_{\mathcal{X}}^{-(k+\alpha)}\left(\mathbb{T}_{\mathbb{G}}\right)} \left\|z(0,\cdot)\right\|_{C_{\mathcal{X}}^{k+\alpha}\left(\mathbb{T}_{\mathbb{G}}\right)} \to 0
\end{align*}
as $j\to+\infty$. Similarly, there is
$$
\int_{0}^{t}\left\langle \upsilon^{\varepsilon_j}(s),z^{\varepsilon_j}(s,\cdot)\right\rangle ds \to \int_{0}^{t}\left\langle \upsilon(s),z(s,\cdot)\right\rangle ds\,\text{ as }j\to+\infty.
$$
Therefore, letting $j\to+\infty$ in \eqref{rho_varepsilon_j WF}, we get that $\rho$ is a weak solution to equation \eqref{general FPK}.

\emph{Step 2: Uniqueness.} Let $\rho_1,\rho_2$ be two weak solutions to equation \eqref{general FPK}. Then $\bar{\rho}:=\rho_1-\rho_2$ is a weak solution of
\begin{equation*}
\begin{cases}
\partial_t\bar{\rho}-\Delta_\mathcal{X}\bar{\rho}-\operatorname{div}_{\mathcal{X}}(\bar{\rho} b)=0, & \text{ in } [0,T] \times \mathbb{T}_{\mathbb{G}}, \\
\bar{\rho}(0)=0, & \text{ in } \mathbb{T}_{\mathbb{G}}.
\end{cases}
\end{equation*}
The weak formulation implies that for any $f \in C\left([0,t]\times\mathbb{T}_{\mathbb{G}}\right)\cap B\left([0,t];C_\mathcal{X}^{k+\alpha}(\mathbb{T}_{\mathbb{G}})\right)$ and $\xi \in C_{\mathcal{X}}^{k+\alpha}(\mathbb{T}_{\mathbb{G}})$,
$$
\left\langle\bar{\rho}(t),\xi\right\rangle+\int_{0}^{t} \left\langle \bar{\rho}(s),f(s,\cdot) \right\rangle ds=0,
$$
which leads to
$$
\sup_{t \in [0,T]}\left\|\bar{\rho}(t)\right\|_{C_{\mathcal{X}}^{-(k+\alpha)}(\mathbb{T}_{\mathbb{G}})}=0.
$$
Thus, the uniqueness is established.

\emph{Step 3: Stability.} For any $i,j \in \mathbb{Z}_+$, set $\bar{\rho}^{i,j}:=\rho^{i}-\rho^{\varepsilon_j}$, where $\left\{\rho^{\varepsilon_j}\right\}_{j=1}^{+\infty}$ is the sequence of functions defined in Step 1. Then $\bar{\rho}^{i,j}$ satisfies equation \eqref{general FPK} with $b$, $\rho_0$ and $\upsilon$ replaced by $b^{i}$, $\rho_{0}^{i}-\rho_0^{\varepsilon_j}$ and $\upsilon^{i}-\upsilon^{\varepsilon_j}+\operatorname{div}_{\mathcal{X}}(\rho^{\varepsilon_j}(b^{i}-b^{\varepsilon_j}))$. Using \eqref{general FPK sol. regularity} and \eqref{div_mathcal X(rho(b1-b2))}, we have
\begin{align*}
& \sup_{t \in [0,T]}\left\|\bar{\rho}^{i,j}(t)\right\|_{C_{\mathcal{X}}^{-(k+\alpha)}(\mathbb{T}_{\mathbb{G}})} \\
\leq & C\left(\left\|\rho_{0}^{i}-\rho_0^{\varepsilon_j}\right\|_{C_{\mathcal{X}}^{-(k+\alpha)}(\mathbb{T}_{\mathbb{G}})} + \left\|\upsilon^{i}-\upsilon^{\varepsilon_j}\right\|_{L^1\left([0,T];C_{\mathcal{X}}^{-(k+\alpha)}\left(\mathbb{T}_{\mathbb{G}}\right)\right)} +\left\|b^{i}-b^{\varepsilon_j}\right\|_{C_{\mathcal{X}}^{0,k-1}\left([0,T]\times\mathbb{T}_{\mathbb{G}}\right)}\right) \\
\leq & C\left(\left\|\rho_{0}^{i}-\rho_0\right\|_{C_{\mathcal{X}}^{-(k+\alpha)}(\mathbb{T}_{\mathbb{G}})} +\left\|\rho_{0}-\rho_0^{\varepsilon_j}\right\|_{C_{\mathcal{X}}^{-(k+\alpha)}(\mathbb{T}_{\mathbb{G}})} +\left\|\upsilon^{i}-\upsilon\right\|_{L^1\left([0,T];C_{\mathcal{X}}^{-(k+\alpha)}(\mathbb{T}_{\mathbb{G}})\right)}\right.\\
& \left. +\left\|\upsilon-\upsilon^{\varepsilon_j}\right\|_{L^1\left([0,T];C_{\mathcal{X}}^{-(k+\alpha)}\left(\mathbb{T}_{\mathbb{G}}\right)\right)} +\left\|b^{i}-b\right\|_{C_{\mathcal{X}}^{0,k-1}\left([0,T]\times\mathbb{T}_{\mathbb{G}}\right)} +\left\|b-b^{\varepsilon_j}\right\|_{C_{\mathcal{X}}^{0,k-1}\left([0,T]\times\mathbb{T}_{\mathbb{G}}\right)}\right),
\end{align*}
where the constants $C>0$ depends on $\mathbb{G}$, $\alpha$, $k$, $T$ and $\sup_{t\in(0,T)}\|b^{i}(t,\cdot)\|_{C_{\mathcal{X}}^{k-1+\alpha}\left(\mathbb{T}_{\mathbb{G}};\mathbb{R}^{n_1}\right)}$ only, and we note that
$$
\sup_{t\in(0,T)}\|b^{i}(t,\cdot)\|_{C_{\mathcal{X}}^{k-1+\alpha}\left(\mathbb{T}_{\mathbb{G}};\mathbb{R}^{n_1}\right)} \leq \left\|b^{i}-b\right\|_{C_{\mathcal{X}}^{\frac{\alpha}{2},k-1+\alpha}\left([0,T]\times\mathbb{T}_{\mathbb{G}}\right)} +\sup_{t\in(0,T)}\|b(t,\cdot)\|_{C_{\mathcal{X}}^{k-1+\alpha}\left(\mathbb{T}_{\mathbb{G}};\mathbb{R}^{n_1}\right)}.
$$
Then we can get that
\begin{align*}
& \sup_{t \in [0,T]}\left\|(\rho^{i}-\rho)(t)\right\|_{C_{\mathcal{X}}^{-(k+\alpha)}(\mathbb{T}_{\mathbb{G}})} \\
\leq & \sup_{t \in [0,T]}\left\|\bar{\rho}^{i,j}(t)\right\|_{C_{\mathcal{X}}^{-(k+\alpha)}(\mathbb{T}_{\mathbb{G}})} +\sup_{t \in [0,T]}\left\|(\rho^{\varepsilon_j}-\rho)(t)\right\|_{C_{\mathcal{X}}^{-(k+\alpha)}(\mathbb{T}_{\mathbb{G}})}\to 0 \text{ as }i,j\to+\infty,
\end{align*}
thus $\rho^{i} \to \rho$ in $C\left([0,T];C_\mathcal{X}^{-(k+\alpha)}(\mathbb{T}_{\mathbb{G}})\right)$ as $i \to +\infty$.

This concludes the proof of the theorem.
\end{proof}

\section*{Acknowledgments}
Yiming Jiang is supported by National Natural Science Foundation of China (Grant No. 12471141). Yawei Wei is supported by National Natural Science Foundation of China (Grant No. 12271269) and Fundamental Research Funds for the Central Universities. All authors contributed equally to this work, and the author list is ordered alphabetically by surname. The authors are grateful to the referees for their careful reading and thoughtful comments.

%\appendix
%\numberwithin{equation}{section}


\vspace{0.4cm}
\begin{thebibliography}{00}

%\bibitem{02Al} G.K. Alexopoulos, Sub-laplacians with drift on lie groups of polynomial volume growth, Mem. Amer. Math. Soc., 2002, 155(739), 101.
\bibitem{14BGL} D. Bakry, I. Gentil, M. Ledoux, Analysis and Geometry of Markov Diffusion Operators, Grundlehren der mathematischen Wissenschaften, vol. 348, Springer International Publishing, 2014.
%\bibitem{19BCP} V. Bally, L. Caramellino, P. Pigato, Tube estimates for diffusions under a local strong H\"{o}rmander condition, Ann. Inst. H. Poincar\'{e} Probab. Statist., 2019, 55(4)2320-2369.
%\bibitem{24BM} M. Bansil, A.R. M\'{e}sz\'{a}ros, Hidden monotonicity and canonical transformations for mean field games and master equations, 2024, arXiv preprint arXiv: 2403.05426.
%\bibitem{23BMM} M. Bansil, A.R. M\'{e}sz\'{a}ros, C. Mou, Global Well-Posedness of Displacement Monotone Degenerate Mean Field Games Master Equations, 2023, arXiv preprint arXiv: 2308.16167.
%\bibitem{19BCCD} E. Bayraktar, A. Cecchin, A. Cohen, F. Delarue, Finite state mean field games with Wright-Fisher common noise, Journal de Math\'{e}matiques Pures et Appliqu\'{e}es, 2019.
%\bibitem{15BFY} A. Bensoussan, J. Frehse, S. C. P. Yam, The master equation in mean field theory, J. Math. Pures et Appliqu\'{e}es., 2015, 103(6): 1441-1474.
%\bibitem{17BFY} A. Bensoussan, J. Frehse, S. C. P. Yam, On the interpretation of the master equation, Stoc. Proc. App., 2017, 127(7): 2093-2137.
%\bibitem{21Be_C} C. Bertucci, Monotone solutions for mean field games master equations: continuous state space and common noise, 2021, arXiv preprint arXiv: 2107.09531.
%\bibitem{21Be_F} C. Bertucci, Monotone solutions for mean field games master equations: finite state space and optimal stopping, Journal de l'\'{E}cole polytechnique-Math\'{e}matiques, 2021, 8, 1099-1132.
\bibitem{02BLU} A. Bonfiglioli, E. Lanconelli, F. Uguzzoni, Uniform Gaussian estimates of the fundamental solutions for heat operators on Carnot groups, Adv. Differential Equations, 2002, 7(10):1153-1192.
\bibitem{03BLU} A. Bonfiglioli, E. Lanconelli, F. Uguzzoni, Fundamental solutions for non-divergence form operators on stratified groups, Trans. Amer. Math. Soc., 2003, 356, no. 7, 2709-2737.
\bibitem{07BLU} A. Bonfiglioli, E. Lanconelli, F. Uguzzoni, Stratified Lie groups and potential theory for their sub-Laplacians, Springer, Berlin, Heidelberg, 2007.
\bibitem{14Br} M. Bramanti, An invitation to hypoelliptic operators and H\"{o}rmander's vector fields, Springer, 2014.
%\bibitem{05BB} M. Bramanti, L. Brandolini, Estimates of BMO type for singular integrals on spaces of homogeneous type and applications to hypoelliptic PDEs, Rev. Mat. Iberoamericana, 2005, 21(2)511-556.
\bibitem{07BB} M. Bramanti, L. Brandolini, Schauder estimates for parabolic nondivergence operators of H\"{o}rmander type, J. Differ. Equ., 2007, 234, 177-245.
\bibitem{22BB} M. Bramanti, L. Brandolini, H\"{o}rmander Operators, World Scientific, 2022.
\bibitem{10BBLU} M. Bramanti, L. Brandolini, E. Lanconeli, F. Uguzzoni, Non-divergence equations stnuctured on H\"{o}rmander vector fields: heat kernels and Harnack inequalities, Mem. Amer. Math. Soc., 2010, 204(961).
%\bibitem{11BZ} M. Bramanti, M. Zhu, $L^p$ and Schauder estimates for nonvariational operators structured on H\"{o}rmander vector fields with drift, 2011, arXiv preprint arXiv: 1103.5116.
%\bibitem{17BLPR} R. Buckdahn, J. Li, S. Peng, C. Rainer, Mean-field stochastic differential equations and associated PDEs, Ann. Probab., 2017, 45(2): 824-878.
\bibitem{13CCM} L. Capogna, G. Citti, M. Manfredini, Uniform Gaussian bounds for subelliptic heat kernels and an application to the total variation flow of graphs over Carnot groups, Anal. Geom. Metr. Spaces, 2013, 1, 255-275.
\bibitem{03CH} L. Capogna, Q. Han, Pointwise Schauder estimates for second order linear equations in Carnot groups, in: Harmonic Analysis at Mount Holyoke, South Hadley, MA, 2001, 45-69, in: Contemp. Math., Amer. Math. Soc., Providence, RI, 2003, 320: 45-69.
\bibitem{19CDLL} P. Cardaliaguet, F. Delarue, J. Lasry, P. Lions, The Master Equation and the Convergence Problem in Mean Field Games, Ann. Math. Stud., 201, Princeton University Press., 2019.
%\bibitem{15CGPT} P. Cardaliaguet, P. J. Graber, A. Porretta, D. Tonon, Second order mean field games with degenerate diffusion and local coupling, Nonlinear Differ. Equ. Appl. NoDEA, 2015, 22: 12871317.
%\bibitem{22CSS} P. Cardaliaguet, B. Seeger, P. Souganidis, Mean field games with common noise and degenerate idiosyncratic noise, 2022, arXiv preprint arXiv: 2207.10209.
%\bibitem{21CS} P. Cardaliaguet, P.E. Souganidis, Weak solutions of the master equation for Mean Field Games with no idiosyncratic noise, 2021, arXiv preprint arXiv: 2109.14911.
%\bibitem{14CD} R. Carmona, F. Delarue, The Master Equation for large population equilibriums, In: D. Crisan, B. Hambly, T. Zariphopoulou, eds, Stochastic Analysis and Applications, France: Springer, 2014.
\bibitem{18CD} R. Carmona, F. Delarue, Probabilistic Theory of Mean Field Games with Applications I: Mean Field FBSDEs, Control, and Games, 2018.
%\bibitem{14CCD} J. Chassagneux, D. Crisan, F. Delarue, A Probabilistic Approach to Classical Solutions of the Master Equation for Large Population Equilibria, Memoirs of the American Mathematical Society, 2014.
\bibitem{22CCL} H. Chen, H. G. Chen, J. N. Li, Upper bound estimates of eigenvalues for H\"{o}rmander operators on non-equiregular sub-Riemannian manifolds, J. Math. Pures Appl., 2022, 164, 180-212.
%\bibitem{18DF} F. Dragoni, E. Feleqi, Ergodic Mean Field Games with H\"{o}rmander diffusions, Calc. Var. Partial Differential Equations, 2018, 57: 1-22.
\bibitem{10Ev} L. C. Evans, Partial Differential Equations, 2nd ed., Graduate studies in mathematics, no. 19, American Mathematical Society, Providence, R.I, 2010.
%\bibitem{20FGT} E. Feleqi, D. Gomes, T. Tada, Hypoelliptic mean field games--a case study, Minimax Theory Appl., 2020, 5(2): 305-326.
%\bibitem{21FGT} R. Ferreira, D. Gomes, T. Tada, Existence of weak solutions to time-dependent mean-field games, Nonlinear Anal., 2021, 212: 112470.
\bibitem{75Fo} G. B. Folland, Subelliptic estimates and function spaces on nilpotent Lie groups. Ark. Mat. 13 (1975) 161-207.
\bibitem{22GMMZ} W. Gangbo, A. R. M\'{e}sz\'{a}ros, C. Mou, and J. Zhang. Mean field games master equations with nonseparable Hamiltonians and displacement monotonicity. Ann. Probab., 2022, 50(6):2178-2217.
%\bibitem{15GS} W. Gangbo, A. \'{S}wi\c{e}ch, Existence of a solution to an equation arising from the theory of mean field games, J. Differ. Equ., 2015, 259(11): 6573-6643.
%\bibitem{77Ga} B. Gaveau, Principe de moindre action, propagation de la chaleur et estim\'{e}es sous-elliptiques sur certains groupes nilpotents, Acta Math., 1977, 139, 95-153.
\bibitem{01GT} D. Gilbarg, N. S. Trudinger, Elliptic Partial Differential Equations of Second Order, 2nd ed., rev. 3rd printing. ed., Springer, Berlin, 2001.
%\bibitem{09GL} C. E. Guti\'{e}rrez, E. Lanconelli, Schauder estimates for sub-elliptic equations, J. Evol. Equ. 9 (2009) 707-726.
\bibitem{67Ho} L. H\"{o}rmander, Hypoelliptic second order differential equations, Acta Math., 1967, 119, 147-171.
%\bibitem{06HCM} M. Huang, P. E. Caines, R. P. Malham\'{e}, Large population stochastic dynamic games: closed-loop McKean-Vlasov systems and the Nash certainty equivalence principle, Comm. Inf. Syst., 2006, 6: 221-251.
%\bibitem{76Hu} A. Hulanicki, The distribution of energy in the Brownian motion in the Gaussian field and analytic hypoellipticity of certain subelliptic operators on the Heisenberg group, Studia Math., 1976, 56, 165-173.
%\bibitem{23JR} E. R. Jakobsen, A. Rutkowski, The master equation for mean field game systems with fractional and nonlocal diffusions, 2023, arXiv preprint arXiv: 2305.18867.
\bibitem{25JWY} Y. Jiang, Y. Wei, Y. Yang, Schauder estimates for Cauchy problems on Carnot groups with rough coefficients, J. Differ. Equ., 2025, 440(2): 113448.
%\bibitem{01LP} E. Lanconelli, A. Pascucci, On the fundamental solution for hypoelliptic second order partial differential equations with non-negative characteristic form, Ricerche Mat., 2001, 48, no. 1, 81-106.
%\bibitem{06LL_I} J.-M. Lasry, P.-L. Lions, Jeux \`{a} champ moyen. I. Le cas stationnaire, C. R. Math. Acad. Sci. Paris., 2006, 343(9): 619-625.
%\bibitem{06LL_II} J.-M. Lasry, P.-L. Lions, Jeux \`{a} champ moyen. II. Horizon fini et contr\^{o} le optimal, C. R. Math. Acad. Sci. Paris., 2006, 343:679-684.
%\bibitem{07LL} J.-M. Lasry, P.-L. Lions, Mean field games, Jpn. J. Math., 2007, 2(1):229-260.
%\bibitem{08LLG} J.-M. Lasry, P.-L. Lions, O. Gu\`{e}ant, Application of mean field games to growth theory, 2008, hal-00348376.
%\bibitem{09Li} H. Q. Li, Fonctions maximales centr¨¦es de Hardy-Littlewood sur les groupes de Heisenberg, Studia Math., 2009, 191, 89-100.
%\bibitem{Li} P.-L. Lions, Cours au Coll\`{e}ge de France, Available at: www.college-de-france.fr.
%\bibitem{22LS} H. Q. Li, P. Sjgren, Estimates for Operators Related to the Sub-Laplacian with Drift in Heisenberg Groups, Journal of Fourier Analysis and Applications., 2022, 28(1), 1-29.
%\bibitem{03Lu} F. Lust-Piquard, A simple-minded computation of heat kernels on Heisenberg groups, Colloq. Math., 2003, 97, 233-249.
\bibitem{24MMM} P. Mannucci, C. Marchi, C. Mendico, Semi-linear parabolic equations on homogenous Lie groups arising from mean field games, Math. Ann., 2024, 390, 3077-3108.
\bibitem{21MMT} P. Mannucci, C. Marchi, N. Tchou, First order periodic Mean Field Games in Heisenberg group, 2021, arXiv preprint arXiv: 2010.09279.
%\bibitem{21Mi} N. Mimikos-Stamatopoulos, Weak and renormalized solutions to a hypoelliptic Mean Field Games system, 2021, arXiv preprint arXiv: 2105.05777.
%\bibitem{06Mo} R. Montgomery, A Tour of Subriemannian Geometries, Their Geodesics and Applications, 2006.
\bibitem{04RR} M. Renardy, R. C. Rogers, An Introduction to Partial Differential Equations, 2nd ed., Texts in Applied Mathematics, no. 13, Springer, New York, 2004.
\bibitem{21Ri} M. Ricciardi, The Master Equation in a bounded domain with Neumann conditions, Communications in Partial Differential Equations, 2021, 47, 912-947.
%\bibitem{18Ry} L. Ryzhik, Lecture notes, 2018.
\bibitem{07St} B. Stroffolini, Homogenization of Hamilton-Jacobi equations in Carnot Groups, ESAIM: Control, Optimisation and Calculus of Variations, 2007, 13(1): 107-119.
\bibitem{08St} D. W. Stroock, Partial Differential Equations for Probabilists. Cambridge: Cambridge University Press, 2008.
\bibitem{92Xu} C.-J. Xu, Regularity for quasilinear second-order subelliptic equations, Comm. Pure Appl. Math., 1992, 45(1): 77-96.

\end{thebibliography}
\end{document}